\let\realItem\item 
\NewDocumentCommand\myItem{ o }{%
   \IfNoValueTF{#1}%
      {\realItem}
      {\realItem[#1]\def\@currentlabel{#1}}
}
\setlist[enumerate]{
    before=\let\item\myItem,       
    label=\textnormal{(\arabic*)}, 
    widest=(2')                    
}
\numberwithin{equation}{section}
\newtheorem{theorem}{Theorem}[section]
\newtheorem{lemma}[theorem]{Lemma}
\newtheorem{example}[theorem]{Example}
\newtheorem{cor}[theorem]{Corollary}
\newtheorem{rem}[theorem]{Remark}
\newtheorem{definition}[theorem]{Definition}
\newtheorem{assumption}[theorem]{Assumption}
\newtheorem{question}[theorem]{Question}
\newtheorem{conjecture}[theorem]{Conjecture}
\newcommand{\R}{\mathbb{R}}
\renewcommand{\tilde}{\widetilde}
\DeclareMathSymbol{\leqslant}{\mathalpha}{AMSa}{"36} 
\DeclareMathSymbol{\geqslant}{\mathalpha}{AMSa}{"3E} 
\DeclareMathSymbol{\eset}{\mathalpha}{AMSb}{"3F}     
\renewcommand{\leq}{\;\leqslant\;}                   
\renewcommand{\geq}{\;\geqslant\;}                   
\newcommand{\dd}{\,\text{\rm d}}             
\newcommand{\Div}{\mathrm{div}}
\newcommand{\mean}[1]{\langle #1 \rangle}
\newcommand{\norm}[1]{\left\lVert#1\right\rVert}
\newcommand{\abs}[1]{\left\lvert#1\right\rvert}
\newcommand{\seminorm}[1]{\left[#1\right]}
\title[On Young regimes for locally monotone SPDEs]{On Young regimes for locally monotone SPDEs}
\author{Florian Bechtold, J\" orn Wichmann}
\address[F. Bechtold]{Fakultät für Mathematik, Universität Bielefeld, 33501 Bielefeld, Germany}
\email{fbechtold@math.uni-bielefeld.de}
\address[J. Wichmann]{School of Mathematics, Monash University, Australia}
\email{joern.wichmann@monash.edu}
\begin{document}
\begin{abstract}
   We consider the following SPDE on a Gelfand-triple $(V, H, V^*)$: 
   \begin{equation} \nonumber
       \begin{split}
            \dd u(t)&=A(t, u(t)) \dd t+\dd I_t(u),\\
            u(0)&=u_0\in H.
       \end{split}
   \end{equation}
  Given certain local monotonicity, continuity, coercivity and growth conditions of the operator $A:[0, T]\times V\to V^*$ and a sufficiently regular operator $I$ 
  we establish global existence of weak solutions.
  
  In analogy to the Young regime for SDEs, no probabilistic structure is required in our analysis, which is based on a careful combination of monotone operator theory and the recently developed Besov rough analysis in \cite{FRIZbesov}. Due to the abstract nature of our approach, it applies to various examples of monotone and locally monotone operators~$A$, such as the $p$-Laplace operator, the porous medium operator, and an operator that arises in the context of shear-thickening fluids; and operators~$I$, including additive Young drivers $I_t(u) = Z_t-Z_0$, abstract Young integrals $I_t(u) = \int_0^t \sigma(u_s)\dd X_s$, and translated integrals $I_t(u) = \int_0^t b(u_s - w_s)\dd s$ that arise in the context of regularization by noise. In each of the latter cases, we identify corresponding noise regimes (i.e. Young regimes) that assure our abstract result to be applicable. In the case of additive drivers, we identify the Brownian setting as borderline, i.e. noises which enjoy slightly more temporal regularity are amenable to our completely pathwise analysis. 
\end{abstract}
\maketitle
\tableofcontents

\section{Introduction}

Going back to the works of Young \cite{young} and Kondurar \cite{kondurar}, it has been a well established fact that differential equations of the form 
\begin{equation}
    Y_t=Y_0+\int_0^t \sigma(Y_s) \dd X_s
    \label{young sde}
\end{equation}
are  well-posed for smooth $\sigma:\R^n\to L(\R^d, \R^n)$ provided $X\in C^\alpha_t\mathbb{R}^d$ for $\alpha>1/2$. It is therefore common to call this regularity class the Young regime\footnote{For a discuss of Young regimes on different regularity scales, see section \ref{besov sde}} associated to~\eqref{young sde}. In modern terms, this can be readily verified by an application of the sewing lemma, see for instance \cite[Section 8.3]{frizhairer}. 

As is also well known, sample paths of $d$-dimensional Brownian motion are almost surely $C^\alpha_t\R^d$ for $\alpha<1/2$, meaning their regularity is just shy of the Young regime. This means that additional probabilistic structure of Brownian motion needs to be exploited in order to obtain a well-posedness theory for \eqref{young sde}. In the case of It\^o calculus for example, one crucially makes use of the martingale property of Brownian motion. Let us stress that from a purely analytical point of view, one could argue that this additional probabilistic structure compensates for the lack of regularity of Brownian sample paths, which fall outside the Young regime for \eqref{young sde} almost surely. 

Recall finally that Itô calculus allows for an infinite dimensional generalization. Indeed, if $H$, $U$ are separable Hilbert spaces, $B$ is an $U$-cylindrical Wiener process, $Z\in L_2(U, H)$ and $W=ZB$, then the variational approach\footnote{refer to section \ref{variational spde lit}} to SPDEs allows for the study of abstract problems of the form 
\begin{equation}
    \dd u_t=A(t, u_t)\dd t+\dd W_t, \qquad u(0)= u_0\in H.
    \label{spde additive intro}
\end{equation}
Here $A:[0, T]\times V\to V^*$ is some (locally) monotone, hemicontinuous, coercive and bounded operator defined on a Gelfand triple $(V, H, V^*)$. Besides semi-linear problems, this abstract framework provides a well-posedness theory for $A$ being the $p$-Laplace or porous medium operator for example. Note that in this setting, the driving noise enjoys regularity $C^\alpha_tH$ for any $\alpha<1/2$ almost surely. 

On the background of all of the above, one could wonder if it is possible to still obtain well-posedness for \eqref{spde additive intro}, provided we replace the infinite-dimensional stochastic process $W$ by some generic path $X$, which posses higher temporal regularity. More precisely, we ask for a Young regime associated to \eqref{spde additive intro} in the following sense:
\begin{question} \label{question:AdditiveScale}
 Given a Gelfand triple $(V, H, V^*)$ and an operator $A:[0, T]\times V\to V^*$ that is (locally) monotone, hemicontinuous, coercive and bounded. 
 
Does there exists a regularity scale, i.e., a choice of parameters $\alpha \in( 1/2,1)$ and $p,q\in [1,\infty]$, such that for any $X\in B^\alpha_{p, q}( (0,T);H)$ the initial value problem 
\[
\dd u_t=A(t, u_t) \dd t+\dd X_t, \qquad u(0)=u_0\in H,
\]
is well-posed?
\end{question}
We give a full answer to Question~\ref{question:AdditiveScale} in Theorem~\ref{thm:AnwerAdditive}. A natural related question that we will address separately, concerns the multiplicative forcing setting, i.e. 

\begin{question} \label{question:Multiplicative-Young-Scale}
Given a Gelfand triple $(V, H, V^*)$, an operator $A:[0, T]\times V\to V^*$ that is locally monotone, hemicontinuous, coercive and bounded, and $\sigma : H\to H$ Lipschitz.

Does there exist a Banach space $E$ and a regularity scale, i.e., a choice of parameters $\alpha \in (1/2,1)$ and $p,q\in [1,\infty]$, such that for any $X\in B^\alpha_{p, q}( (0,T);E)$ the problem 
\[
\dd u_t=A(t, u_t) \dd t+\sigma(u_t) \dd X_t, \qquad u(0)=u_0\in H,
\]
is well-posed?
\end{question}
We will provide partial answers to this question in Theorems~\ref{thm:Existence-abstract-Young} and~\ref{thm:Multiplicative}. Let us give an overview of known results and contextualize the above questions in the related literature landscape.

\subsection{Young regimes for SDEs and semilinear SPDEs}
\label{besov sde}
Beyond the H\"older scale mentioned above, different regularity scales for driving noises $X$ that assure well-posedness of \eqref{young sde} have been investigated in the literature. \cite{Lejay2010} studies the problem on $p$-variation scale. The paper \cite{Prmel2016} can be considered a starting point in investigating \eqref{young sde} on Besov scales, while also addressing the 'rough regime'. In the general case of nonlinear smooth $\sigma$, \cite[Theorem 3.2]{Prmel2016} covers $X\in B^\alpha_{\infty, q}$ for $\alpha>1/2$ and $q\in [1, \infty]$. In \cite[Theorem 3.8]{Liu2021}, the authors are able to cover $X\in W^{\alpha, p}$ with $\alpha>1/2$ and $1/\alpha<p$. Finally, a completion of this program can be considered \cite[Theorem 4.2]{FRIZbesov}, which obtains well-posedness for $X\in B^\alpha_{p, q}$ provided either $\alpha>1/2$, $1/\alpha<p\leq \infty$ and $q\in (0, \infty]$ or $\alpha=1/2$, $2<p\leq \infty$ and $0<q\leq 2$. Since sample paths of Brownian motion are almost surely in $B^{1/2}_{p, \infty}$ for $p\in [1, \infty)$ but not in $B^{1/2}_{p, q}$ for $p, q\in [1, \infty)$ almost surely \cite[Theorem 4.1]{hoelderexponential}, we see that also moving to a finer Besov scale does not eliminate the need of harnessing Brownian motion's martingale structure in order to formulation a well-posedness theory for \eqref{young sde}.

A first step towards non-linear SPDEs is the understanding of semilinear SPDEs. These equations typically take the form
\begin{equation}
    \label{semi lin intro}
    \dd u_t=A u_t \dd t+f(u_t)\dd t+\sigma(u_t)\dd W_t, \qquad u(0)=u_0\in H,
\end{equation}
where $A$ is the generator of an analytic semigroup $S$ on a Hilbert space $H$, $B:H\to H$ is a non-linearity and $W$ is a spatially colored noise as above. The key feature of semilinear problems is that \eqref{semi lin intro} allows for a mild formulation 
\begin{equation}
    \label{mild form intro}
    u_t=S_tu_0+\int_0^t S_{t-s}f(u_s) \dd s+\int_0^tS_{t-s}\sigma(u_s) \dd W_s.
\end{equation}
This means that upon establishing adequate Schauder estimates and maximal inequalities for stochastic convolutions, \eqref{mild form intro} is essentially accessible by fixed point arguments. Some contributions in this direction include \cite{Flandoli1995-va,Gtarek1994,Brzeniak1999,Hofmanov2012Semi,DaPrato2013,Coupek2017,Gerasimovis2019, Bechtold2021}. A classical reference for problems of this type is given by \cite{Da_Prato1992-at}.


In the case of $W$ being replaced by a noise, which behaves as a $H$-fractional Brownian motion in time with $H>1/2$ equally colored in space, \eqref{mild form intro} has been studied in \cite{Maslowski2003} by means of fractional calculus and semigroup techniques. Upon replacing the infinite dimensional stochastic process $W$ with a generic path $X$ of higher temporal regularity, a first investigation of the associated 'Young regime' for \eqref{mild form intro} has been given by Gubinelli, Lejay and Tindel~\cite{Gubinelli2006}. In the case of the $1$-dimensional stochastic heat equation on $[0, 1]$ with Dirichlet boundary conditions, i.e.,
\[
\dd Y_t=\Delta Y_t \dd t+\sigma(Y_t)\dd X_t, \qquad Y_0=y_0,
\]
this yields on $H=L^2_x$ scale the regularity constraint $X\in C^{1/2+}_tL^2_x$ for $\sigma\in C^2_b$ \cite[Theorem 3]{Gubinelli2006}, which could be considered a corresponding Young regime in this setting. A further extension of these considerations to a 'rough regime' has been provided in \cite{gubinelli2010}.  Other contributions in this direction include \cite{Deya2011, Kuehn2020, Hesse2020,  Gerasimovis2021}.

We want to stress that the above results crucially make use of the semi-linear structure and the associated semi-group formulation and fixed point argument -- a strategy unavailable for SPDEs with non-linear leading order differential operator we are concerned with in Questions \ref{question:AdditiveScale} and \ref{question:Multiplicative-Young-Scale}.

\subsection{Monotone operators and evolution equations}
A different method to study evolution equations for a large class of non-linear differential operators consists in the monotone operator approach -- a vastly studied field since its introduction by Browder~\cite{MR156204} and Minty~\cite{MR169064}. Many authors contributed to the development of monotone and pseudo-monotone operator theory, e.g.,~\cite{MR0259693,MR0348562,10.32917/hmj/1206137159,Landes1980}. More references can be found, for example, in the books~\cite{MR1033498,Barbu2010,Showalter2013,Kaltenbach2023}.

We briefly sketch some conceptual ideas: For a separable, reflexive Banach space~$V$ and an operator~$A:V\to V^*$, the theorem of Browder and Minty asserts that the equation
\[
A(u)=b\in V^*,
\]
will always have a solution~$u\in V$, provided that $A:V\to V^*$ is (i) monotone, (ii) hemicontinous and (iii) coercive. Typical examples of monotone operators include the $p$-Laplace and porous medium operators.

Its extension to evolution equations requires working on a Gelfand triple $(V, H, V^*)$ and imposing additionally (iv) a growth condition; in this situation, it can be shown (see for example \cite[p.~319]{MR0259693}) that the associated initial value problem
\[
\partial_t u = A(u) , \qquad u(0)=u_0\in H,
\]
is well-posed in $C([0,T];H)\cap L^\alpha((0,T);V) \cap W^{1,\alpha'}((0,T);V^*)$, where $\alpha>1$ ($\alpha' = \alpha/(\alpha-1)$) quantifies the coercivity of~$A$.

The theory canonically extends to additive perturbations~$Z \in  W^{1,\alpha'}((0,T);V^*)$ and~$Z \in L^\alpha((0,T);V)$ by considering 
\begin{align*}
   \partial_t u= A(u) +\partial_t Z, \hspace{3em} \text{ and } \hspace{3em}  \partial_t v= A(v+Z),
\end{align*}
respectively. In the latter case, the solution~$u$ is reconstructed by the transformation~$u = v+Z$. Thus, additive perturbations are well understood for endpoint regularity assumptions, i.e.,  either the perturbation is temporally regular and spatially rough $Z\in  W^{1,\alpha'}((0,T);V^*)$, or temporally rough and spatially regular $Z\in L^\alpha((0,T);V)$. The latter setting has been investigated extensively in \cite{Gess2011} for example. But an understanding on intermediate scales -- below possessing a time derivative and taking values in $H$ -- is missing. This gap in the literature is addressed by Question \ref{question:AdditiveScale} and its answer in Theorem \ref{thm:AnwerAdditive}.

\subsection{The variational approach to SPDEs}
\label{variational spde lit}
Going back to the seminal works of Bensoussan and Temam~\cite{Bensoussan1972}, Pardoux~\cite{pardoux1975equation}, Krylov and Rozovskii~\cite{Krylov1981}, and G\"{o}ngy~\cite{Gyongy1982}, the variational approach to SPDEs consists in the study of evolutionary problems of the form 
\[
\dd u_t=A(t, u_t) \dd t+\sigma(t, u_t)\dd W_t, \qquad u(0)=u_0\in H,
\]
where $A$, $W$ and $\sigma$ are a monotone, hemicontinuous, coercive and bounded operator, a cylindrical Wiener process and a non-linear noise coefficient, respectively. Since then, it has been a very active field of research with many contributing authors, for example,~\cite{Goldys2009,Gess2012,Gess2014,Marinelli2018,Vallet2018,Ma2019,Scarpa2020,Scarpa2022}.

Starting from the foundational work of Liu and R\"ockner~\cite{Liu2010} (see also their book~\cite{Liu2015}), the monotonicity assumption has been relaxed and the concept of locally monotone operators in the context of SPDEs arose. It attracted much attention and several slightly different versions of local monotonicity and compatibility conditions on the operators~$A$ and~$\sigma$ have been proposed. This led to various generalizations, e.g., 
\begin{itemize}
    \item higher moments of solutions~\cite{Neelima2019,gnann2022higher};
    \item noise coefficients below Lipschitz-regularity~\cite{ZHANG2009,schmitz2024wellposedness};
    \item fully local monotone operators~\cite{röckner2022wellposedness,kumar2023wongzakai};
    \item Levy noise~\cite{Brzeniak2014,Nguyen2021};
    \item the critical variational framework~\cite{Agresti2024}.
\end{itemize}

\subsection{Abstract formulation of the problem}
Let us point out that a common feature of the results discussed in the last subsection is the usage of some martingale structure in $W$, allowing to first give meaning to the stochastic integral $\int_0^t \sigma(u_s) \dd W_s$ and secondly, to derive an Itô-formula for $\norm{u_t}_H^2$. This in turn provides crucial energy estimates similar to the ones encountered in the deterministic setting. Question \ref{question:Multiplicative-Young-Scale} therefore implicitly asks two questions: (i) If we replace the stochastic process $W$ with a generic path $X$, what regularity requirement on $u$ and $X$ do we need to impose in order for $I_t(u)=\int_0^t\sigma(u_s) \dd X_s$ to be still well-defined? (ii) Provided this expression is well-defined, what further requirements on $I$ ensure that we can solve the associated PDE? In order to decouple these two sub-questions, we isolate and abstract (ii) in the following way:
\begin{question} \label{question:Multiplicative-General-Scale}
Given a Gelfand triple $(V, H, V^*)$ and an operator $A:[0, T]\times V\to V^*$ that is locally monotone, hemicontinuous, coercive and bounded. 

Does there exist a regularity scale, i.e., a choice of parameters $\alpha_1 \in (0,1)$, $\alpha_2 \in (1/2,1)$ and $p_1,p_2,q_1,q_2\in [1,\infty]$, and class of operators
\[
\mathcal{I} \subset \{I: B^{\alpha_1}_{p_1,q_1}((0,T);H) \to B^{\alpha_2}_{p_2,q_2}((0,T);H) \}
\]
such that for any $I \in \mathcal{I}$ the problem 
\[
\dd u_t=A(t, u_t) \dd t+\dd I_t(u), \qquad u(0)=u_0\in H,
\]
is well-posed?
\end{question}

In this article, we provide an affirmative answer to the existence of weak solutions raised in Question~\ref{question:Multiplicative-General-Scale} for a specific class of abstract operators~$I$ -- Theorem~\ref{thm:main}. In short, weak solutions exist whenever the abstract operator
\begin{itemize}
    \item can be approximated by more regular integrals;
    \item satisfies a non-standard stability bound;
    \item and is continuous in a rather weak sense.
\end{itemize}

In a separate argument, we address (i) by providing conditions on $u, X, \sigma$, which ensure that $I_t(u)=\int_0^t\sigma(u_s) \dd X_s$ is well-defined. We also show that under these conditions, Theorem ~\ref{thm:main} is applicable thus giving a partial answer to Question \ref{question:Multiplicative-Young-Scale}. 

Thanks to our decoupling approach, Theorem \ref{thm:main} can also be applied in some regularization by noise settings by choosing $I_t(u)=\int_0^tb(u_s-w_s) \dd s$. Here, we assume $w$ to be a path with sufficiently regular local time\footnote{refer to the Appendix \ref{appendix}}. This allows us to extend a recent result~\cite{Bechtold2023} on regularisation by noise for the $p$-Laplace equation, Theorem~\ref{thm:reg-by-noise}.

\subsection{Structure of the paper}
In Section~\ref{sec:Main-results}, we establish the framework and introduce the main results of this article. We close the section by a discussion on possible extensions of the methods.

In Section~\ref{sec:Besov-rough}, we recall and extend the Besov rough path analysis of Friz, Seeger, and Zorin-Kranich~\cite{FRIZbesov}.

In Section~\ref{sec:Young-integration}, we use the Besov sewing lemma to construct the Young integral-- an integral that naturally generalizes Bochner integration and is of utmost importance in our analysis.

In Section~\ref{sec:proof-main}, we proof our main result -- Theorem~\ref{thm:main} -- on the existence of weak solutions to abstract integral equations, answering Question~\ref{question:Multiplicative-General-Scale}.

In Section~\ref{sec:proofs-young-regimes}, we use Theorem~\ref{thm:main} to prove existence and (sometimes) uniqueness of solutions to integral equations with specific integral operators, such as additive drivers and Young integrals.

In Section~\ref{sec:examples}, we apply our main results on concrete examples: the $p$-Laplace equation, the porous medium equation, and shear-thickening fluids. We establish existence and (sometimes) uniqueness. Moreover, we show that our general framework captures the effect of regularization by noise for the $p$-Laplace equation.

In the appendix, Section~\ref{appendix}, we collect useful tools.

\section{Setup and main results} \label{sec:Main-results}
Let $T>0$ be fixed. We write $f \lesssim g$ for two non-negative quantities $f$ and $g$ if $f$ is bounded by $g$ up to a multiplicative constant. Accordingly we define $\gtrsim$ and $\eqsim$. Moreover, we denote by $c$ and $C$ generic constants which can change their value from line to line. For $r\in [1, \infty]$, we denote by $r' = r/(r-1)$ its H\"older conjugate. Minimum and maximum are denoted by $\wedge$ and $\vee$, respectively. 

For a Banach space $\left(X, \norm{\cdot}_X \right)$, let $L^q(0,T;X)$, $q \in (0,\infty]$, be the space of Bochner-measurable functions $u: [0,T] \to X$ satisfying $t \mapsto \norm{u(t)}_X \in L^q(0,T)$. For $\alpha \in (0,1)$ and $q,p \in (0,\infty]$, we denote the Besov space by $B^{\alpha}_{p,q}(0,T;X)$; it is the quasi-Banach space of Bochner-measurable functions with finite quasi-norm given by
\begin{align*}
    \norm{u}_{B^\alpha_{p,q}(0,T;X)} &:= \norm{u}_{L^p(0,T;X)} + \seminorm{u}_{B^\alpha_{p,q}(0,T;X)}, \\
    \seminorm{u}_{B^\alpha_{p,q}(0,T;X)} &:= \left( \int_{0}^T \left( \int_{0}^{t-h} \left( \frac{\norm{u(t+h) - u(t)}_X}{ \abs{h}^\alpha} \right)^p \dd t\right)^{q/p} \frac{\dd h}{h} \right)^{1/q}.
\end{align*}
Moreover, $C([0,T];X)$ is the space of continuous functions with respect to the norm-topology. We also use $C^{\alpha}([0,T];X)$, $\alpha \in (0,1)$, for the space of $\alpha$-H\"older continuous functions. For $u\in C^{\alpha}([0,T];X)$, we denote by $\seminorm{u}_{C^{\alpha}([0,T]; X)}:=\sup_{s\neq t\in [0,T]}\frac{\norm{u_t-u_s}_X}{|t-s|^\alpha}$ and $\norm{u}_{C^{\alpha}([0,T];X)}=\sup_{t\in [0,T]}\norm{u_t}_x+\seminorm{u}_{C^{\alpha}([0,T]; X)}$ the corresponding semi-norm and norm, respectively. If there is no ambiguity, we abbreviate $ L^q_t X := L^q(0,T;X) $, $B^\alpha_{p,q}X = B^\alpha_{p,q}(0,T;X)$ and $C_t X = C([0,T];X)$. We moreover write $B^{\alpha+}_{p, q}X$ if $u\in B^{\alpha+\epsilon}_{p, q}X$ for some $\epsilon>0$ and $B^{\alpha-}_{p, q}X$ if $u\in B^{\alpha-\epsilon}_{p, q}X$ for any $\epsilon>0$.

If a Banach space $\left(X, \norm{\cdot}_X \right)$ embeds continuously into another Banach space $\left(Y, \norm{\cdot}_Y \right)$, we write $X\hookrightarrow Y$. 
If a sequence $(u_n)_n\subset X$ converges to $u\in X$ weakly, respectively, weakly star in a Banach space $\left(X, \norm{\cdot}_X \right)$, we write $u\rightharpoonup u$, respectively, $u\overset{*}{\rightharpoonup} u$.

\subsection{Assumptions}
Let $H$ be a separable Hilbert space with inner product $\left(\cdot, \cdot \right)_H$. Let $V$ be a reflexive Banach space such that $V \subset H$ continuously and densely. We write $\langle \cdot, \cdot \rangle_{V^*,V}$ for the duality pairing of $V^*$ and $V$. The triple $(V,H,V^*)$ is called \textit{Gelfand triple} and forms the basis of the variational approach.
\begin{assumption}[Compact Gelfand triple] \label{ass:compact-Gelfand}
We assume that $V \hookrightarrow H$ is compact.
\end{assumption}
\begin{assumption}[Monotone operator] \label{ass:monotone-operator}
We assume that $ A: [0,T] \times V \to V^*$ is $\mathcal{B}(0,T) \otimes \mathcal{B}(V)$-measurable and satisfies the following conditions:
\begin{enumerate}
    \item[(H1)] \label{it:H1} (Hemicontinuity) For all $u,v,w \in V$ and $t \in [0,T]$ the map
    \begin{align*}
        \mathbb{R} \ni \lambda \mapsto \langle A(t,u+\lambda v), w \rangle_{V^*,V}
    \end{align*}
    is continuous.
    \item[(H2)] \label{it:H2} (Local monotonicity) For all $u,v \in V$ and $t \in [0,T]$ it holds
    \[
    2\langle A(t, u)-A(t, v), u-v\rangle_V\leq (h_t+\eta(u))\norm{u-v}_H^2,
    \]
    where $h \in L^1(0,T)$ is non-negative, and $\eta:V\to [0, \infty)$ is measurable and locally bounded.
    \item[(H3)] \label{it:H3} (Coercivity) There exist $\alpha\in (1, \infty)$, $c_1\in (0, \infty)$, $c_2\in \mathbb{R}$ and $f\in L^1(0, T)$ such that for all $v \in V$ and $t \in [0,T]$
    \begin{align*}
        \langle A(t, u), u\rangle_{V^*,V}\leq -c_1\norm{u}_V^\alpha+c_2\norm{u}_H^2+f_t.
    \end{align*}
    \item[(H4)] \label{it:H4} (Boundedness) There exist $c_3 \in [0, \infty)$ and $g\in L^\frac{\alpha}{\alpha-1}(0, T)$ such that for all $v \in V$ and $t \in [0,T]$
    \begin{align*}
        \norm{A(t, v)}_{V^*}\leq g(t)+c_3\norm{v}^{\alpha-1}_V,
    \end{align*}
    where $\alpha$ is as in~\ref{it:H3}.
\end{enumerate}
\end{assumption}
Assumption~\ref{ass:monotone-operator} is standard in the variational framework for SPDEs, see e.g.~\cite{Liu2010,Liu2015}. It covers various operators, such as the $p$-Laplace operator, the porous medium operator, and an operator that arises in the context of shear-thickening fluids. We present these examples in Section~\ref{sec:examples}. 

\begin{assumption} \label{ass:integral-operator}
Let $q\in (2, \infty]$, $\gamma>1/2+1/q$ and 
\[
I: L^\infty(0,T; H)\cap B^{1/2}_{2, \infty}(0,T;H)\to \left\{ I \in B^{\gamma}_{q, \infty}(0,T;H): I_0 = 0 \right\}.
\]
We assume that $I$ can be approximated by a sequence of operators $\{I^n\}_{n \in \mathbb{N}}$ in the following sense: let $b^n : [0,T] \times H \to H$ be $\mathcal{B}(0,T) \otimes \mathcal{B}(H)$-measurable and there exist $C_n \in [0,\infty)$ such that for all $u,v \in H$ and $t \in [0,T]$
\[
    \norm{b^n(t, v)}_H\leq C_n(1+\norm{v}_H), \qquad \norm{b^n(t,u) - b^n(t,v)}_H \leq C_n \norm{u-v}_H.
    \]
We set $I^n_t(v) = \int_0^t b^n(s,v(s)) \dd s$ and further assume the following conditions to hold uniformly in $n \in \mathbb{N}$:
\begin{enumerate}
    \item[(H5)] \label{it:H5} (Time-local boundedness on $B^{\gamma}_{q,\infty}(0,T;H)$) There exist $\lambda : [0,\infty) \to [0,\infty)$ with $\lambda(0) = 0$ and continuous in $0$, and $c_4 \in [0,\infty)$ such that for all $t > s \in [0,T]$ and $u\in C([s,t];H) \cap B^{1/2}_{2, \infty}(s,t;H)$,
        \begin{align*}
            \seminorm{I^n(u)}_{B^{\gamma}_{q,\infty}(s,t;H)}^2 \leq c_4\left(1+\lambda(\abs{t-s})\left( \seminorm{u}_{B^{1/2}_{2, \infty}(s,t;H)}^2+\norm{u}_{L^\infty(s,t; H)}^2 \right) \right).
        \end{align*}
    \item[(H6)] \label{it:H6} (Continuity) There exists $\overline{\gamma} \in (1/2, \gamma]$ such that for all $v_n, v \in L^\infty(0,T;H)\cap B^{1/2}_{2, \infty}(0,T;H)$ with
    \begin{align*}
        \sup_{n \in \mathbb{N}} \norm{v_n}_{L^\infty(0,T;H)} +  \sup_{n \in \mathbb{N}} \norm{v_n}_{B^{1/2}_{2,\infty}(0,T;H)} < \infty
    \end{align*}
    and $v_n \rightarrow v \in L^2(0,T; H)$, implies $I^n(v_n) \rightarrow I(v) \in B^{\overline{\gamma}}_{2,\infty}(0,T;H)$.
\end{enumerate}
\end{assumption}
Before stating the main results, we present some examples of operators~$I$ satisfying Assumption~\ref{ass:integral-operator}. 
\begin{rem}
The operator~$I$ is a proxy for a generalized integral. By definition it is an extension of Bochner integration. Various choices are possible, e.g.:
\begin{itemize}
    \item (Young drivers) $I_t(u) \equiv Z_t - Z_0$;
    \item (Young integrals) $I_t(u) = \int_0^t \sigma(u_s) \dd X_s$;
    \item (Integrals of distributions) $I_t(u) = \int_0^t b(u_s - w_s) \dd s$.  
\end{itemize}
For a verification of the above choices in concrete examples we refer to Sections~\ref{sec:proofs-young-regimes} and~\ref{sec:examples}.

\end{rem}

To increase our familiarity with Assumption~\ref{ass:integral-operator}, let us discuss the regular situation at this stage:
\begin{example} \label{ex:regular-integral-operator}
 Let $I_t(u) = \int_0^t b(s,u_s) \dd s$, where $b$ satisfies the conditions of $b^n$ in Assumption~\ref{ass:integral-operator}. In this situation, defining an approximate integral is trivial: set~$b^n = b$ and, thus,~$I^n = I$. Fix $q \in (2,\infty]$ and $\gamma \in (1/2+1/q,1)$.

Ad~\ref{it:H5}: The fundamental theorem and the growth condition on $b$ imply
\begin{align*}
    \seminorm{I^n(u)}_{B^{\gamma}_{q,\infty}(s,t;H)} &\leq \abs{t-s}^{2(1 - (\gamma - 1/q))} \sup_{r \in [s,t]} \norm{b(r,u_r)}_H^2 \\
    & \leq  2 \abs{t-s}^{2(1 - (\gamma - 1/q))} C^2 (1+\norm{u}_{L^\infty(s,t; H)}^2).
\end{align*}
Thus,~\ref{it:H5} holds with $\lambda(r) = r^{2(1 - (\gamma - 1/q))}$ and $c_4 = 2 C^2 T^{2(1 - (\gamma - 1/q))}$.

Ad~\ref{it:H6}: Since $I^n = I$, the operator convergence $I^n \to I$ is trivial. It remains to verify that strong convergence of $v_n \to v \in L^2(0,T;H)$ with the additional information $v_n, v \in L^\infty(0,T;H)\cap B^{1/2}_{2, \infty}(0,T;H)$, implies strong convergence of $I(v_n) \to I(v) \in B^{\overline{\gamma}}_{2,\infty}(0,T;H)$ for some $\overline{\gamma} \in (1/2,\gamma]$. Fix $\ell \in (2,\infty)$ and $\overline{\gamma} \in (1/2, 1 - 1/\ell + 1/q)$.

The embedding $W^{1,\ell} \hookrightarrow B^{\overline{\gamma}}_{q,\infty}$ and the uniform Lipschitz-continuity of $b$ show
\begin{align*}
    \seminorm{I(v_n) - I(v)}_{B^{\overline{\gamma}}_{q,\infty}(0,T;H)} &\leq T^{1 - 1/\ell + 1/q - \overline{\gamma}} \seminorm{I(v_n) - I(v)}_{W^{1,\ell}(0,T;H)} \\
    &= T^{1 - 1/\ell + 1/q - \overline{\gamma}} \left( \int_0^T \norm{b(s,v_n(s)) - b(s,v(s))}_H^\ell \dd s \right)^{1/\ell} \\
    &\leq  T^{1 - 1/\ell + 1/q - \overline{\gamma}} C \left( \int_0^T  \norm{v_n(s) - v(s)}_H^\ell \dd s\right)^{1/\ell} \\
    &\leq T^{1 - 1/\ell + 1/q - \overline{\gamma}} C  \left( \norm{v_n}_{L^\infty_t H}+ \norm{v}_{L^\infty_t H} \right)^{1-2/\ell} \norm{v_n - v}_{L^2_t H}^{2/\ell}.
\end{align*}
This verifies~\ref{it:H6}.

\end{example}

\begin{rem}
We want to emphasize that regular integral operators~$I$, as presented in Example~\ref{ex:regular-integral-operator}, don't need to incorporate additional information on the Besov scale~$B^{1/2}_{2,\infty}(0,T;H)$. This changes for more rough integrals such as Young integration.
\end{rem}

\subsection{Main result: Existence of weak solutions}
In this section we define our notion of solution and present our main result.
\begin{definition}\label{def:weak-solution}
A function~$u \in C([0,T];H) \cap  B^{1/2}_{2, \infty}(0,T;H)\cap L^\alpha(0,T;V)$ is called weak solution to 
\begin{equation}
\begin{cases}
     \dd u_t+A(t, u_t) \dd t&=\dd I_t(u),\\
      \hfill u(0)&=u_0,
      \end{cases}
      \label{problem def}
\end{equation}
if for all $t \in [0,T]$ and $v \in V$ the following evolution equation is satisfied:
\begin{align} \label{eq:weak-solution}
    \left(u_t - u_0, v \right)_H = \int_0^t \langle A(s,u_s), v \rangle_{V^*,V} \dd s + \left( I_t(u), v \right)_H.
\end{align}
\end{definition}

The following result addresses existence of weak solutions in an abstract framework. Due to its generality, it is a powerful tool for the analysis of various choices of operators.
\begin{theorem}[Existence of weak solutions] \label{thm:main}
Let $T> 0$ and $u_0 \in H$. Moreover, let  
\begin{itemize}
    \item Assumption~\ref{ass:compact-Gelfand} be satisfied;
    \item $A$ satisfy Assumption~\ref{ass:monotone-operator};
    \item and $I$ satisfy Assumption~\ref{ass:integral-operator}.
\end{itemize}

Then there exists a weak solution to \ref{problem def} in the sense of Definition~\ref{def:weak-solution}. Moreover, this solution satisfies 
\begin{align} \label{eq:main-result-estimate}
\begin{aligned}
   \norm{u}_{C([0,T]; H)}^2 + \norm{u}_{B^{1/2}_{2,\infty}(0,T; H)}^2 &+ \norm{u}_{L^{\alpha}(0,T;V)}^\alpha \\
   &\leq C \left( \norm{u_0}_H^2 + \norm{f}_{L^1(0,T)} +\norm{g}_{L^{\alpha'}(0,T)}^{\alpha'} +1\right),
\end{aligned}
\end{align}
for a constant $C = C(c_1,c_2,c_3,c_4,\lambda,\alpha,q,\gamma,T) >0$. 
\end{theorem}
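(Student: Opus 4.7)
The plan is to exploit the approximation furnished by Assumption~\ref{ass:integral-operator}: solve the regularized problems obtained by replacing $I$ with $I^n$ and pass to the limit in $n$. For each fixed $n$, the map $v\mapsto b^n(t,v)$ is Lipschitz from $H$ to $H$, so after absorbing it into the drift via $\tilde A^n(t,v):=A(t,v)-b^n(t,v)$ (viewing $b^n(t,v)\in H\subset V^*$), the modified operator still satisfies Assumption~\ref{ass:monotone-operator} with constants depending on $C_n$ and $T$. The classical variational theory of locally monotone operators~\cite{Liu2010,Liu2015}, specialized to the deterministic setting, then yields a solution $u^n\in C([0,T];H)\cap L^\alpha(0,T;V)$ with $\partial_t u^n\in L^{\alpha'}(0,T;V^*)$ to
\[
\partial_t u^n + A(t,u^n)=b^n(t,u^n),\qquad u^n(0)=u_0.
\]

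The crux is to derive uniform bounds in $C_tH\cap B^{1/2}_{2,\infty}H\cap L^\alpha_tV$, independent of $n$. The classical chain rule for $\tfrac12\|\cdot\|_H^2$ together with coercivity~\ref{it:H3} provides the $L^\alpha_tV$ control in the usual way. The $B^{1/2}_{2,\infty}H$ bound combines the $C^{1/2}_tH$ regularity inherited from the $\int A$ contribution (standard for locally monotone evolution equations) with the $B^\gamma_{q,\infty}H$ regularity of $I^n(u^n)$ provided by~\ref{it:H5}. The key new ingredient is the \emph{Young-integral estimate} coming from the Besov sewing lemma of Section~\ref{sec:Besov-rough}: exploiting the compatibility $\gamma>1/2+1/q$ between the scales $B^{1/2}_{2,\infty}H$ and $B^\gamma_{q,\infty}H$, one obtains
\[
\Bigl|\int_s^t (u^n_r-u^n_s,\dd I^n_r(u^n))_H\Bigr|\lesssim \seminorm{u^n}_{B^{1/2}_{2,\infty}(s,t;H)}\seminorm{I^n(u^n)}_{B^{\gamma}_{q,\infty}(s,t;H)}.
\]
Feeding this into the energy identity, invoking~\ref{it:H5} and Young's inequality, and exploiting that $\lambda(|t-s|)\to 0$ as $|t-s|\to 0$, we can absorb the $\seminorm{u^n}_{B^{1/2}_{2,\infty}}^2$ term into the left-hand side on any sufficiently short subinterval, then concatenate across a finite partition of $[0,T]$ to recover~\eqref{eq:main-result-estimate}. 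I expect this short-interval absorption and concatenation scheme to be the main technical obstacle.

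Equipped with uniform bounds, Banach--Alaoglu extracts $u^n\overset{*}{\rightharpoonup}u$ in $L^\infty(0,T;H)\cap B^{1/2}_{2,\infty}(0,T;H)$ and $u^n\rightharpoonup u$ in $L^\alpha(0,T;V)$, while the compact embedding $V\hookrightarrow H$ from Assumption~\ref{ass:compact-Gelfand} combined with the time-regularity on the $B^{1/2}_{2,\infty}H$ scale yields $u^n\to u$ strongly in $L^2(0,T;H)$ via an Aubin--Lions-type compactness. Condition~\ref{it:H6} then produces $I^n(u^n)\to I(u)$ in $B^{\bar\gamma}_{2,\infty}(0,T;H)$, and boundedness~\ref{it:H4} gives, along a further subsequence, $A(\cdot,u^n)\rightharpoonup\xi$ in $L^{\alpha'}(0,T;V^*)$. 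Passing to the limit in the weak formulation~\eqref{eq:weak-solution} shows that $u$ solves the equation with $\xi$ in place of $A(\cdot,u)$.

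It remains to identify $\xi=A(\cdot,u)$ by a Minty-type argument adapted to the locally monotone framework, following~\cite{Liu2010}: combining local monotonicity~\ref{it:H2} tested against $u^n-v$ for arbitrary $v\in L^\infty(0,T;H)\cap L^\alpha(0,T;V)$, lower semi-continuity of the limiting energy (derived from a Young-type energy identity passed to the limit via~\ref{it:H6} and the Besov sewing machinery), and hemicontinuity~\ref{it:H1}, one concludes $\xi=A(\cdot,u)$ through a standard localization and limit procedure. Attainment of the initial datum $u(0)=u_0$ follows from the uniform equicontinuity provided by the $B^{1/2}_{2,\infty}H$ bound, and the quantitative estimate~\eqref{eq:main-result-estimate} passes to the limit by lower semi-continuity of the relevant norms.
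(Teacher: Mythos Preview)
Your proposal is correct and follows essentially the same approach as the paper: approximate via $I^n$, derive the two a priori bounds (Lemmas~\ref{lem:first-apriori-bound}--\ref{lem:second-apriori-bound}) and close them by the short-interval absorption and concatenation scheme you anticipate (Lemma~\ref{lem:uniform-estimate}), extract limits via Banach--Alaoglu and the Aubin--Lions-type Theorem~\ref{nikoslki-lions}, and identify $\overline A=A(\cdot,u)$ by a weighted Minty argument. The one point worth emphasizing is that the Minty step in the locally monotone setting requires a chain rule for $e^{-\int_0^t(h_s+\eta(\phi_s))\,ds}\|u_t\|_H^2$ compatible with the Young integral (Theorem~\ref{thm:Chain-rule-after}), together with the convergence $\mathscr{S}(\partial_yF\,u^n,\dd I^n(u^n))\to\mathscr{S}(\partial_yF\,u,\dd I(u))$; this is where most of the technical work in Section~\ref{sec:proof-main} is concentrated, and your phrase ``Young-type energy identity passed to the limit via~\ref{it:H6} and the Besov sewing machinery'' is pointing in exactly the right direction.
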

We postpone the proof of Theorem~\ref{thm:main} to Section~\ref{sec:proof-main}. However, we discuss the main steps at this point. This clarifies and motivates the new concepts needed for the existence theory of generic integral equations of the form~\eqref{eq:weak-solution}.

\subsubsection{Proof strategy and main ideas} \label{sec:proof-strategy}
The proof consists of $4$ steps:
\begin{itemize}
    \item construction of approximate solutions by choosing more regular operators~$I^n$;
    \item derivation of uniform a priori estimates on $B^{1/2}_{2,\infty} H \cap C_t H \cap L^\alpha V$;
    \item extraction of limits using compactness;
    \item identification of the limiting equation.
\end{itemize}

Replacing the abstract operator~$I$ by the more feasible and explicit integral~$I^n$ allows us to use standard monotone operator theory for the construction of an approximate solution $u^n$. A vital step towards a limit using compactness are uniform bounds on the approximate solution. In our situation, two regularity scales are of major importance: $C_t H \cap L^{\alpha}_t V$ and $B^{1/2}_{2,\infty} H$. Let us give the heuristic main ideas that go into two a priori bounds on these scales. In Assumption \ref{ass:monotone-operator}, we will restrict ourselves here to $h=\eta=c_2=f=g=0$ and $c_1=c_3=1$ for the sake of readability.

The first regularity scale requires us to find an expansion of the squared $H$-norm, i.e. a classical energy inequality. Formally, provided $I(u)$ is sufficiently regular -- the precise formula is stated in Theorem~\ref{chain rule norm} -- any solution~$u$ to~\eqref{eq:weak-solution} satisfies
\begin{equation}
      \dd \norm{u}_H^2 = 2 \langle A(t,u), u \rangle_{V^*,V} \dd t + 2\left( u, \dd I_t(u) \right)_H.
      \label{energy intro}
\end{equation}
Crucially, we need to make sense of the last term. If we assumed $I(u)\in C^1_tH$, this term would obviously make sense as 
\[
\int_0^t\left( u_s, \dd I_s(u) \dd s\right)_H:=\int_0^t \left( u_s, (\partial_t I(u))_s \right)_H \dd s
\]
However, we are interested in $I(u)$ possessing only some weaker time regularity on $H$-scale.  At this stage, the sewing lemma on Besov spaces distributes regularity requirements from the integrator $\dd I(u)$ to the integrand~$u$; in other words, 
\begin{align*}
     \int_0^t \left( u_s, \dd I_s(u) \right)_H = \mathscr{S}_t( u, I(u))
\end{align*}
can be constructed as a Young integral, whenever $I(u) \in C^{\gamma}H$ for some $\gamma>1/2$ provided we assume $u \in B^{1/2}_{2,\infty} H$, cf. Theorem~\ref{thm:young-integral}. Accordingly however, the Young integral $\mathscr{S}$ has to be controlled in these norms, meaning that the energy estimate formally becomes 
\begin{equation}
\norm{u}^2_{L^\infty_tH}+\norm{u}^\alpha_{L^\alpha_tV}\lesssim \norm{u_0}_H^2+\norm{u}_{B^{1/2}_{2, \infty}H}\norm{I(u)}_{B^\gamma_{q, \infty}H}.
    \label{energy intro 2}
\end{equation}
The corresponding rigorous statement can be found in Lemma \ref{lem:second-apriori-bound}. In order to control $I(u)$, \ref{it:H5} appears as a natural condition. Under this condition, we are left with the task of controlling $u$ on $B^{1/2}_{2,\infty} H$-scale. 

The Besov scale measures integrability of weighted time-differences. Locally, time-differences of the solution satisfy
\begin{align*}
    \norm{\dd u}_H^2 = \langle A(t,u), \dd u \rangle_{V^*,V} \dd t + \left( \dd u, \dd I_t(u) \right)_H.
\end{align*}

Let us for the moment assume that $A = 0$. In this situation, Cauchy--Schwarz's inequality implies $\norm{\dd u}_H \leq \norm{\dd I_t(u)}_H$, which shows that $u$ inherits any time-decay of $\dd I(u)$. Closing the estimate requires the generalized integral operator~$I$ to be sufficiently regularizing, e.g., for some $\alpha > 0$
\begin{align*}
    \norm{\dd I_t(u)}_H \leq  \abs{\dd t}^\alpha (\norm{\dd u}_H + 1).
\end{align*}
The precise regularization condition is formulated in~\ref{it:H5}. On short time-intervals, this implies the a priori bound
\begin{align*}
    \norm{\dd u}_H^2 \leq \frac{\abs{\dd t }^{2\alpha} }{1 - \abs{\dd t}^{2\alpha}}.
\end{align*}
Therefore, $\alpha$-regularization of $I$ transfers to $u$.

Going back to the general case, we need to cope with the additional difficulties imposed by the operator~$A$. Cauchy--Schwarz's and Young's inequalities imply
\begin{align*}
    \norm{\dd u}_H^2 \leq 2\langle A(t,u), \dd u \rangle_{V^*,V} \dd t + \norm{\dd I_t(u)}_H^2.
\end{align*}
Since we can't expect any time decay of $\dd u \in V$, the first term is of size~$\dd t$. Thus, even though $I$ might be $\alpha$-regularizing for some $\alpha > 1/2$, 
$u$ will no longer inherit $\alpha$-regularity but is restricted to $\alpha \wedge 1/2$ instead. Since we will always require $\alpha>1/2$ in order to make sense of the chain rule in \eqref{energy intro}, the optimal regularity we might expect of $u$ on $H$ scale is $1/2$. More precisely, the bound we obtain in Lemma \ref{lem:first-apriori-bound} reads in this formal setting 
\begin{align} \label{eq:B1/2H-estimate intro}
    \seminorm{u}_{B^{1/2}_{2, \infty} H}^2\lesssim \norm{u}_{L^\alpha_t V}^\alpha+  \seminorm{I(u)}_{B^{1/2}_{2, \infty}H}^2.
\end{align}

A crucial step in the proof of Theorem \ref{thm:main} is the combination of estimates for $\norm{\dd u}_H^2$ and $\dd \norm{u}_H^2$, i.e. \eqref{energy intro 2} and \eqref{eq:B1/2H-estimate intro} to derive stable a priori bounds, which couldn't be closed individually. The precise statement is given in Lemma \ref{lem:uniform-estimate}. 

\begin{rem}
We want to stress that the classical deterministic and stochastic situation, where $I(u) = \int b(u) \dd s$ and $I(u) = \int b(u) \dd W(s)$ for some Wiener process~$W$, respectively, require the estimate on~$\dd \norm{u}_H^2$ only. In this situations, increased time-regularity on Besov scales can be verified a posteriori and is not part of the construction. 
\end{rem}

Obviously, the above considerations in deriving \eqref{energy intro 2} and \eqref{eq:B1/2H-estimate intro} are only formal and need to be made rigorous through an approximation procedure. Once a priori bounds on $u^n$, a solution to an approximate problem driven by $I^n(u^n)$ are derived uniformly in $n$, one proceeds with a compactness argument. In order to be able to pass to the limit in $I$, we naturally require some form of continuity expressed in \ref{it:H6}.   The limit identification for the remaining parts is more delicate; we need to re-weight the evolution of the squared $H$-norm in order to use the local monotonicity condition~\ref{it:H2}. This requires an application of the newly developed chain rule for Young integral equations, Theorem~\ref{thm:Chain-rule-after}, to find the evolution of (for some fixed $\phi$)
\begin{align*}
 t \mapsto F(t,\norm{u_t}_H^2) =   \exp\left(\int_0^t h_s + \eta(\phi_s) \dd s \right) \norm{u_t}_H^2.
\end{align*}
Most importantly, $\partial_y F$ is a sufficiently regular multiplier, so that $\mathscr{S}( \partial_y F u^n, \dd I^n(u^n))$ and $\mathscr{S}( \partial_y F u, \dd I(u)) $ are well-defined Young integrals.

\begin{rem}
    The identification of the limit is considerably simpler if the local monotonicity is replaced by the stronger assumption:
    \begin{enumerate}
        \item[(H2')] \label{it:H2-alter} (Monotonicity) For all $t \in [0,T]$ and $u,v \in V$ it holds
        \begin{align*}
    \langle A(t, u)-A(t, v), u-v\rangle_{V^*,V}\leq 0.
\end{align*}
    \end{enumerate}
In this situation, $F(t,y) = y$ and the chain rule~\eqref{eq:Chain-rule} is trivial.
\end{rem}

\subsubsection{Sharpness of a chain rule}
As mentioned above in the proof strategy for Theorem \ref{thm:main}, one crucial component consists in the establishment of a chain-rule, allowing to deduce an expression for the squared $H$-norm of any solution \eqref{energy intro}. Let us give the precise statement thereof and discuss in what sense this result is sharp when compared to the setting of cylindrical Brownian motion.

\begin{theorem}
\label{chain rule norm}
Let $\gamma>1/2$, and $\alpha \in (1,\infty)$. 
    Suppose $X_0\in H$, $Y\in L^{\alpha'}_t V^*$, and $I\in C^\gamma_tH$ with $I_0 = 0$. Define the  $V^*$-continuous element
\begin{align} \label{eq:X-definition} 
    X_t:=X_0+\int_0^t Y_s \dd s+I_t.
\end{align}
    Assume that $X\in B^{1/2}_{2, \infty}H\cap L^\infty_tH\cap L^\alpha_t V$. Then $X$ is a continuous $H$-valued element for which we have 
\begin{align} \label{chain rule norm formula}
    \norm{X_t}_H^2=\norm{X_0}_H^2+2\int_0^t \langle Y_s, X_s\rangle_{V^*, V} \dd s+2\mathscr{S}_t(X, \dd I),
\end{align}
    where $\mathscr{S}_t(X, \dd I)=\int_0^t ( X_s, \dd I_s)_H$ denotes the Young integral constructed in Theorem~\ref{thm:young-integral}.
\end{theorem}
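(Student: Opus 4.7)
The plan is to prove the chain rule by reducing to the classical Gelfand-triple chain rule via time-mollification, then passing to the limit using continuity of the Young integral $\mathscr{S}$ constructed in Theorem~\ref{thm:young-integral}. The classical Lions result does not apply directly because, with $I$ only Hölder continuous in $H$, the distributional derivative $\partial_t X = Y + \partial_t I$ fails to lie in $L^{\alpha'}_t V^*$.

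First I extend $Y$ by zero and $I$ by zero on $\{t < 0\}$ (consistent thanks to $I_0 = 0$), so that~\eqref{eq:X-definition} extends to $X_t = X_0$ for $t \le 0$. Let $(\rho_\delta)_{\delta > 0}$ be a smooth mollifier supported in $(0, \delta)$ and set $X^\delta := X \ast \rho_\delta$, $Y^\delta := Y \ast \rho_\delta$, $I^\delta := I \ast \rho_\delta$. Convolution preserves the relevant norms, so $X^\delta \in C^\infty_t V \cap L^\alpha_t V \cap B^{1/2}_{2,\infty} H$, $Y^\delta \in L^{\alpha'}_t V^*$, and $I^\delta \in C^\infty_t H$. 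Because the mollifier is one-sided and $X \equiv X_0$ on $\{t \le 0\}$, one has $X^\delta_0 = X_0$ exactly (this clean boundary identity is the main reason for choosing a one-sided kernel). Differentiating the integral representation yields $\partial_t X^\delta = Y^\delta + \partial_t I^\delta \in L^{\alpha'}_t V^*$ (the second term lies in $C^\infty_t H \hookrightarrow L^\infty_t V^*$), so the classical chain rule for Gelfand triples applies to $X^\delta$ and gives for all $t \in [0,T]$:
\begin{align*}
    \norm{X^\delta_t}_H^2 - \norm{X_0}_H^2 = 2 \int_0^t \langle Y^\delta_r, X^\delta_r \rangle_{V^*,V} \dd r + 2 \mathscr{S}_t(X^\delta, \dd I^\delta),
\end{align*}
where the last term is simultaneously the classical Bochner integral $\int_0^t (X^\delta_r, \partial_r I^\delta_r)_H \dd r$ and a Young integral (they agree for smooth integrators).

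Passing $\delta \to 0$: the duality-pairing term converges by strong convergences $X^\delta \to X$ in $L^\alpha_t V$ and $Y^\delta \to Y$ in $L^{\alpha'}_t V^*$. The Young-integral term converges by continuity of $\mathscr{S}$ (Theorem~\ref{thm:young-integral}), using the uniform bound of $X^\delta$ in $B^{1/2}_{2,\infty} H$, strong convergence $X^\delta \to X$ in a weaker norm (e.g.\ $L^2_t H$), and $I^\delta \to I$ in $C^{\overline{\gamma}}_t H$ for any $\overline{\gamma} < \gamma$ (in particular some $\overline{\gamma} > 1/2$). After extraction, $\norm{X^\delta_t}_H^2 \to \norm{X_t}_H^2$ for a.e.\ $t$. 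This establishes~\eqref{chain rule norm formula} for a.e.\ $t \in [0,T]$.

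To conclude, the right-hand side of~\eqref{chain rule norm formula} is continuous in $t$, so $t \mapsto \norm{X_t}_H^2$ admits a continuous representative. Combined with weak $H$-continuity of $X$ -- which follows from $V^*$-continuity, the $L^\infty_t H$ bound, and density of $V$ in $H$ via testing against $V$-elements -- this upgrades to strong $H$-continuity on $[0,T]$, and the identity then extends to every $t$. The main obstacle I anticipate is establishing the continuity of $\mathscr{S}$ under joint perturbation of both integrand (in the $B^{1/2}_{2,\infty} H$ topology) and integrator (in the $C^{\gamma}_t H$ topology) with $\gamma$ only marginally above $1/2$; this is precisely where the sharp Besov sewing estimates developed in Section~\ref{sec:Besov-rough} must be invoked, and handling the joint convergence quantitatively (rather than separately in each slot) is the delicate point.
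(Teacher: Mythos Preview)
Your mollification approach is a genuinely different route from the paper's. The paper works directly: for $s, t \in \mathcal{J} := \{r : X_r \in V\}$ it writes $\|X_t\|_H^2 - \|X_s\|_H^2 = (X_t - X_s, X_t + X_s)_H$, substitutes~\eqref{eq:X-definition}, and sews the resulting two-parameter expression along partitions. The Lebesgue piece $\int_s^t \langle Y_r, X_t + X_s\rangle\,dr$ is identified with $2\int_s^t \langle Y_r, X_r\rangle\,dr$ via piecewise-constant approximation of $X$ in $L^\alpha_t V$; the piece $(I_t - I_s, X_t + X_s)_H$ splits into the Young germ $2(X_s, I_t - I_s)_H$ plus a remainder $(I_t - I_s, X_t - X_s)_H \in \mathbb{B}^{\gamma+1/2}_{2,\infty}$ whose sewing vanishes by Lemma~\ref{local approx doesnt matter}. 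No classical Lions formula is invoked and no limit passage in $\mathscr{S}$ is needed. Your approach is more modular (it recycles the classical result), but pays for this with two nontrivial limits.

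There is a concrete gap in your sketch. The claim $X^\delta \in C^\infty_t V \cap L^\alpha_t V$ on $[0,T]$ fails: you extend by $X_t = X_0$ for $t < 0$, but $X_0$ is only assumed in $H$, not in $V$. Hence for $t \in [0,\delta)$ the one-sided convolution $X^\delta_t$ contains a nonzero scalar multiple of $X_0$ and need not belong to $V$, so the classical Gelfand-triple chain rule is unavailable on the full interval. Applying it on $[\delta,T]$ yields $\|X^\delta_t\|_H^2 - \|X^\delta_\delta\|_H^2 = \dots$, but then you must show $\|X^\delta_\delta\|_H^2 \to \|X_0\|_H^2$; weak $H$-continuity gives only $\|X_0\|_H^2 \le \liminf \|X^\delta_\delta\|_H^2$. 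A clean repair is to first establish the increment identity for a.e.\ $0 < s < t$, upgrade to strong $H$-continuity on $(0,T]$ as you outline, and only then close at $s = 0$. The paper's direct argument sidesteps this boundary issue since it works entirely within $\mathcal{J}$ and extends by continuity afterwards.

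For the convergence $\mathscr{S}(X^\delta, dI^\delta) \to \mathscr{S}(X, dI)$ that you correctly flag: split bilinearly. The integrator piece is handled by $I^\delta \to I$ in $C^{\bar\gamma}_t H$ for any $\bar\gamma \in (1/2,\gamma)$ together with the uniform $B^{1/2}_{2,\infty}H$-bound on $X^\delta$ and Theorem~\ref{thm:young-integral}. For the integrand piece you need $X^\delta \to X$ in $B^{1/2-\epsilon}_{2,\infty}H$ for some $\epsilon \in (0,\gamma - 1/2)$; this follows by interpolating the uniform $B^{1/2}_{2,\infty}H$-bound against the $L^2_t H$-convergence (a direct computation on increments gives $[f]_{B^{1/2-\epsilon}_{2,\infty}} \lesssim \|f\|_{L^2}^{2\epsilon}[f]_{B^{1/2}_{2,\infty}}^{1-2\epsilon}$), or alternatively by invoking Lemma~\ref{sewing convergence} on the germs directly.
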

The proof is presented in Section~\ref{subsec:expanding-the-square}.

\begin{rem}
\label{chain rule sharp}
    Theorem \ref{chain rule norm} is sharp in the following sense: Consider an $U$-cylindrical Brownian motion~$B$ on $(\Omega, \mathcal{F}, \mathbb{P})$ and $Z\in L_2(U, H)$. By Kolmogorov's continuity theorem, we have $W=ZB\in C^\gamma_tH$ almost surely for any $\gamma<1/2$; in particular, $W$ doesn't fullfill the conditions stated Theorem~\ref{chain rule norm}. In this case for $X_0\in H$ and $Y\in L^{\alpha'}([0, T]\times \Omega, V^*)$ progressively measurable, we have for
    \[
    X_t=X_0+\int_0^t Y_s \dd s+W_t
    \]
    by It\^o's formula (see for example \cite[Theorem 4.2.5]{Liu2015}
    \begin{equation}
      \norm{X_t}_H^2=\norm{X_0}_H^2+2\int_0^t \langle Y_s, X_s\rangle_{V^*, V} \dd s+2\int_0^t (X_s, \dd W_s)_H+t\norm{Z}_{L_2(U, H)}^2
        \label{ito norm}
    \end{equation}
    where the integral $\int_0^t(X_s, \dd W_s)_H$ has to be understood as an infinite-dimensional It\^o integral. Note in particular the second order Itô correction term not present in \eqref{chain rule norm formula}.
\end{rem}

\subsection{Main result: Integral operators and Young regimes} \label{sec:main-integral-operator}
Theorem~\ref{thm:main} is a powerful tool towards answering Questions~\ref{question:AdditiveScale},~\ref{question:Multiplicative-Young-Scale} and~\ref{question:Multiplicative-General-Scale}. It identifies an abstract set of operators, i.e., operators that satisfy Assumption~\ref{ass:integral-operator}, such that existence of solutions is guaranteed, partially answering Question~\ref{question:Multiplicative-General-Scale}. In this section, we present various choices of integral operators that fall in our abstract framework:
\begin{itemize}
    \item additive Young drivers;
    \item abstract Young integrals;
    \item linear multiplicative spatially homogeneous noise;
    \item averages over oscillating paths (regularization by noise).
\end{itemize}
We identify situations that give rise to Young regimes; that is, an identification of the data regularity such that both, existence and uniqueness of solutions is valid. Existence boils down to the verification of Assumption~\ref{ass:integral-operator}. Uniqueness is more delicate and requires a case-by-case study. All proofs are postponed to Section~\ref{sec:proofs-young-regimes}.


\subsubsection{Additive Young regime} Additive drivers have one major advantage: differences of solutions to the same equation are no longer driven by the driver. This leads to the following result that answers Question~\ref{question:AdditiveScale}:
\begin{theorem} \label{thm:AnwerAdditive}
Let Assumption~\ref{ass:monotone-operator} be satisfied, $\gamma > 1/2$ and $Z \in C^\gamma_tH$. Define $I_t(u) \equiv Z_t - Z_0 $. 

Then there exists a unique weak solution to~\eqref{problem def} in the sense of Definition~\ref{def:weak-solution}. Moreover, any two weak solutions $u$ and $v$ of~\eqref{problem def} started in $u_0$ and $v_0$, respectively, satisfy for all $t\in [0,T]$
    \[
    \norm{u_t-v_t}_H^2\leq\norm{u_0-v_0}_H^2\exp{\left(\int_0^t h_s+\eta(u_s) \dd s \right)},
    \]
    where $h$ and $\eta$ are given by \ref{it:H2}. 
\end{theorem}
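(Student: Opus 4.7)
The plan is to split the argument into existence, which I extract from Theorem~\ref{thm:main}, and uniqueness/stability, which follows from a classical chain rule applied to the difference of two solutions once the driver has been cancelled by subtraction.

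For existence I would verify Assumption~\ref{ass:integral-operator} for $I_t(u)\equiv Z_t-Z_0$ and invoke Theorem~\ref{thm:main}. Since $\gamma>1/2$ there exists $q\in(2,\infty)$ with $\gamma>1/2+1/q$, and $C^\gamma\hookrightarrow B^\gamma_{q,\infty}$ places $I$ in the target space. To produce the approximating family, I would extend $Z$ to $\mathbb{R}$ and set $Z^n:=Z\ast\rho_n$ for a smooth mollifier, then take $b^n(s,v):=\dot Z^n_s$ (independent of $v$, so the growth and Lipschitz hypotheses are trivial with $C_n=\|\dot Z^n\|_{L^\infty_tH}$). Then $I^n_t(v)=Z^n_t-Z^n_0$. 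Condition~\ref{it:H5} is immediate with $\lambda\equiv 0$: since mollification does not enlarge the Hölder seminorm,
\[
[I^n(u)]_{B^\gamma_{q,\infty}(s,t;H)}=[Z^n]_{B^\gamma_{q,\infty}(s,t;H)}\leq [Z^n]_{C^\gamma}(t-s)^{1/q}\leq [Z]_{C^\gamma}T^{1/q}
\]
uniformly in $n$, $u$ and $[s,t]\subset[0,T]$. Condition~\ref{it:H6} holds because $I^n(v_n)=Z^n-Z^n_0$ does not see $v_n$ at all and converges to $I(v)=Z-Z_0$ in $C^{\bar\gamma}_tH\hookrightarrow B^{\bar\gamma}_{2,\infty}H$ for any $\bar\gamma\in(1/2,\gamma)$.

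For the stability bound I would exploit that, when $u,v$ are two weak solutions, the common driver cancels, so the difference $w:=u-v\in C_tH\cap L^\alpha_tV\cap B^{1/2}_{2,\infty}H$ satisfies
\[
w_t=w_0-\int_0^t\bigl(A(s,u_s)-A(s,v_s)\bigr)\,ds\qquad\text{in }V^*,
\]
with integrand in $L^{\alpha'}_tV^*$ thanks to~\ref{it:H4}. Applying Theorem~\ref{chain rule norm} with $Y:=-(A(u)-A(v))$ and trivial $I\equiv 0$ yields
\[
\|w_t\|_H^2=\|w_0\|_H^2-2\int_0^t\langle A(s,u_s)-A(s,v_s),u_s-v_s\rangle_{V^*,V}\,ds.
\]
Local monotonicity~\ref{it:H2} then bounds the integrand by $(h_s+\eta(u_s))\|w_s\|_H^2$, and Grönwall's inequality delivers the asserted exponential estimate. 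Uniqueness follows by taking $u_0=v_0$, localizing via stopping times $\tau_N=\inf\{t:\int_0^t\eta(u_s)\,ds>N\}$ if the Grönwall exponent is not a priori integrable.

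The main obstacle I expect is purely administrative: ensuring that the time-local Besov seminorm in~\ref{it:H5} is controlled uniformly in the mollification parameter $n$, which forces the choice $q<\infty$ and the observation that convolution is non-expansive on Hölder seminorms. Everything else reduces to classical Lions--Prodi-type monotone operator arguments, since the additive structure removes the driver from the difference equation and thus bypasses the full Young-integral chain rule of Theorem~\ref{thm:Chain-rule-after} needed in the multiplicative case.
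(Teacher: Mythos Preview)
Your argument follows the paper's almost exactly: verify Assumption~\ref{ass:integral-operator} for the constant-in-$u$ operator and appeal to Theorem~\ref{thm:main} for existence; then subtract two solutions so that the driver cancels, apply Theorem~\ref{chain rule norm} with $I\equiv 0$, invoke~\ref{it:H2}, and close with Gr\"onwall. Your explicit mollification check of~\ref{it:H5}--\ref{it:H6} is more detailed than the paper (which simply declares the verification trivial), and the stopping-time localisation is a harmless extra precaution the paper omits.

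There is, however, one point you overlook that the paper addresses. Theorem~\ref{thm:AnwerAdditive} assumes only Assumption~\ref{ass:monotone-operator}, \emph{not} the compact embedding Assumption~\ref{ass:compact-Gelfand}, whereas Theorem~\ref{thm:main} requires both. By invoking Theorem~\ref{thm:main} directly you are using a hypothesis not granted to you. The paper fixes this by observing that in the additive case $I^n(u^n)=Z^n-Z^n_0$ is independent of $u^n$, so the passage to the limit nowhere requires the strong convergence~\eqref{eq:conv-strong}; since the Aubin--Lions step (Theorem~\ref{nikoslki-lions}) is the only place Assumption~\ref{ass:compact-Gelfand} enters the proof of Theorem~\ref{thm:main}, it can be dropped here. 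You should add a sentence making this explicit, or else your existence proof only covers the compact-Gelfand case.
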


\begin{rem}
If the additive driver~$Z$ is a stochastic process with sample paths almost surely in $C^\gamma_tH$, the above path-wise stability result establishes path-by-path uniqueness. An example is space-colored cylindrical $h$-fractional Brownian motion~$W^h$ with Hurst parameter $h > 1/2$, i.e. 
\[
W^h_t=\sum_k \lambda_k e_k \beta^{k, h}_t
\]
where $(\lambda_k)_k\in \ell^2$, $(e_k)_k$ forms an orthonormal basis in $H$ and $(\beta^{k, h})_k$ is a sequence of independent $h$-fractional Brownian motions. Since $W^h \in C^{h-}_t H$, Theorem~\ref{thm:AnwerAdditive} is applicable. Note that for $h=1/2$, our Theorem is not applicable. Indeed, in this case, the variational approach to SPDEs provides a well-posedness theory, see for example \cite[Theorem 4.2.4]{Liu2015}. Let us stress again however, that our result does not rely on any probabilistic structure of the driver $Z$. 
\end{rem}

\subsubsection{Abstract Young integrals}
An answer of Question~\ref{question:Multiplicative-Young-Scale} involves the rigorous construction of an abstract Young integral:
\begin{align} \label{eq:abstract-young-integral-intro}
  I_t(u) =\int_0^t \sigma(u_s) \dd X_s.
\end{align}
Both, time and space regularity of integrand and integrator need to be compatible for the construction of~\eqref{eq:abstract-young-integral-intro}. We capture the spatial compatibility of the integrator by restricting to well-defined and stable pointwise multiplication: 
\begin{definition} \label{multiplicative ideal}
A Banach space $E\subset H$ is called multiplier in $H$ if the bilinear multiplication
    \begin{align*}
    \cdot :H\times E&\to H, \hspace{3em}        (h, e)\mapsto h\cdot e,
    \end{align*}
    is well-defined and there exists a constant $C>0$ such that for all $h \in H$ and $e \in E$ it holds $\norm{h\cdot e}_{H}\leq C \norm{h}_H\norm{e}_E$.

 A multiplier~$E$ in $H$ is called multiplicative ideal in $H$ if $C=1$.
\end{definition}
\begin{example}
An obvious example is given by $H = E=(\mathbb{R},\abs{\cdot})$. Other than that, the most typical example to have in mind is $H=L^2(\Lambda)$ and $E=L^\infty(\Lambda)$, where $\Lambda$ is a bounded domain. Further examples can be given by means of Theorem \ref{multiplication theorem}, for example, $H=W^{-1, 2}(\Lambda)$ and $E=C^{1+}(\Lambda)$.
\end{example}

The Banach space $E$ quantifies the spatial compatibility of abstract Young integration. This, together with appropriate temporal compatibility allows us to rigorously define~\eqref{eq:abstract-young-integral-intro} as done in Lemma~\ref{abstract young lemma}. Again, it is a consequence of the remarkable sewing lemma on Besov spaces. 

Towards an answer of Question~\ref{question:Multiplicative-Young-Scale}, we need further restrictions on the non-linearity~$\sigma$ and the path regularity of $X$:
\begin{assumption} \label{ass:Young-integral-condition}
Let $E$ be a multiplier in $H$, $\gamma > 3/4$ and $X \in C_t^\gamma E$. Moreover, let $\sigma: H\to H$ satisfy the following:
\begin{enumerate}
    \item (Linear growth) There exists a constant $C > 0$ such that for all $u \in H$
    \begin{align*}
        \norm{\sigma(u)}_H\leq C(1+\norm{u}_H);
    \end{align*}
    \item (Lipschitz continuity) There exists a constant $L > 0$ such that for all $u,v \in H$
    \begin{align*}
        \norm{\sigma(u)-\sigma(v)}_H\leq L \norm{u-v}_H.
    \end{align*}
\end{enumerate}
\end{assumption}
Assumption~\ref{ass:Young-integral-condition} guarantees that the Young integral is a valid integral operator in our abstract framework, which leads to a partial answer of Question~\ref{question:Multiplicative-Young-Scale}:
\begin{theorem} \label{thm:Existence-abstract-Young}
   Suppose Assumptions \ref{ass:compact-Gelfand}, \ref{ass:monotone-operator}  and~\ref{ass:Young-integral-condition} are satisfied.  Let $I_t(u) = \int_0^t \sigma(u_s) \dd X_s$ be defined by Lemma~\ref{abstract young lemma}.

    Then there exists a weak solution to~\eqref{problem def} in the sense of Definition~\ref{def:weak-solution}.
\end{theorem}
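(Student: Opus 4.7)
The plan is to show that the abstract Young integral $I_t(u)=\int_0^t\sigma(u_s)\dd X_s$, constructed via Lemma~\ref{abstract young lemma}, falls into the scope of Assumption~\ref{ass:integral-operator}, so that Theorem~\ref{thm:main} applies directly. The main work is to exhibit a suitable approximating sequence $I^n$ of Bochner-type integrals and to transfer the Young-integral stability estimates of Section~\ref{sec:Young-integration} into the precise bounds~\ref{it:H5} and~\ref{it:H6}.

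First, I would regularise the driver. Extend $X$ from $[0,T]$ to $\mathbb{R}$ preserving $C^\gamma_t E$-regularity, mollify in time against a standard family $(\rho_n)$ and let $X^n := X * \rho_n$. Then $X^n \in C^1_t E$, $\partial_t X^n \in L^\infty_t E$, and $\sup_n \norm{X^n}_{C^{\gamma}_t E} \lesssim \norm{X}_{C^{\gamma}_t E}$, while $X^n \to X$ in $C^{\gamma'}_t E$ for every $\gamma' < \gamma$. Define
\[
b^n(s,v) := \sigma(v) \cdot (\partial_s X^n_s), \qquad I^n_t(v) := \int_0^t b^n(s,v(s))\dd s = \int_0^t \sigma(v_s)\,\partial_s X^n_s\dd s,
\]
where the product is interpreted via the multiplier property of $E$ in $H$ (Definition~\ref{multiplicative ideal}). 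The linear growth and Lipschitz bounds on $b^n$ (with constants $C_n$ depending on $\norm{\partial_t X^n}_{L^\infty_t E}$) follow from the corresponding properties of $\sigma$ in Assumption~\ref{ass:Young-integral-condition}, together with $\norm{h\cdot e}_H \leq C\norm{h}_H\norm{e}_E$. Moreover, since $\partial_s X^n_s$ is Bochner-integrable, $I^n_t(v)$ agrees with the Young integral $\int_0^t \sigma(v_s)\dd X^n_s$ produced by Lemma~\ref{abstract young lemma}.

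Next, verify~\ref{it:H5}. The Young/sewing estimate of Lemma~\ref{abstract young lemma} applied on $[s,t]$ bounds $\seminorm{I^n(u)}_{B^{\gamma}_{q,\infty}(s,t;H)}$ by
\[
C\,\norm{X^n}_{C^{\gamma}(s,t;E)}\Bigl( \norm{\sigma(u)}_{L^\infty(s,t;H)} + \abs{t-s}^{\kappa}\seminorm{\sigma(u)}_{B^{1/2}_{2,\infty}(s,t;H)}\Bigr)
\]
for some $\kappa>0$ coming from the spare regularity $\gamma > 3/4$. Using linear growth of $\sigma$ on the first term, and Lipschitz continuity $\seminorm{\sigma(u)}_{B^{1/2}_{2,\infty}} \leq L\,\seminorm{u}_{B^{1/2}_{2,\infty}}$ on the second, squaring, and invoking $\sup_n\norm{X^n}_{C^\gamma_tE}\lesssim \norm{X}_{C^\gamma_t E}$, gives exactly an estimate of the form
\[
\seminorm{I^n(u)}_{B^{\gamma}_{q,\infty}(s,t;H)}^2 \leq c_4\Bigl(1 + \lambda(|t-s|)\bigl(\seminorm{u}_{B^{1/2}_{2,\infty}(s,t;H)}^2+\norm{u}_{L^\infty(s,t;H)}^2\bigr)\Bigr),
\]
with $\lambda(r)=r^{2\kappa}\to 0$ as $r\to 0$, uniformly in $n$.

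The hardest step is~\ref{it:H6}. Given $v_n\to v$ in $L^2_tH$ with uniform bounds in $L^\infty_tH\cap B^{1/2}_{2,\infty}H$, split
\[
I^n(v_n)-I(v) = \bigl[I^n(v_n)-I^n(v)\bigr]+\bigl[I^n(v)-I(v)\bigr].
\]
The second bracket is the Young integral of $\sigma(v)$ against $X^n - X$; Lemma~\ref{abstract young lemma} applied to $X^n-X\to 0$ in $C^{\gamma'}_t E$ (for a $\gamma'\in(1/2,\gamma)$ large enough that $\gamma' > 1/2 + 1/q$) controls it in $B^{\bar\gamma}_{2,\infty}H$ for some $\bar\gamma \in (1/2,\gamma']$, yielding convergence to $0$. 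For the first bracket, set $w_n := \sigma(v_n)-\sigma(v)$. Lipschitz continuity of $\sigma$ gives $\norm{w_n}_{L^2_tH}\leq L\norm{v_n-v}_{L^2_tH}\to 0$, while $\sup_n \seminorm{w_n}_{B^{1/2}_{2,\infty}H} \leq 2L\sup_n \seminorm{v_n}_{B^{1/2}_{2,\infty}H}<\infty$ and analogously in $L^\infty_tH$. By real interpolation between $L^2_tH$ (strong convergence to $0$) and $B^{1/2}_{2,\infty}H$ (uniform boundedness), $w_n\to 0$ strongly in $B^{\theta}_{2,\infty}H$ for any $\theta<1/2$; applying the Young-integral estimate to $\int \sigma(v_n)-\sigma(v)\dd X^n$ against $X^n$ (whose $C^\gamma_tE$-norm is bounded uniformly in $n$) then yields convergence of the first bracket in $B^{\bar\gamma}_{2,\infty}H$ for a suitable $\bar\gamma\in (1/2,\gamma]$, provided $\gamma > 3/4$, which is exactly our standing assumption. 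Collecting the two pieces establishes~\ref{it:H6}.

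With~\ref{it:H5} and~\ref{it:H6} verified, Theorem~\ref{thm:main} yields existence of a weak solution to~\eqref{problem def}, completing the proof. The main technical obstacle is the precise interpolation argument in~\ref{it:H6}, as it is the only place where the margin $\gamma > 3/4$ is genuinely consumed to buy enough room above $1/2$ for the target Besov scale $B^{\bar\gamma}_{2,\infty}$.
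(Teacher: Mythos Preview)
Your strategy—verify Assumption~\ref{ass:integral-operator} for the abstract Young integral and then invoke Theorem~\ref{thm:main}, with the implicit choice $q=4$ forced by $\gamma>3/4$—is exactly the paper's, and the mollified drivers $X^n$ with $b^n(t,v)=\sigma(v)\,\partial_t X^n_t$ are the same approximants Lemma~\ref{verify H5} uses.

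Two points deserve comment. For~\ref{it:H5}, the bound you write down places the decaying factor $|t-s|^\kappa$ only on the $B^{1/2}_{2,\infty}$ term; after squaring, $\norm{u}_{L^\infty(s,t;H)}^2$ appears without any $\lambda(|t-s|)$ in front, so the estimate is \emph{not} of the form~\ref{it:H5} as stated. The fix is easy: bound the germ in $\mathbb{B}^\gamma_{q,\infty}(s,t)$ rather than $\mathbb{B}^\gamma_{\infty,\infty}(s,t)$, so that the finite $L^q$-integration over $[s,t]$ contributes an additional $|t-s|^{1/q}$ multiplying the $L^\infty$-term as well. (The paper's Lemma~\ref{verify H5} is equally brief on this.) For~\ref{it:H6}, your route differs from the paper's. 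You split $I^n(v_n)-I(v)=[I^n(v_n)-I^n(v)]+[I^n(v)-I(v)]$ and handle the first bracket by interpolating $w_n=\sigma(v_n)-\sigma(v)\to 0$ in $B^\theta_{2,\infty}H$ for some $\theta<1/2$, then re-applying a Young estimate; this is viable but requires redoing the sewing for integrands below the $1/2$-threshold (the sewability constraint becomes $\gamma+2\theta/q>1$, i.e.\ $\theta>2(1-\gamma)$ when $q=4$, compatible with $\theta<1/2$ precisely because $\gamma>3/4$). The paper instead uses the opposite splitting $[I^n(v^n)-I(v^n)]+[I(v^n)-I(v)]$ and, for the second bracket, appeals to the dominated-convergence Lemma~\ref{sewing convergence}: germ-level convergence $\norm{G^n-G}_{\mathbb{B}^\gamma_{q,\infty}}\to 0$ follows from $v^n\to v$ in $L^2_tH$ via Lemma~\ref{abstract limit identification lemma}, while $\delta G^n,\delta G$ remain uniformly bounded. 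The paper's route stays at $\theta=1/2$ and uses machinery already developed in Section~\ref{sec:Besov-rough}; yours is correct in outline but more work to make rigorous.
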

It remains open whether Assumption~\ref{ass:Young-integral-condition} is also sufficient for uniqueness of weak solutions.

\subsubsection{Linear multiplicative Young regime}
Restricting to linear multiplicative and spatially homogeneous noise ensures uniqueness:
\begin{theorem} \label{thm:Multiplicative}
 Suppose Assumptions \ref{ass:compact-Gelfand} and \ref{ass:monotone-operator} are met and let $A$ satisfy additionally~\ref{it:H2-alter}. Moreover, let $\gamma > 3/4$, $\beta\in C^\gamma_t \mathbb{R}$ and $I_t(u) = \int_0^t u_s \dd \beta_s$ be defined by Lemma~\ref{abstract young lemma}. 
 
 Then there exists a unique weak solution to~\eqref{problem def} in the sense of Definition~\ref{def:weak-solution}. Moreover, any two weak solutions $u$ and $v$ of~\eqref{problem def} started in $u_0$ and $v_0$, respectively, satisfy for all $t\in [0,T]$
    \[
    \norm{u_t-v_t}_H^2\leq\norm{u_0-v_0}_H^2\exp{(\beta_t-\beta_0)}.
    \]
\end{theorem}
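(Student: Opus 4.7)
Existence is essentially a corollary of Theorem~\ref{thm:Existence-abstract-Young}. Take $\sigma = \mathrm{id}_H$ -- linear, hence both $1$-Lipschitz and of linear growth -- and $E = (\mathbb{R}, |\cdot|)$, which is a multiplicative ideal in $H$ via scalar multiplication and a constant $C = 1$. Since $\beta \in C^\gamma_t\mathbb{R}$ with $\gamma > 3/4$, Assumption~\ref{ass:Young-integral-condition} is satisfied and the Young integral $\int_0^t u_s\,d\beta_s$ supplied by Lemma~\ref{abstract young lemma} coincides with $I_t(u)$; Theorem~\ref{thm:Existence-abstract-Young} therefore produces a weak solution.

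For the stability bound (from which uniqueness follows on setting $u_0 = v_0$), let $u,v$ be weak solutions with initial data $u_0,v_0$ and set $w = u-v$. By linearity of the Young integral, $w$ satisfies the $V^*$-valued identity
\begin{align*}
w_t = w_0 + \int_0^t \bigl(A(s,u_s) - A(s,v_s)\bigr)\,ds + \int_0^t w_s\, d\beta_s,
\end{align*}
where the last term is the Young integral $I_t := \int_0^t w_s\,d\beta_s$. The a priori bound~\eqref{eq:main-result-estimate} places $w \in B^{1/2}_{2,\infty}H \cap L^\infty_tH \cap L^\alpha_tV$; condition~\ref{it:H4} places $A(\cdot,u) - A(\cdot,v) \in L^{\alpha'}_tV^*$; and Young-integration bounds obtained from the Besov sewing lemma place $I \in C^\gamma_tH$. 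Thus Theorem~\ref{chain rule norm} applies to $w$ and yields
\begin{align*}
\|w_t\|_H^2 = \|w_0\|_H^2 + 2\int_0^t \langle A(s,u_s) - A(s,v_s), w_s\rangle_{V^*,V}\, ds + 2\,\mathscr{S}_t(w, dI).
\end{align*}

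The deterministic integral is non-positive by assumption~\ref{it:H2-alter}. For the Young remainder, I claim the substitution identity $\mathscr{S}_t(w, dI) = \int_0^t \|w_s\|_H^2\, d\beta_s$: heuristically, $dI_s = w_s\, d\beta_s$ so $(w_s, dI_s)_H = \|w_s\|_H^2\, d\beta_s$, and rigorously both sides are the sewing-lemma limits of the same Riemann-type sums $\sum (w_{t_i}, w_{t_i})_H (\beta_{t_{i+1}} - \beta_{t_i})$, with the necessary Besov regularity secured by $\gamma + 1/2 > 5/4 > 1$. Combining these two observations gives an integral inequality of the form $\|w_t\|_H^2 \leq \|w_0\|_H^2 + c\int_0^t \|w_s\|_H^2\, d\beta_s$. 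A Young-analogue of Gronwall's inequality then closes the argument; this is most cleanly realised by applying the chain rule Theorem~\ref{thm:Chain-rule-after} to $F(t,y) = e^{-c(\beta_t - \beta_0)} y$ and observing that the resulting evolution of $F(t, \|w_t\|_H^2)$ has a non-positive right-hand side, whence this quantity is non-increasing and the exponential bound follows.

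The main obstacle, beyond routine bookkeeping, is the rigorous identification $\mathscr{S}_t(w, dI) = \int_0^t \|w_s\|_H^2\,d\beta_s$: the Young integral is defined abstractly via sewing, so the substitution -- a form of associativity of iterated Young integration across the bilinear form $(\cdot,\cdot)_H$ -- has to be checked by comparing germ increments at the level of the Besov sewing lemma of Section~\ref{sec:Besov-rough}. A secondary point is the verification that the transformation $F(t,y) = e^{-c(\beta_t - \beta_0)}y$ is an admissible multiplier in the setting of Theorem~\ref{thm:Chain-rule-after}, which again reduces to the smoothness of the scalar $C^\gamma$-path $\beta$.
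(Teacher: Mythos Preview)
Your existence argument and the reduction to the inequality $\|w_t\|_H^2 \leq \|w_0\|_H^2 + 2\int_0^t \|w_s\|_H^2\,d\beta_s$ agree with the paper. (A minor point: Lemma~\ref{abstract young lemma} only yields $I(w)\in B^\gamma_{q,\infty}H$, not $C^\gamma_tH$; one recovers $I(w)\in C^{\gamma-1/q}_tH$ with $\gamma-1/q>1/2$ by Besov embedding, which is what Theorem~\ref{chain rule norm} actually needs.)

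The genuine gap is the Gronwall step. Theorem~\ref{thm:Chain-rule-after} requires $F\in W^{1,1}([0,T]\times\mathbb{R})$, and in particular condition~\eqref{eq:localBounded-l1} demands $\int_0^T\sup_{z\in B_R}|\partial_t F(t,z)|\,dt<\infty$. For your choice $F(t,y)=e^{-c(\beta_t-\beta_0)}y$ this would require $\partial_t F(t,y)=-c\dot\beta_t\,e^{-c(\beta_t-\beta_0)}y$, but $\beta\in C^\gamma_t\mathbb{R}$ with $\gamma<1$ is in general nowhere differentiable, so $F\notin W^{1,1}$ and the chain rule is inapplicable. That chain rule was tailored to absolutely continuous multipliers of the form $e^{-\int_0^t h_s\,ds}$ with $h\in L^1_t$ (as used in Section~\ref{sec:proof-main}), not to multipliers driven by a rough path. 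The paper instead sets $B_t:=\int_0^t Z_r\,d\beta_r-Z_t$ for $Z_t=\|w_t\|_H^2$, observes that the integral inequality makes $B$ non-decreasing so that the Riemann--Stieltjes integral $\int_0^t e^{-\beta_r}\,dB_r\geq 0$ is well-defined, and then invokes Lemma~\ref{classical rules of calculus} (itself proved by smoothing $\beta,Z$ and passing to the limit via Lemma~\ref{sewing convergence}) to obtain $\int_0^t e^{-\beta_r}\,dB_r=-Z_t e^{-\beta_t}+Z_0 e^{-\beta_0}$, from which the stability bound is immediate.
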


\begin{rem}
    As in the previous case of additive noise, if $\beta$ is a stochastic process with sample paths $\beta\in C^\gamma_t\mathbb{R}$  for some $\gamma>3/4$ almost surely, then the above result implies path-by-path uniqueness. A particular example is given by an $\mathbb{R}$-valued fractional Brownian motion with Hurst parameter $h\in (3/4, 1)$.
\end{rem}
\subsubsection{Regularization by noise}
In our previous work \cite[Theorem 1.1]{Bechtold2023}, we investigated regularization by noise properties for the evolutionary $p$-Laplace problem, i.e.~\eqref{problem def} for $A$ being the $p$-Laplace operator (refer to Section~\ref{sec: p-laplace}) and $I_t(u)=\int_0^tb(u_s-w_s) \dd s$. Here $w:[0, T]\to \mathbb{R}$ is a continuous path that admits a sufficiently regular local time (refer to the Appendix \ref{appendix} and the corresponding subsection in \ref{sec: p-laplace} for an overview of the general regularization by noise philosophy in this setting). By means of our abstract approach in Theorem \ref{thm:main}, we are able to generalize these results in the following way.

\begin{assumption} \label{ass:Reg-by-noise}
Let $b \in \mathscr{S}'(\mathbb{R})$ be a Schwartz distribution, $w \in C([0,T];\mathbb{R})$ and $L^w$ be the local time of $w$. We assume the following to be true:
\begin{enumerate}
    \item There exists $\gamma > 3/4$ such that $b * L^w \in C^\gamma(0,T;C^{0,1}(\mathbb{R}))$;
    \item For all $n \in \mathbb{N}$ there exist $b^n: \mathbb{R} \to \mathbb{R}$ and constants $C_n >0$ such that:
\begin{itemize}
    \item For all $x \in \mathbb{R}$ it holds $ \abs{b^n(x)}\leq C_n(1+\abs{x})$;
    \item For all $x,y \in \mathbb{R}$ it holds $\abs{b^n(x)-b^n(y)}\leq C_n \abs{x-y}$;
    \item $\norm{b^n*L^w- b*L^w}_{C^{\gamma}(0,T; C^{0,1}(\mathbb{R}))} \leq 1/n$.
\end{itemize}
\end{enumerate}
\end{assumption}

\begin{theorem} \label{thm:reg-by-noise}
Let $\Lambda\subset \R^d$ be a bounded Lipschitz domain,  $T > 0$, $u_0 \in L^2(\Lambda)$ and let Assumption~\ref{ass:Reg-by-noise} be satisfied. Assume $p>\frac{2d}{d+2}$. 

Then there exists a weak solution
    \begin{align*}
        u \in C([0,T]; L^2(\Lambda)) \cap B^{1/2}_{2,\infty}(0,T; L^2(\Lambda)) \cap L^p(0,T; W^{1,p}_{0}(\Lambda))
    \end{align*}
    to
    \begin{align*}
        \partial_t u - \Delta_p u  =b(u-w), \qquad u(0) = u_0\in L^2(\Lambda),
    \end{align*}
    where we understand the right-hand side in the sense of Lemma \ref{verify nonlinear H5} as
    \[
    \int_0^tb(u_r-w_r)= \mathscr{I}_t \big( b * L^w (\mean{u}) \big).
    \]
\end{theorem}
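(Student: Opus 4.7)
The plan is to cast the regularization-by-noise problem into the abstract framework of Theorem~\ref{thm:main} with the Gelfand triple $V = W^{1,p}_0(\Lambda)$, $H = L^2(\Lambda)$, $V^* = W^{-1,p'}(\Lambda)$, the $p$-Laplace operator $A(u)=\Delta_p u$, and the integral operator $I_t(u) = \mathscr{I}_t(b*L^w(\mean{u}))$ whose approximations are $I^n_t(u) = \int_0^t b^n(u_s-w_s)\dd s$. Once Assumptions~\ref{ass:compact-Gelfand}, \ref{ass:monotone-operator} and~\ref{ass:integral-operator} are verified, Theorem~\ref{thm:main} produces a weak solution in the required regularity class.

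The verification of Assumption~\ref{ass:compact-Gelfand} reduces to the Rellich--Kondrachov embedding: the hypothesis $p>2d/(d+2)$ is precisely the exponent range in which $W^{1,p}_0(\Lambda) \hookrightarrow L^2(\Lambda)$ compactly for a bounded Lipschitz domain. Assumption~\ref{ass:monotone-operator} for $\Delta_p$ with coercivity exponent $\alpha = p$ is classical: hemicontinuity follows from continuity of $\xi \mapsto \abs{\xi}^{p-2}\xi$, monotonicity of the $p$-Laplace gradient form yields~\ref{it:H2} (in the stronger form~\ref{it:H2-alter}), and coercivity/boundedness are standard estimates recalled in Section~\ref{sec: p-laplace}.

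The heart of the argument is Assumption~\ref{ass:integral-operator}. The approximations $b^n$ given by Assumption~\ref{ass:Reg-by-noise} satisfy the linear growth and Lipschitz bounds uniformly in $n$ only up to a constant $C_n$; however, the crucial uniform bounds required for~\ref{it:H5} and~\ref{it:H6} do \emph{not} involve $C_n$ but rather pass through the Young-integral representation
\[
I^n_t(u) = \int_0^t b^n(u_s-w_s)\dd s = \mathscr{I}_t\bigl(b^n * L^w(\mean{u})\bigr),
\]
which is precisely the content of the structural Lemma~\ref{verify nonlinear H5}. Feeding the $C^\gamma_t C^{0,1}(\mathbb{R})$-bound on $b^n*L^w$ (uniform in $n$ by item~(2) of Assumption~\ref{ass:Reg-by-noise}) into the Besov sewing estimate yields the time-local bound~\ref{it:H5} with the same $\gamma > 3/4$ and a constant depending only on $\norm{b*L^w}_{C^\gamma_t C^{0,1}} + 1/n \leq \norm{b*L^w}_{C^\gamma_t C^{0,1}}+1$.

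For the continuity assumption~\ref{it:H6}, given $v_n \to v$ in $L^2_tH$ with uniform bounds in $L^\infty_t H \cap B^{1/2}_{2,\infty}H$, the Young-integral estimate splits $I^n(v_n)-I(v)$ into two pieces, $\mathscr{I}((b^n-b)*L^w(\mean{v_n}))$ and $\mathscr{I}(b*L^w(\mean{v_n})-b*L^w(\mean{v}))$. The first vanishes thanks to $\norm{b^n*L^w-b*L^w}_{C^\gamma_t C^{0,1}} \leq 1/n$; the second vanishes because the Lipschitz regularity in space of $b*L^w$ combined with strong $L^2_t H$-convergence of $v_n$ controls differences of the composition at the required Besov scale. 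The main obstacle is keeping the Young-sewing estimate uniform in $n$ while simultaneously passing to the limit in the argument of the nonlinearity; this is exactly the balancing done in Lemma~\ref{verify nonlinear H5}, and once it is invoked, all remaining steps are direct applications of Theorem~\ref{thm:main}.
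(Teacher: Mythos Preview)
Your proof sketch is correct and follows essentially the same route as the paper. You correctly reduce the problem to verifying Assumptions~\ref{ass:compact-Gelfand}, \ref{ass:monotone-operator}, and~\ref{ass:integral-operator} for the Gelfand triple $(W^{1,p}_0,L^2,W^{-1,p'})$ with $A=\Delta_p$, identify Lemma~\ref{verify nonlinear H5} as the mechanism that converts the non-uniform $C_n$-bounds on $b^n$ into a uniform $C^\gamma_t C^{0,1}$-bound on $b^n*L^w$ for~\ref{it:H5}, and use the same two-piece splitting of $I^n(v_n)-I(v)$ for~\ref{it:H6} that the paper employs; the second piece in your splitting is exactly what the paper isolates as Lemma~\ref{nonlinear verify H6}, which feeds the Lipschitz bound on $b*L^w$ and the $L^2_tH$-convergence into the dominated-convergence-for-sewings result (Lemma~\ref{sewing convergence}).
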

\begin{rem}
    Compared with \cite[Theorem 1.1]{Bechtold2023} we do not require a condition on $b^2*L$ but only on $b*L$ in the above assumptions. This is because in our previous work, we employed a strong formulation approach, allowing us to work on a regularity scale $C^{1/2}_tL^2_x$. This however required us to test the equation by itself on differential level, making appear the $b^2$. In particular, Theorem \ref{thm:reg-by-noise} now allows us to treat even distributional $b$ (since $b^2$ no longer needs to be well-defined), provided the regularization effect of $L^w$ is sufficiently strong (see Example \ref{reg by noise example}). Moreover, we are able to weaken the assumption on the initial condition to $u_0\in L^2(\Lambda)$ compared with $u_0\in W^{1, p}_0(\Lambda)$ in \cite[Theorem 1.1]{Bechtold2023}. Finally, upon inspecting the proof, it can be seen that the statement generalizes to locally monotone operators $A$ satisfying Assumptions \ref{ass:compact-Gelfand} and  \ref{ass:monotone-operator}, provided $H=L^2(\Lambda)$. 
\end{rem}

\begin{example}
\label{reg by noise example}
  Let $b=\delta_0\in H^{-1/2-}$. Let $w$ be an $\mathbb{R}$-valued fractional Brownian motion with Hurst parameter $H<\frac{1}{6}$. By \cite[Theorem 3.4]{harang2020cinfinity}, almost any realization of $w$ admits a local time $L^w\in C^{3/4+}_tH^{3/2+}_x$ and thus $b*L\in C^{3/4+}_tC^{0, 1}_x$. Upon mollification of $\delta_0$, we see that Assumption \ref{ass:Reg-by-noise} is satisfied, i.e. we may conclude by Theorem \ref{thm:reg-by-noise} that the problem 
  \[
  \partial_tu-\Delta_pu=\delta_0(u-w), \qquad u(0)=u_0\in L^2(\Lambda),
  \]
  admits a solution. 
\end{example}

\subsection{Outlook}
A full identification of the Young regime for abstract Young integral equations involves establishing uniqueness of solutions. So far, we obtained uniqueness only in special cases: additive drivers and linear multiplicative spatial homogeneous noise. It remains open whether uniqueness of solutions holds in the general framework of Theorem~\ref{thm:main}.

Theorem~\ref{thm:Existence-abstract-Young} shows that locally monotone, abstract Young integral equations are solvable if the driving path satisfies $X \in C^{3/4+}_t E$. In comparison, the Young regime for SDEs is $X \in C^{1/2+}$. We believe that this gap is due to technical restrictions of our method and can eventually be removed. In a similar fashion, we believe that $Z \in B^{1/2+}_{2,\infty} H$ is sufficient for the well-posedness of additively driven, locally monotone equations, cf. Theorem~\ref{thm:AnwerAdditive}. More generally, we conjecture:
\begin{conjecture}
Let Assumptions~\ref{ass:compact-Gelfand} and~\ref{ass:monotone-operator} be satisfied. Moreover, let $\theta \in (0,1)$ and $Z \in B^{r_\theta+}_{\alpha_\theta,\infty} V_\theta$ with
\begin{align*}
    \frac{1}{\alpha_\theta} := \frac{1-\theta}{\alpha'} + \frac{\theta}{\alpha}, \qquad r_\theta := (1-\theta)\cdot 1 + \theta \cdot 0,
\end{align*}
where $\alpha$ is given in~\ref{it:H3} and $V_\theta$ is an interpolation space of $V$ and $V^*$.

Then 
\begin{align*}
    \dd u = A(t,u) \dd t + \dd Z, \qquad u(0) = u_0\in H,
\end{align*}
is well-posed.
\end{conjecture}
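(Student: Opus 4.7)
The plan is to attack the conjecture by combining real interpolation of the space in which $Z$ lives with the a priori estimates of Theorem~\ref{thm:main}. The conjecture interpolates three known regimes: at $\theta = 0$, $Z \in B^{1+}_{\alpha', \infty} V^* \hookrightarrow W^{1,\alpha'} V^*$ is classical, with $\partial_t Z$ appearing as an $L^{\alpha'} V^*$ source; at $\theta = 1$, $Z \in B^{0+}_{\alpha, \infty} V \hookrightarrow L^\alpha V$ is handled by the substitution $v = u - Z$ as in \cite{Gess2011}; and at $\theta = 1/2$ with $V_{1/2} = H$ one recovers Theorem~\ref{thm:AnwerAdditive}. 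The canonical choice $V_\theta := (V^*, V)_{\theta, \alpha_\theta}$, together with the standard Bochner/Besov interpolation $(W^{1,\alpha'} V^*, L^\alpha V)_{\theta, \alpha_\theta} \hookrightarrow B^{r_\theta}_{\alpha_\theta, \infty} V_\theta$, makes the three endpoints consistent.

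The natural strategy is a trichotomy decomposition via the K-functional: one would write $Z = Z^{(0)} + Z^{(1/2)} + Z^{(1)}$ with $Z^{(0)} \in W^{1, \alpha'} V^*$, $Z^{(1/2)} \in B^{1/2+}_{2, \infty} H$, and $Z^{(1)} \in L^\alpha V$, and consider the transformed problem for $v := u - Z^{(1)}$,
\[
\partial_t v = \widetilde{A}(t, v) + \partial_t Z^{(0)} + \partial_t Z^{(1/2)}, \qquad v(0) = u_0,
\]
where $\widetilde{A}(t, v) := A(t, v + Z^{(1)}(t))$. This falls within the scope of Theorem~\ref{thm:main} with integral operator $I_t(v) := \int_0^t \partial_s Z^{(0)}_s \dd s + Z^{(1/2)}_t - Z^{(1/2)}_0$, provided $\widetilde{A}$ inherits the properties of Assumption~\ref{ass:monotone-operator}. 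Hemicontinuity is immediate; boundedness and coercivity transfer via Young's inequality using $Z^{(1)} \in L^\alpha V$; local monotonicity is preserved with shifted bound $\widetilde{\eta}(v) := \eta(v + Z^{(1)}(t))$, which imposes an additional but mild structural constraint on the growth of $\eta$.

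The main obstacle is to close a uniform a priori estimate whose constants depend only on the \emph{interpolation} norm $\norm{Z}_{B^{r_\theta+}_{\alpha_\theta, \infty} V_\theta}$ rather than on the individual pieces $Z^{(0)}, Z^{(1/2)}, Z^{(1)}$, whose norms blow up as the K-functional decomposition is made sharp. The remedy would be to track the joint scaling of the three contributions in the energy expansion of $\norm{u}_H^2$ along the K-functional scale, and to absorb the blow-up via a sewing-type nonlinear Gronwall argument, thereby interpolating the solution map $Z \mapsto u$ along the K-scale. Establishing this quantitative interpolation -- together with a chain rule analogous to Theorem~\ref{chain rule norm} on the asymmetric scale $V_\theta$ when the endpoint arguments do not suffice -- is the core technical difficulty, and is the likely reason that the statement is presented as a conjecture rather than a theorem.
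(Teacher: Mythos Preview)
The statement is a \emph{conjecture}: the paper does not prove it and offers no proof to compare against. You correctly recognise this at the end of your proposal, so what you have written is a plausible strategy sketch rather than a proof.

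That said, it is worth contrasting your proposed route with the direction the paper itself hints at in the discussion immediately following the conjecture. The paper suggests that the general case $\theta \in (0,1)\setminus\{1/2\}$ would require ``an extension of the Besov sewing lemma to time and space'' so that the pairing $(u,\dd Z)$ is well-defined directly on the asymmetric scale $V_\theta$, together with a new a priori bound on a positive time-regularity scale. In other words, the paper envisions building the analytic machinery (sewing, chain rule, energy estimate) \emph{intrinsically} at level $\theta$. Your approach is instead \emph{extrinsic}: decompose $Z$ via the $K$-functional into pieces living at the three known endpoints $\theta\in\{0,1/2,1\}$ and handle each piece with the corresponding existing theory. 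This is a genuinely different idea and has the advantage of recycling established results rather than developing new sewing tools.

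The gap you identify is real and is the crux: in a $K$-functional decomposition $Z = Z^{(0)} + Z^{(1/2)} + Z^{(1)}$ the individual norms of the pieces are not controlled by $\|Z\|_{B^{r_\theta+}_{\alpha_\theta,\infty}V_\theta}$ alone---only a weighted combination is---so the a priori constants coming from Theorem~\ref{thm:main} applied to the transformed problem will blow up unless one can track the scaling jointly. There is no indication in the paper (or elsewhere) that such a nonlinear interpolation of the solution map can be closed, and the difficulty is compounded by the fact that the local monotonicity bound $\widetilde\eta(v)=\eta(v+Z^{(1)}(t))$ need not be locally bounded uniformly along the $K$-scale without further structural assumptions on $\eta$. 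Your sketch is therefore an honest outline of a possible program, but it does not resolve the conjecture, and neither does the paper.
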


As already discussed in the introduction, classical monotone operator theory covers the endpoint cases $\theta \in \{0,1\}$, i.e., $Z\in W^{1,\alpha'}_t V^*$ and $Z\in L^\alpha_t V$. Our results contribute towards the choice $\theta = 1/2$. 
The general case $\theta \in (0,1)\backslash \{1/2\}$ requires an extension of the Besov sewing lemma to time and space; most importantly, $(u, \dd Z)$ needs to be well-defined and a new a priori bound on positive time regularity scale needs to be devised.   
\newline
Another possible research direction would be going beyond the Young, and into the 'rough' regime: Recall that in the SDE setting
\[
Y_t=Y_0+\int_0^t \sigma(Y_s) \dd X_s
\]
a pathwise solution theory can be established provided $X\in C^\alpha_t$ with $\alpha>1/3$ admits a so called rough path lift $\mathbf{X}=(X, \mathbb{X})$. Provided this lift is accordingly chosen, one is able to recover in this setting the classical Itô-theory of stochastic integration. A main advantage of this 'factorization procedure' (lifting the path in a first probabilistic step to a rough path and performing then a purely pathwise analysis) lies in the fact that stability estimates and thus path-by-path uniqueness, the Wong-Zakai theorem and large deviations principles essentially come for free. 
In the setting of locally monotone SPDEs, a natural question to address would be: 
\begin{question}
    Let Assumptions \ref{ass:compact-Gelfand} and \ref{ass:monotone-operator} be satisfied. Let $Z$ be a stochastic process such that almost surely $Z\in C^\gamma_tH$ with $\gamma\in (1/3, 1/2]$. Does there exists a rough path lift $\mathbf{Z}=(Z, \mathbb{Z})$ with $\mathbb{Z}\in C^{2\gamma}_t (H\otimes H)$, for which the problem 
    \[
    du=A(t, u)dt+\dd \mathbf{Z}, \qquad u(0)=u_0\in H,
    \]
    is well-posed?
\end{question}
As in the previous conjecture, this question might be also asked for intermediate spatial regularity scales, i.e. replacing various $H$'s with some interpolation space $V_\theta$. 

\section{Elements of Besov rough analysis and auxiliary results} \label{sec:Besov-rough}
For the convenience of the reader, let us recall the Sewing Lemma in a Besov space setting due to \cite{FRIZbesov} (see also \cite{gubi} \cite[Lemma 4.2]{frizhairer} for the more classical H\"older setting). We will restrict ourselves to the case of Nikolskii spaces sufficient for our purposes.

\subsection{The sewing lemma}
Let $E$ be a Banach space, $[0,T]$ a given interval. Let $\Delta_n$ denote the $n$-th simplex of $[0,T]$, i.e. $\Delta_n:\{(t_1, \dots, t_n)| 0\leq t_1\dots\leq t_n\leq T \} $. For measurable functions~$f:[0,T] \to E$ and $A:\Delta_2\to E$ we define $\delta f: \Delta_2 \to E$ and $\delta A: \Delta_3\to E$ by
\begin{align*}
    (\delta f)_{s,t}&:= f_t - f_s \hspace{2em} \text{ and } \hspace{2em}
    (\delta A)_{s,u,t}:=A_{s,t}-A_{s,u}-A_{u,t},
\end{align*}
respectively. 

Given $A$ and $\delta A$ as above, we define for $p\in (0, \infty]$ and $\tau\in (0, T]$
\[
\Omega_p(A, \tau):=\sup_{0\leq h\leq \tau} \left(\int_0^{T-h}\norm{A_{r, r+h}}_E^p \dd r\right)^{1/p},
\]
as well as 
\begin{equation*}
    \begin{split}
        \bar{\Omega}_p(\delta A, \tau):&=\sup_{\theta\in [0, 1]}\sup_{0\leq h\leq \tau}\left(\int_0^{T-h}\norm{(\delta A)_{r, r+\theta h, r+h}}_E^p \dd r\right)^{1/p}. 
    \end{split}
\end{equation*}
We further define for $\alpha > 0$
\begin{align*}
    \norm{A}_{\mathbb{B}^\alpha_{p, \infty}([0, T], E)} &:= \sup_{0\leq \tau\leq T} \tau^{-\alpha}\Omega_p(A, \tau), \\
    \norm{\delta A}_{\bar{\mathbb{B}}^\alpha_{p, \infty}([0, T], E)} &:=\sup_{0\leq \tau\leq T}\tau^{-\alpha}\bar{\Omega}_p(\delta A, \tau).
\end{align*}
Notice that
\begin{align} \label{eq:A-dom-deltaA}
\norm{\delta A}_{\bar{\mathbb{B}}^\alpha_{p,
\infty}} \lesssim \norm{A}_{\mathbb{B}^\alpha_{p,\infty}},
\end{align}
since $\bar{\Omega}_p(\delta A, \tau) \leq 3^{1/p \vee 1} \Omega_p(A, \tau)$.

Let $\mathcal{P} := \{0 = \tau_0 < \tau_1 < \ldots < \tau_N = 1 \}$ be a partition of $[0,1]$. For $(s,t) \in \Delta_2$ we set
\begin{align} \label{eq:sewing-germs}
    \mathscr{I}_\mathcal{P}A_{s,t}:= \sum_{i=1}^N A_{s + \tau_{i-1}(t-s), s+\tau_i(t-s) } \quad \text{ and } \quad \mathscr{R}_\mathcal{P}A := \mathscr{I}_\mathcal{P}A - A.
\end{align}

\begin{lemma}[Besov Sewing, \cite{FRIZbesov} Theorem 3.1]
\label{sewing}
Assume that $p_1, p_2 \in (0,\infty]$, $\alpha \in (0,1)$, $\gamma > 1 \vee 1/p_2$, and $A:\Delta_2\to E$ be measurable such that
\[
\norm{A}_{\mathbb{B}^\alpha_{p_1, \infty}([0, T], E)}+\norm{\delta A}_{\bar{\mathbb{B}}^\gamma_{p_2, \infty}([0, T], E)} < \infty.
\]
Then there exist $\mathscr{I}A \in B^\alpha_{p_1 \wedge p_2,\infty}([0,T];E)$ and $\mathscr{R}A \in \mathbb{B}^\gamma_{p_2,\infty}([0,T];E)$ such that
\begin{align}\label{eq:sewing-convergence}
    \lim_{\norm{\mathcal{P}} \to 0} \norm{\mathscr{I}_{\mathcal{P}}A - \delta \mathscr{I}A}_{\mathbb{B}^\gamma_{p_2,\infty}} =  \lim_{\norm{\mathcal{P}} \to 0} \norm{ \mathscr{R}_\mathcal{P}A -  \mathscr{R} A}_{\mathbb{B}^\gamma_{p_2,\infty}} = 0.
\end{align}
Moreover, we have the bounds
\begin{align}
    \norm{ \mathscr{R} A}_{\mathbb{B}^\gamma_{p_2,\infty}([0, T], E)} \lesssim_{p_2,\gamma} \norm{\delta A}_{\bar{\mathbb{B}}^\gamma_{p_2, \infty}([0, T], E)},
\end{align}
and
\begin{align} \label{est:sewed-seminorm}
    \seminorm{\mathscr{I}A}_{B^\alpha_{p_1 \wedge p_2,\infty}([0,T];E)} \lesssim_{T,p_1,p_2,\gamma} \norm{A}_{\mathbb{B}^\alpha_{p_1, \infty}([0, T], E)} + T^{\gamma - \alpha}\norm{\delta A}_{\bar{\mathbb{B}}^\gamma_{p_2, \infty}([0, T], E)}.
\end{align}
\end{lemma}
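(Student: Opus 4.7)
The plan is to construct $\mathscr{I}A$ as the limit of Riemann-type sums along dyadic refinements and to control the Nikolskii-type quasi-norms by summing contributions from $\delta A$ at each dyadic scale. Let $\mathcal{P}_n$ denote the normalized partition of $[0,1]$ with $2^n$ equal subintervals. A direct bookkeeping of the midpoints inserted in the refinement $\mathcal{P}_n \to \mathcal{P}_{n+1}$ gives the elementary identity
\[
\mathscr{I}_{\mathcal{P}_{n+1}}A_{s,t} - \mathscr{I}_{\mathcal{P}_n}A_{s,t} = -\sum_{i=0}^{2^n-1} (\delta A)_{u_i^n,\, u_i^n + 2^{-n-1}(t-s),\, u_{i+1}^n}, \quad u_i^n:=s+i\,2^{-n}(t-s),
\]
which iterates into a full dyadic telescoping expansion of $\mathscr{I}_{\mathcal{P}_n}A - A$.

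Specializing to $(s,t)=(r,r+h)$ and taking the $L^{p_2}$ quasi-norm in $r\in[0,T-h]$, translation-invariance of Lebesgue measure in $r$ identifies each of the $2^n$ summands with a copy of $\bar{\Omega}_{p_2}(\delta A, 2^{-n-1}h)\lesssim (2^{-n}h)^\gamma \norm{\delta A}_{\bar{\mathbb{B}}^\gamma_{p_2,\infty}}$. Applying Minkowski's inequality when $p_2\geq 1$ and the $p_2$-triangle inequality $\norm{\sum_i x_i}^{p_2}\leq\sum_i\norm{x_i}^{p_2}$ when $p_2<1$, both regimes combine into
\[
\norm{\mathscr{I}_{\mathcal{P}_{n+1}}A_{\cdot,\cdot+h}-\mathscr{I}_{\mathcal{P}_n}A_{\cdot,\cdot+h}}_{L^{p_2}}\lesssim 2^{n((1\vee 1/p_2)-\gamma)}\,h^\gamma\,\norm{\delta A}_{\bar{\mathbb{B}}^\gamma_{p_2,\infty}}.
\]
The threshold $\gamma>1\vee 1/p_2$ is exactly what forces the geometric series in $n$ to converge; this is the new ingredient beyond classical H\"older sewing, which uses only a pointwise triangle inequality at one fixed scale.

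Summability then yields Cauchyness of $\{\mathscr{I}_{\mathcal{P}_n}A_{s,t}\}$ in the quasi-norm uniformly over $(s,t)$, so I define its limit as a germ, verify additivity in its endpoints, and recognize it as the increment $\delta\mathscr{I}A$ of a path $\mathscr{I}A$. Setting $\mathscr{R}A := \delta\mathscr{I}A - A$ and passing to the limit in the summed dyadic bound yields $\norm{\mathscr{R}A}_{\mathbb{B}^\gamma_{p_2,\infty}}\lesssim_{p_2,\gamma}\norm{\delta A}_{\bar{\mathbb{B}}^\gamma_{p_2,\infty}}$. The seminorm estimate~\eqref{est:sewed-seminorm} then follows from $\delta\mathscr{I}A = A+\mathscr{R}A$, a quasi-triangle inequality at scale $\alpha$, and the factor $T^{\gamma-\alpha}$ absorbing the gap between the two regularity scales. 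Convergence~\eqref{eq:sewing-convergence} for an arbitrary vanishing-mesh sequence of partitions follows by inserting each $\mathcal{P}$ into a fine common dyadic refinement and repeating the dyadic summation.

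The main obstacle is the dyadic $L^{p_2}$ estimate above: one must simultaneously integrate over $r$ and telescope the $2^n$ germ-differences at level $n$ so that the correct exponent $\gamma>1\vee 1/p_2$ emerges from the quasi-triangle inequality appropriate to the regime of $p_2$, while gracefully handling the truncated integration range $[0,T-h]$ via the translation-invariance step. Once this quantitative dyadic estimate is in place, pointwise convergence and identification of $\mathscr{I}A$ as a genuine path are comparatively routine adaptations of the classical sewing bookkeeping.
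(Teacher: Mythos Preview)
The paper does not prove this lemma; it is quoted verbatim from \cite[Theorem~3.1]{FRIZbesov} and used as a black box. Your outline is precisely the argument given there: dyadic refinement, the telescoping identity expressing $\mathscr{I}_{\mathcal{P}_{n+1}}A-\mathscr{I}_{\mathcal{P}_n}A$ as a sum of $2^n$ copies of $\delta A$ at scale $2^{-n}h$, translation-invariance in $r$ to reduce each summand to $\bar\Omega_{p_2}(\delta A,2^{-n}h)$, and the dichotomy between Minkowski ($p_2\geq 1$) and the $p_2$-triangle inequality ($p_2<1$) that produces the exponent $1\vee 1/p_2$. Your identification of $\gamma>1\vee 1/p_2$ as exactly the summability threshold for the resulting geometric series is the heart of the matter and is correct.

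One ordering issue deserves attention. You list ``verify additivity in its endpoints'' before ``convergence for an arbitrary vanishing-mesh sequence of partitions'', but additivity of the dyadic limit $\Xi_{s,t}:=\lim_n \mathscr{I}_{\mathcal{P}_n}A_{s,t}$ does \emph{not} follow from dyadic convergence alone: the scaled dyadic partitions of $[s,u]$, $[u,t]$ and $[s,t]$ are not nested unless $u$ is a dyadic point of $[s,t]$, so one cannot directly pass to the limit in $\mathscr{I}_{\mathcal{P}_n}A_{s,u}+\mathscr{I}_{\mathcal{P}_n}A_{u,t}=\mathscr{I}_{\mathcal{P}_{?}}A_{s,t}$. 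The clean fix is to swap the order: first prove convergence along \emph{arbitrary} partitions with vanishing mesh (your final step, which is correct), and only then deduce additivity by taking partitions of $[s,t]$ that contain $u$ and refining on both subintervals. With that reordering your sketch is complete.
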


\subsection{Dominated convergence}
In order to perform our analysis, we need a result ensuring that the convergence $A^n\to A$ implies (under certain additional assumptions) the convergence $\mathscr{I}A^n\to \mathscr{I}A$. The sewing Lemma implies that such an additional assumption could consist in the convergence $\delta A^n\to \delta A$. However, we can relax this condition to $(\delta A^n)$ being uniformly bounded in certain spaces (Lemma~\ref{sewing convergence}). Towards this end, we need to first establish the following Lemma.

\begin{lemma}
    Suppose the setting of the previous Lemma~\ref{sewing}. Then, we have 
    \begin{equation}
           \Omega_{p_1\wedge p_2}(\delta \mathscr{I}A, h)\lesssim h^\alpha \norm{A}_{\mathbb{B}^\alpha_{p_1, \infty}([0, T], E)}+h^\gamma \norm{\delta A}_{\bar{\mathbb{B}}^\gamma_{p_2, \infty}([0, T], E)}.
           \label{finer sewing estimate}
    \end{equation}
    Moreover, we have for any $p\in (0, \infty]$ and $m\in \mathbb{N}$,
  \begin{equation}
          \Omega_{p}(\delta \mathscr{I}A, h)\leq m^{1 \vee 1/p} \, \Omega_{p}\left(\delta \mathscr{I}A, \frac{h}{m}\right).
          \label{scale estimate}
  \end{equation}
  \label{sewing convergence hilfe}
\end{lemma}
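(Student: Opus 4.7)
The key observation is that Lemma~\ref{sewing} already furnishes the decomposition $\delta \mathscr{I}A = A + \mathscr{R}A$ on $\Delta_2$, together with separate control on the two relevant Besov scales:
\[
\Omega_{p_1}(A,h) \leq h^\alpha \, \norm{A}_{\mathbb{B}^\alpha_{p_1,\infty}([0,T],E)}, \qquad \Omega_{p_2}(\mathscr{R}A,h) \lesssim h^\gamma \, \norm{\delta A}_{\bar{\mathbb{B}}^\gamma_{p_2,\infty}([0,T],E)},
\]
the first by definition of the $\mathbb{B}$-norm, the second by the bound on $\mathscr{R}A$ in Lemma~\ref{sewing}. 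To derive \eqref{finer sewing estimate}, I would first apply H\"older's inequality on the finite domain $[0,T-h']$ (for $h'\leq h$) to lower each integrability exponent to the common value $p_1 \wedge p_2$, at the cost of a multiplicative constant depending only on $T$, $p_1$, $p_2$, and then combine the two bounds via the (quasi-)triangle inequality in $L^{p_1\wedge p_2}$ applied to $\delta \mathscr{I}A = A + \mathscr{R}A$. Taking the supremum over $h'\in[0,h]$ and absorbing the $T$-factor into $\lesssim$ yields the first estimate.

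For the scaling inequality \eqref{scale estimate}, fix $h' \in [0,h]$ and use the telescoping identity
\[
\delta \mathscr{I}A_{r,r+h'} \;=\; \sum_{i=1}^m \delta \mathscr{I}A_{r + (i-1)h'/m,\, r + ih'/m}.
\]
Taking $L^p(dr)$-norm over $[0,T-h']$ and using Minkowski's inequality when $p \geq 1$, or the $p$-subadditivity $\norm{x+y}^p \leq \norm{x}^p + \norm{y}^p$ when $p < 1$, produces an overall prefactor of exactly $m^{1 \vee 1/p}$ (no extra constant). For each $i$, the translation $s = r + (i-1)h'/m$ identifies the $i$-th term with $\bigl(\int_{(i-1)h'/m}^{T-h' + (i-1)h'/m}\norm{\delta \mathscr{I}A_{s, s+h'/m}}^p ds\bigr)^{1/p}$, and the inclusion $[(i-1)h'/m, T-h'+(i-1)h'/m] \subseteq [0, T - h'/m]$ bounds it by $\Omega_p(\delta \mathscr{I}A, h'/m) \leq \Omega_p(\delta \mathscr{I}A, h/m)$. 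Taking the supremum over $h' \in [0,h]$ on the left-hand side concludes \eqref{scale estimate}.

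The main technical care is simply in bookkeeping: one must distinguish the cases $p<1$ and $p\geq 1$ in the telescoping argument to recover the sharp exponent $1 \vee 1/p$, and, in the first estimate, verify that the H\"older step from $p_i$ to $p_1\wedge p_2$ contributes only constants depending on $T$, $p_1$, $p_2$, thereby preserving the advertised scaling $h^\alpha + h^\gamma$ in the two terms on the right-hand side.
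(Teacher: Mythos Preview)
Your proof is correct and essentially matches the paper's. For \eqref{scale estimate} the arguments are identical (telescoping plus Minkowski for $p\geq 1$ and $p$-subadditivity for $p<1$). For \eqref{finer sewing estimate} the paper simply defers to the proof of the Besov sewing lemma in \cite{FRIZbesov}, whereas you extract the same information directly from the statement of Lemma~\ref{sewing} via the identity $\delta\mathscr{I}A = A + \mathscr{R}A$; this is slightly more self-contained but equivalent in content.
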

\begin{proof}
    Estimate \eqref{finer sewing estimate} essentially follows from the proof of the Besov sewing Lemma, see \cite[p.190]{FRIZbesov}. Concerning \eqref{scale estimate}, we have that 
    \begin{equation*}
        \begin{split}
               \Omega_{p}(\delta \mathscr{I}A, h)&=\sup_{0\leq \mu\leq h}\left(\int_0^{T-\mu} \norm{(\delta \mathscr{I}A)_{s, s+\mu}}_E^{p} \dd s\right)^{1/p}\\
               &=\sup_{0\leq \tau\leq \frac{h}{m}}\left(\int_0^{T-m\tau } \norm{(\mathscr{I}A)_{s, s+m\tau}}_E^{p}\dd s\right)^{1/p}
        \end{split}
    \end{equation*}
 Now remark that since $(\mathscr{I}A)_{s, s+m\tau}=\mathscr{I}A_{s+m\tau }-\mathscr{I}A_{s}$, i.e. the sewing is an increment, we have 
 \begin{equation*}
     \begin{split}
         \int_0^{T-m\tau } \norm{(\delta \mathscr{I}A)_{s, s+m\tau}}_E^{p}\dd s&=\int_0^{T-m\tau} \norm{\sum_{k=0}^{m-1}(\mathscr{I}A)_{s+(k+1)\tau}-(\mathscr{I}A)_{s+k\tau}}^{p}_E\dd s\\
         &\leq m^{(p-1)\vee 0}\sum_{k=0}^{m-1}\int_0^{T-m\tau }\norm{(\mathscr{I}A)_{r+(s+1)\tau}-(\mathscr{I}A)_{r+s\tau}}^{p}_E\dd s\\
         &=m^{(p-1)\vee 0}\sum_{k=0}^{m-1}\int_{k\tau}^{T-(m-k)\tau }\norm{(\mathscr{I}A)_{s+\tau}-(\mathscr{I}A)_{s}}^{p}_E\dd s\\
         &\leq m^{p \vee 1}\int_0^{T-\tau}\norm{(\mathscr{I}A)_{s+\tau}-(\mathscr{I}A)_{s}}^{p}_E\dd s
     \end{split}
 \end{equation*}
 and thus 
 \begin{equation*}
     \begin{split}
         \Omega_{p}(\delta \mathscr{I}A, h)&\leq m^{1 \vee 1/p} \sup_{0\leq \tau\leq \frac{h}{m}}\left(\int_0^{T-\tau}\norm{(\mathscr{I}A)_{s+\tau}-(\mathscr{I}A)_{s}}^{p}_E\dd s\right)^{1/p}\\
         &=m^{1 \vee 1/p} \, \Omega_{p}\left(\delta \mathscr{I}A, \frac{h}{m}\right).
     \end{split}
 \end{equation*}
\end{proof}

Using Lemma \ref{sewing convergence hilfe}, we can now proceed to the following extension of Lemma A.2 in \cite{Galeati2021} to the Besov setting we are concerned with. 

\begin{lemma}
\label{sewing convergence}
Let $p_1, p_2 \in (0,\infty]$, $\alpha \in (0,1)$, $\gamma > 1\vee 1/p_2$, and $A, A^n: \Delta_2 \to E$ be measurable and satisfy for some $R > 0$
\begin{align*}
    \sup_{n \in \mathbb{N}}\norm{A^n}_{\mathbb{B}^\alpha_{p_1, \infty}E}  + \norm{A}_{\mathbb{B}^\alpha_{p_1, \infty}E}+  \sup_{n\in \mathbb{N}}\norm{\delta A^n}_{\bar{\mathbb{B}}^\gamma_{p_2, \infty}} + \norm{\delta A}_{\bar{\mathbb{B}}^\gamma_{p_2, \infty}}\leq R.
\end{align*}
Then $\norm{A^n-A}_{\mathbb{B}^\alpha_{p_1, \infty}}\to 0$ implies $\seminorm{\mathscr{I}A-\mathscr{I}A^n}_{\mathbb{B}^\alpha_{p_1\wedge p_2, \infty}}\to 0$ as $n\to \infty$. Moreover, for $n\in \mathbb{N}$ sufficiently large, we have 
\[
\seminorm{\mathscr{I}A-\mathscr{I}A^n}_{\mathbb{B}^\alpha_{p_1\wedge p_2, \infty}}\lesssim (1+R)\norm{A^n-A}_{\mathbb{B}^\alpha_{p_1, \infty}}^{\frac{\gamma-1}{\gamma-\alpha}}.
\]
\end{lemma}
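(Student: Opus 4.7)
I will establish the quantitative bound directly; the qualitative convergence statement $\seminorm{\mathscr{I}A - \mathscr{I}A^n}_{\mathbb{B}^\alpha_{p_1 \wedge p_2, \infty}} \to 0$ then follows at once, since $(\gamma - 1)/(\gamma - \alpha) > 0$. By linearity of the sewing map, setting $B^n := A - A^n$ and $\epsilon_n := \norm{B^n}_{\mathbb{B}^\alpha_{p_1,\infty}}$ gives $\mathscr{I}B^n = \mathscr{I}A - \mathscr{I}A^n$, $\norm{\delta B^n}_{\bar{\mathbb{B}}^\gamma_{p_2,\infty}} \leq 2R$, and $\epsilon_n \to 0$. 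The refined Sewing estimate \eqref{finer sewing estimate} applied to $B^n$ yields, for every $h \in (0,T]$,
\begin{align*}
    \Omega_{p_1 \wedge p_2}(\delta \mathscr{I}B^n, h) \lesssim h^\alpha \epsilon_n + h^\gamma R.
\end{align*}
After dividing by $h^\alpha$, this is small only for small $h$: a direct appeal to the Sewing bound \eqref{est:sewed-seminorm} yields merely the non-vanishing bound $\epsilon_n + T^{\gamma - \alpha} R$. The point is therefore to combine this with the subdivision inequality \eqref{scale estimate}.

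I introduce the critical scale $h_\ast := (\epsilon_n / R)^{1/(\gamma - \alpha)}$, which equalizes the two terms above and satisfies $h_\ast \leq T$ for $n$ sufficiently large. On the fine range $h \leq h_\ast$ the refined estimate directly yields $h^{-\alpha} \Omega_{p_1 \wedge p_2}(\delta \mathscr{I}B^n, h) \lesssim \epsilon_n$. On the coarse range $h \in (h_\ast, T]$ I apply \eqref{scale estimate} with $m := \lceil h / h_\ast \rceil$, so that $h/m \leq h_\ast$:
\begin{align*}
    \Omega_{p_1 \wedge p_2}(\delta \mathscr{I}B^n, h) \leq m^{1 \vee 1/(p_1 \wedge p_2)} \Omega_{p_1 \wedge p_2}(\delta \mathscr{I}B^n, h/m) \lesssim m^{1 \vee 1/(p_1 \wedge p_2)} h_\ast^\alpha \epsilon_n.
\end{align*}
In the case $p_1 \wedge p_2 \geq 1$ corresponding to the stated exponent, one has $1 \vee 1/(p_1 \wedge p_2) = 1$ and $m \lesssim h/h_\ast$; dividing by $h^\alpha$ and substituting $h_\ast$ gives
\begin{align*}
    h^{-\alpha} \Omega_{p_1 \wedge p_2}(\delta \mathscr{I}B^n, h) \lesssim T^{1-\alpha}\, h_\ast^{\alpha - 1}\, \epsilon_n = T^{1-\alpha}\, R^{(1-\alpha)/(\gamma-\alpha)}\, \epsilon_n^{(\gamma - 1)/(\gamma - \alpha)}.
\end{align*}
Since $(1 - \alpha)/(\gamma - \alpha) \in (0,1)$, I bound $R^{(1-\alpha)/(\gamma - \alpha)} \leq 1 + R$; and since $\epsilon_n \leq \epsilon_n^{(\gamma - 1)/(\gamma - \alpha)}$ for $\epsilon_n \leq 1$, the fine-range bound is subsumed by this coarse-range bound. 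Taking the supremum over $h \in (0, T]$ then produces the asserted estimate.

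The main obstacle is to recognize that \eqref{finer sewing estimate} alone cannot deliver smallness uniformly in the scale $h$, and that the subdivision inequality \eqref{scale estimate} is precisely the right tool to propagate smallness from the fine scale $h \leq h_\ast$, where the refined estimate is sharp, out to the coarse scale $h \sim T$, where it is not. The exponent $(\gamma - 1)/(\gamma - \alpha)$ that emerges is the Gagliardo--Nirenberg-type interpolation between the uniform $\alpha$-Besov control of $\mathscr{I}B^n$ (of size $1+R$) and the higher $\gamma$-regularity carried by the remainder $\mathscr{R}B^n = \delta \mathscr{I}B^n - B^n$, which is why one obtains only a Hölder power of $\epsilon_n$ rather than $\epsilon_n$ itself.
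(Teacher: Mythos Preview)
Your proof is correct and follows essentially the same approach as the paper: both arguments reduce by linearity to bounding $\mathscr{I}B^n$ with $B^n = A - A^n$, and both combine the refined sewing estimate \eqref{finer sewing estimate} with the subdivision inequality \eqref{scale estimate} from Lemma~\ref{sewing convergence hilfe} to extract the interpolation exponent $(\gamma-1)/(\gamma-\alpha)$. The only difference is organizational: the paper chooses a single $m$ independent of $h$ (namely $m^{1-\alpha} \simeq (\norm{\delta B^n}/\norm{B^n})^{(1-\alpha)/(\gamma-\alpha)}$) and then takes the supremum over $h$, whereas you split into the ranges $h \leq h_\ast$ and $h > h_\ast$ and pick $m = \lceil h/h_\ast \rceil$ depending on $h$ in the second range. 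Both routes lead to the same bound; your scale-splitting presentation is perhaps slightly more transparent about why the critical scale $h_\ast$ appears, while the paper's global choice of $m$ is marginally shorter.
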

\begin{proof}
   Since the sewing map $\mathscr{I}$ is linear, assume without loss of generality that $A=0$. We will verify the assertion for $p = p_1 \wedge p_2 \in [1,\infty]$. The case $p \in (0,1)$ follows analogously. Combining the two estimates in Lemma \ref{sewing convergence hilfe}, we obtain
   \[
   \Omega_{p}(\delta \mathscr{I}A^n, h)\lesssim m^{1-\alpha}h^\alpha \norm{A^n}_{\mathbb{B}^\alpha_{p_1, \infty} }+m^{1-\gamma}h^\gamma \norm{\delta A^n}_{\bar{\mathbb{B}}^\gamma_{p_2, \infty}}.
   \]
   As we have by assumption 
   \[
   \frac{\norm{\delta A^n}_{\bar{\mathbb{B}}^\gamma_{p_2, \infty}}}{\norm{A^n}_{\mathbb{B}^\alpha_{p_1, \infty} }}\to \infty,
   \]
   we (for $n$ sufficiently big) choose $m \in \mathbb{N}$ such that $m^{1-\alpha}\simeq (\norm{\delta A^n}_{\bar{\mathbb{B}}^\gamma_{p_2, \infty}}/\norm{A^n}_{\mathbb{B}^\alpha_{p_1, \infty} })^\theta$. By assumption, this yields the estimate
   \[
      \Omega_{p}(\delta \mathscr{I}A^n, h)\lesssim (1+R)h^\alpha \norm{A^n}_{\mathbb{B}^\alpha_{p_1, \infty} }^{1-\theta}+(1+R)h^\gamma \norm{A^n}_{\mathbb{B}^\alpha_{p_1, \infty} }^{\theta \frac{\gamma-1}{1-\alpha}}.
   \]
   Choosing $\theta=(1-\alpha)/(\gamma-\alpha)$, deviding by $h^\alpha$ and taking the supremum over $h\in [0, T]$ then yields the claim. 
\end{proof}
In the classical H\"older setting, it is well known that if two germs $A, A'$ give rise to two integrals $\mathscr{I}A$, $\mathscr{I}A'$ via the sewing lemma and satisfy $|A_{s,t}-A'_{s,t}|\lesssim |t-s|^\beta$ for $\beta>1$, then $\mathscr{I}A=\mathscr{I}A'$. The following is a generalization of this to the Nikolskii setting. The lemma justifies the use of different local approximations that upon 'sewing' all yield the same integral. 
\begin{lemma}
\label{local approx doesnt matter}
Let $p_1, p_2 \in (0,\infty]$, $\alpha \in (0,1)$, $\gamma > 1\vee 1/p_2$, and $A, A': \Delta_2 \to E$ be measurable and satisfy 
\begin{align*}
   \norm{A'}_{\mathbb{B}^\alpha_{p_1, \infty}E}  + \norm{A}_{\mathbb{B}^\alpha_{p_1, \infty}E}+ \norm{\delta A'}_{\bar{\mathbb{B}}^\gamma_{p_2, \infty}} + \norm{\delta A}_{\bar{\mathbb{B}}^\gamma_{p_2, \infty}} < \infty.
\end{align*}
 Suppose furthermore that 
    \[
    \norm{A-A'}_{\mathbb{B}^\beta_{p_1, \infty}([0, T], E)}<\infty
    \]
    for some $\beta>1$. Then $\mathscr{I}A=\mathscr{I}A'$.
\end{lemma}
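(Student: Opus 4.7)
The plan is to set $D := A-A'$ and prove that the Besov sewing $\mathscr{I}D$ vanishes; by the linearity of the sewing map (which follows from the linearity of the Riemann germs $\mathscr{I}_\mathcal{P}$ in \eqref{eq:sewing-germs} together with the uniqueness of the limit in Lemma~\ref{sewing}), this yields $\mathscr{I}A = \mathscr{I}A'$. To apply Lemma~\ref{sewing} to $D$, first note that $\norm{D}_{\mathbb{B}^\alpha_{p_1,\infty}}\leq \norm{A}_{\mathbb{B}^\alpha_{p_1,\infty}} + \norm{A'}_{\mathbb{B}^\alpha_{p_1,\infty}} < \infty$ and $\norm{\delta D}_{\bar{\mathbb{B}}^\gamma_{p_2,\infty}} \leq \norm{\delta A}_{\bar{\mathbb{B}}^\gamma_{p_2,\infty}} + \norm{\delta A'}_{\bar{\mathbb{B}}^\gamma_{p_2,\infty}} < \infty$, so $\mathscr{I}D$ is well-defined.

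The key step is to exploit the additional integrability $\norm{D}_{\mathbb{B}^\beta_{p_1,\infty}} < \infty$ with $\beta>1$ to show that the Riemann sums $\mathscr{I}_\mathcal{P}D$ collapse to zero as $|\mathcal{P}|\to 0$. For a partition $\mathcal{P}=\{0=\tau_0<\cdots<\tau_N=1\}$ of $[0,1]$, applying Minkowski's inequality in $L^{p_1}([0,T-h])$ (or its quasi-Banach analog when $p_1<1$) followed by the translation-invariance of Lebesgue measure gives
\begin{align*}
    \Omega_{p_1}(\mathscr{I}_\mathcal{P}D,h) \leq \sum_{i=1}^N \Omega_{p_1}\bigl(D,(\tau_i-\tau_{i-1})h\bigr) \leq h^\beta \, \norm{D}_{\mathbb{B}^\beta_{p_1,\infty}} \sum_{i=1}^N(\tau_i-\tau_{i-1})^\beta.
\end{align*}
Since $\sum_i(\tau_i-\tau_{i-1})^\beta \leq |\mathcal{P}|^{\beta-1}\sum_i(\tau_i-\tau_{i-1}) = |\mathcal{P}|^{\beta-1}$, dividing by $h^\beta$ and taking the supremum in $h$ yields
\begin{align*}
    \norm{\mathscr{I}_\mathcal{P}D}_{\mathbb{B}^\beta_{p_1,\infty}} \leq |\mathcal{P}|^{\beta-1}\,\norm{D}_{\mathbb{B}^\beta_{p_1,\infty}} \xrightarrow[|\mathcal{P}|\to 0]{} 0.
\end{align*}

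Finally, combine this with the sewing convergence \eqref{eq:sewing-convergence}, which asserts $\mathscr{I}_\mathcal{P}D - \delta\mathscr{I}D \to 0$ in $\mathbb{B}^\gamma_{p_2,\infty}$. For every fixed $h\in(0,T)$ this forces both $r \mapsto \mathscr{I}_\mathcal{P}D_{r,r+h}\to 0$ in $L^{p_1}([0,T-h];E)$ and $r \mapsto \mathscr{I}_\mathcal{P}D_{r,r+h}\to (\delta\mathscr{I}D)_{r,r+h}$ in $L^{p_2}([0,T-h];E)$; passing to a common subsequence converging almost everywhere yields $(\delta\mathscr{I}D)_{r,r+h}=0$ for a.e.\ $r$. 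By Fubini $\delta\mathscr{I}D \equiv 0$ on $\Delta_2$ up to a null set, so $\mathscr{I}D$ is constant and therefore vanishes as an element of $B^\alpha_{p_1\wedge p_2,\infty}([0,T];E)$ (where this space, being controlled by Lemma~\ref{sewing} only through the seminorm \eqref{est:sewed-seminorm}, is naturally taken modulo additive constants). The main subtlety is exactly this last step: reconciling the two limits, which live a priori in different Besov-type topologies on $\Delta_2$, through a common pointwise-a.e.\ notion of convergence in order to legitimately identify $\delta\mathscr{I}D$ with zero.
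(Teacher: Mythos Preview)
Your argument is largely sound and takes a genuinely different route from the paper. The paper's proof applies the refined sewing estimate \eqref{finer sewing estimate} of Lemma~\ref{sewing convergence hilfe} to $D=A-A'$ with the germ regularity $\beta$ in place of $\alpha$, obtaining directly
\[
\Omega_{p_1\wedge p_2}(\delta\mathscr{I}D,h)\lesssim h^\beta\norm{D}_{\mathbb{B}^\beta_{p_1,\infty}}+h^\gamma\norm{\delta D}_{\bar{\mathbb{B}}^\gamma_{p_2,\infty}}.
\]
Since $\beta,\gamma>1$, this forces $\partial_t\mathscr{I}D=0$ weakly, and the normalisation $\mathscr{I}D_0=0$ gives $\mathscr{I}D\equiv 0$. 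Your approach instead shows the Riemann germs $\mathscr{I}_\mathcal{P}D$ themselves tend to zero in $\mathbb{B}^\beta_{p_1,\infty}$ and then matches this against the sewing convergence $\mathscr{I}_\mathcal{P}D\to\delta\mathscr{I}D$ in $\mathbb{B}^\gamma_{p_2,\infty}$ via uniqueness of limits in measure. This is more elementary in that it avoids Lemma~\ref{sewing convergence hilfe}, at the cost of a slightly fiddly identification step at the end (which you handle correctly; the ``modulo constants'' phrasing could be replaced by simply invoking $\mathscr{I}D_0=0$).

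There is, however, a genuine gap when $p_1<1$. Your displayed inequality $\Omega_{p_1}(\mathscr{I}_\mathcal{P}D,h)\leq\sum_i\Omega_{p_1}(D,(\tau_i-\tau_{i-1})h)$ uses Minkowski and is valid only for $p_1\geq 1$. The quasi-Banach analogue for $p_1<1$ gives instead $\Omega_{p_1}(\mathscr{I}_\mathcal{P}D,h)^{p_1}\leq\sum_i\Omega_{p_1}(D,(\tau_i-\tau_{i-1})h)^{p_1}$, leading to
\[
\norm{\mathscr{I}_\mathcal{P}D}_{\mathbb{B}^\beta_{p_1,\infty}}\leq\Bigl(\sum_i(\tau_i-\tau_{i-1})^{\beta p_1}\Bigr)^{1/p_1}\norm{D}_{\mathbb{B}^\beta_{p_1,\infty}},
\]
and for an equispaced partition with $N$ pieces this factor behaves like $N^{(1-\beta p_1)/p_1}$, which \emph{diverges} when $\beta p_1<1$. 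Since the lemma only assumes $\beta>1$, the regime $p_1\in(0,1)$, $\beta\in(1,1/p_1]$ is not covered by your argument. The paper's proof sidesteps this entirely because it never sums over a partition: the bound on $\Omega_{p_1\wedge p_2}(\delta\mathscr{I}D,h)$ comes from the single-step decomposition $\delta\mathscr{I}D=D+\mathscr{R}D$ and holds for all $p_1\in(0,\infty]$.
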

\begin{proof}
    By \eqref{finer sewing estimate} of Lemma \ref{sewing convergence hilfe} and the linearity of the sewing, we have that 
    \begin{align*}
    \Omega_{p_1\wedge p_2}(\delta \mathscr{I}(A-A'), h)&\lesssim h^\beta \norm{A-A'}_{\mathbb{B}^\beta_{p_1, \infty}([0, T], E)}\\&+h^\gamma (\norm{\delta A}_{\bar{\mathbb{B}}^\gamma_{p_2, \infty}([0, T], E)}+ \norm{\delta A'}_{\bar{\mathbb{B}}^\gamma_{p_2, \infty}([0, T], E)}).
    \end{align*}
   This readily implies
    \[
    \left(\int_0^{T-h}\norm{\mathscr{I}(A-A')_{r+h}-\mathscr{I}(A-A')_r}_E^{p_1\wedge p_2}dr\right)^{\frac{1}{p_1\wedge p_2}}\lesssim h^\beta+h^\gamma,
    \]
    from which we conclude that the weak derivative $\partial_t (\mathscr{I}(A-A'))\in L^{p_1\wedge p_2}([0, T], E)$ exists and is equal to zero. As $\mathscr{I}(A-A')_0=0$, this implies the claim. 
\end{proof}

\section{Young integration} \label{sec:Young-integration}
The sewing lemma is a powerful tool for the construction of a generalized integration theory. In this section, we define the Young integral $\mathscr{S}$ that can be written heuristically as
\begin{align} \label{eq:Young-integral}
  (u,I) \mapsto  \mathscr{S}_t(u,\dd I) = \int_0^t \left(  u_s, \dd I_s \right)_H. 
\end{align}
It is a non-symmetric bi-linear operator that intertwines the integrand~$u$ and the integrator~$I$ by sewing function values of $u$ and infinitesimal increments of $I$. We show that the Young integral generalises classical Bochner integration for sufficiently regular integrators and naturally occurs in the evolution equation for the squared $H$-norm, cf. Theorem~\ref{chain rule norm}. Finally, we derive a chain rule for sufficiently smooth transformations of integral equations that involve the Young integral.

\subsection{Young integrals}
The next theorem rigorously constructs~\eqref{eq:Young-integral} leveraging the sewing lemma on Besov spaces.

\begin{theorem}[Young integral] \label{thm:young-integral}
Let $\mu \in (0,\infty]$, and $p,q \in [\mu,\infty]$ such that $1/\mu = 1/p + 1/q$.
Moreover, let $\alpha \in (0,1)$ and $\beta > 0$ such that $\alpha + \beta > 1 \vee 1/\mu$. 

There exists a bi-linear operator $ \mathscr{S} :  B^{\alpha}_{p,\infty}H \times   B^{\beta}_{q,\infty}H \to B^{\beta}_{\mu,\infty}$ called Young integral such that
\begin{align}\label{eq:stability-Young-integral}
    \seminorm{\mathscr{S}(u,\dd I)}_{B^{\beta}_{\mu,\infty}} \lesssim_{T,\mu,\alpha,\beta} \norm{u}_{B^{\alpha}_{p,\infty}H} \seminorm{I}_{B^{\beta}_{q, \infty}H}. 
\end{align}
\end{theorem}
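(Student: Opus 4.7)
The plan is to construct $\mathscr{S}(u,\dd I)$ as the Besov sewing of the natural germ obtained by freezing the integrand at the left endpoint. Concretely, define $A: \Delta_2 \to \mathbb{R}$ by
\[
A_{s,t} := \left( u_s, I_t - I_s \right)_H,
\]
and set $\mathscr{S}(u,\dd I) := \mathscr{I}A$, the output of Lemma~\ref{sewing} applied to $A$. Bi-linearity of $\mathscr{S}$ is then automatic: $A$ is bi-linear in $(u,I)$ and the sewing map $\mathscr{I}$ is linear, so it suffices to verify the quantitative estimates needed to invoke the sewing lemma.

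The first step is a direct algebraic computation giving
\[
(\delta A)_{s,u,t} = A_{s,t} - A_{s,u} - A_{u,t} = -\left( u_u - u_s,\, I_t - I_u \right)_H,
\]
after which Cauchy--Schwarz in $H$ and Hölder in time (with exponents $p/\mu$ and $q/\mu$, which is legitimate since $1/\mu = 1/p + 1/q$) yield, for every $h \in (0,T]$ and $\theta \in [0,1]$,
\begin{align*}
\left( \int_0^{T-h} \abs{A_{r,r+h}}^\mu \dd r \right)^{1/\mu}
& \leq \norm{u}_{L^p_t H} \left( \int_0^{T-h} \norm{I_{r+h} - I_r}_H^q \dd r \right)^{1/q}
\leq h^\beta \norm{u}_{L^p_t H}\, \seminorm{I}_{B^\beta_{q,\infty} H}, \\
\left( \int_0^{T-h} \abs{(\delta A)_{r,r+\theta h,r+h}}^\mu \dd r \right)^{1/\mu}
& \leq (\theta h)^\alpha \seminorm{u}_{B^\alpha_{p,\infty}H} \cdot \big((1-\theta)h\big)^\beta \seminorm{I}_{B^\beta_{q,\infty}H} \\
& \leq h^{\alpha+\beta}\, \seminorm{u}_{B^\alpha_{p,\infty}H}\, \seminorm{I}_{B^\beta_{q,\infty}H}.
\end{align*}
Taking suprema in $h$ and $\theta$ and dividing by the appropriate power of $h$ produces
\[
\norm{A}_{\mathbb{B}^\beta_{\mu,\infty}} \lesssim \norm{u}_{L^p_t H}\, \seminorm{I}_{B^\beta_{q,\infty} H},
\qquad
\norm{\delta A}_{\bar{\mathbb{B}}^{\alpha+\beta}_{\mu,\infty}} \lesssim \seminorm{u}_{B^\alpha_{p,\infty}H}\, \seminorm{I}_{B^\beta_{q,\infty}H}.
\]

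Finally, apply Lemma~\ref{sewing} with $E = \mathbb{R}$, $p_1 = p_2 = \mu$, exponent $\beta \in (0,1)$ for $A$, and $\gamma = \alpha + \beta$ for $\delta A$; the standing hypothesis $\alpha + \beta > 1 \vee 1/\mu$ is exactly the condition $\gamma > 1 \vee 1/p_2$ required by the sewing lemma. This yields $\mathscr{I}A \in B^\beta_{\mu,\infty}$ together with the bound~\eqref{est:sewed-seminorm}, namely
\[
\seminorm{\mathscr{I}A}_{B^\beta_{\mu,\infty}} \lesssim_{T,\mu,\alpha,\beta}
\norm{A}_{\mathbb{B}^\beta_{\mu,\infty}} + T^\alpha \norm{\delta A}_{\bar{\mathbb{B}}^{\alpha+\beta}_{\mu,\infty}}
\lesssim \norm{u}_{B^\alpha_{p,\infty}H}\, \seminorm{I}_{B^\beta_{q,\infty}H},
\]
which is precisely~\eqref{eq:stability-Young-integral}. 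The only delicate book-keeping is pairing the Hölder exponent $p/\mu \in [1,\infty]$ with the time-increment of $u$ and $q/\mu$ with that of $I$, and checking the quasi-norm inequalities remain valid when $\mu < 1$; no deeper obstacle arises, as the construction reduces entirely to the already established Besov sewing lemma.
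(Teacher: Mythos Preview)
Your proof is correct and essentially identical to the paper's own argument: both define the germ $A_{s,t} = (u_s, I_t - I_s)_H$, bound $\norm{A}_{\mathbb{B}^\beta_{\mu,\infty}}$ and $\norm{\delta A}_{\bar{\mathbb{B}}^{\alpha+\beta}_{\mu,\infty}}$ via Cauchy--Schwarz in $H$ and H\"older in time with exponents $p/\mu$, $q/\mu$, and then invoke the Besov sewing lemma with $p_1 = p_2 = \mu$ and $\gamma = \alpha+\beta$. There is nothing to add.
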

\begin{proof}
Let $u \in B^{\alpha}_{p,\infty}H$ and $I \in B^{\beta}_{q,\infty}H$. We construct~$\mathscr{S}$ as the sewing of the germ
\begin{align} \label{eq:germ}
 \Delta_2 \ni (s,t) \mapsto  (u,\dd I)_{s,t} := \left( u_s, I_t - I_s \right)_H .
\end{align}
Since we want to apply the Besov sewing lemma (Lemma~\ref{sewing}), we need to check that $(u,\dd I)$ is sew-able. We will show that
\begin{align} \label{eq:nec-bound-young-int}
   \norm{(u,\dd I)}_{\mathbb{B}^\beta_{\mu, \infty}}+\norm{\delta (u,\dd I)}_{\bar{\mathbb{B}}^{\alpha + \beta}_{\mu, \infty}} < \infty.
\end{align}

By H\"older's inequality
   \begin{align*}
    \left(\int_0^{T-h}\abs{(u,\dd I)_{r,r+h}}^\mu \dd r\right)^{1/\mu} &\leq \norm{u}_{L^p_t H}\left(\int_0^{T-h}\norm{I_{r+h}-I_r}_H^q \dd r\right)^{1/q}\\
            &\leq  \norm{u}_{L^p_t H}\seminorm{I}_{B^\beta_{q, \infty}H}h^{\beta}.
    \end{align*}
This implies $ \norm{(u,\dd I)}_{\mathbb{B}^\beta_{\mu, \infty}} \leq  \norm{u}_{L^p_t H}\seminorm{I}_{B^\beta_{q, \infty}H}$. 

Similarly, using H\"older's inequality and $\delta (u,\dd I)_{s, r, t} =( u_s-u_r,I_t-I_r)_H$,
\begin{align*}
    &\left(\int_0^{T-h}| \delta (u,\dd I)_{r, r+\theta h, r+h}|^{\mu} \dd r\right)^{1/\mu}\\
    &\hspace{2em} \leq \left(\int_0^{T-h} \norm{u_r - u_{r+\theta h}}_H^p \dd r\right)^{1/p}\left(\int_0^{T-h}\norm{I_{r+h}-I_{r+\theta h}}_H^q \dd r \right)^{1/q}\\
    &\hspace{2em}  \leq h^{\alpha + \beta}\seminorm{u}_{B^{\alpha}_{p, \infty}H}\seminorm{I}_{B^{\beta}_{q, \infty}H}.
\end{align*}
It follows $\norm{\delta (u,\dd I)}_{\bar{\mathbb{B}}^{\alpha + \beta}_{\mu, \infty}} \leq \seminorm{u}_{B^{\alpha}_{p, \infty}H}\seminorm{I}_{B^{\beta}_{q, \infty}H}$.

We have verified~\eqref{eq:nec-bound-young-int} and, thus, $\mathscr{S}(u, \dd I) := \mathscr{I}[ (u,\dd I)] \in B^\gamma_{\mu,\infty}$ is well-defined by Lemma~\ref{sewing}. Estimate~\eqref{eq:stability-Young-integral} follows from the stability of the sewing~\eqref{est:sewed-seminorm}. Indeed,
\begin{align*}
        \seminorm{\mathscr{I}[(u,\dd I)]}_{B^\alpha_{\mu,\infty}} &\lesssim_{T,\mu,\alpha,\beta} \norm{(u,\dd I)}_{\mathbb{B}^\alpha_{\mu, \infty}} + T^{\beta}\norm{\delta (u,\dd I)}_{\bar{\mathbb{B}}^{\alpha +\beta}_{\mu, \infty}} \\
        &\lesssim_{T,\mu,\alpha,\beta} \norm{u}_{B^{\alpha}_{p,\infty}H} \seminorm{I}_{B^{\beta}_{q, \infty}H}.
\end{align*}

The bi-linearity of $\mathscr{S}$ is inherited from linearity of the sewing~$\mathscr{I}$, and bi-linearity of the germ~$(u,I) \to (u,\dd I)$. 
\end{proof}


Next, we identify the Young integral as a classical Bochner integral provided the integrator~$I$ is sufficiently regular.

\begin{lemma} \label{lem:Identification-Young}
Let $u \in L^\infty_t H \cap B^{1/2}_{2,\infty} H$ and $I \in W^{1,\infty}_t H$. Then for all $t \in [0,T]$
\begin{align} \label{eq:Identification-Young}
    \mathscr{S}_t(u,\dd I) = \int_0^t \left(  u_r,\partial_t I_r \right)_H \dd r.
\end{align}
\end{lemma}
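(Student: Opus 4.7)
The strategy is to exhibit the Bochner integral $J_t := \int_0^t (u_r, \partial_t I_r)_H \, dr$ as the sewing of an auxiliary germ and then invoke Lemma~\ref{local approx doesnt matter} to identify it with $\mathscr{S}_t(u, \dd I)$.

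\textbf{Step 1 (Two germs).} Let $A_{s,t} := (u_s, I_t - I_s)_H$ be the germ used in the proof of Theorem~\ref{thm:young-integral}, so that by definition $\mathscr{S}_t(u,\dd I) = \mathscr{I}A(t)$. Introduce the alternative germ
\[
A'_{s,t} := \int_s^t (u_r, \partial_t I_r)_H \, \dd r = J_t - J_s.
\]
Since $A'$ is already a true increment, $\delta A' \equiv 0$, and the Riemann sums $\mathscr{I}_{\mathcal P} A'_{0,t}$ telescope to $J_t$ for every partition; hence $\mathscr{I}A'(t) = J_t$.

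\textbf{Step 2 (Sewing hypotheses).} Apply Lemma~\ref{local approx doesnt matter} with parameters $p_1 = p_2 = 2$, $\alpha = 1/2$, $\gamma = 3/2$ (note $\gamma > 1 \vee 1/p_2 = 1$). For $A$, the required bounds $\norm{A}_{\mathbb{B}^{1/2}_{2,\infty}} + \norm{\delta A}_{\bar{\mathbb{B}}^{3/2}_{2,\infty}} < \infty$ are exactly those verified in the proof of Theorem~\ref{thm:young-integral} (taking $p=2$, $q=\infty$, $\mu=2$, $\alpha = 1/2$, $\beta = 1$ there; note $u \in B^{1/2}_{2,\infty}H$ and $I \in W^{1,\infty}_t H \hookrightarrow B^{1}_{\infty,\infty}H$). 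For $A'$, the elementary bound $\abs{A'_{s,t}} \leq \norm{u}_{L^\infty_t H} \norm{\partial_t I}_{L^\infty_t H} \, (t-s)$ gives $A' \in \mathbb{B}^1_{\infty,\infty} \hookrightarrow \mathbb{B}^{1/2}_{2,\infty}$, while $\delta A' = 0$ takes care of the other norm.

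\textbf{Step 3 (The key estimate).} Write
\[
A_{s,t} - A'_{s,t} = \int_s^t (u_s - u_r, \partial_t I_r)_H \, \dd r,
\]
so that by Cauchy--Schwarz, for $h \in (0,T]$,
\[
\abs{A_{s,s+h} - A'_{s,s+h}}^2 \leq \norm{\partial_t I}_{L^\infty_t H}^2 \, h \int_0^h \norm{u_s - u_{s+\sigma}}_H^2 \, \dd \sigma.
\]
Integrating in $s$ and applying Fubini,
\[
\int_0^{T-h} \abs{A_{s,s+h} - A'_{s,s+h}}^2 \, \dd s \leq \norm{\partial_t I}_{L^\infty_t H}^2 \, h \int_0^h \sigma \cdot \sigma^{-1} \int_0^{T-\sigma} \norm{u_s - u_{s+\sigma}}_H^2 \, \dd s \, \dd \sigma \leq \norm{\partial_t I}_{L^\infty_t H}^2 \, \seminorm{u}_{B^{1/2}_{2,\infty} H}^2 \, h^3.
\]
Therefore $\norm{A - A'}_{\mathbb{B}^{3/2}_{2,\infty}} < \infty$, and the exponent $3/2$ exceeds $1$.

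\textbf{Step 4 (Conclusion).} By Lemma~\ref{local approx doesnt matter}, $\mathscr{I}A = \mathscr{I}A'$, which yields $\mathscr{S}_t(u,\dd I) = J_t$ for every $t \in [0,T]$, as required.

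The only computation that is not routine is Step~3, where one must distribute the $h^3$ across the two half-powers of $h$ coming from the Cauchy--Schwarz application and the $B^{1/2}_{2,\infty}$ seminorm of $u$; the rest is bookkeeping against the hypotheses of the two sewing lemmas already established in the excerpt.
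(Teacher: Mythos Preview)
Your proof is correct and takes a genuinely different route from the paper. The paper proceeds by direct computation: it writes $\mathscr{I}_{\mathcal P}[(u,\dd I)]_{s,t} - \int_s^t (u_r,\partial_t I_r)_H\,\dd r$ explicitly as an integral against the differences $u_{\mathcal P}^-(\cdot) - u$ and $\partial_t I_{\mathcal P}^{\mathrm{av}}(\cdot) - \partial_t I$ between the piecewise-constant interpolants and the true functions, and then invokes an $L^1$-approximation argument (in the spirit of \cite[Lemma~4.2.6]{Liu2015}) to find a sequence of partitions along which these differences vanish. Your argument instead stays entirely within the sewing framework: you recognise the Bochner integral as the sewing of the exact-increment germ $A'_{s,t}=J_t-J_s$, verify the quantitative bound $\norm{A-A'}_{\mathbb{B}^{3/2}_{2,\infty}}<\infty$ via the short Fubini computation in Step~3, and conclude by the uniqueness result Lemma~\ref{local approx doesnt matter}. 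This is cleaner in that it avoids the external pointwise-approximation lemma and reuses machinery already set up in Section~\ref{sec:Besov-rough}; the paper's approach, on the other hand, is more self-contained in that it does not rely on Lemma~\ref{local approx doesnt matter} and makes the mechanism of convergence along partitions visible.
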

\begin{proof}
Let $\mu \in (1,2]$ and $q \in (2,\infty]$ such that $1/\mu = 1/2 + 1/q$.

Since $I \in W^{1,\infty}_t H \hookrightarrow B^{\gamma}_{r,\infty} H$ for all $\gamma \in (0,1]$ and $r \in [1,\infty]$, $u$ and $I$ satisfy the assumptions of Theorem~\ref{thm:young-integral} with $\alpha = 1/2$, $p = 2$, $\beta > 1/\mu$, $q = q$. Therefore, the left-hand side of~\eqref{eq:Identification-Young} is well-defined in $B^{\beta}_{\mu,\infty}$. Similarly, $r\mapsto \left( u_r,\partial_t I_r\right)_H \in L^\infty_t $, which makes the right-hand side of~\eqref{eq:Identification-Young} well-defined. It remains to establish the identity~\eqref{eq:Identification-Young}.

Let $(s,t) \in \Delta_2$ and $\mathcal{P} := \{0 = \tau_0 < \tau_1 < \ldots < \tau_N = 1 \}$ be a partition of $[0,1]$. The fundamental theorem implies, for $i =1,\ldots, N$,
\begin{align*}
    \frac{I_{s+\tau_{i}(t-s)} - I_{s+\tau_{i-1}(t-s)}}{(\tau_i - \tau_{i-1})(t-s)} = \int_0^1 \partial_t I_{s+\tau_{i}(t-s) + \theta((\tau_i - \tau_{i-1})(t-s))} \dd \theta=: \partial_t I_{\mathcal{P}}^i(s,t).
\end{align*}
This,~\eqref{eq:germ} and~\eqref{eq:sewing-germs} show
\begin{align*}
    &\mathscr{I}_{\mathcal{P}}[(u,\dd I)]_{s,t} - \int_s^t \left(  u_r,\partial_t I_r \right)_H \dd r  \\
    &= \sum_{i=1}^N \int_{s + \tau_{i-1}(t-s)}^{s+\tau_i(t-s)} \left( \left(u_{s+\tau_{i-1}(t-s)}, \frac{I_{s+\tau_{i}(t-s)} - I_{s+\tau_{i-1}(t-s)}}{(\tau_i - \tau_{i-1})(t-s)} \right)_H - (u_r, \partial_t I_r )_H \right) \dd r \\
    &= \int_{s}^{t} \left(u_{\mathcal{P}}^-(r;s,t), \partial_t I_{\mathcal{P}}^{\mathrm{av}}(r;s,t)   - \partial_t I_r \right)_H  + \left(u_{\mathcal{P}}^-(r;s,t) - u_r,  \partial_t I_r\right)_H  \dd r,
\end{align*}
where 
\begin{align*}
    u_{\mathcal{P}}^-(r;s,t) &:= \sum_{i=1}^N 1_{(s + \tau_{i-1}(t-s), s + \tau_{i}(t-s)]}(r)  u_{s + \tau_{i-1}(t-s)}, \\
    \partial_t I_{\mathcal{P}}^{\mathrm{av}}(r;s,t) &:= \sum_{i=1}^N 1_{(s + \tau_{i-1}(t-s), s + \tau_{i}(t-s)]}(r) \partial_t I_{\mathcal{P}}^i(s,t),
\end{align*}
 denote the piece-wise constant interpolation of nodal values and averaged nodal values,  respectively. H\"older's inequality allows us to derive
 \begin{align*}
    &\abs{\mathscr{I}_{\mathcal{P}_l}[(u,\dd I)]_{s,t} - \int_s^t \left(  u_r,\partial_t I_r \right)_H \dd r } \\
    &\quad \leq \norm{u}_{L^\infty_t H} \norm{\partial_t I_{\mathcal{P}_l}^{\mathrm{av}}(\cdot;s,t) - \partial_t I}_{L^1([s,t];H)}  + \norm{u_{\mathcal{P}_l}^-(\cdot;s,t) - u}_{L^1([s,t];H)} \norm{\partial_t I}_{L^\infty H}.
\end{align*}
 Similarly to~\cite[Lemma~4.2.6]{Liu2015} there exists a sequence of partitions~$\mathcal{P}_l$ such that $\norm{\mathcal{P}_l} \to 0$ and 
$\norm{u_{\mathcal{P}_l}^-(\cdot;s,t) - u}_{L^1([s,t];H)} + \norm{\partial_t I_{\mathcal{P}_l}^{\mathrm{av}}(\cdot;s,t) - \partial_t I}_{L^1([s,t];H)} \to 0$. This establishes the claim.
\end{proof}
Lemma~\ref{lem:Identification-Young} shows that Young integration is a consistent generalisation of classical Bochner integration. It relaxes the regularity conditions on the integrator, while strengthening the conditions on the integrand. Additionally, due to the identification~\eqref{eq:Identification-Young} it is possible to derive new stability results for classical Bochner integrals by exploiting the stability of the Young integral.

\begin{cor} \label{cor:weighted-time-derivative}
    Let $u\in L^\infty_t H \cap B^{1/2}_{2,\infty} H$, $I \in W^{1,\infty} H$ with $I_0 = 0$, and $\lambda \geq 0$. Moreover, let $q \in (2,\infty]$, $1/\mu = 1/2 + 1/q$ and $\gamma \in (1/\mu, 1)$.
    Then 
    \begin{align} \label{eq:weighted-time-derivative}
    \sup_{t \in [0,T]} \int_0^t e^{\lambda(t-s)} \left( \partial_s I_s, u_s \right)_H \dd s \lesssim_{\lambda,T,q,\gamma} \left( \norm{u}_{L^\infty_t H} + \seminorm{u}_{B^{1/2}_{2,\infty }H} \right)\seminorm{I}_{B^{\gamma}_{q,\infty} H}.
    \end{align}
\end{cor}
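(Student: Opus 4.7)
The plan is to reduce to the unweighted case by factoring $e^{\lambda(t-s)} = e^{\lambda t}\cdot e^{-\lambda s}$, identify the resulting unweighted integral as a Young integral via Lemma~\ref{lem:Identification-Young}, apply the Young stability bound from Theorem~\ref{thm:young-integral}, and finally pass from a Besov bound to a pointwise bound using a Besov--H\"older embedding together with the vanishing of the sewing at $t=0$.

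Concretely, I would first set $v_s := e^{-\lambda s} u_s$ and observe that
\begin{align*}
    \int_0^t e^{\lambda(t-s)}\left(\partial_s I_s, u_s\right)_H \dd s = e^{\lambda t} \int_0^t \left(\partial_s I_s, v_s\right)_H \dd s.
\end{align*}
A routine decomposition $v_{t+h}-v_t = e^{-\lambda(t+h)}(u_{t+h}-u_t) + (e^{-\lambda(t+h)}-e^{-\lambda t})u_t$ combined with $\abs{e^{-\lambda(t+h)}-e^{-\lambda t}} \leq \lambda h$ yields
\begin{align*}
    \norm{v}_{L^\infty_t H} + \seminorm{v}_{B^{1/2}_{2,\infty} H} \lesssim_{\lambda,T} \norm{u}_{L^\infty_t H} + \seminorm{u}_{B^{1/2}_{2,\infty} H},
\end{align*}
so $v$ inherits the regularity needed to apply Lemma~\ref{lem:Identification-Young}, which identifies
\begin{align*}
    \int_0^t (\partial_s I_s, v_s)_H \dd s = \mathscr{S}_t(v, \dd I).
\end{align*}

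Next, I would verify the hypotheses of Theorem~\ref{thm:young-integral} with $p=2$, $\alpha = 1/2$, $\beta = \gamma$, and the same $q$ and $\mu$ as in the statement: the relation $1/\mu = 1/p + 1/q$ is built in, the restriction $q \in (2,\infty]$ forces $\mu \in [1,2]$ and hence $1\vee 1/\mu = 1$, so the admissibility $\alpha + \beta > 1\vee 1/\mu$ reduces to $\gamma > 1/2$, which follows from $\gamma > 1/\mu \geq 1/2$. The theorem then yields
\begin{align*}
    \seminorm{\mathscr{S}(v, \dd I)}_{B^\gamma_{\mu,\infty}} \lesssim_{T,\mu,\gamma} \norm{v}_{B^{1/2}_{2,\infty} H}\, \seminorm{I}_{B^\gamma_{q,\infty} H}.
\end{align*}

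Finally, I would use $I_0 = 0$ (so that the sewing satisfies $\mathscr{S}_0(v,\dd I) = 0$ by convention) together with the Besov--H\"older embedding $B^\gamma_{\mu,\infty} \hookrightarrow C^{\gamma - 1/\mu}$, valid because $\gamma > 1/\mu$, to upgrade the Besov seminorm to a pointwise estimate
\begin{align*}
    \sup_{t\in[0,T]}\abs{\mathscr{S}_t(v,\dd I)} \lesssim T^{\gamma - 1/\mu}\, \seminorm{\mathscr{S}(v,\dd I)}_{B^\gamma_{\mu,\infty}}.
\end{align*}
Multiplying by $\sup_{t\in[0,T]} e^{\lambda t} \leq e^{\lambda T}$ and chaining the preceding inequalities gives~\eqref{eq:weighted-time-derivative}. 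I do not expect any genuine obstacle; the only delicate point is to ensure that the indices $(p, q, \mu, \alpha, \gamma)$ simultaneously meet both $\alpha + \gamma > 1\vee 1/\mu$ (for Theorem~\ref{thm:young-integral}) and $\gamma > 1/\mu$ (for the embedding into $C^{\gamma-1/\mu}$), which is exactly what the hypothesis $\gamma \in (1/\mu,1)$ provides.
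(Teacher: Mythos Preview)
Your proof is correct and in fact slightly cleaner than the paper's, though the overall architecture is the same: factor $e^{\lambda(t-s)}=e^{\lambda t}e^{-\lambda s}$, absorb $e^{-\lambda s}$ into one of the two arguments, identify the resulting integral as a Young integral via Lemma~\ref{lem:Identification-Young}, apply Theorem~\ref{thm:young-integral}, and finish with a Besov$\to$sup embedding.

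The difference lies in \emph{where} the exponential weight is absorbed. You put it on the integrand, setting $v_s=e^{-\lambda s}u_s$, so that the weighted integral becomes $e^{\lambda t}\mathscr{S}_t(v,\dd I)$ directly, and you only have to check that $v$ inherits the $L^\infty_tH\cap B^{1/2}_{2,\infty}H$ regularity of $u$, which is immediate. The paper instead puts the weight on the integrator, setting $J_s=e^{-\lambda s}I_s$; because of the product rule $\partial_s J_s=-\lambda J_s+e^{-\lambda s}\partial_s I_s$, this produces an additional term $\lambda\int_0^t(J_s,u_s)_H\dd s$ that has to be estimated separately, and bounding $\seminorm{J}_{B^\gamma_{q,\infty}H}$ in terms of $\seminorm{I}_{B^\gamma_{q,\infty}H}$ requires invoking Lemma~\ref{lem:Shift-bound} and the hypothesis $I_0=0$. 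Your route avoids both complications; in particular, in your argument the assumption $I_0=0$ is not actually used (your remark that it ensures $\mathscr{S}_0(v,\dd I)=0$ is a harmless misattribution: the sewing vanishes at $t=0$ by construction regardless of $I_0$). The paper's choice has the minor conceptual advantage that the same device---weighting the integrator---reappears later in the identification step of Section~\ref{sec:proof-main}, but for the present corollary your variant is the more economical one.
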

\begin{proof}
Let $(s,t) \in \Delta_2$. First, notice that 
    \begin{align*}
        \int_0^t e^{\lambda(t-s)} \left( \partial_s I_s, u_s \right)_H \dd s = e^{\lambda t} \left( \int_0^t  \left( \partial_s J_s, u_s \right)_H \dd s + \lambda \int_0^t  \left( J_s, u_s \right)_H \dd s \right),
    \end{align*}
    where $J_s := e^{-\lambda s} I_s $. 

Next, we show various bounds of $J$ in terms of $I$; clearly, $ \norm{J}_{L^\infty_t H} \leq \norm{I}_{L^\infty_t H}$. Moreover, $J_t - J_s = e^{-\lambda s} \left( (e^{-\lambda(t-s)} - 1) I_t + I_t - I_s \right)$, which implies 
    \begin{align*}
    \left( \int_0^{T-h} \norm{J_{t+h}-J_t}_H^q \dd t \right)^{1/q}      &\leq \left( \int_0^{T-h} \norm{ (e^{-\lambda h} - 1) I_{t+h} + I_{t+h} - I_t }_H^q \dd t \right)^{1/q} \\
    &\leq \abs{e^{-\lambda h} - 1} \norm{I}_{L^q_t H} + \left( \int_0^{T-h} \norm{I_{t+h} - I_t }_H^q \dd t \right)^{1/q} \\
    &\leq \lambda h \norm{I}_{L^q_t H} + h^{\gamma} \seminorm{I}_{B^\gamma_{q,\infty}H}.
    \end{align*}
Therefore, using $I_0 = 0$ and Lemma~\ref{lem:Shift-bound}, 
\begin{align} \label{eq:scaled-bound}
    \seminorm{J}_{B^{\gamma}_{q,\infty} H} \leq \left( \lambda T  C_{\gamma,q} +1 \right) \seminorm{I}_{B^{\gamma}_{q,\infty}H}.
\end{align}
Finally, $\partial_s J_s = -\lambda J_s + e^{-\lambda s} \partial_s I_s$, which shows $\norm{\partial_s J}_{L^\infty H} \leq \lambda \norm{I}_{L^\infty H} + \norm{\partial_t I}_{L^\infty_t H}$.

We are ready to show~\eqref{eq:weighted-time-derivative}; since $u \in L^\infty_t H \cap B^{1/2}_{2,\infty}H$ and $ J \in W^{1,\infty}_t H$, it is possible to apply Lemma~\ref{lem:Identification-Young}. Notice that $\mathscr{S}_0(u,J)= 0 = J_0$. Moreover, recall that $B^{\gamma}_{q,\infty} H\hookrightarrow B^{\gamma}_{\mu,\infty}H \hookrightarrow L^\infty_t H$, cf.~\cite[Theorem~16]{MR1108473}.  This and the stability of the Young integral~\eqref{eq:stability-Young-integral} imply
    \begin{align*}
        &e^{\lambda t} \left( \int_0^t  \left( \partial_s J_s, u_s \right)_H \dd s + \lambda \int_0^t  \left( J_s, u_s \right)_H \dd s \right) \\
        &\quad = e^{\lambda t} \left( \mathscr{S}_t(u,\dd J) + \lambda \int_0^t  \left( J_s, u_s \right)_H \dd s \right) \\
        &\quad \leq e^{\lambda T} \left( \norm{ \mathscr{S}(u,\dd J)}_{L^\infty_t} + \lambda T \norm{J}_{L^\infty_t H} \norm{u}_{L^\infty_t H} \right) \\
        &\quad \lesssim   \seminorm{ \mathscr{S}(u,\dd J)}_{B^{\gamma}_{\mu,\infty}} +  \seminorm{J}_{B^{\gamma}_{q,\infty} H} \norm{u}_{L^\infty_t H}  \\
        &\quad \lesssim   \left(\norm{u}_{L^\infty_t H} + \seminorm{u}_{B^{1/2}_{2,\infty }H} \right) \seminorm{J}_{B^{\gamma}_{q,\infty}H}.
    \end{align*}
Assertion~\eqref{eq:weighted-time-derivative} follows using~\eqref{eq:scaled-bound}, which completes the proof.
\end{proof}

\subsection{Proof of an import formula} \label{subsec:expanding-the-square}
In this section we prove the formula for the squared $H$-norm of solutions to generic integral equations.
\begin{proof}[Proof of Theorem~\ref{chain rule norm}]
The proof consists of three steps:
\begin{enumerate}
    \item \label{it:01} $X$ is uniformly continuous in time with respect to the weak topology in $H$;
    \item \label{it:02} $X$ satisfies the equation~\eqref{chain rule norm formula};
    \item \label{it:03} $X$ is continuous in time with respect to the strong topology in $H$.
\end{enumerate}

\underline{Step~\ref{it:01}:} Let $\varepsilon >0$ and $h \in H$. We need to show that there exists $\delta > 0$ such that $\abs{t-s} \leq \delta$ implies $\abs{\left( X_t - X_s,h \right)_H} \leq \varepsilon.$

Let us start with an estimate of $(X_t,v)$ for $v \in V$. Since $V \hookrightarrow H$ continuously and densely, and~\eqref{eq:X-definition}
\begin{align*}
    \left( X_t - X_s, v \right)_H &= \langle X_t - X_s, v \rangle_{V^*,V} \\
    &= \int_s^t \langle Y_r, v \rangle_{V^*,V} \dd r + \left( I_t - I_s, v \right)_H \\
    &\leq \left( \int_s^t \norm{Y_r}_{V^*} \dd r + \norm{I_t - I_s}_H \right) C\norm{v}_V \\
    &\leq \abs{t-s}^\beta C\norm{v}_V,
\end{align*}
where $\beta = 1/\alpha \wedge \gamma$.

Using the density of $V \subset H$ there exists $h_n \in V$ such that $\norm{h_n}_V \leq n$ and $\norm{h - h_n}_H \leq n^{-1} \norm{X}_{L^\infty H}^{-1}$. This allows us to conclude
\begin{align*}
    \abs{\left( X_t - X_s, h \right)_H} &\leq \abs{\left( X_t - X_s, h - h_n \right)_H }+ \abs{ \left( X_t - X_s, h_n \right)_H} \\
    &\leq 2/n + \abs{t-s}^\beta C n.
\end{align*}
Now, we first choose $n \in \mathbb{N}$ such that $n \geq 4/\varepsilon$; secondly, we choose $\delta > 0$ such that $\delta^\beta Cn \leq \varepsilon/2 $. This implies the uniform continuity with respect to the weak topology in $H$. 

\underline{Step~\ref{it:02}:}
Let $\mathcal{J} = \{ t\in[0,T]: \, X_t \in V\}$ be the set of regular times of $X$. Since $X \in L^\alpha_t V$, the complement $\mathcal{N} = [0,T] \backslash \mathcal{J}$ is a null set. Let $t,s \in \mathcal{J}$, consider increments of~\eqref{eq:X-definition}, and choose $v=X_t+X_s \in V$, yielding 
\begin{align*} \nonumber
    \norm{X_t}^2_H-\norm{X_s}_H^2&=(X_t-X_s, X_t+X_s)_H\\
    &=\underbrace{\int_s^t \langle Y_r, X_t + X_s\rangle_{V^*,V} \dd r}_{=: \mathrm{I}_{s,t}}+\underbrace{(I_t-I_s, X_t+X_s)_H}_{=: \mathrm{II}_{s,t}}.
\end{align*}
Let $\mathcal{P} := \{0 = \tau_0 < \tau_1 < \ldots < \tau_N = 1 \}$ be a partition of $[0,1]$.

We start with $\mathrm{I}$: clearly,
\begin{align*}
    \mathrm{I}_{s,t} - 2\int_s^t \langle Y_r, X_r \rangle_{V^*,V} \dd r =  \underbrace{\int_s^t \langle Y_r, X_t - X_r\rangle_{V^*,V} \dd r}_{\mathrm{R}^1_{s,t}} +  \underbrace{\int_s^t \langle Y_r, X_s - X_r \rangle_{V^*,V} \dd r}_{\mathrm{R}^2_{s,t}}.
\end{align*}
An application of the sewing~\eqref{eq:sewing-germs} implies
\begin{align*}
    \mathscr{I}_{\mathcal{P}} \mathrm{R}^1_{s,t}  &= \int_s^t \langle Y_r,X_{\mathcal{P}}^+(r;s,t) - X_r\rangle_{V^*,V} \dd r, \\
    \mathscr{I}_{\mathcal{P}} \mathrm{R}^2_{s,t}  &= \int_s^t \langle Y_r,X_{\mathcal{P}}^-(r;s,t) - X_r\rangle_{V^*,V} \dd r,
\end{align*}
where $X_{\mathcal{P}}^+(r;s,t) := \sum_{i=1}^N 1_{(s + \tau_{i-1}(t-s), s + \tau_{i}(t-s)]}(r)  X_{s + \tau_i(t-s)}$, respectively, $X_{\mathcal{P}}^-(r;s,t) := \sum_{i=1}^N 1_{(s + \tau_{i-1}(t-s), s + \tau_{i}(t-s)]}(r)  X_{s + \tau_{i-1}(t-s)}$ denote the piece-wise constant interpolation of the nodal values to the left, respectively, right interval. Similarly to~\cite[Lemma~4.2.6]{Liu2015} there exists a sequence of partitions~$\mathcal{P}_l$ such that $\norm{\mathcal{P}_l} \to 0$ and 
$\norm{X_{\mathcal{P}_l}^+(\cdot;s,t) - X}_{L^\alpha([s,t];V)} + \norm{X_{\mathcal{P}_l}^-(\cdot;s,t) - X}_{L^\alpha([s,t];V)} \to 0$. Therefore,
\begin{align*}
    &\abs{(\mathscr{I}_{\mathcal{P}_l} \mathrm{I})_{s,t} -  2\int_s^t \langle Y_r, X_r \rangle_{V^*,V} \dd r } \\
    &\hspace{2em} \leq \norm{Y}_{L^{\alpha'}_t V^*} \left( \norm{X_{\mathcal{P}_l}^+(\cdot;s,t) - X}_{L^\alpha([s,t];V)} + \norm{X_{\mathcal{P}_l}^-(\cdot;s,t) - X}_{L^\alpha([s,t];V)}\right),
\end{align*}
which implies $\lim_{\norm{\mathcal{P}_l} \to 0}(\mathscr{I}_{\mathcal{P}_l} \mathrm{I})_{s,t}$ exists and equals $ 2\int_s^t \langle Y_r, X_r \rangle_{V^*,V} \dd r$.

Next, we investigate~$\mathrm{II}$: a trivial expansion shows
\begin{align*}
    \mathrm{II}_{s,t} - 2(I_t-I_s, X_s)_H = (I_t-I_s, X_t-X_s)_H =:\mathrm{K}_{s,t}.
\end{align*}
Due to the regularity assumption $I \in C^\gamma_t H$ and $X \in B^{1/2}_{2,\infty}H \cap L^\infty_t H$, we can apply Theorem~\ref{thm:young-integral}. This ensures the existence of $\lim_{\norm{\mathcal{P}}\to 0} \mathscr{I}_{\mathcal{P}}[(X,\dd I)] = \mathscr{S}(X,\dd I)$. It remains to show that~$\mathrm{K}$ is sew-able and decays sufficiently fast for $\abs{t-s} \ll 1$, which implies that its sewing vanishes. 

Let $h \in (0,T)$. H\"older's inequality implies
\begin{align*}
    \left( \int_0^{T-h} \abs{\mathrm{K}_{t,t+h}}^2 \dd t \right)^{1/2} &\leq  h^{\gamma + 1/2} \seminorm{I}_{C^\gamma_t H} \seminorm{X}_{B^{1/2}_{2,\infty}H}.
\end{align*}
This together with~\eqref{eq:A-dom-deltaA} ensure
\begin{align*}
    \norm{ \delta \mathrm{K}}_{\bar{\mathbb{B}}^{\gamma + 1/2}_{2,\infty}} + \norm{\mathrm{K}}_{\mathbb{B}^{\gamma + 1/2}_{2,\infty}} \lesssim \seminorm{I}_{C^\gamma_t H} \seminorm{X}_{B^{1/2}_{2,\infty}H}.
\end{align*}
Therefore, Lemma~\ref{local approx doesnt matter} implies $\lim_{\norm{\mathcal{P}} \to 0} \mathscr{I}_{\mathcal{P}} \mathrm{K} = 0$.

In total, we have verified that~\eqref{chain rule norm formula} holds for all $t,s \in \mathcal{J}$. Next, we argue that~\eqref{chain rule norm formula} holds for all $[t,s] \in [0,T]$. Notice that
\begin{align*}
    t\mapsto \int_0^t \langle Y_r, X_r \rangle _{V^*,V} \dd r \in W^{1,1}_t \hookrightarrow C_t,
\end{align*}
and
\begin{align*}
    t \mapsto \mathscr{S}_{t}(X, \dd I) \in B^{\gamma}_{2,\infty} \hookrightarrow C^{\gamma - 1/2}_t.
\end{align*}
Thus, the right-hand side of~\eqref{chain rule norm formula} is continuous in time, so must be its left-hand side.

\underline{Step~\ref{it:03}:} It remains to show the continuity of $X$ in the strong topology of $H$. Let $t \in [0,T]$ and $\varepsilon > 0$. Due to~\eqref{chain rule norm formula}, the map $t \mapsto \norm{X_t}_H^2$ is continuous, i.e., there exists $\delta > 0$ such that $\abs{t-s} \leq \delta$ implies $\abs{\norm{X_t}_H^2 - \norm{X_s}_H^2} \leq \varepsilon$. Additionally, by Step~\ref{it:01}, $X$ is uniformly continuous with respect to the weak topology, i.e., for all $h \in H$ (in particular $h = X_t$) there exists $\tilde{\delta} > 0$ such that $\abs{t-s} \leq \tilde{\delta}$ implies $\abs{(X_t - X_s, h)_H} \leq \varepsilon$. Choosing $\overline{\delta} = \delta \wedge \tilde{\delta}$ and $s \in[0,T]$ such that $\abs{s-t} \leq \overline{\delta}$ guarantee
\begin{align*}
    \norm{X_t - X_s}_H^2 =  \norm{X_s}_H^2- \norm{X_t}_H^2  +  2\left( X_t -  X_s, X_t \right)_H \leq 2 \varepsilon.
\end{align*}
Thus, we have verified~\ref{it:03}.
\end{proof}

\subsection{Chain rule for Young integral equations}
In order to cope with locally monotone operators, it is crucial to have access to a chain rule for Young integral equations. 

\begin{theorem} \label{thm:Chain-rule-after}
Let $q \in (2,\infty]$, $\gamma>1/2+1/q$, and $1/\mu = 1/2 + 1/q$. Moreover, let $b \in L^1_t$, $u \in   L^\infty_t H \cap B^{1/2}_{2,\infty}H$, $ I \in B^{\gamma}_{q,\infty}H$, and
    \begin{align} \label{eq:y-expansion}
   \forall t \in [0,T]: \quad     y_t = y_0 + \int_0^t b_s \dd s + \mathscr{S}_t( u,\dd I).
    \end{align}
Let $F \in W^{1,1}\big([0,T] \times \mathbb{R}\big)$ satisfy the following: for all $R>0$ there exists $C_R >0$ such that
\begin{subequations} \label{eq:localBounded}
\begin{align} \label{eq:localBounded-l1}
\int_0^T \sup_{z \in B_R} \abs{\partial_t F(t,z)} \dd t \leq C_R, \\
\label{eq:localBounded-linf}
   \sup_{t \in [0,T]} \sup_{z \in B_R}  \abs{\partial_y F(t,z)}\leq C_R, \\ \label{eq:localBounded-lip-space}
 \sup_{t \in [0,T]} \sup_{z_1,z_2 \in B_R} \frac{\abs{\partial_y F(t,z_1) - \partial_yF(t,z_2)}}{\abs{z_1 - z_2}}\leq C_R, \\ \label{eq:localBounded-lip-time}
 \sup_{h \in [0,T]} h^{-1/2}\left(  \int_0^{T-h}   \sup_{z \in B_R} \abs{\partial_y F(t+h, z) - \partial_y F(t, z)}^2 \dd t \right)^{1/2} \leq C_R.
\end{align}
\end{subequations}
Then the following chain rule is satisfied
    \begin{align} \label{eq:Chain-rule}
    \begin{aligned}
    \forall t \in [0,T]: \quad    &F(t,y_t) = F(0,y_0) \\
    &+ \int_0^t \partial_t F(s,y_s) \dd s +  \int_0^t \partial_y F(s,y_s) b_s \dd s + \mathscr{S}_t( \partial_y F(y) u, \dd I),
    \end{aligned}
    \end{align}
    where $t \mapsto \mathscr{S}_t( \partial_y F(y) u, \dd I) \in B^{\gamma}_{\mu,\infty}$ denotes the Young integral constructed in Theorem~\ref{thm:young-integral}.
\end{theorem}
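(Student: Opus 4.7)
\emph{Strategy.} The plan is to prove~\eqref{eq:Chain-rule} by a two-stage approximation: first I will smooth the integrator $I$ so that the Young integral reduces to a Bochner integral and $y$ becomes absolutely continuous; second I will mollify $F$ in its spatial variable to obtain $C^\infty$-regularity there. For the smoothed data the chain rule is classical, and the Young-integral stability~\eqref{eq:stability-Young-integral} will recover~\eqref{eq:Chain-rule} in the limit. As a preliminary step I verify $y\in C([0,T])\cap L^\infty_t\cap B^{1/2}_{2,\infty}$ by decomposing $y-y_0=\int_0^\cdot b_s\dd s+\mathscr{S}_\cdot(u,\dd I)$: the drift lies in $W^{1,1}_t$ and also in $B^{1/2}_{2,\infty}$ via the convolution estimate $\int_0^{T-h}(\int_t^{t+h}|b_r|\dd r)^2\dd t\leq h\|b\|_{L^1}^2$; the Young part lies in $B^\gamma_{\mu,\infty}\hookrightarrow B^{1/2}_{2,\infty}\cap C_t$ by Theorem~\ref{thm:young-integral}, since $\gamma>1/\mu$. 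The bounds~\eqref{eq:localBounded-linf}--\eqref{eq:localBounded-lip-time} then give $\partial_y F(\cdot,y_\cdot)u\in L^\infty_t H\cap B^{1/2}_{2,\infty} H$, so that $\mathscr{S}(\partial_y F(y)u,\dd I)$ is well-defined.

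\emph{Smooth approximation and classical chain rule.} Next, I fix mollifiers $\rho_\epsilon$ on $\mathbb{R}$ and $\phi_m$ on $[0,T]$ and set $F^\epsilon(t,y):=(F(t,\cdot)*_y\rho_\epsilon)(y)$, $I^m:=I*_t\phi_m$. Then $F^\epsilon$ is $C^\infty$ in $y$ and satisfies~\eqref{eq:localBounded} uniformly in $\epsilon$, while $I^m\in W^{1,\infty}_t H$ with $I^m\to I$ in $B^\gamma_{q,\infty}H$. By Lemma~\ref{lem:Identification-Young}, $y^m_t:=y_0+\int_0^t b_s\dd s+\mathscr{S}_t(u,\dd I^m)$ is absolutely continuous with derivative $b_s+(u_s,\partial_s I^m_s)_H\in L^1_t$. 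Since $F^\epsilon$ is smooth in $y$ and $W^{1,1}$ in $t$ with the dominations~\eqref{eq:localBounded}, the classical chain rule along the AC curve $y^m$ yields
\begin{align*}
F^\epsilon(t,y^m_t)-F^\epsilon(0,y_0)=\int_0^t\partial_t F^\epsilon(s,y^m_s)\dd s+\int_0^t\partial_y F^\epsilon(s,y^m_s)\partial_s y^m_s\dd s.
\end{align*}
Expanding $\partial_s y^m_s$ and reapplying Lemma~\ref{lem:Identification-Young} to the $(u,\partial I^m)_H$ term (valid because $\partial_y F^\epsilon(\cdot,y^m_\cdot)u\in L^\infty_t H\cap B^{1/2}_{2,\infty}H$, checked as in Step~1) produces
\begin{align*}
F^\epsilon(t,y^m_t)=F^\epsilon(0,y_0)&+\int_0^t\partial_t F^\epsilon(s,y^m_s)\dd s+\int_0^t\partial_y F^\epsilon(s,y^m_s)b_s\dd s\\&\quad+\mathscr{S}_t\bigl(\partial_y F^\epsilon(y^m)u,\dd I^m\bigr).
\end{align*}

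\emph{Passage to the limit.} With $\epsilon$ fixed, I send $m\to\infty$: $y^m\to y$ in $L^\infty_t\cap B^{1/2}_{2,\infty}$ by stability~\eqref{eq:stability-Young-integral}; continuity of $F^\epsilon,\partial_t F^\epsilon,\partial_y F^\epsilon$ in $y$ together with the dominations~\eqref{eq:localBounded} permit dominated convergence on the two Bochner integrals; the Young integral converges by~\eqref{eq:stability-Young-integral}. Sending $\epsilon\to 0$ thereafter: $F^\epsilon\to F$ and $\partial_y F^\epsilon\to\partial_y F$ locally uniformly by~\eqref{eq:localBounded-linf}--\eqref{eq:localBounded-lip-space}, $\partial_t F^\epsilon\to\partial_t F$ a.e.\ as an approximate identity, and the Young integral again converges by stability. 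Collecting the limits produces~\eqref{eq:Chain-rule}.

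\emph{Main obstacle.} The hardest step will be the $\epsilon\to 0$ passage of $\int_0^t\partial_t F^\epsilon(s,y_s)\dd s\to\int_0^t\partial_t F(s,y_s)\dd s$: since $\partial_t F$ is only $L^1_t$-dominated uniformly in $y$ (condition~\eqref{eq:localBounded-l1}) and is not a priori continuous in $y$, pointwise convergence of the integrand along the thin curve $\{(s,y_s)\}$ is a subtle measure-theoretic issue. I plan to resolve it via a Fubini-type argument: writing $\partial_t F^\epsilon(s,y_s)=\int\partial_t F(s,y_s-w)\rho_\epsilon(w)\dd w$ and swapping the order of integration, the convergence reduces to $0$ being (for a.e.\ $s$) a Lebesgue point of $w\mapsto\partial_t F(s,y_s-w)$; this holds thanks to the continuity of $y$ and the uniform bound $|\partial_t F(s,z)|\leq\Phi(s)\in L^1_t$ on compact $z$-sets, and Lebesgue dominated convergence then closes the argument.
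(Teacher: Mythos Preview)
Your approach is genuinely different from the paper's. The paper never approximates: it expands the increment $F(t,y_t)-F(s,y_s)$ via the fundamental theorem in each variable, obtaining three germs $\mathrm I,\mathrm{II},\mathrm{III}$; it then shows directly that the germ $\partial_y F(s,y_s)(u_s,I_t-I_s)_H$ is sewable (a long $\bar\Omega_\mu$ computation splitting into time-increment, space-increment and $u$-increment pieces) and that the residual germs sew to zero via Lemma~\ref{local approx doesnt matter}. Your route---smooth $I$, reduce to a classical chain rule along the now AC curve $y^m$ via Lemma~\ref{lem:Identification-Young}, then pass to the limit---is more intuitive and largely sound, but two points deserve attention.

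First, the $\epsilon$-mollification of $F$ in $y$ is unnecessary and is precisely what creates your ``main obstacle''. Conditions~\eqref{eq:localBounded-linf}--\eqref{eq:localBounded-lip-space} already make $F(t,\cdot)\in C^{1,1}$ uniformly in $t$, and together with the $L^1$-domination~\eqref{eq:localBounded-l1} this is enough for the Carath\'eodory chain rule along the AC curve $y^m$; no extra smoothing in $y$ is needed. Your proposed fix via Lebesgue points does not work as written: for a.e.\ $s$ almost every point is a Lebesgue point of $z\mapsto\partial_t F(s,z)$, but nothing in the hypotheses forces the specific value $z=y_s$ to be one---continuity of $y$ is irrelevant here, since the issue is regularity of $\partial_t F(s,\cdot)$, not of $y$. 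Dropping the $\epsilon$-step removes the problem entirely.

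Second, ``the Young integral converges by~\eqref{eq:stability-Young-integral}'' is too quick. Stability gives a bound, not convergence; to pass $\mathscr S(\partial_y F(y^m)u,\dd I^m)\to\mathscr S(\partial_y F(y)u,\dd I)$ you need either $\partial_y F(y^m)u\to\partial_y F(y)u$ in $B^{1/2}_{2,\infty}H$ (which requires controlling increments of $\partial_y F(\cdot,y^m_\cdot)-\partial_y F(\cdot,y_\cdot)$ via~\eqref{eq:localBounded-lip-space} and~\eqref{eq:localBounded-lip-time}) or, more efficiently, the dominated-convergence sewing Lemma~\ref{sewing convergence}: the germs converge in $\mathbb B^\gamma_{\mu,\infty}$ because $\|\partial_y F(y^m)u-\partial_y F(y)u\|_{L^2_tH}\to0$, and the $\delta$-germs are uniformly bounded in $\bar{\mathbb B}^{1/2+\gamma}_{\mu,\infty}$ by the same increment analysis the paper carries out. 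With these two adjustments your argument goes through.
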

\begin{proof}
The proof consists of three steps:
\begin{enumerate}
    \item \label{it:chain-first} the convex hull of $y([0,T])$ is contained in a ball, which allows us to use the local assumption~\eqref{eq:localBounded} on $F$;
    \item \label{it:chain-second} increments of $t \mapsto F(t,y_t)$ poses an alternative representation using a finite partition~$\mathcal{P}$;
    \item \label{it:chain-third} terms in the new representation are identified for partitions with vanishing mesh-size.
\end{enumerate}

\underline{Step~\ref{it:chain-first}:} First, we notice that $t\mapsto y_t$ is continuous as the right-hand side of~\eqref{eq:y-expansion} is continuous. Moreover, using $B^{\gamma}_{\mu,\infty} \hookrightarrow C^{\gamma - 1/\mu}$ and~\eqref{eq:stability-Young-integral},
  \begin{align*}
      \sup_{t \in [0,T]} \abs{y_t} &\leq \abs{y_0} + \norm{b}_{L^1_t} + \sup_{t\in [0,T]} \abs{\mathscr{S}_{t}( u, \dd I)} \\
      &\lesssim \abs{y_0} + \norm{b}_{L^1_t} + \left( \norm{u}_{L^\infty_t H} +\norm{u}_{B^{1/2}_{2,\infty}H} \right) \seminorm{I}_{B^{\gamma}_{q,\infty}H}< \infty.
  \end{align*}
Therefore, it exists $R > 0$ such that $\mathrm{conv}( y([0,T]) ) \subset B_R$, where $\mathrm{conv}( y([0,T]) )$ denotes the convex hull of the $y$-image of~$[0,T]$. 

\underline{Step~\ref{it:chain-second}:} Due to the fundamental theorem, for all $(s,t) \in \Delta_2$,
  \begin{align*}
      &F(t,y_t) - F(s,y_s) = F(t,y_t) - F(s,y_t) + F(s,y_t) -  F(s,y_s) \\
      &= \int_0^1 \partial_t F( s + \theta(t-s),y_t )\dd \theta \,  (t-s)  + \int_0^1 \partial_y F(s, y_s + \theta(y_t - y_s) ) \dd \theta \,(y_t - y_s).
  \end{align*}
Recall that, using~\eqref{eq:y-expansion}, 
  \begin{align*}
      y_t - y_s = \int_s^t b_r \dd r + \mathscr{S}_t( u, \dd I) - \mathscr{S}_s( u, \dd I).
  \end{align*}
Therefore, introducing the shorten notation $\mathscr{S}_{\cdot} = \mathscr{S}_{\cdot}( u, \dd I)$,
\begin{align}
    \begin{aligned} \label{eq:pre-sewing}
     F(t,y_t) - F(s,y_s)  &= \underbrace{\int_0^1 \partial_y F(s, y_s + \theta(y_t - y_s) ) \dd \theta \,\int_s^t b_r \dd r }_{=: \mathrm{I}_{s,t}}  \\
     &\quad + \underbrace{\int_0^1 \partial_y F(s, y_s + \theta(y_t - y_s) ) \dd \theta \, \left( \mathscr{S}_t - \mathscr{S}_s\right)}_{=: \mathrm{II}_{s,t}} \\
     &\quad + \underbrace{\int_0^1 \partial_t F( s + \theta(t-s), y_t )\dd \theta \,  (t-s)}_{=: \mathrm{III}_{s,t}}.
     \end{aligned}
\end{align}
Let $\mathcal{P} := \{0 = \tau_0 < \tau_1 < \ldots < \tau_N = 1 \}$ be a partition of $[0,1]$. Sewing~\eqref{eq:pre-sewing} along the finite partition~$\mathcal{P}$ -- in other words, we apply the linear operator $\mathscr{I}_\mathcal{P}$ defined in~\eqref{eq:sewing-germs} to~\eqref{eq:pre-sewing} -- yields
\begin{align*}
    F(t,y_t) - F(s,y_s) = (\mathscr{I}_{\mathcal{P}} \mathrm{I} )_{s,t} + (\mathscr{I}_\mathcal{P} \mathrm{II})_{s,t} +  (\mathscr{I}_{\mathcal{P}} \mathrm{III} )_{s,t}.
\end{align*}

\underline{Step~\ref{it:chain-third}:} 
The chain rule~\eqref{eq:Chain-rule} follows provided we show, for all $(s,t) \in \Delta_2$,
\begin{subequations}\label{eq:conv-both}
\begin{align} \label{eq:conv-classic}
  \lim_{\norm{\mathcal{P}} \to 0} (\mathscr{I}_{\mathcal{P}} \mathrm{I} )_{s,t}  &= \int_s^t \partial_y F(y_r) b_r \dd r, \\ \label{eq:conv-sewing}
   \lim_{\norm{\mathcal{P}} \to 0}  (\mathscr{I}_{\mathcal{P}} \mathrm{II} )_{s,t} &= \mathscr{S}_t(\partial_y F(y)u, \dd I) - \mathscr{S}_s(\partial_y F(y)u, \dd I), \\ \label{eq:conv-time}
    \lim_{\norm{\mathcal{P}} \to 0}  (\mathscr{I}_{\mathcal{P}} \mathrm{III} )_{s,t} &= \int_s^t \partial_t F(r,y_r) \dd r,
\end{align}
\end{subequations}
 We will discuss each case separately. 

\underline{\eqref{eq:conv-classic}:} By definition of the sewing along a finite partition, cf.~\eqref{eq:sewing-germs},
\begin{align*}
    &(\mathscr{I}_{\mathcal{P}} \mathrm{I} )_{s,t} - \int_s^t \partial_y F(r,y_r) b_r \dd r =  \int_{s}^{t} \mathscr{H}_{\mathcal{P}}(r;s,t) b_r \dd r,
\end{align*}
where
\begin{align*}
    &\mathscr{H}_{\mathcal{P}}(r;s,t) \\
    &= \int_0^1 \Big( \partial_y F( s_{\mathcal{P}}^-(r;s,t) , y_{\mathcal{P}}^-(r;s,t) + \theta(y_{\mathcal{P}}^+(r;s,t) - y_{\mathcal{P}}^-(r;s,t)) ) - \partial_y F(r,y_r) \Big) \dd \theta,
\end{align*}
and
\begin{subequations} \label{eq:interpolations}
\begin{align}    
    s_{\mathcal{P}}^-(r;s,t) &= \sum_{i=1}^N 1_{[s + \tau_{i-1}(t-s),s + \tau_{i}(t-s))}(r) \left(s + \tau_{i-1}(t-s) \right), \\
    y_{\mathcal{P}}^-(r;s,t) &=  \sum_{i=1}^N 1_{[s + \tau_{i-1}(t-s),s + \tau_{i}(t-s))}(r) y_{s + \tau_{i-1}(t-s)}, \\
    y_{\mathcal{P}}^+(r;s,t) &=  \sum_{i=1}^N 1_{[s + \tau_{i-1}(t-s),s + \tau_{i}(t-s))}(r) y_{s + \tau_{i}(t-s)}.
\end{align}
\end{subequations}
Continuity implies $y_{\mathcal{P}}^\pm(r;s,t) \rightarrow y_r$ and $s_{\mathcal{P}}^-(r;s,t) \rightarrow r$ as $\norm{\mathcal{P}} \to 0$. It follows that $\mathscr{H}_{\mathcal{P}}(r;s,t) \rightarrow 0$ as $\norm{\mathcal{P}} \to 0$. Moreover,~\eqref{eq:localBounded-linf} shows 
\begin{align*}
   \mathscr{H}_{\mathcal{P}}(r;s,t) \leq 2 \sup_{t \in [0,T]} \sup_{z \in B_R} \abs{\partial_y F(t,z)} < \infty.
\end{align*}
This and $b \in L^1_t$ allow us to use dominated convergence to conclude
\begin{align*}
   \lim_{\norm{\mathcal{P}}\to 0} \abs{(\mathscr{I}_{\mathcal{P}} \mathrm{I} )_{s,t} - \int_s^t \partial_y F(r,y_r) b_r \dd r} \leq \lim_{\norm{\mathcal{P}}\to 0} \int_s^t \abs{\mathscr{H}_{\mathcal{P}}(r;s,t)} \abs{b_r} \dd r = 0.
\end{align*}

\underline{\eqref{eq:conv-sewing}:} 
Recall that Young integration -- as a sewing -- is locally well-approximated by its germ~\eqref{eq:germ}. Indeed, using~\eqref{eq:sewing-germs}, 
\begin{align*}
    \mathscr{S}_t &- \mathscr{S}_s  - (u,\dd I)_{s,t} \\
    &= \left( \mathscr{S}_t - \mathscr{S}_s - \mathscr{I}_{\mathcal{P}}[(u,\dd I)]_{s,t} \right) + \left( \mathscr{R}_{\mathcal{P}}[(u,\dd I)]_{s,t} - \mathscr{R}[(u,\dd I)]_{s,t} \right) + \mathscr{R}[(u,\dd I)]_{s,t},
\end{align*}
where the right-hand side vanishes if sewed along partitions with vanishing mesh-size, cf.~\eqref{eq:sewing-convergence}. This enables the alternative representation of~$ \mathscr{I}_{\mathcal{P}} \mathrm{II}$,
\begin{align*}
    \mathscr{I}_{\mathcal{P}} \mathrm{II}_{s,t} - \mathscr{I}_{\mathcal{P}}\left[ \partial_y F(y) (u,\dd I)\right]_{s,t} &= \sum_{i=1}^4  \mathscr{I}_{\mathcal{P}} \left[ \mathrm{R}^i \right]_{s,t},
\end{align*}
where $\partial_y F(y) (u,\dd I)_{s,t} := \partial_y F(s,y_s) (u_s,I_t - I_s)_H$ and
\begin{align*}
    \mathrm{R}^1_{s,t} &=  \int_0^1 \left( \partial_y F(s, y_s + \theta(y_t - y_s) ) - \partial_y F(s,y_s) \right) \dd \theta \, \left( \mathscr{S}_t - \mathscr{S}_s \right), \\
    \mathrm{R}^2_{s,t} &= \partial_y F(s,y_s)\left( \mathscr{S}_t - \mathscr{S}_s - \mathscr{I}_{\mathcal{P}}[(u,\dd I)]_{s,t} \right), \\
     \mathrm{R}^3_{s,t} &= \partial_y F(s,y_s)\left( \mathscr{R}_{\mathcal{P}}[(u,\dd I)]_{s,t} - \mathscr{R}[(u,\dd I)]_{s,t} \right), \\
      \mathrm{R}^4_{s,t} &= \partial_y F(s,y_s) \mathscr{R}[(u,\dd I)]_{s,t}.
\end{align*}
The assertion~\eqref{eq:conv-sewing} follows provided that 
\begin{enumerate}[label=(\Alph*)]
    \item \label{it:01-chain} $\partial_y F(y) (u,\dd I)$ defines a sew-able germ,
    \item \label{it:02-chain} and the sewing of $\mathrm{R}^i$ vanishes for all $i=1,\ldots, 4$.
\end{enumerate}

\underline{\ref{it:01-chain}:} 
We start by showing the sew-ability of $\partial_y F(y) (u,\dd I)$; that is, we need to verify the conditions of Lemma~\ref{sewing}. Let $h \in (0,T)$. Clearly,
\begin{align*}
  \left(  \int_0^{T-h} \abs{\partial_y F(y) (u,\dd I)_{t,t+h}}^{\mu} \dd t \right)^{1/\mu} \leq  \sup_{t \in [0,T]} \sup_{z \in B_R} \abs{\partial_y F(t,z)} \norm{u}_{L^\infty_t H} \seminorm{I}_{B^{\gamma}_{\mu,\infty}H} h^\gamma,
\end{align*}
which implies $\norm{\partial_y F(y) (u,\dd I)}_{\mathbb{B}^\gamma_{\mu,\infty}} \lesssim \norm{u}_{L^\infty_t H} \seminorm{I}_{B^{\gamma}_{q,\infty}H}$, since $B^{\gamma}_{\mu,\infty} \hookrightarrow B^{\gamma}_{q,\infty}$. Next, we decompose the $\delta$-operator of the germ and estimate each term separately,
\begin{align*}
     &\left(  \int_0^{T-h} \abs{(\delta \partial_y F(y) (u,\dd I) )_{t,t+\theta h, t+h}}^{\mu} \dd t \right)^{1/\mu} \\
     &= \left(  \int_0^{T-h} \abs{ \left( \partial_y F(t+\theta h, y_{t+\theta h})u_{t+\theta h} - \partial_y F(t, y_{t})u_{t},I_{t+h} - I_t\right)_H}^{\mu} \dd t \right)^{1/\mu} \\
     &\lesssim \left(  \int_0^{T-h} \abs{ \left( \partial_y F(t+\theta h, y_{t+\theta h}) - \partial_y F(t+\theta h, y_{t}) \right) \left(  u_{t+\theta h} ,I_{t+h} - I_t\right)_H}^{\mu} \dd t \right)^{1/\mu} \\
     &\quad + \left(  \int_0^{T-h} \abs{ \left( \partial_y F(t+\theta h, y_{t}) - \partial_y F(t, y_{t}) \right) \left(  u_{t+\theta h} ,I_{t+h} - I_t\right)_H}^{\mu} \dd t \right)^{1/\mu} \\
     &\quad + \left(  \int_0^{T-h} \abs{  \partial_y F(t, y_{t})\left( u_{t+\theta h} - u_t ,I_{t+h} - I_t\right)_H}^{\mu} \dd t \right)^{1/\mu}\\
     &=: \mathrm{J}_1 + \mathrm{J}_2 + \mathrm{J}_3.
\end{align*}

$\mathrm{J}_1$ is controlled using the uniform local Lipschitz-continuity of $\partial_y F$ in its second argument~\eqref{eq:localBounded-lip-space}, and~\eqref{eq:y-expansion}
\begin{align*}
    \mathrm{J}_1 &\lesssim \norm{u}_{L^\infty_t H} \left(  \int_0^{T-h} \left( \abs{y_{t+h} - y_t} \norm{ I_{t+h} - I_t}_H \right)^{\mu} \dd t \right)^{1/\mu} \\
    &\lesssim \norm{u}_{L^\infty_t H} \left(  \int_0^{T-h} \left( \int_t^{t+h} \abs{b_r} \dd r \norm{ I_{t+h} - I_t}_H \right)^{\mu} \dd t \right)^{1/\mu}\\
    &\quad + \norm{u}_{L^\infty_t H} \left(  \int_0^{T-h} \left(\abs{\mathscr{S}_{t+h} -\mathscr{S}_{t} }  \norm{ I_{t+h} - I_t}_H \right)^{\mu} \dd t \right)^{1/\mu}\\
    &=: \mathrm{J}_1^a + \mathrm{J}_1^b.
\end{align*}
Estimating $\mathrm{J}_1^a$: Using H\"older's inequality with $a = 2/\mu \in (1,\infty]$ and $a' = 2/(2-\mu)$, noticing that $a' \mu = q$, and Fubini's theorem
\begin{align*}
    \mathrm{J}_1^a &\lesssim  \norm{u}_{L^\infty_t H} \norm{b}_{L^1_t}^{1/2} \left(  \int_0^{T-h} \left( \int_t^{t+h} \abs{b_r} \dd r  \right)^{\mu/2} \norm{ I_{t+h} - I_t}_H^{\mu} \dd t \right)^{1/\mu} \\
    &\lesssim  \norm{u}_{L^\infty_t H} \norm{b}_{L^1_t} \norm{ I}_{B^{\gamma}_{q,\infty}H} h^{1/2 + \gamma}.
\end{align*}
Clearly, $1/2 + \gamma > 1 + 1/q > 1$.

Estimating $\mathrm{J}_1^b$: Let $\theta = q/(\mu +q) \in (0,1]$. Using H\"older's inequality with $a = 1/\theta$ and $a' = 1/(1-\theta)$, observing that $a' \theta \mu =q$, and employing the embedding $B^{\gamma}_{q,\infty} \hookrightarrow B^{\gamma}_{\mu,\infty} \hookrightarrow L^\infty$,
\begin{align*}
    \mathrm{J}_1^b &\lesssim  \norm{u}_{L^\infty_t H} \norm{\mathscr{S}}_{L^\infty_t}^{1-\theta} \norm{I}_{L^\infty_t H}^{1-\theta} \left(  \int_0^{T-h} \left(\abs{\mathscr{S}_{t+h} -\mathscr{S}_{t} }  \norm{ I_{t+h} - I_t}_H \right)^{\theta \mu} \dd t \right)^{1/\mu} \\
    &\leq \norm{u}_{L^\infty_t H} \norm{\mathscr{S}}_{L^\infty_t}^{1-\theta} \norm{I}_{L^\infty_t H}^{1-\theta} \left(  \int_0^{T-h} \abs{\mathscr{S}_{t+h} -\mathscr{S}_{t} }^{\mu} \dd t \right)^{\theta /\mu} \left(  \int_0^{T-h}  \norm{ I_{t+h} - I_t}_H^{a' \theta \mu} \dd t \right)^{\theta/q}  \\
    &\lesssim  \norm{u}_{L^\infty_t H} \norm{\mathscr{S}}_{B^{\gamma}_{\mu,\infty}} \norm{I}_{B^{\gamma}_{q,\infty} H} h^{\theta 2 \gamma}.
\end{align*}
Notice that $\theta 2 \gamma > 1$ if and only if $\gamma > 1/2 + 1/(2+q)$. The latter inequality is satisfied since $\gamma > 1/2 + 1/q$.

Overall, we have established, for $\overline{\gamma} = \theta 2 \gamma \wedge (1/2 + \gamma) > 1$,
\begin{align} \label{est:J1}
    \mathrm{J}_1 \lesssim   \norm{u}_{L^\infty_t H} \norm{I}_{B^{\gamma}_{q,\infty}H} \left( \norm{\mathscr{S}}_{B^{\gamma}_{\mu,\infty}}  + \norm{b}_{L^1_t}\right) \, h^{\overline{\gamma}}.
\end{align}

Next, we investigate $\mathrm{J}_2$; H\"older's inequality with $a = 2/\mu \in (1,\infty]$ and $a' = 2/(2-\mu)$, and~\eqref{eq:localBounded-lip-time} show
\begin{align} \nonumber
    \mathrm{J}_2 &\leq \norm{u}_{L^\infty_t H}  \left(  \int_0^{T-h} \hspace{-0.5em}  \abs{\partial_y F(t+\theta h, y_{t}) - \partial_y F(t, y_{t})}^2 \dd t \right)^{1/2} \left(  \int_0^{T-h}  \hspace{-0.5em} \norm{I_{t+h} - I_t }_H^{q} \dd t \right)^{1/q} \\ \label{est:J2}
    &\lesssim \norm{u}_{L^\infty_t H} \seminorm{I}_{B^{\gamma}_{q,\infty} H} \,h^{1/2+\gamma}. 
\end{align}

Lastly, we check $\mathrm{J}_3$; using~\eqref{eq:localBounded-linf} and H\"older's inequality with $a = 2/\mu \in (1,\infty]$ and $a' = 2/(2-\mu)$,
\begin{align} \label{est:J3}
   \mathrm{J}_3 &\lesssim  \left(  \int_0^{T-h} \norm{ u_{t+\theta h} - u_t}_H^\mu \norm{I_{t+h} - I_t}_H^{\mu} \dd t \right)^{1/\mu} \hspace{-1em}\leq  \seminorm{u}_{B^{1/2}_{2,\infty} H} \seminorm{I}_{B^{\gamma}_{q,\infty}H }\, h^{1/2 + \gamma}.
\end{align}

Combining~\eqref{est:J1},~\eqref{est:J2} and~\eqref{est:J3} provide
\begin{align*}
    \norm{ \delta \partial_y F(y) (u,\dd I)}_{\bar{\mathbb{B}}^{\overline{\gamma}}_{\mu,\infty}} \lesssim  \left( \norm{u}_{L^\infty_t H} + \seminorm{u}_{B^{1/2}_{2,\infty}H} \right) \norm{I}_{B^{\gamma}_{q,\infty} H} \left( \norm{\mathscr{S}}_{B^{\gamma}_{\mu,\infty}}  + \norm{b}_{L^1_t}\right) .
\end{align*}

Therefore, we have shown that Lemma~\ref{sewing} is applicable with $\gamma = \overline{\gamma} > 1$, $\alpha = \gamma$ and $p_1 = p_2 = \mu \in (1,2]$, which establishes the existence of $\mathscr{I}[\partial_y F(y) (u,\dd I)] \in B^{\gamma}_{\mu,\infty}.$

\underline{\ref{it:02-chain}:} It is sufficient to show that each $\mathrm{R}^i$ decays sufficiently fast towards the diagonal $\abs{t-s} \ll 1$. Then the assertion follows by Lemma~\ref{local approx doesnt matter}.

First, we investigate $\mathrm{R}^1$; using~\eqref{eq:localBounded-lip-space} and~\eqref{eq:y-expansion}
\begin{align*}
    &\left( \int_0^{T-h} \abs{\mathrm{R}^1_{t,t+h}}^{\mu/2} \dd t \right)^{2/\mu} \\
    &\quad \leq \sup_{t \in [0,T]} \sup_{z_1,z_2 \in B_R} \frac{\abs{\partial_y F(t,z_1) - \partial_y F(t,z_2)}}{\abs{z_1 - z_2}} \left( \int_0^{T-h} (\abs{y_{t+h} - y_t}\abs{\mathscr{S}_{t+h} - \mathscr{S}_t})^{\mu/2} \dd t \right)^{2/\mu} \\
    &\quad \lesssim  \left( \int_0^{T-h} \left(\int_t^{t+h} \abs{b_r} \dd r \abs{\mathscr{S}_{t+h} - \mathscr{S}_t} \right)^{\mu/2} \dd t \right)^{2/\mu} +  \left( \int_0^{T-h} \abs{\mathscr{S}_{t+h} - \mathscr{S}_t}^{\mu} \dd t \right)^{2/\mu}\\
    &\quad =: \mathrm{K}_1 + \mathrm{K}_2.
\end{align*}
Let $\theta = 2/(\mu+1) \in [2/3,1)$, since $\mu \in ( 1,2]$. 
H\"older's inequality with $a = (\mu+1)/\mu$, $a' = \mu+1$, and Fubini's theorem show
\begin{align*}
    \mathrm{K}_1 &\lesssim  \norm{b}_{L^1_t}^{1-\theta} \norm{\mathscr{S}}_{L^\infty}^{1-\theta} \left( \int_0^{T-h} \left(\int_t^{t+h} \abs{b_r} \dd r \right)^{\theta \mu/2} \abs{\mathscr{S}_{t+h} - \mathscr{S}_t}^{\theta \mu/2}  \dd t \right)^{2/\mu} \\
    &\leq  \norm{b}_{L^1_t}^{1-\theta} \norm{\mathscr{S}}_{L^\infty}^{1-\theta} \left( \int_0^{T-h}\int_t^{t+h} \abs{b_r} \dd r \dd t  \right)^{2/(\mu+1)} \left(\int_0^{T-h} \abs{\mathscr{S}_{t+h} - \mathscr{S}_t}^{\mu} \dd t \right)^{2/(\mu(\mu+1))} \\
    &\leq h^{2(1 + \gamma)/(\mu+1)} \norm{b}_{L^1_t} \norm{\mathscr{S}}_{L^\infty}^{1-\theta} \seminorm{\mathscr{S}}_{B^{\gamma}_{\mu,\infty}}^{2/(\mu+1)}.
\end{align*}
Similarly by H\"older's inequality
\begin{align*}
    \mathrm{K}_2 &\leq T^{(\mu-1)/\mu} \left( \int_0^{T-h} \abs{\mathscr{S}_{t+h} - \mathscr{S}_t}^{\mu} \dd t \right)^{2/\mu} \leq T^{(\mu-1)/\mu} h^{2\gamma} \seminorm{\mathscr{S}}_{B^{\gamma}_{\mu,\infty}}^2.
\end{align*}
Define $\tilde{\gamma} =2(1 + \gamma)/(\mu+1) \wedge 2 \gamma$. Therefore, using~\eqref{eq:A-dom-deltaA} and $B^{\gamma}_{\mu,\infty} \hookrightarrow L^\infty$, we find
\begin{align*}
    \norm{\delta \mathrm{R}^1}_{\bar{\mathbb{B}}^{\tilde{\gamma}}_{\mu/2,\infty}} + \norm{\mathrm{R}^1}_{\mathbb{B}^{\tilde{\gamma}}_{\mu/2,\infty}} \lesssim \norm{b}_{L^1_t} \norm{\mathscr{S}}_{B^{\gamma}_{\mu,\infty}} + \seminorm{\mathscr{S}}_{B^{\gamma}_{\mu,\infty}}^2.
\end{align*}
We apply Lemma~\ref{local approx doesnt matter} with $\alpha \in (0,1)$ arbitrary, $\gamma = \tilde{\gamma}$, $p_1 = p_2 = \mu/2$, $A= \mathrm{R}^1$ and $A' = 0$, which implies $\lim_{\norm{\mathcal{P}}\to 0} 
 \mathscr{I}_{\mathcal{P}}\mathrm{R}^1 = \mathscr{I}\mathrm{R}^1 = \mathscr{I} 0 = 0$.

Next, we investigate $\mathrm{R}^2$; using~\eqref{eq:localBounded-linf}
\begin{align*}
    \left( \int_0^{T-h} \abs{\mathrm{R}^2_{t,t+h}}^\mu \dd t \right)^{1/\mu} \leq \sup_{t \in [0,T]} \sup_{z \in B_R} \abs{\partial_y F(t,z)} h^{\gamma +1/2}\norm{\delta \mathscr{S} - \mathscr{I}_\mathcal{P}}_{\mathbb{B}^{\gamma +1/2}_{\mu,\infty}}.
\end{align*}
This implies 
\begin{align*}
    \norm{\mathrm{R}^2}_{\mathbb{B}^{\gamma +1/2}_{\mu,\infty}} + \norm{\delta \mathrm{R}^2}_{\bar{\mathbb{B}}^{\gamma +1/2}_{\mu,\infty}} \lesssim \sup_{t \in [0,T]} \sup_{z \in B_R} \abs{\partial_y F(t,z)} \norm{\delta \mathscr{S} - \mathscr{I}_\mathcal{P}}_{\mathbb{B}^{\gamma +1/2}_{\mu,\infty}},
\end{align*}
from which we conclude by Lemma~\ref{local approx doesnt matter} that $\lim_{\norm{\mathcal{P}}\to 0} \mathscr{I}_{\mathcal{P}}\mathrm{R}^2  = 0$. Similarly, one shows $\lim_{\norm{\mathcal{P}}\to 0} \mathscr{I}_{\mathcal{P}}\mathrm{R}^3= \lim_{\norm{\mathcal{P}}\to 0} \mathscr{I}_{\mathcal{P}}\mathrm{R}^4= 0$.

\underline{\eqref{eq:conv-time}:} 
Artificially introducing the expected asymptotic of the sewing yields
\begin{align*}
  &\mathscr{I}_{\mathcal{P}} \mathrm{III}_{s,t}  - \int_s^t\partial_t F(r,y_r) \dd r  =  \int_{s}^{t}  \overline{\mathscr{H}}_{\mathcal{P}}(r;s,t) \dd r,
\end{align*}
where 
\begin{align*}
    &\overline{\mathscr{H}}_{\mathcal{P}}(r;s,t) \\
    &= \int_0^1 \Big( \partial_t F(s_{\mathcal{P}}^-(r;s,t) + \theta (s_{\mathcal{P}}^+(r;s,t) - s_{\mathcal{P}}^-(r;s,t) ), y_{\mathcal{P}}^+(r;s,t)- \partial_t F(r,y_r) \Big) \dd \theta,
\end{align*}
and $s_{\mathcal{P}}^\pm(r;s,t)$ and $y_{\mathcal{P}}^+(r;s,t)$ are defined analogously to~\eqref{eq:interpolations}. The claim follows in the same way as~\eqref{eq:conv-classic}.
\end{proof}

\section{Proof: Existence of weak solutions} \label{sec:proof-main}
In this section we prove Theorem~\ref{thm:main}. For a summary of the proof we refer to Section~\ref{sec:proof-strategy}. From now on we assume that the assumptions of Theorem~\ref{thm:main} are satisfied.

\subsection{The approximate model}
We start by considering the approximate model 
\begin{align} \label{eq:approximate-model}
  \forall t \in[0,T], v \in V:\quad   \left(u_t - u_0, v \right)_H = \int_0^t \langle A(s,u_s), v \rangle_{V^*,V} \dd s + \left( I_t^n(u), v \right)_H.
\end{align}
Due to the structural assumption on $I^n$ we can differentiate~\eqref{eq:approximate-model} in time, which leads us to the equivalent differential equation 
\begin{align} \label{eq:approximate-model-dif}
  \forall t \in (0,T)\backslash \mathcal{N}, v \in V:\quad   \langle \partial_t u_t - A(t,u_t), v \rangle_{V^*,V} = \left(b^n(t,u_t), v \right)_H,
\end{align}
where $\mathcal{N} \subset (0,T)$ is a nullset. 

The existence of weak solutions to monotone differential equations is a standard result and can be found e.g. in~\cite{MR1033498}.
\begin{lemma} \label{lem:existence-approximate-model}
There exists a unique weak solution $u^n \in W^{1,\alpha'}_t V^* \cap C_t H \cap L^\alpha_t V$ to~\eqref{eq:approximate-model-dif}. Moreover, $u^n$ solves~\eqref{eq:approximate-model-dif} if and only if $u^n$ solves~\eqref{eq:approximate-model}.
\end{lemma}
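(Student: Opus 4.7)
The plan is to absorb the Lipschitz, linearly-growing perturbation $b^n$ into the operator $A$ and then invoke the classical existence theory for (locally) monotone evolution equations on a Gelfand triple. Concretely, I will define
\[
    \tilde A^n : [0,T]\times V \to V^*, \qquad \tilde A^n(t,u) := A(t,u) + b^n(t,u),
\]
where $b^n(t,u)\in H$ is identified with an element of $V^*$ via $H\hookrightarrow V^*$. The first step is to check that $\tilde A^n$ inherits Assumption~\ref{ass:monotone-operator} from $A$: (H1) is immediate from hemicontinuity of $A$ and continuity of $b^n(t,\cdot):H\to H$; (H2) follows from
\[
    2\langle b^n(t,u)-b^n(t,v), u-v\rangle_{V^*,V} = 2\bigl(b^n(t,u)-b^n(t,v), u-v\bigr)_H \leq 2C_n\norm{u-v}_H^2,
\]
so the constant $h$ in the local monotonicity is shifted by $2C_n$; (H3) uses Young's inequality on $(b^n(t,u),u)_H\leq C_n(1+\norm{u}_H)\norm{u}_H$, which is absorbed into a new $c_2$ and $f$; (H4) follows from $\norm{b^n(t,u)}_{V^*}\lesssim \norm{b^n(t,u)}_H \leq C_n(1+\norm{u}_H)$, which is a lower-order perturbation relative to the $c_3\norm{v}_V^{\alpha-1}$ bound.

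Second, I invoke the classical existence and uniqueness theorem for (locally) monotone deterministic evolution equations (e.g.\ the deterministic version of \cite[Theorem~5.1.3]{Liu2015}, or Showalter~\cite{Showalter2013}) to produce a unique $u^n\in C_tH\cap L^\alpha_tV$ satisfying~\eqref{eq:approximate-model-dif}. The additional time regularity $\partial_t u^n\in L^{\alpha'}_tV^*$ is obtained by reading off the equation: by (H4), $A(\cdot,u^n)\in L^{\alpha'}_tV^*$; since $u^n\in L^\infty_tH$ (from the standard a priori estimate), the term $b^n(\cdot,u^n)\in L^\infty_tH\hookrightarrow L^{\alpha'}_tV^*$. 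Hence $u^n\in W^{1,\alpha'}_tV^*$ as required.

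Third, the equivalence between~\eqref{eq:approximate-model-dif} and~\eqref{eq:approximate-model} is a direct consequence of the Lions embedding $W^{1,\alpha'}_tV^*\cap L^\alpha_tV\hookrightarrow C_tH$ together with the fundamental theorem of calculus in $V^*$: integrating~\eqref{eq:approximate-model-dif} against $v\in V$ from $0$ to $t$ and using $I^n_t(u) = \int_0^t b^n(s,u_s)\dd s$ produces~\eqref{eq:approximate-model}, while differentiation in time recovers~\eqref{eq:approximate-model-dif} almost everywhere. Uniqueness in either formulation is obtained by taking the $H$-norm of the difference of two solutions via the classical chain rule for elements of $W^{1,\alpha'}_tV^*\cap L^\alpha_tV$, applying (H2) for $\tilde A^n$, and closing via Grönwall's inequality.

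The main obstacle is purely bookkeeping around (H4) in the regime $\alpha\in(1,2)$, where $\norm{v}_V$ cannot be dominated by $\norm{v}_V^{\alpha-1}$; however, because $b^n$ is controlled on the $H$-scale rather than the $V$-scale, the standard existence theorems accommodate this lower-order perturbation without modification. The rest of the argument is routine and does not require any of the new Besov/Young machinery developed later in the paper.
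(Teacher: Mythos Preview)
Your proposal is correct and follows essentially the same approach as the paper, which simply cites the classical monotone operator theory (``a standard result \ldots in~\cite{MR1033498}'') without giving further details. Your explicit verification that the Lipschitz perturbation $b^n$ can be absorbed into $A$ while preserving (H1)--(H4) up to harmless constant shifts, together with the equivalence argument via the fundamental theorem in $V^*$, is exactly what the citation is meant to cover; your remark on the $\alpha\in(1,2)$ bookkeeping for (H4) is also accurate and resolved the right way.
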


\subsection{New a priori bounds}
Next, we will derive new a priori estimates leveraging the mapping properties of the approximate integral operators. Most importantly, since we don't expect an asymptotically stable time derivative in $V^*$, we avoid using estimates that invoke the time derivative. Instead, we use estimates on the fractional Besov space~$B^{1/2}_{2,\infty} H$. This space can be thought of as a $1/2$-interpolation space in between $W^{1,\alpha'}_t V^*$ and $L^\alpha_t V$.  

We start by deriving two estimates for approximate weak solutions: the first addresses fractional time-regularity on $H$, and the second provides an estimate on the natural energy space $C_t H \cap L^\alpha_t V$. 
\begin{lemma} \label{lem:first-apriori-bound}
There exists a constant $C_{c_3,\alpha} >0$ such that
\begin{align} \label{eq:B1/2H-estimate}
    \seminorm{u^n}_{B^{1/2}_{2, \infty} H}^2\leq C_{c_3, \alpha} \left(\norm{u^n}_{L^\alpha_t V}^\alpha+\norm{g}_{L_t^{\alpha'}}^{\alpha'} \right)+  \seminorm{I^n(u^n)}_{B^{1/2}_{2, \infty}H}^2.
\end{align}
\end{lemma}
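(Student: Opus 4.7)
Starting from the differential form~\eqref{eq:approximate-model-dif}, integrating in time over $[t, t+h]$ gives the $V^*$-valued identity
\[
u^n_{t+h} - u^n_t = \int_t^{t+h} A(s, u^n_s)\,\dd s + I^n_{t+h}(u^n) - I^n_t(u^n).
\]
Since $u^n \in L^\alpha_t V$, for almost every pair $(t, t+h)$ the increment $u^n_{t+h} - u^n_t$ lies in $V$, so I may apply the duality pairing with this increment on both sides to obtain
\[
\norm{u^n_{t+h} - u^n_t}_H^2 = \int_t^{t+h} \langle A(s,u^n_s), u^n_{t+h} - u^n_t\rangle_{V^*,V}\,\dd s + (I^n_{t+h}(u^n) - I^n_t(u^n), u^n_{t+h} - u^n_t)_H.
\]

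\textbf{Estimating the two terms.} For the second term, Cauchy--Schwarz and Young's inequality $ab \leq \tfrac{1}{2}a^2 + \tfrac{1}{2}b^2$ allow me to absorb a factor $\tfrac{1}{2}\norm{u^n_{t+h} - u^n_t}_H^2$ to the left-hand side at the cost of $\tfrac{1}{2}\norm{I^n_{t+h}(u^n) - I^n_t(u^n)}_H^2$. For the $A$-term I bound the pairing by $\norm{A(s,u^n_s)}_{V^*}(\norm{u^n_{t+h}}_V + \norm{u^n_t}_V)$ and apply H\"older's inequality in time with exponents $(\alpha',\alpha)$, yielding
\[
\int_t^{t+h} \norm{A(s,u^n_s)}_{V^*}\,\dd s \leq h^{1/\alpha}\left(\int_t^{t+h} \norm{A(s,u^n_s)}_{V^*}^{\alpha'}\,\dd s\right)^{1/\alpha'}.
\]
A second Young inequality with exponents $(\alpha, \alpha')$ then separates the two factors into $h(\norm{u^n_{t+h}}_V + \norm{u^n_t}_V)^\alpha$ and $\int_t^{t+h} \norm{A(s,u^n_s)}_{V^*}^{\alpha'}\,\dd s$; assumption~\ref{it:H4} together with the algebraic identity $(\alpha-1)\alpha' = \alpha$ bounds the latter by a constant multiple of $g(s)^{\alpha'} + c_3^{\alpha'}\norm{u^n_s}_V^\alpha$.

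\textbf{Closing the argument.} Integrating over $t \in [0, T-h]$ and applying Fubini in the form $\int_0^{T-h}\int_t^{t+h} f(s)\,\dd s\,\dd t \leq h\int_0^T f(s)\,\dd s$ turns every remaining time integral into $h$ times a global $L^\alpha_t V$-norm of $u^n$ or $L^{\alpha'}_t$-norm of $g$; the $I^n$-term contributes $h\seminorm{I^n(u^n)}_{B^{1/2}_{2,\infty}H}^2$ in the same way. Dividing through by $h$ and taking the supremum over $h \in (0,T)$ produces exactly $\seminorm{u^n}_{B^{1/2}_{2,\infty}H}^2$ on the left and the stated inequality on the right. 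The only real subtlety is bookkeeping: the $h^{1/\alpha}$ from H\"older and the $h = \int_t^{t+h}\dd s$ from the outer integration must combine into a single power of $h$, so that the resulting scaling matches Besov regularity~$1/2$ and no better. It is precisely the relation $(\alpha-1)\alpha' = \alpha$ that makes the $\alpha'$-integrability assumption on~$g$ in~\ref{it:H4} compatible with this scaling and lets the estimate close with the correct constants $C_{c_3,\alpha}$.
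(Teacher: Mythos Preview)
Your proof is correct and follows essentially the same approach as the paper's. The only cosmetic difference is in handling the $A$-term: the paper applies Young's inequality with exponents $(\alpha',\alpha)$ directly to the integrand $\norm{A(s,u^n_s)}_{V^*}\norm{u^n_{t+h}-u^n_t}_V$ (avoiding the intermediate triangle inequality and H\"older step), which yields $\tfrac{1}{\alpha'}\int_t^{t+h}\norm{A(s,u^n_s)}_{V^*}^{\alpha'}\dd s + \tfrac{h}{\alpha}\norm{u^n_{t+h}-u^n_t}_V^\alpha$ in one stroke, but the resulting estimate and constants are of the same form.
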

\begin{proof}
Let $\mathcal{J} = \{ t \in [0,T]: \, u^n(t) \in V\}$. Fix $h\in (0,T) \cap \mathcal{J} $ and $t \in (0,T-h)\cap \mathcal{J} $, and subtract~\eqref{eq:approximate-model} for $t' \in \{t+h,t\}$,
\begin{align*}
    \left(u^n_{t+h} - u^n_t, v \right)_H = \int_t^{t+h} \langle A(s,u^n_s), v \rangle_{V^*,V} \dd s + \left( I_{t+h}^n(u^n) - I_{t}^n(u^n), v \right)_H.
\end{align*}
Choosing $v = u^n_{t+h} - u^n_t \in V$, and using Cauchy-Schwarz and Young's inequalities
\begin{align*}
    &\frac{1}{2} \norm{u^n_{t+h} - u^n_t}_H^2 \leq \frac{1}{2} \norm{I_{t+h}^n(u^n) - I_{t}^n(u^n)}_{H}^2 \\
    &\hspace{2em} +\frac{1}{\alpha'}\int_t^{t+h}\norm{A(s, u^n_s)}_{V^*}^{\frac{\alpha}{\alpha-1}} \dd s+\frac{h}{\alpha}\norm{u^n_{t+h}-u^n_t}_{V}^\alpha .
\end{align*}
Next, we integrate over $t \in (0,T-h) \cap \mathcal{J}$ and use Fubini's theorem
\begin{align*}
    &\int_0^{T-h} \norm{u^n_{t+h} - u^n_t}_H^2 \dd t \leq  \int_0^{T-h}  \norm{I_{t+h}^n(u^n) - I_{t}^n(u^n)}_{H}^2 \dd t  \\
    &\hspace{2em} +h \left( \frac{2}{\alpha'} \int_0^{T}\norm{A(t, u^n_t)}_{V^*}^{\alpha'} \dd t+\frac{2}{\alpha} \int_0^{T-h} \norm{u^n_{t+h}-u^n_t}_{V}^\alpha \dd t \right).
\end{align*}
Re-scaling by $h^{-1}$, taking the supremum over $h \in (0,T)$, and using~\ref{it:H4} show
\begin{align*}
    \seminorm{u^n}_{B^{1/2}_{2,\infty}H}^2 &\leq \seminorm{I^n(u^n)}_{B^{1/2}_{2,\infty}H}^2 + \left(\frac{(2c_3)^{\alpha'}}{\alpha'}  +\frac{2^\alpha}{\alpha}  \right) \int_0^T \norm{u^n_t}^{\alpha}_V \dd t  + \frac{2^{\alpha'}}{\alpha'} \int_0^T \abs{g_t}^{\alpha'} \dd t.
\end{align*}
This establishes the estimate~\eqref{eq:B1/2H-estimate}.
\end{proof}

\begin{lemma} \label{lem:second-apriori-bound}
There exists a constant $C_{c_1,c_2,T,q,\gamma} > 0$ such that
\begin{align}
\begin{aligned} \label{eq:energy-estimate}
    \norm{u^n}_{L^\infty_t H}^2 &+ \norm{u^n}_{L^\alpha_t V}^\alpha \leq C_{c_1,c_2,T,q,\gamma} \left( \norm{u_0}_H^2 + 2\norm{f}_{L^1_t} \right) \\
    &+ C_{c_1,c_2,T,q,\gamma} \left( \norm{u^n}_{L^\infty_t H} + \seminorm{u^n}_{B^{1/2}_{2,\infty }H} \right)\seminorm{I^n (u^n)}_{B^{\gamma}_{q,\infty}H}.
    \end{aligned}
\end{align}
\end{lemma}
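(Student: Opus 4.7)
\smallskip

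\noindent\textbf{Proof proposal.} The plan is to expand $\norm{u^n_t}_H^2$ via the chain rule for Young integral equations (Theorem~\ref{chain rule norm}), insert the coercivity bound, control the emerging Young integral by its sewing stability estimate, and close via Grönwall. First I would verify the hypotheses of Theorem~\ref{chain rule norm} applied with $X = u^n$, $Y = A(\cdot, u^n_\cdot)$, and $I = I^n(u^n)$. From Lemma~\ref{lem:existence-approximate-model}, $u^n \in C_tH \cap L^\alpha_tV$; Lemma~\ref{lem:first-apriori-bound} guarantees $u^n \in B^{1/2}_{2,\infty}H$ (at this stage possibly with an $n$-dependent bound, which is acceptable); (H4) implies $A(\cdot, u^n) \in L^{\alpha'}_tV^*$; and the Lipschitz/linear-growth assumptions on $b^n$ combined with $u^n \in L^\infty_tH$ give $I^n(u^n) \in W^{1,\infty}_tH \hookrightarrow C^\gamma_tH$ with $I^n_0(u^n) = 0$.

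Applying Theorem~\ref{chain rule norm} and inserting the coercivity~(H3) yields, for every $t\in[0,T]$,
\begin{align*}
    \norm{u^n_t}_H^2 + 2c_1 \int_0^t \norm{u^n_s}_V^\alpha \dd s \leq \norm{u_0}_H^2 + 2\norm{f}_{L^1_t} + 2c_2\int_0^t \norm{u^n_s}_H^2 \dd s + 2 \abs{\mathscr{S}_t(u^n, \dd I^n(u^n))}.
\end{align*}
For the last term I would invoke Theorem~\ref{thm:young-integral} with $p=2$, $\alpha=1/2$, $\beta=\gamma$ and $1/\mu = 1/2 + 1/q$; the admissibility condition $1/2+\gamma > 1 \vee 1/\mu$ follows from $\gamma > 1/2 + 1/q$. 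Combined with $\mathscr{S}_0(u^n, \dd I^n(u^n)) = 0$ and the embedding $B^\gamma_{\mu,\infty}\hookrightarrow L^\infty_t$ (applicable since $\gamma - 1/\mu > 0$), together with $\norm{u^n}_{L^2_tH} \leq T^{1/2}\norm{u^n}_{L^\infty_tH}$, this yields
\begin{align*}
    \sup_{t\in[0,T]} \abs{\mathscr{S}_t(u^n, \dd I^n(u^n))} \lesssim_{T,q,\gamma} \bigl(\norm{u^n}_{L^\infty_tH} + \seminorm{u^n}_{B^{1/2}_{2,\infty}H}\bigr) \seminorm{I^n(u^n)}_{B^{\gamma}_{q,\infty}H}.
\end{align*}

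Finally, an application of Grönwall's inequality to $t \mapsto \norm{u^n_t}_H^2$ (with the $H$-valued term $2c_2\int_0^t\norm{u^n_s}_H^2 \dd s$ as the Grönwall weight, and everything else treated as data) produces the bound on $\norm{u^n}_{L^\infty_tH}^2$ with multiplicative factor $e^{2 c_2^+ T}$. Re-substituting into the energy inequality then yields the $L^\alpha_tV$ bound after dividing by $2c_1$. The constant $C_{c_1,c_2,T,q,\gamma}$ emerges as a product of $1/c_1$, $e^{2c_2^+ T}$, $T$-dependent factors and the Besov-embedding constant.

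The only delicate step is the extraction of the Young integral bound in precisely the form $(\norm{u^n}_{L^\infty_tH} + \seminorm{u^n}_{B^{1/2}_{2,\infty}H})\,\seminorm{I^n(u^n)}_{B^{\gamma}_{q,\infty}H}$ rather than the full Besov norm; this is essential so that the inequality can later be coupled with Lemma~\ref{lem:first-apriori-bound} and the mapping property~(H5) (via Young's inequality on the product) to produce truly uniform bounds in $n$. The present lemma is deliberately not yet self-closed in $\seminorm{u^n}_{B^{1/2}_{2,\infty}H}$; the closure happens in the subsequent coupling step.
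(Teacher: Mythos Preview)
Your proof is correct and reaches the same conclusion, but the route differs from the paper's. The paper stays at the differential level throughout: it tests~\eqref{eq:approximate-model-dif} with $u^n_t$, applies Gr\"onwall \emph{first} to obtain
\[
\norm{u^n_{t'}}_H^2 \leq e^{2c_2 t'}\norm{u_0}_H^2 + 2\int_0^{t'} e^{2c_2(t'-t)}\bigl((b^n(t,u^n_t),u^n_t)_H + f_t\bigr)\,\dd t,
\]
and only then recognises the weighted Bochner integral $\int_0^{t'} e^{2c_2(t'-t)}(b^n(t,u^n_t),u^n_t)_H\,\dd t$ as (the evaluation of) a Young integral via Corollary~\ref{cor:weighted-time-derivative}, which packages the identification Lemma~\ref{lem:Identification-Young} together with the sewing stability. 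You instead invoke the abstract chain rule Theorem~\ref{chain rule norm} up front, bound $\sup_t|\mathscr{S}_t(u^n,\dd I^n(u^n))|$ directly through Theorem~\ref{thm:young-integral} and the embedding $B^\gamma_{\mu,\infty}\hookrightarrow L^\infty_t$ (valid since $\gamma>1/\mu$), and apply Gr\"onwall \emph{afterwards} with this supremum treated as data. Your order of operations avoids the exponential weight inside the integral and hence bypasses Corollary~\ref{cor:weighted-time-derivative} entirely, at the modest cost of verifying the hypotheses of Theorem~\ref{chain rule norm} (in particular $u^n\in B^{1/2}_{2,\infty}H$, which you correctly source from Lemma~\ref{lem:first-apriori-bound} or, equivalently, from $u^n\in W^{1,\alpha'}_tV^*\cap L^\alpha_tV$). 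The paper's route has the minor advantage of using only classical calculus at the approximate level and isolating all sewing machinery into the single black box Corollary~\ref{cor:weighted-time-derivative}; yours is arguably more streamlined and makes the role of the Young integral more transparent.
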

\begin{proof}
    Let $t\in [0,T] \backslash \mathcal{N} \cap \mathcal{J}$, where $\mathcal{J} = \{ t \in [0,T]: \, u^n(t) \in V\}$ and $\mathcal{N}$ is defined in~\eqref{eq:approximate-model-dif}. Choose $v = u^n_t \in V$ in~\eqref{eq:approximate-model-dif} to find
    \begin{align} \label{eq:Test-dif}
 \langle \partial_t u^n_t, u^n_t \rangle_{V^*,V} - \langle A(t,u^n_t), u^n_t \rangle_{V^*,V} = \left(b^n(t,u^n_t), u^n_t \right)_H.
    \end{align}
Since $V \hookrightarrow H$ continuously and densely, it holds
\begin{align*}
     \langle \partial_t u^n_t, u^n_t \rangle_{V^*,V} = \partial_t \left( \frac{1}{2} \norm{u^n_t}^2_H \right). 
\end{align*}
Invoking~\ref{it:H3},
\begin{align*}
    - \langle A(t,u^n_t), u^n_t \rangle_{V^*,V} \geq c_1\norm{u^n_t}_V^\alpha-c_2\norm{u^n_t}_H^2-f_t.
\end{align*}
Therefore,
\begin{align*}
    \partial_t \norm{u^n_t}^2_H \leq 2 c_2\norm{u^n_t}_H^2  + 2\left( \left(b^n(t,u^n_t), u^n_t \right)_H + f_t\right),
\end{align*}
and Gronwall's inequality implies for all $t' \in [0,T]$
\begin{align} \label{eq:Gronwall}
    \norm{u^n(t')}_{H}^2 \leq e^{2c_2 t'} \norm{u_0}_H^2 +  2\int_0^{t'} e^{2c_2(t'-t)} \left( \left(b^n(t,u^n_t), u^n_t \right)_H + f_t\right) \dd t.
\end{align}
Moreover, integrating~\eqref{eq:Test-dif} over $t \in [0,t'] \backslash \mathcal{N} \cap \mathcal{J}$ and using~\eqref{eq:Gronwall}, allow us to show
\begin{align*}
    \norm{u^n(t')}_{H}^2 &+ 2  c_1 \int_0^{t'} \norm{u^n_t}_V^\alpha \dd t \\
    &\leq \norm{u_0}_{H}^2+ 2\int_0^{t'} \left(b^n(t,u^n_t), u^n_t \right)_H + c_2\norm{u^n_t}_H^2 + f_t\dd t \\
    &\leq \norm{u_0}_{H}^2+ 2\int_0^{t'}  \left(b^n(t,u^n_t), u^n_t \right)_H + f_t\dd t \\
    &\hspace{-1em} + \int_0^{t'}  2c_2\left\{ e^{2c_2 t} \norm{u_0}_H^2 +  2\int_0^{t} e^{2c_2(t-s)} \left( \left(b^n(s,u^n_s), u^n_s \right)_H + f_s \right) \dd s \right\}  \dd t \\
    &= e^{2c_2 t'} \norm{u_0}_{H}^2 + 2 \int_0^{t'} e^{2c_2(t'-t)} \left( \left(b^n(t,u^n_t), u^n_t \right)_H + f_t\right) \dd t,
\end{align*}
where we used that
\begin{align*}
    \int_0^{t'}  2c_2 e^{2c_2 t} \norm{u_0}_H^2  \dd t = (e^{2c_2 t'} - 1)\norm{u_0}_H^2,
\end{align*}
and
\begin{align*}
    &\int_0^{t'}  2c_2 \int_0^{t} e^{2c_2(t-s)} \left( \left(b^n(s,u^n_s), u^n_s \right)_H + f_s \right) \dd s \dd t \\
    &\hspace{2em} = \int_0^{t'} \left( e^{2c_2(t'-t)} - 1\right) \left( \left(b^n(t,u^n_t), u^n_t \right)_H + f_t\right) \dd t.
\end{align*}

Recall that $b^n(s,u^n_s) = \partial_s I^n_s( u^n) \in L^{\infty}_t H$ and $u^n \in W^{1,\alpha'}_t V^* \cap L^{\alpha}_t V \hookrightarrow B^{1/2}_{2,\infty} H$. Therefore, we can apply Corollary~\ref{cor:weighted-time-derivative} with $\lambda = 2c_2$ to find a constant $C_{c_2,T,q,\gamma}> 0$ such that
\begin{align*}
  &\sup_{t' \in [0,T]}  \int_0^{t'} e^{2c_2(t'-t)} \left(b^n(t,u^n_t), u^n_t \right)_H \hspace{-2pt} \dd t \\
  &\hspace{3em} \leq C_{c_2,T,q,\gamma} \left( \norm{u^n}_{L^\infty_t H} + \seminorm{u^n}_{B^{1/2}_{2,\infty }H} \right)\seminorm{I^n(u^n)}_{B^{\gamma}_{q,\infty}H}.
\end{align*}
We arrive at
\begin{align*}
    \norm{u^n(t')}_{H}^2 &+ 2  c_1 \int_0^{t'} \norm{u^n_t}_V^\alpha \dd t \leq e^{2c_2 T} \left( \norm{u_0}_H^2 + 2\norm{f}_{L^1_t} \right) \\
    &\hspace{2em} + 2  C_{c_2,T,q,\gamma} \left( \norm{u^n}_{L^\infty_t H} + \seminorm{u^n}_{B^{1/2}_{2,\infty }H} \right)\seminorm{I^n (u^n)}_{B^{\gamma}_{q,\infty}H}.
\end{align*}
The assertion~\eqref{eq:energy-estimate} follows by taking the supremum over $t' \in [0,T]$.
\end{proof}

So far, neither~\eqref{eq:B1/2H-estimate} nor~\eqref{eq:energy-estimate} are closed estimates for $u^n$ as both sides of the inequalities depend on the approximate weak solution. Next, we will use the quantified mapping properties of the approximate integral operators~$I^n$, cf.~\ref{it:H5}, to close the argument.

\begin{lemma} \label{lem:uniform-estimate}
There exists a constant $C = C(c_1,c_2,c_3,c_4,\lambda,\alpha,q,\gamma,T) > 0$ such that 
\begin{align} \label{eq:uniform-estimate}
   \norm{u^n}_{L^\infty_t H}^2 + \norm{u^n}_{L^\alpha_t V}^\alpha + \norm{u^n}_{B^{1/2}_{2,\infty} H}^2 \leq C \left( \norm{u_0}_H^2 + \norm{f}_{L^1_t} +\norm{g}_{L_t^{\alpha'}}^{\alpha'} +1\right).
\end{align}
\end{lemma}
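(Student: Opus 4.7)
My plan is to localize in time on short subintervals $I=[a,a+\Delta]\subset[0,T]$ where $\lambda(\Delta)$ is small enough that assumption~\ref{it:H5} permits a bootstrap between Lemmas~\ref{lem:first-apriori-bound} and~\ref{lem:second-apriori-bound}, and then iterate the resulting local estimate over finitely many subintervals. First I would observe that the proofs of both lemmas are local in time and therefore extend verbatim to any subinterval $[a,a+\Delta]$, with $u^n(a)$ in place of $u_0$ and all norms localized to $I$. I would then exploit the short length scale via the elementary Hölder-in-time embedding
\[
\seminorm{f}_{B^{1/2}_{2,\infty}(I;H)}\leq \Delta^{\gamma-1/q}\seminorm{f}_{B^{\gamma}_{q,\infty}(I;H)}
\]
to convert the $B^\gamma_{q,\infty}(I;H)$-bound on $I^n(u^n)$ provided by~\ref{it:H5} into a $B^{1/2}_{2,\infty}(I;H)$-bound, which is the scale on which $I^n(u^n)$ enters Lemma~\ref{lem:first-apriori-bound}.

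Abbreviating $L=\norm{u^n}_{L^\infty(I;H)}^2$, $V=\norm{u^n}_{L^\alpha(I;V)}^\alpha$, $B=\seminorm{u^n}_{B^{1/2}_{2,\infty}(I;H)}^2$ and $J=\seminorm{I^n(u^n)}_{B^\gamma_{q,\infty}(I;H)}^2$, assumption~\ref{it:H5} reads $J\leq c_4(1+\lambda(\Delta)(B+L))$. Inserting this into the localized Lemma~\ref{lem:first-apriori-bound} via the embedding above yields
\[
(1-\kappa)B\leq C_{c_3,\alpha}V+C\norm{g}_{L^{\alpha'}(I)}^{\alpha'}+c_4\Delta^{2(\gamma-1/q)}+\kappa L,\qquad \kappa=c_4\Delta^{2(\gamma-1/q)}\lambda(\Delta),
\]
while applying Young's inequality $(\sqrt{L}+\sqrt{B})\sqrt{J}\leq\varepsilon(L+B)+J/(2\varepsilon)$, with a free parameter $\varepsilon\in(0,1)$, to the localized Lemma~\ref{lem:second-apriori-bound} and again using~\ref{it:H5} yields
\[
(1-\mu)L+V\leq C_0\big(\norm{u^n(a)}_H^2+\norm{f}_{L^1(I)}\big)+\tfrac{C_0c_4}{2\varepsilon}+\mu B,\qquad \mu=C_0\varepsilon+\tfrac{C_0c_4\lambda(\Delta)}{2\varepsilon}.
\]
The main obstacle I anticipate is that the constant $C_{c_3,\alpha}$ appearing in Lemma~\ref{lem:first-apriori-bound} is a fixed structural quantity and cannot be made small by any tuning, so a purely global combination of the two lemmas does not close. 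The remedy is exactly the localization: first fix $\varepsilon$ so small that $C_0\varepsilon\leq\tfrac12\min(\tfrac13,\tfrac{1}{4C_{c_3,\alpha}})$, and then use continuity of $\lambda$ at $0$ to choose $\Delta$ so small that $\mu\leq\min(\tfrac13,\tfrac{1}{4C_{c_3,\alpha}})$ and $\kappa\leq\tfrac12$. Substituting the first displayed inequality into the second then decouples the system and produces the local closed bound
\[
L+V+B\leq\widetilde{C}\,\big(\norm{u^n(a)}_H^2+\norm{f}_{L^1(I)}+\norm{g}_{L^{\alpha'}(I)}^{\alpha'}+1\big),
\]
with $\widetilde{C}$ depending only on the structural constants and on the now-fixed $\Delta,\varepsilon$.

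To finish, I would partition $[0,T]$ into $N=\lceil T/\Delta\rceil$ subintervals $I_k=[a_k,a_{k+1}]$, apply the local estimate on each $I_k$, and use $\norm{u^n(a_{k+1})}_H^2\leq L_k$ to iterate the resulting recursion; this gives $\max_k\norm{u^n(a_k)}_H^2\leq\widetilde{C}^N\big(\norm{u_0}_H^2+\norm{f}_{L^1_t}+\norm{g}_{L^{\alpha'}_t}^{\alpha'}+1\big)$, after which summing the local bounds directly controls $\norm{u^n}_{L^\infty_t H}^2=\max_k L_k$ and $\norm{u^n}_{L^\alpha_t V}^\alpha=\sum_k V_k$. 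The global $B^{1/2}_{2,\infty}$-seminorm is reassembled via the elementary subadditivity
\[
\seminorm{u^n}_{B^{1/2}_{2,\infty}(0,T;H)}^2\lesssim \sum_k B_k+C_{N,\Delta,T}\norm{u^n}_{L^\infty_t H}^2,
\]
obtained by splitting the lag $h\in(0,T)$ into $h\leq\Delta$ (handled piecewise, with the cross-subinterval junctions bounded by $\norm{u^n}_{L^\infty_t H}^2$) and $h>\Delta$ (handled trivially by the $L^\infty$ estimate). Collecting these contributions yields~\eqref{eq:uniform-estimate} with a constant $C=C(c_1,c_2,c_3,c_4,\lambda,\alpha,q,\gamma,T)$ independent of $n$.
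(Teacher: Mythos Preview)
Your proposal is correct and follows essentially the same route as the paper: localize Lemmas~\ref{lem:first-apriori-bound} and~\ref{lem:second-apriori-bound} to a short subinterval, feed~\ref{it:H5} into each, close the resulting system by choosing $\Delta$ small enough that $\lambda(\Delta)$ absorbs the feedback terms, and then iterate and reassemble the global norms via the subadditivity of the Nikolskii seminorm plus the $L^\infty_tH$ bound (the paper's Lemma~\ref{lem:local-Nikolskii}). The only cosmetic difference is that the paper first merges the two lemmas into a single inequality \eqref{eq:Iteration-Bound} before localizing, whereas you keep them as a coupled $2\times 2$ system with a free Young parameter~$\varepsilon$; both organizations close under the same smallness condition on $\lambda(\Delta)$.
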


\begin{proof}
The proof consists of six steps:
\begin{enumerate}
    \item \label{it:closing-01} a preliminary estimate;
    \item \label{it:closing-02} setup of induction;
    \item \label{it:closing-03} start of induction;
    \item \label{it:closing-04} induction hypothesis;
    \item \label{it:closing-05} induction step;
    \item \label{it:closing-06} gluing of local norms to global ones.
\end{enumerate}

\underline{Step~\ref{it:closing-01}:} First we use Lemma~\ref{lem:first-apriori-bound} and~\ref{lem:second-apriori-bound} to estimate
\begin{align*}
     &\norm{u^n}_{L^\infty_t H}^2 + \norm{u^n}_{L^\alpha_t V}^\alpha + \seminorm{u^n}_{B^{1/2}_{2,\infty} H}^2 \\
     &\leq \norm{u^n}_{L^\infty_t H}^2 + (1+C_{c_3, \alpha}) \norm{u^n}_{L^\alpha_t V}^\alpha + C_{c_3, \alpha}\norm{g}_{L_t^{\alpha'}}^{\alpha'} +  \seminorm{I^n(u^n)}_{B^{1/2}_{2, \infty}H}^2 \\
     &\leq  (1 + C_{c_3, \alpha})\left( \norm{u^n}_{L^\infty_t H}^2 +\norm{u^n}_{L^\alpha_t V}^\alpha \right)+ C_{c_3, \alpha}\norm{g}_{L_t^{\alpha'}}^{\alpha'} +  \seminorm{I^n(u^n)}_{B^{1/2}_{2, \infty}H}^2  \\
     &\leq  (1 + C_{c_3, \alpha}) C_{c_1,c_2,T,q,\gamma} \left( \norm{u_0}_H^2 + 2\norm{f}_{L^1_t} \right) +  C_{c_3, \alpha}\norm{g}_{L_t^{\alpha'}}^{\alpha'}  + \seminorm{I^n(u^n)}_{B^{1/2}_{2, \infty}H}^2 \\
     &\quad + (1 + C_{c_3, \alpha}) C_{c_1,c_2,T,q,\gamma} \left( \norm{u^n}_{L^\infty_t H} + \seminorm{u^n}_{B^{1/2}_{2,\infty }H} \right)\seminorm{I^n (u^n)}_{B^{\gamma}_{q,\infty}H}.
\end{align*}
Young's inequality implies
\begin{align} \label{eq:estimate-pre-H5}
\begin{aligned}
\norm{u^n}_{L^\infty_t H}^2 &+ \norm{u^n}_{L^\alpha_t V}^\alpha + \seminorm{u^n}_{B^{1/2}_{2,\infty} H}^2 \\
    &\leq 2(1 + C_{c_3, \alpha}) C_{c_1,c_2,T,q,\gamma} \left( \norm{u_0}_H^2 + 2\norm{f}_{L^1_t} \right) + 2 C_{c_3, \alpha}\norm{g}_{L_t^{\alpha'}}^{\alpha'}  \\
    &\quad +  (1 + C_{c_3, \alpha})^2 (1+ C_{c_1,c_2,T,q,\gamma})^2 \seminorm{I^n (u^n)}_{B^{\gamma}_{q,\infty}H}^2.
\end{aligned}
\end{align}
Finally, using~\ref{it:H5} 
\begin{align}\label{eq:Iteration-Bound}
\begin{aligned}
\norm{u^n}_{L^\infty_t H}^2 &+ \norm{u^n}_{L^\alpha_t V}^\alpha + \seminorm{u^n}_{B^{1/2}_{2,\infty} H}^2    \\
&\leq \mathrm{K}_1 \norm{u_0}_H^2 + \mathrm{K}_2 + \mathrm{K}_3 + \mathrm{K}_3 \lambda(T)\left\{ \seminorm{u^n}_{B^{1/2}_{2, \infty}H}^2+\norm{u^n}_{L^\infty_t H}^2 \right\},
\end{aligned}
\end{align}
where
\begin{align*}
    \mathrm{K}_1 &:=  2(1 + C_{c_3, \alpha}) C_{c_1,c_2,T,q,\gamma}, \\
    \mathrm{K}_2 &:=  4(1 + C_{c_3, \alpha}) C_{c_1,c_2,T,q,\gamma}\norm{f}_{L^1_t} + 2C_{c_3, \alpha}\norm{g}_{L_t^{\alpha'}}^{\alpha'}, \\
    \mathrm{K}_3 &:=  (1 + C_{c_3, \alpha})^2 (1+ C_{c_1,c_2,T,q,\gamma})^2  c_4.
\end{align*}


\underline{Step~\ref{it:closing-02} -- Induction setup:}  Define
\begin{align} \label{def:short-time}
    T^* = \inf\{ t \in [0,T]: \lambda(t) \geq (2 \mathrm{K}_3)^{-1} \},
\end{align}
where we set $T^* = T$ if $\lambda(t) < (2 \mathrm{K}_3)^{-1}$ for all $t\in[0,T]$. Since $t\mapsto \lambda(t)$ is continuous in $0$ and $\lambda(0) = 0$, it holds $T^* > 0$. We define the ratio between $T$ and $T^*$ as $r = \lceil T/T^* \rceil$, the $k$-th interval as~$J_k := [(k-1)T^*, k T^*)\cap [0,T]$, and the partition of $[0,T)$ with mesh-size $T^*$ by $\mathcal{P} = \{ J_k:  k \in \{1,\ldots, r\} \}$.

\underline{Step~\ref{it:closing-03} -- Induction start:} : Clearly, $u^n$ solves~\eqref{eq:approximate-model} on~$[0,T^*)$. Thus,~\eqref{eq:Iteration-Bound} holds for $T$ replaced by $T^*$. In particular, $\mathrm{K}_3 \lambda(T^*) \leq 1/2$ by definition of $T^*$. We infer
\begin{align*}
\norm{u^n}_{L^\infty(J_1;H)}^2 &+ \norm{u^n}_{L^\alpha(J_1;V)}^\alpha + \seminorm{u^n}_{B^{1/2}_{2,\infty}(J_1;H )}^2    \\
&\leq \mathrm{K}_1 \norm{u_0}_H^2 + \mathrm{K}_2 + \mathrm{K}_3+ 1/2 \left\{ \seminorm{u^n}_{B^{1/2}_{2, \infty}(J_1;H)}^2+\norm{u^n}_{L^\infty(J_1;H)}^2 \right\},
\end{align*}
which shows 
\begin{align}
    \norm{u^n}_{L^\infty(J_1;H)}^2 + \norm{u^n}_{L^\alpha(J_1;V)}^\alpha + \seminorm{u^n}_{B^{1/2}_{2,\infty}(J_1;H )}^2 \leq 2 \mathrm{K}_1 \norm{u_0}_H^2 + 2\mathrm{K}_2 + 2\mathrm{K}_3.
\end{align}

\underline{Step~\ref{it:closing-04} -- Induction hypothesis:} 
We assume that for all $\tilde{k} \leq k-1$ it holds
\begin{align} \label{eq:induction-hypothesis}
\begin{aligned}
 \norm{u^n}_{L^\infty(J_{\tilde{k}};H)}^2 &+ \norm{u^n}_{L^\alpha(J_{\tilde{k}};V)}^\alpha + \seminorm{u^n}_{B^{1/2}_{2,\infty}(J_{\tilde{k}};H )}^2    \\
&\leq (2 \mathrm{K}_1)^{\tilde{k}} \norm{u_0}_H^2 + \left( 2\mathrm{K}_2 + 2\mathrm{K}_3 \right) \sum_{l=0}^{\tilde{k}-1} (2  \mathrm{K}_1)^l .
\end{aligned}
\end{align}

\underline{Step~\ref{it:closing-05} -- Induction step:} 
Notice that $u^n$ solves for $t \in J_k$
\begin{align*}
     \left(u_t^n - u_{(k-1)T^*}^n, v \right)_H = \int_{(k-1)T^*}^t \langle A(s,u_s^n), v \rangle_{V^*,V} \dd s + \left( \overline{I}_t^{k-1,n}(u^n) , v \right)_H,
\end{align*}
where $ \overline{I}_t^{k-1,n}(u^n) := I_t^n(u^n) - I_{(k-1)T^*}^n(u^n)$. Similarly to Step~\ref{it:closing-01}, we derive
\begin{align} \label{eq:Estimate-k}
\begin{aligned}
    \norm{u^n}_{L^\infty(J_k;H)}^2 &+ \norm{u^n}_{L^\alpha(J_k;V)}^\alpha + \seminorm{u^n}_{B^{1/2}_{2,\infty}(J_k; H)}^2 \\
    &\leq \mathrm{K}_1 \norm{u_{(k-1)T^*}^n}_{H}^2 + \mathrm{K}_2 \\
    &\quad +  (1 + C_{c_3, \alpha})^2 (1+ C_{c_1,c_2,T^*,q,\gamma})^2 \seminorm{\overline{I}^{k-1,n}(u^n)}_{B^{\gamma}_{q,\infty}(J_k;H)}^2.
\end{aligned}
\end{align}
Observe that for fixed $h \leq T^*$ 
\begin{align*}
   \int_{(k-1)T^*}^{kT^*-h} \norm{\overline{I}_{t+h}^{k-1,n}(u^n) -\overline{I}_{t}^{k-1,n}(u^n) }_H^2 \dd t = \int_{(k-1)T^*}^{kT^*-h} \norm{I_{t+h}^{n}(u^n) -I_{t}^{n}(u^n) }_H^2 \dd t,
\end{align*}
which implies $\seminorm{\overline{I}^{k-1,n}(u^n)}_{B^{\gamma}_{q,\infty}(J_k;H)} = \seminorm{I^n(u^n)}_{B^{\gamma}_{q,\infty}(J_k;H)}$. Therefore, using~\ref{it:H5} and~$\mathrm{K}_3 \lambda(T^*) \leq 1/2$ in~\eqref{eq:Estimate-k} establish
\begin{align*}
   \norm{u^n}_{L^\infty(J_k;H)}^2 &+ \norm{u^n}_{L^\alpha(J_k;V)}^\alpha + \seminorm{u^n}_{B^{1/2}_{2,\infty}(J_k;H )}^2    \\
&\leq \mathrm{K}_1 \norm{u_{(k-1)T^*}^n}_{H}^2 + \mathrm{K}_2 + \mathrm{K}_3+ 1/2 \left\{ \seminorm{u^n}_{B^{1/2}_{2, \infty}(J_k;H)}^2+\norm{u^n}_{L^\infty(J_k;H)}^2 \right\}.
\end{align*}
Reordering and using the induction hypothesis~\eqref{eq:induction-hypothesis} provide
\begin{align*}
     \norm{u^n}_{L^\infty(J_k;H)}^2 &+ \norm{u^n}_{L^\alpha(J_k;V)}^\alpha + \seminorm{u^n}_{B^{1/2}_{2,\infty}(J_k;H )}^2  \\
     &\leq 2  \mathrm{K}_1 \norm{u_{(k-1)T^*}^n}_{H}^2 + 2\mathrm{K}_2 + 2\mathrm{K}_3 \\
     &\leq 2  \mathrm{K}_1 \left( (2 \mathrm{K}_1)^{k-1} \norm{u_0}_H^2 + \left( 2\mathrm{K}_2 + 2\mathrm{K}_3 \right) \sum_{l=0}^{k-1} (2  \mathrm{K}_1)^l \right) + 2\mathrm{K}_2 + 2\mathrm{K}_3 \\
     &= (2 \mathrm{K}_1)^{k} \norm{u_0}_H^2 + \left( 2\mathrm{K}_2 + 2\mathrm{K}_3 \right) \sum_{l=0}^{k-1} (2  \mathrm{K}_1)^l.
\end{align*}
Thus, we have proved that~\eqref{eq:induction-hypothesis} remains valid for $\tilde{k} = k$.

\underline{Step~\ref{it:closing-06} -- Gluing:} The induction verifies a uniform bound for localized norms. It remains to estimate the global norms in terms of local ones. We discuss each term on the left-hand side of~\eqref{eq:uniform-estimate} individually. 

$L^\infty([0,T];H)$: Clearly, using~\eqref{eq:Estimate-k}
\begin{align*}
    \norm{u^n}_{L^\infty(0,T;H)}^2 = \max_{k \in \{1,\ldots, r\} } \norm{u^n}_{L^\infty(J_k;H)}^2 \leq (2 \mathrm{K}_1)^{r} \left( \norm{u_0}_H^2 +2\mathrm{K}_2 + 2\mathrm{K}_3 \right).
\end{align*}

$L^\alpha([0,T];V)$: Similarly,
\begin{align*}
    \norm{u^n}_{L^\alpha([0,T];V)}^\alpha = \sum_{k=1}^r \norm{u^n}_{L^\alpha(J_k; V)}^\alpha &\leq \left( \norm{u_0}_H^2 +2\mathrm{K}_2 + 2\mathrm{K}_3 \right) \sum_{k=1}^r (2 \mathrm{K}_1)^{k} \\
    &\leq (2 \mathrm{K}_1)^{r+1}  \left( \norm{u_0}_H^2 +2\mathrm{K}_2 + 2\mathrm{K}_3 \right).
\end{align*}

$B^{1/2}_{2,\infty}([0,T];H)$: Fractional Besov spaces can't be localised easily, i.e.,
\begin{align*}
    u^n \in \bigcap_{k=1}^r B^{1/2}_{2,\infty}(J_k;H) \quad \not \Rightarrow \quad u^n \in B^{1/2}_{2,\infty}([0,T];H).
\end{align*}
However, we have the additional information $u^n \in L^\infty([0,T]; H)$. This is sufficient for a localisation of the Nikolskii norm; indeed, Lemma~\ref{lem:local-Nikolskii} and~\eqref{eq:Estimate-k} imply
\begin{align*}
    \seminorm{u^n}_{B^{1/2}_{2,\infty}([0,T];H)}^2 &\leq \sum_{k=1}^r  \seminorm{u^n}_{B^{1/2}_{2,\infty}(J_k;H)}^2 + \norm{u^n}_{L^\infty([0,T];H)}^2 2(2r-1) \\
    &\leq  \sum_{k=1}^r \left\{ (2 \mathrm{K}_1)^{k} \norm{u_0}_H^2 + \left( 2\mathrm{K}_2 + 2\mathrm{K}_3 \right) \sum_{l=0}^{k-1} (2  \mathrm{K}_1)^l \right\}  \\
    &\quad + (2 \mathrm{K}_1)^{r} \left( \norm{u_0}_H^2 +2\mathrm{K}_2 + 2\mathrm{K}_3 \right)2(2r-1) \\
    &\leq (2 \mathrm{K}_1)^{r}\left( 2 \mathrm{K}_1 +2(2r-1) \right)   \left( \norm{u_0}_H^2 +2\mathrm{K}_2 + 2\mathrm{K}_3 \right) .
\end{align*}
Observe that
\begin{align*}
    \norm{u}_{L^2(0,T;H)}^2 \leq T \norm{u}_{L^\infty([0,T];H)}^2 \leq T(2 \mathrm{K}_1)^{r} \left( \norm{u_0}_H^2 +2\mathrm{K}_2 + 2\mathrm{K}_3 \right).
\end{align*}
Therefore,
\begin{align*}
     \norm{u^n}_{B^{1/2}_{2,\infty}([0,T];H)}^2 \leq (2 \mathrm{K}_1)^{r}\left( 2 \mathrm{K}_1 +2(2r-1) + T\right)   \left( \norm{u_0}_H^2 +2\mathrm{K}_2 + 2\mathrm{K}_3 \right).
\end{align*}

All together, we find
\begin{align*}
    \norm{u^n}_{L^\infty_t H}^2 &+ \norm{u^n}_{L^\alpha_t V}^\alpha + \norm{u^n}_{B^{1/2}_{2,\infty} H}^2 \\
    &\leq 3(2 \mathrm{K}_1)^{r}\left( 2 \mathrm{K}_1 +2(2r-1) + T\right)   \left( \norm{u_0}_H^2 +2\mathrm{K}_2 + 2\mathrm{K}_3 \right),
\end{align*}
and the proof is complete.
\end{proof}

\subsection{A limit is born}
The Banach--Alaoglu theorem states that
\begin{align*}
    B_{X^*} := \left\{ f \in X^*: \, \norm{f}_{X^*} \leq R \right\}
\end{align*}
is relatively compact with respect to weak-star convergence for arbitrary Banach spaces~$X$ and constants~$R >0$. We aim to use this fact for the construction of limits.
\begin{lemma} \label{lem:existence-of-limits}
There exist $u \in L^\infty_t H \cap B^{1/2}_{2,\infty} H \cap L^\alpha_t V$, $\overline{A} \in L^{\alpha'}_t V^*$, and a sub-sequence (not relabeled) such that 
\begin{subequations} \label{eq:convergence}
\begin{alignat}{2} \label{eq:conv-lalphaV}
    &u^n \rightharpoonup u &&\in L^\alpha_t V, \\ \label{eq:conv-linfH}
    &u^n \overset{*}{\rightharpoonup} u &&\in L^\infty_t H, \\ \label{eq:conv-BH}
    &u^n \overset{*}{\rightharpoonup} u &&\in B^{1/2}_{2,\infty} H, \\ \label{eq:conv-strong}
    &u^n \rightarrow u &&\in L^2_t H, \\ 
    \label{eq:conv-lalpha'V*}
    &A(\cdot, u^n) \rightharpoonup \overline{A} &&\in L^{\alpha'}_t V^*.
\end{alignat}
\end{subequations}
\end{lemma}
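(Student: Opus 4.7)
The plan is to read off the required limits directly from the uniform a priori bound in Lemma~\ref{lem:uniform-estimate} via Banach--Alaoglu, and to upgrade to strong $L^2_t H$ convergence through a fractional Aubin--Lions--Simon compactness argument. Lemma~\ref{lem:uniform-estimate} supplies uniform-in-$n$ bounds on $\{u^n\}$ in $L^\infty_t H$, $L^\alpha_t V$ and $B^{1/2}_{2,\infty} H$. Since $V$ is reflexive and $\alpha \in (1,\infty)$, the space $L^\alpha_t V$ is reflexive, so a first subsequence extraction yields~\eqref{eq:conv-lalphaV}. Using the duality $L^\infty_t H = (L^1_t H)^*$, a further subsequence gives~\eqref{eq:conv-linfH}. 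All weak(-star) limits are identified as a single function $u$ because the three topologies agree on a common dense subclass.

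For the strong convergence~\eqref{eq:conv-strong}, I would invoke a fractional Aubin--Lions--Simon type embedding: since $V \hookrightarrow H$ is compact (Assumption~\ref{ass:compact-Gelfand}), any sequence bounded in $L^\alpha(0,T;V) \cap B^{1/2}_{2,\infty}(0,T;H)$ is relatively compact in $L^2(0,T;H)$. The standard proof combines the Riesz--Fr\'echet--Kolmogorov criterion in time (with the uniform Nikolskii bound controlling the $L^2_t H$ modulus of continuity in time) with the compactness $V \hookrightarrow H$ to control spatial oscillations. After a subsequence, $u^n \to u$ strongly in $L^2_t H$; the limit coincides with the one extracted above by uniqueness. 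The convergence~\eqref{eq:conv-BH} is then a consequence of weak-$\ast$ lower semicontinuity of the Nikolskii seminorm (applied along the already strongly convergent subsequence) combined with the uniform bound: it yields $u \in B^{1/2}_{2,\infty} H$, and a standard density argument upgrades this to weak-$\ast$ convergence along a further subsequence.

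Finally, the growth condition~\ref{it:H4} together with the $L^\alpha_t V$ bound yields
\begin{align*}
\norm{A(\cdot, u^n)}_{L^{\alpha'}_t V^*}^{\alpha'} \lesssim \norm{g}_{L^{\alpha'}_t}^{\alpha'} + \norm{u^n}_{L^\alpha_t V}^{\alpha} \leq C,
\end{align*}
uniformly in $n$. Since $L^{\alpha'}_t V^*$ is reflexive, a final Banach--Alaoglu extraction produces $\overline{A}$ satisfying~\eqref{eq:conv-lalpha'V*}. The main technical point is the fractional Aubin--Lions step: the classical version needs a uniform bound on $\partial_t u^n$ in a negative-regularity space, which is not available here; the required compactness $L^\alpha_t V \cap B^{1/2}_{2,\infty} H \hookrightarrow L^2_t H$ when $V \hookrightarrow H$ is compact is nevertheless standard in the fractional-in-time evolution literature, and I would cite it as a black box rather than reproduce its proof.
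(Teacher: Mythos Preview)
Your proposal is correct and follows essentially the same route as the paper: Banach--Alaoglu on the relevant duals for \eqref{eq:conv-lalphaV}, \eqref{eq:conv-linfH}, \eqref{eq:conv-lalpha'V*}, and a fractional Aubin--Lions argument for \eqref{eq:conv-strong}. Two minor differences are worth noting. First, for \eqref{eq:conv-BH} the paper proceeds directly by identifying the predual $B^{1/2}_{2,\infty}H = (\tilde B^{-1/2}_{2,1}H)^*$ (via the spectral characterisation of Besov spaces) and applying Banach--Alaoglu; your route through lower semicontinuity plus a ``density argument'' works but is less clean, and in fact still implicitly requires knowing enough about the predual to identify the weak-$*$ limit with the $u$ already obtained. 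Second, the compactness $L^\alpha_t V \cap B^{1/2}_{2,\infty}H \hookrightarrow L^2_t H$ that you cite as a black box is actually proved in the paper as Theorem~\ref{nikoslki-lions}, via the Riesz--Fr\'echet--Kolmogorov-type criterion of \cite{aubinlions}.
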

\begin{proof}
First, since $V$ is a reflexive Banach space, we identify that $(L^\alpha_t V)^* = L^{\alpha'}_t V^*$, and $( L^{\alpha'}_t V^*)^* = L^\alpha_t V^{**} = L^\alpha_t V$. Moreover, $L^\infty_t H = (L^1_t H)^*$ and $B^{1/2}_{2,\infty} H = (\tilde{B}^{-1/2}_{2,1} H)^*$. The definition of Besov spaces with respect to integrability of difference quotients doesn't immediately transfer to negative derivatives. However, for positive differentiability, $\gamma >0$, there exists an alternative spectral representation, i.e., $B^{\gamma}_{2,\infty} = \tilde{B}^{\gamma}_{2,\infty}$ with equivalent norms. This spectral definition canonically extends to $\gamma \in \mathbb{R}$. Moreover, one has the identification $\tilde{B}^{\gamma}_{2,\infty} = (\tilde{B}^{-\gamma}_{2,1})^*$. More details can be found in~\cite[Appendix~A.1]{Wichmann2024}.

We start by verifying~\eqref{eq:conv-lalphaV}: Choose $X =  L^{\alpha'}_t V^*$ and define 
\begin{align} \label{eq:duality-map}
    f \mapsto \iota_{v}(f):= \int_0^T \langle v(t), f(t) \rangle_{V,V^*} \dd t\in (L^{\alpha'}_t V^*)^*, \quad v\in L^\alpha_t V.
\end{align}
Due to~\eqref{eq:uniform-estimate}, it holds $\{ \iota_{u^n} \}_n \subset B_{(L^{\alpha'}_t V^*)^*}$. The theorem of Banach--Alaoglu establishes the existence of $\overline{\iota} \in (L^{\alpha'}_t V^*)^*$ and a sub-sequence (not relabeled) such that 
\begin{align} \label{eq:V**-weak}
 \forall f \in  L^{\alpha'}_t V^*: \quad  \iota_{u^n}(f) \rightarrow \overline{\iota}(f).
\end{align}
But since $( L^{\alpha'}_t V^*)^* = L^\alpha_t V^{**} = L^\alpha_t V$, there exists $u \in L^\alpha_t V$ such that $\overline{\iota}(f) = \int \langle u(t), f(t) \rangle_{V,V^*} \dd t = \iota_u(f)$. This and~\eqref{eq:V**-weak} imply $u^n \rightharpoonup u \in L^\alpha_t V$. 

The assertions~\eqref{eq:conv-linfH},~\eqref{eq:conv-BH}, and~\eqref{eq:conv-lalpha'V*} follow analogously by choosing $X = L^\alpha_t V$, $X = L^1_t H$, and $X = \tilde{B}^{-1/2}_{2,1} H$, respectively. In each step, we possibly restrict to another sub-sequence.

Lastly, we check~\eqref{eq:conv-strong}: Recall by Assumption~\ref{ass:compact-Gelfand} it holds $V \hookrightarrow H$ compactly. Therefore, an application of Theorem~\ref{nikoslki-lions} with $q = \alpha$, $X_0 = V$, $\gamma = 1/2$, $p = 2$, and $X=H$ shows that $\{u^n\}_{n\in \mathbb{N}}$ is relatively compact in $L^2_t H$.
\end{proof}

\subsection{The limit is a solution}
The identification of the limit is a delicate task. We start by identifying the limiting equation; multiply~\eqref{eq:approximate-model} by a temporal test function $\eta \in C^1_t$ with $\eta(T) = 0$ (we write $\eta' = \partial_t \eta$), and integrate in time
\begin{align*}
\int_0^T \left(u^n_t - u_0, v \right)_H \eta_t' \dd t = \int_0^T \int_0^t \langle A(s,u^n_s), v \rangle_{V^*,V} \dd s \, \eta_t' \dd t + \int_0^T \left( I_t^n(u^n), v \right)_H \eta_t' \dd t.
\end{align*}
Notice that due to~\eqref{eq:conv-linfH},~\eqref{eq:conv-lalpha'V*} respectively~\ref{it:H6}, it holds
\begin{align*}
    \int_0^T \left(u^n_t - u_0, v \right)_H \eta_t' \dd t &\rightarrow \int_0^T \left(u_t - u_0, v \right)_H \eta_t' \dd t, \\
    \int_0^T \int_0^t \langle A(s,u^n_s), v \rangle_{V^*,V} \dd s \, \eta_t' \dd t &\rightarrow \int_0^T \int_0^t \langle \overline{A}_s, v \rangle_{V^*,V} \dd s \, \eta_t' \dd t, \\
    \int_0^T \left( I_t^n(u^n), v \right)_H \eta_t' \dd t & \rightarrow \int_0^T \left( I_t(u), v \right)_H \eta_t' \dd t.
\end{align*}
Thus, $u$ solves for almost all $t \in [0,T]$ and all $v \in V$
\begin{align}
\label{limiting start}
\left(u_t - u_0, v \right)_H = \int_0^t \langle \overline{A}_s, v \rangle_{V^*,V} \dd s + \left( I_t(u), v \right)_H.
\end{align}
Moreover, since the right-hand side of~\eqref{limiting start} is continuous in time for fixed $v \in V$, so must be the left-hand side and~\eqref{limiting start} remains valid for all $t\in [0,T]$.

\begin{lemma}[Identification of monotone operator]
It holds $\overline{A} = A(\cdot, u)$.
\end{lemma}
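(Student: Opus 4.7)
My plan is to adapt the classical Minty monotonicity trick to the Young-integral framework, neutralising the local monotonicity defect $\eta$ in~\ref{it:H2} by an exponential weight, as sketched in Section~\ref{sec:proof-strategy}. Fix a smooth test function $\phi \in W^{1,\alpha'}_t V^* \cap L^\infty_t H \cap L^\alpha_t V \cap B^{1/2}_{2,\infty}H$ such that $\eta(\phi_\cdot)\in L^1_t$, and set $r_s := h_s + \eta(\phi_s)$, $\rho_t := \exp(-\int_0^t r_s\,\dd s)$. Applying Theorem~\ref{chain rule norm} to the increment $u^n - \phi$ (legitimate since its drift lies in $L^{\alpha'}_t V^*$ and the rough part $I^n(u^n)\in B^\gamma_{q,\infty}H \hookrightarrow C^{\gamma-1/q}_t H$), followed by Theorem~\ref{thm:Chain-rule-after} with $F(t,y)=\rho_t y$, produces a weighted identity for $\rho_t \|u^n_t - \phi_t\|_H^2$. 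Adding and subtracting $A(\cdot,\phi)$ in the duality pairing and invoking the symmetric form of~\ref{it:H2} (valid since the stated inequality also holds with $\eta(v)$ in place of $\eta(u)$) annihilates the $-r_s\rho_s\|u^n-\phi\|_H^2$ contribution and yields the one-sided bound
\[
\rho_t \|u^n_t - \phi_t\|_H^2 \leq \|u_0 - \phi_0\|_H^2 + 2\int_0^t \rho_s \langle A(s,\phi_s) - \partial_s\phi_s, u^n_s - \phi_s\rangle_{V^*,V}\,\dd s + 2\mathscr{S}_t(\rho(u^n-\phi), \dd I^n(u^n)).
\]

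Multiplying by a non-negative $\psi \in L^\infty(0,T)$ and integrating in $t$, I pass to the limit $n\to\infty$ using Lemma~\ref{lem:existence-of-limits}: strong $L^2_t H$ convergence handles the quadratic term; weak $L^\alpha_t V$ convergence handles the duality pairing against the fixed element $\rho_s(A(s,\phi_s)-\partial_s\phi_s)\in L^{\alpha'}_t V^*$; and the Young integral converges via the bilinear stability~\eqref{eq:stability-Young-integral} combined with~\ref{it:H6} (providing $I^n(u^n)\to I(u)$ in $B^{\bar\gamma}_{2,\infty}H$ for some $\bar\gamma>1/2$) and an interpolation step that upgrades strong $L^2_t H$ convergence of $u^n$, combined with its uniform $B^{1/2}_{2,\infty}H$ bound, to strong convergence in $B^{\alpha}_{2,\infty}H$ for any $\alpha\in(1-\bar\gamma, 1/2)$. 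Applying the same pair of chain rules directly to the limit $u$ itself (admissible because $u$ solves~\eqref{limiting start} with $\bar A \in L^{\alpha'}_t V^*$ and $I(u)\in B^\gamma_{q,\infty}H$ by Assumption~\ref{ass:integral-operator}) produces the corresponding identity with $\bar A$ in place of $A(\cdot,u^n)$ and equality in place of $\leq$. Subtracting this identity from the limit of the above inequality and cancelling common terms delivers, for every non-negative $\psi\in L^\infty(0,T)$,
\[
\int_0^T \psi(t) \int_0^t \Big[ r_s \rho_s \|u_s - \phi_s\|_H^2 + 2\rho_s \langle A(s,\phi_s) - \bar A_s, u_s - \phi_s\rangle_{V^*,V}\Big]\,\dd s\,\dd t \geq 0.
\]

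To conclude I run Minty's trick: substitute $\phi = u - \lambda w$ for $w$ in a class of test functions along which $\eta(u-\lambda w)$ stays uniformly $L^1_t$-integrable for small $\lambda > 0$, divide by $\lambda$, and send $\lambda\to 0^+$. The $\|u-\phi\|_H^2$ contribution is $O(\lambda)$ and vanishes; hemicontinuity~\ref{it:H1} combined with the growth bound~\ref{it:H4} and dominated convergence yield $\int \psi \int_0^t \rho^{(\lambda)}_s \langle A(s,u_s - \lambda w_s), w_s\rangle\,\dd s \,\dd t \to \int \psi \int_0^t \rho^{(0)}_s \langle A(s,u_s), w_s\rangle\,\dd s \,\dd t$, where $\rho^{(0)}_s = \exp(-\int_0^s (h_r + \eta(u_r))\,\dd r) > 0$. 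Replacing $w$ by $-w$ forces equality, and the arbitrariness of $\psi$ and $w$, combined with the fundamental lemma of the calculus of variations, yields $\bar A_s = A(s,u_s)$ in $V^*$ for almost every $s\in[0,T]$. The principal obstacle is the joint convergence of the Young integral $\mathscr{S}_t(\rho(u^n-\phi),\dd I^n(u^n))$, which cannot be deduced from~\eqref{eq:stability-Young-integral} directly but only after the interpolation argument above (or equivalently via the dominated sewing Lemma~\ref{sewing convergence}); a secondary technicality is preserving $\eta(u-\lambda w)\in L^1_t$ uniformly along the Minty family, which relies on local boundedness of $\eta$ and a careful choice of the class of admissible perturbations $w$.
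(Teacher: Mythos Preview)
Your strategy coincides with the paper's: exponentially weighted Minty trick via the chain rule of Theorem~\ref{thm:Chain-rule-after}, combined with the convergence package of Lemma~\ref{lem:existence-of-limits} and~\ref{it:H6}. Your handling of the Young integral convergence is in fact cleaner than the paper's: you interpolate the strong $L^2_tH$ convergence of $u^n$ against its uniform $B^{1/2}_{2,\infty}H$ bound to obtain strong convergence in $B^{\alpha}_{2,\infty}H$ for any $\alpha<1/2$, which together with~\ref{it:H6} lets the bilinear estimate~\eqref{eq:stability-Young-integral} do all the work. The paper instead splits off a mollified integrator $J^\varepsilon\in W^{1,\infty}_tH$, identifies that piece as a Bochner integral via Lemma~\ref{lem:Identification-Young}, and invokes the weak-$*$ convergence~\eqref{eq:conv-linfH}; your route avoids this detour entirely.

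There is, however, a genuine gap in your Minty step. You derive the key inequality by applying Theorem~\ref{chain rule norm} to the increment $u^n-\phi$, which forces you to assume $\phi\in W^{1,\alpha'}_tV^*$ so that its drift $\partial_s\phi_s$ exists. But the Minty substitution $\phi=u-\lambda w$ is \emph{not} in this class: from~\eqref{limiting start} the limit $u$ carries the rough part $I(u)\in B^{\gamma}_{q,\infty}H$ with $\gamma<1$, so $u\notin W^{1,\alpha'}_tV^*$ in general. Extending your inequality to such $\phi$ by density is obstructed by the weight $\rho$, since $\eta$ is merely measurable and locally bounded, so $\eta(\phi^k_\cdot)\to\eta(\phi_\cdot)$ in $L^1_t$ is not available. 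The paper circumvents this by never differentiating $\phi$: it applies the chain rules to $\|u^n\|_H^2$ and $\|u\|_H^2$ \emph{separately} and then algebraically decomposes $\|u^n_s\|_H^2=\|u^n_s-\phi_s\|_H^2-\|\phi_s\|_H^2+2(u^n_s,\phi_s)_H$ and $\langle A(s,u^n_s),u^n_s\rangle$ analogously, so that $\phi$ only needs to lie in $L^\alpha_tV\cap C_tH$. This is precisely the regularity class containing $u+\varepsilon\xi$ for $\xi=\zeta v$ with $\zeta\in L^\infty_t$, $v\in V$, and the Minty argument closes. You can repair your proof either by adopting this decomposition, or by allowing $\phi$ itself to carry a rough part (so that $u^n-\phi$ has rough part $I^n(u^n)-I(u)\to 0$), but as written the substitution step does not go through.
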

\begin{proof}
The proof consists of 5 steps:
\begin{enumerate}
    \item \label{it:identification-01} expanding the square;
    \item \label{it:identification-02} preparing the chain rule;
    \item \label{it:identification-03} applying the chain rule;
    \item \label{it:identification-04} identifying the Young integral;
    \item \label{it:identification-05} identifying the monotone operator.
\end{enumerate}

\underline{Step~\ref{it:identification-01} -- expanding the square:}
Notice that $u$ and $u^n$ solve~\eqref{limiting start} and~\eqref{eq:approximate-model}, respectively. An application of Theorem~\ref{chain rule norm} shows for all $t\in[0,T]$
\begin{subequations}
\begin{align} \label{eq:square-u}
    \norm{u_t}^2_H &= \norm{u_0}_H^2 + 2 \int_0^t \langle \overline{A}_s, u_s \rangle_{V^*,V} \dd s + 2\mathscr{S}_t( u,\dd I(u)), \\ \label{eq:square-un}
    \norm{u_t^n}^2_H &= \norm{u_0}_H^2 + 2 \int_0^t \langle A(s,u^n_s), u_s \rangle_{V^*,V} \dd s + 2\mathscr{S}_t( u^n,\dd I^n(u^n)).
\end{align}
\end{subequations}

\underline{Step~\ref{it:identification-02}  -- preparing the chain rule:}
Fix $\phi \in L^\alpha_t V \cap C_t H $ and define
\begin{align*}
    F(t,y) := \exp\left(-\int_0^t h_s + \eta(\phi_s) \dd s \right) y.
\end{align*}
We aim to apply Theorem~\ref{thm:Chain-rule-after}. Therefore, we need to check the assumptions. Clearly, $F$ is differentiable with
\begin{align*}
    \partial_y F(t,y) &=  \exp\left(-\int_0^t h_s + \eta(\phi_s) \dd s \right) ,\\
    \partial_t F(t,y) &= -(h_t + \eta(\phi_t)) F(t,y).
\end{align*}
Let $R >0$ and define $C_R:= (R \vee \sqrt{2}) \left( \int_0^T \left(h_s + \eta(\phi_s) \right) \dd s + 1 \right)$. Then~\eqref{eq:localBounded} holds.

Ad~\eqref{eq:localBounded-l1}: Since $h$ and $\eta$ are non-negative
\begin{align*}
    \int_0^T \sup_{z \in B_R} \abs{\partial_t F(t,z)} \dd t \leq R \int_0^T \left(h_s + \eta(\phi_s) \right) \dd s.
\end{align*}

Ad~\eqref{eq:localBounded-linf} and~\eqref{eq:localBounded-lip-space}: Clearly, 
\begin{align*}
\sup_{t \in [0,T]} \sup_{z \in B_R}  \abs{\partial_y F(t,z)}\leq 1 \quad \text{ and } \quad  \sup_{t \in [0,T]} \sup_{z_1,z_2 \in B_R} \frac{\abs{\partial_y F(t,z_1) - \partial_yF(t,z_2)}}{\abs{z_1 - z_2}} = 0.
\end{align*}

Ad~\eqref{eq:localBounded-lip-time}:
It remains to verify
\begin{align*}
    \sup_{h \in [0,T]} h^{-1/2}\left(  \int_0^{T-h}   \sup_{z \in B_R} \abs{\partial_y F(t+h, z) - \partial_y F(t, z)}^2 \dd t \right)^{1/2}  \leq C_R.
\end{align*}
Notice that $z \mapsto \partial_y F(t,z)$ is constant. The fundamental theorem and and Fubini's theorem imply 
\begin{align*}
    &\int_0^{T-h}   \sup_{z \in B_R} \abs{\partial_y F(t+h, z) - \partial_y F(t, z)}^2 \dd t \\
    &\hspace{2em}\leq 2\sup_{t \in [0,T]} \sup_{z \in B_R}   \abs{\partial_y F(t, z)} \int_0^{T-h}   \abs{\partial_y F(t+h, z) - \partial_y F(t, z)} \dd t \\
    &\hspace{2em}\leq 2 \int_0^{T-h}  \int_t^{t+h} [h_r + \eta(\phi_r)] \exp\left(-\int_0^r h_s +\eta(\phi_s) \dd s \right) \dd r \dd t \\
    &\hspace{2em}\leq 2 h \left( \norm{h}_{L^1_t} + \int_0^T \eta(\phi_s) \dd s \right),
\end{align*}
which is sufficient to conclude~\eqref{eq:localBounded-lip-time}.

\underline{Step~\ref{it:identification-03} -- applying the chain rule:} In the last step we have verified that~$F$ satisfies the assumption of Theorem~\ref{thm:Chain-rule-after}. Applying the theorem with $y_t = \norm{u_t^n}_H^2$, $b_s = 2\langle A(s,u^n_s), u_s^n \rangle_{V^*,V}$ shows
\begin{align*}
     &\exp\left(-\int_0^t h_s + \eta(\phi_s) \dd s \right)\norm{u_t^n}^2_H - \norm{u_0}_H^2 \\
    &= \int_0^t -( h_s + \eta(\phi_s)) \exp\left(-\int_0^s h_r + \eta(\phi_r) \dd r \right)\norm{u_s^n}^2_H   \dd s \\
    &\quad +  \int_0^t  \exp\left(-\int_0^s h_r + \eta(\phi_r) \dd r \right)2 \langle A(s,u^n_s), u^n_s \rangle_{V^*,V} \dd s \\
    &\quad + 2\mathscr{I}_t( \partial_y F(y) u^n, \dd I(u^n)),\\
    &= \int_0^t\exp\left(-\int_0^s h_r + \eta(\phi_r) \dd r \right) \Big\{  -( h_s + \eta(\phi_s)) \left( - \norm{\phi_s}_H^2 + 2\left( u^n_s,\phi_s \right)_H\right) \Big\} \dd s\\
    &\quad + \int_0^t\exp\left(-\int_0^s h_r + \eta(\phi_r) \dd r \right) \Big\{ 2 \langle  A(s,\phi_s), u^n_s - \phi_s \rangle_{V^*,V} + 2\langle A(s,u^n_s) , \phi_s \rangle_{V^*,V} \Big\} \dd s\\
    &\quad + \int_0^t\exp\left(-\int_0^s h_r + \eta(\phi_r) \dd r \right) \\
    &\hspace{3em} \Big\{  -( h_s + \eta(\phi_s)) \norm{u^n_s - \phi_s}_H^2 + 2\langle A(s,u^n_s) - A(s,\phi_s), u^n_s - \phi_s \rangle_{V^*,V}  \Big\} \dd s\\
    &\quad +  2\mathscr{S}_t( \partial_y F u^n, \dd I^n(u^n)),
\end{align*}
where we used that $y \mapsto \partial_y F(t,y)$ is constant in $y$, and
\begin{align*}
    \norm{u^n_s}_H^2 &=  \norm{u^n_s - \phi_s}_H^2 - \norm{\phi_s}_H^2 + 2\left( u^n_s,\phi_s \right)_H, \\
    \langle A(s,u^n_s), u^n_s \rangle_{V^*,V}  &=  \langle A(s,u^n_s) - A(s,\phi_s), u^n_s - \phi_s \rangle_{V^*,V}\\
    &\quad + \langle  A(s,\phi_s), u^n_s - \phi_s \rangle_{V^*,V} + \langle A(s,u^n_s) , \phi_s \rangle_{V^*,V}.
\end{align*}

Let $\overline{\psi} \in W^{1,\infty}_t$ be a non-negative function with $\overline{\psi}(T) = 0$. Set $\psi = \partial_t \overline{\psi}$. Then, using weak lower semi-continuity, Fatou's lemma,~\ref{it:H2} and~\eqref{eq:convergence},
\begin{align} \label{eq:limit-un-equation}
\begin{aligned}
    &\int_0^T \psi_t \bigg[ \exp\left(-\int_0^t h_s + \eta(\phi_s) \dd s \right)\norm{u_t}^2_H - \norm{u_0}_H^2 \bigg] \dd t \\
    &\leq \liminf_{n \to \infty}  \int_0^T \psi_t \bigg[ \exp\left(-\int_0^t h_s + \eta(\phi_s) \dd s \right)\norm{u_t^n}^2_H - \norm{u_0}_H^2 \bigg] \dd t \\
    &\leq \int_0^T \psi_t \bigg[\int_0^t\exp\left(-\int_0^s h_r + \eta(\phi_r) \dd r \right) \Big\{  -( h_s + \eta(\phi_s)) \left( - \norm{\phi_s}_H^2 + 2\left( u_s,\phi_s \right)_H\right) \Big\} \dd s\\
    &\quad + \int_0^t\exp\left(-\int_0^s h_r + \eta(\phi_r) \dd r \right) \Big\{ 2 \langle  A(s,\phi_s), u_s - \phi_s \rangle_{V^*,V} + 2\langle \overline{A}_s , \phi_s \rangle_{V^*,V} \Big\} \dd s \bigg] \dd t\\
    &\quad + \liminf_{n\to \infty} 2 \int_0^T \psi_t  \mathscr{S}_t( \partial_y F u^n, \dd I^n(u^n)) \dd t.
\end{aligned}
\end{align}

Similarly by Theorem~\ref{thm:Chain-rule-after} with $y_t = \norm{u_t}_H^2$, $b_s = 2\langle \overline{A}_s, u_s \rangle_{V^*,V}$,
\begin{align} \label{eq:u-equation}
\begin{aligned}
     &\exp\left(-\int_0^t h_s + \eta(\phi_s) \dd s \right)\norm{u_t}^2_H-  \norm{u_0}_H^2 \\
    &\hspace{2em} = \int_0^t -( h_s + \eta(\phi_s)) \exp\left(-\int_0^s h_r + \eta(\phi_r) \dd r \right)\norm{u_s}^2_H   \dd s \\
    &\hspace{2em} \quad +  \int_0^t  \exp\left(-\int_0^s h_r + \eta(\phi_r) \dd r \right)2 \langle \overline{A}_s, u_s \rangle_{V^*,V} \dd s \\
    &\hspace{2em} \quad+ 2\mathscr{S}_t( \partial_y F u, \dd I(u)).
\end{aligned}
\end{align}

Combining~\eqref{eq:limit-un-equation} and~\eqref{eq:u-equation}
\begin{align} \label{eq:almost-identified}
\begin{aligned}
    &\int_0^T \psi_t \bigg[\int_0^t  \exp\left(-\int_0^s h_r + \eta(\phi_r) \dd r \right)2 \langle \overline{A}_s -  A(s,\phi_s), u_s - \phi_s \rangle_{V^*,V} \dd s \bigg] \dd t \\
    &\quad \leq  \liminf_{n\to \infty} 2 \int_0^T \psi_t \left[ \mathscr{S}_t( \partial_y F u^n, \dd I^n(u^n))  -\mathscr{S}_t( \partial_y F u, \dd I(u)) \right] \dd t\\
    & \quad+   \int_0^T \psi_t \bigg[ \int_0^t ( h_s + \eta(\phi_s)) \exp\left(-\int_0^s h_r + \eta(\phi_r) \dd r \right)\norm{u_s - \phi_s}^2_H   \dd s\bigg] \dd t.
    \end{aligned}
\end{align}

\underline{Step~\ref{it:identification-04} -- identifying the Young integral:}
Next, we will show that
\begin{align} \label{eq:convergence-young-integral}
     \liminf_{n\to \infty} \int_0^T \psi_t \left[ \mathscr{S}_t( \partial_y F u^n, \dd I^n(u^n))  -\mathscr{S}_t( \partial_y F u, \dd I(u)) \right] \dd t = 0.
\end{align}
Recall that Young integration is a bi-linear operation, cf. Theorem~\ref{thm:young-integral}. Thus, we can decompose
\begin{align*}
    &\int_0^T \psi_t \left[ \mathscr{S}_t( \partial_y F u^n, \dd I^n(u^n))  -\mathscr{S}_t( \partial_y F u, \dd I(u)) \right] \dd t \\
    &=\int_0^T \psi_t \left[ \mathscr{S}_t( \partial_y F u^n, \dd [I^n(u^n) - I(u)])  \right] \dd t \\
    &\quad + \int_0^T \psi_t \left[ \mathscr{S}_t( \partial_y F [u^n - u], \dd I(u))  \right] \dd t\\
    &=: \mathrm{R}_1^n + \mathrm{R}_2^n.
\end{align*}
Let~$\overline{\gamma}$ be given by~\ref{it:H6}.

Ad $\mathrm{R}_1^n$: H\"older's inequality, the embedding $B^{\overline{\gamma}}_{1,\infty} \hookrightarrow L^1_t$, using that $\mathscr{S}_0( \partial_y F u^n, \dd [I^n(u^n) - I(u)]) = 0$, and the stability of Young integration~\eqref{thm:young-integral} imply
\begin{align} \label{eq:R-1-n}
\begin{aligned}
    \mathrm{R}_1^n &\leq \norm{\mathscr{S}_\cdot( \partial_y F u^n, \dd [I^n(u^n) - I(u)])}_{L^1_t } \norm{\psi}_{L^\infty_t} \\
    &\lesssim \seminorm{\mathscr{S}_\cdot( \partial_y F u^n, \dd [I^n(u^n) - I(u)])}_{B^{\overline{\gamma}}_{1,\infty}} \\
    &\lesssim  \norm{ \partial_y F u^n  }_{B^{1/2}_{2,\infty} H} \seminorm{I^n(u^n) - I(u)}_{B^{\overline{\gamma}}_{2,\infty} H}.
    \end{aligned}
\end{align}
Let us for the moment assume that
\begin{align} \label{eq:stable-mult}
    \sup_{n \in \mathbb{N}}\norm{ \partial_y F u^n  }_{B^{1/2}_{2,\infty} H} < \infty.
\end{align}
By Lemma~\ref{lem:existence-approximate-model} and~\ref{lem:existence-of-limits} it holds $u, u^n \in L^\infty_t H \cap B^{1/2}_{2,\infty}H$, and $u_n \to u \in L^2_t H$. Thus, using~\ref{it:H6} we find $I^n(u^n) \to I(u) \in B^{\overline{\gamma}}_{2,\infty}H$, which shows $\liminf_{n\to\infty} \mathrm{R}_1^n = 0$. Therefore, it suffices to show~\eqref{eq:stable-mult}.

Clearly,
\begin{align} \label{eq:stable-01}
    \norm{ \partial_y F u^n  }_{L^\infty_t H} =  \sup_{t \in [0,T]} \norm{ \exp\left( - \int_0^t h_s + \eta(\phi_s) \dd s \right) u^n_t  }_{ H} \leq \norm{u^n}_{L^\infty_t H}.
\end{align}
Notice that
\begin{align*}
    \partial_y F_t u^n_t - \partial_y F_s u^n_s &= \left[ \exp\left( - \int_0^t h_r + \eta(\phi_r) \dd r \right) - \exp\left( - \int_0^s h_r + \eta(\phi_r) \dd r \right) \right] u^n_t \\
    &\quad + \exp\left( - \int_0^s h_r + \eta(\phi_r) \dd r \right) [u^n_t - u^n_s].
\end{align*}
Thus,
\begin{align*}
    &\int_0^{T-h} \norm{ \partial_y F_{t+h} u^n_{t+h} - \partial_y F_t u^n_t}_H^2 \dd t \\
    &\lesssim \norm{u^n}_{L^\infty_t H}^2 \int_0^{T-h} \abs{ \exp\left( - \int_0^{t+h} h_r + \eta(\phi_r) \dd r \right) - \exp\left( - \int_0^t h_r + \eta(\phi_r) \dd r \right)}^2 \dd t \\
    &\quad + \int_0^{T-h} \norm{u^n_{t+h} - u^n_t}_H^2 \dd t
\end{align*}
The fundamental theorem implies
\begin{align*}
    &\exp\left( - \int_0^{t+h} h_r + \eta(\phi_r) \dd r \right) - \exp\left( - \int_0^t h_r + \eta(\phi_r) \dd r \right) \\
    &\hspace{4em}= -\int_t^{t+h} \exp\left( - \int_0^{s} h_r + \eta(\phi_r) \dd r \right)  ( h_s + \eta(\phi_s)) \dd s.
\end{align*}
This together with estimating one product of the square by its supremal value, and Fubini's theorem show
\begin{align*}
&\int_0^{T-h} \abs{ \exp\left( - \int_0^{t+h} h_r + \eta(\phi_r) \dd r \right) - \exp\left( - \int_0^t h_r + \eta(\phi_r) \dd r \right)}^2 \dd t \\
&\hspace{4em} \lesssim \int_0^{T-h} \abs{ \exp\left( - \int_0^{t+h} h_r + \eta(\phi_r) \dd r \right) - \exp\left( - \int_0^t h_r + \eta(\phi_r) \dd r \right)} \dd t \\
&\hspace{4em} \lesssim h  \int_0^T \left(h_s + \eta(\phi_s) \right) \dd s,
\end{align*}
which allows us to verify
\begin{align*}
    \int_0^{T-h} \norm{ \partial_y F_{t+h} u^n_{t+h} - \partial_y F_t u^n_t}_H^2 \dd t \lesssim h \left( \norm{u^n}_{L^\infty_t H}^2  \int_0^T \left(h_s + \eta(\phi_s) \right) \dd s + \seminorm{u^n}_{B^{1/2}_{2,\infty}H}^2 \right).
\end{align*}
Finally, we conclude
\begin{align} \label{eq:stable-02}
    \seminorm{\partial_y F u^n}_{B^{1/2}_{2,\infty} H}^2 \lesssim \left( \norm{u^n}_{L^\infty_t H}^2  \int_0^T \left(h_s + \eta(\phi_s) \right) \dd s + \seminorm{u^n}_{B^{1/2}_{2,\infty}H}^2 \right).
\end{align}
The assertion~\eqref{eq:stable-mult} follows from~\eqref{eq:stable-01},~\eqref{eq:stable-02} and the a priori bound~\eqref{eq:uniform-estimate}.

Ad $\mathrm{R}_2^n$: We aim to use the weak-star convergence of $u^n$ to $u$ for the identification. To do so, we first regularize the integrand $I(u)$ so that the Young integral can be identified as a classical Bochner integral. The Bochner integral is handled by weak-star convergence, whilst the correction terms are small as $W^{1,\infty}_t H \hookrightarrow B^{\gamma}_{2,\infty} H$ is dense.

Let $\delta \in (1/2,\overline{\gamma})$ and $\varepsilon > 0$. Recall that $B^{\gamma}_{2,\infty} \hookrightarrow B^{\delta}_{2,2} \hookrightarrow B^{\delta}_{2,\infty}$, cf.~\cite[Theorem~16]{MR1108473}. Thus, $I(u) \in B^{\delta}_{2,2} H$ and there exists $J^\varepsilon \in W^{1,\infty}_t H$ such that 
\begin{align}
    \norm{I(u) - J^\varepsilon}_{B^{\delta}_{2,\infty}} \lesssim \norm{I(u) - J^\varepsilon}_{B^{\delta}_{2,2}} \leq \varepsilon.
\end{align}
Next, we split, using the linearity of $\mathscr{S}$,
\begin{align*}
\mathrm{R}_2^n &= \int_0^T \psi_t \left[ \mathscr{S}_t( \partial_y F [u^n - u], \dd [I(u) - J^\varepsilon])  \right] \dd t \\
&\quad + \int_0^T \psi_t \left[ \mathscr{S}_t( \partial_y F [u^n - u], \dd J^\varepsilon)  \right] \dd t\\
&=: \mathrm{R}^n_{2,a} + \mathrm{R}^n_{2,b}. 
\end{align*}

Ad $\mathrm{R}_{2,a}^n$: Analogously to~\eqref{eq:R-1-n} it holds
\begin{align*}
    \mathrm{R}_{2,a}^n \lesssim  \norm{ \partial_y F [u^n-u] }_{B^{1/2}_{2,\infty} H} \seminorm{I(u) - J^\varepsilon}_{B^{\delta}_{2,\infty} H}.
\end{align*}
Also, similarly to~\eqref{eq:stable-mult}, we verify
\begin{align*}
    \norm{ \partial_y F [u^n-u] }_{B^{1/2}_{2,\infty} H}^2 \lesssim \norm{u^n - u}_{L^\infty_t H}^2\left( 1 +  \int_0^T \left(h_s + \eta(\phi_s) \right) \dd s \right) + \seminorm{u^n-u}_{B^{1/2}_{2,\infty}H}^2,
\end{align*}
which is finite uniformly in $n$ by Lemma~\ref{lem:uniform-estimate} and~\ref{lem:existence-of-limits}. This implies
\begin{align} \label{eq:R-2-a-n}
   \liminf_{n \to \infty} \mathrm{R}_{2,a}^n \leq C \varepsilon.
\end{align}

Ad $\mathrm{R}_{2,b}^n$: Using Lemma~\ref{lem:Identification-Young} (with $u = \partial_y F [u^n - u]$ and $I = J^\varepsilon$) and integration by parts (recall that $\overline{\psi}(T) = 0$), we rewrite
\begin{align*}
    \mathrm{R}_{2,b}^n &= \int_0^T \psi_t \left[ \int_0^t  \exp\left(-\int_0^r h_s + \eta(\phi_s) \dd s \right) \left(  u^n_r - u_r,\partial_t J^\varepsilon_r \right)_H \dd r \right] \dd t \\
    &=  \int_0^T \overline{\psi}_r  \exp\left(-\int_0^r h_s + \eta(\phi_s) \dd s \right) \left(  u^n_r - u_r,\partial_t J^\varepsilon_r \right)_H \dd r \\
    &=  \iota_{u^n - u}(Z),
\end{align*}
where~$\iota$ is defined similarly to~\eqref{eq:duality-map}, and $Z_r = \overline{\psi}_r  \exp\left(-\int_0^r h_s + \eta(\phi_s) \dd s \right) \partial_t J^\varepsilon_r$. Observe that
\begin{align*}
    \norm{Z}_{L^1_t H} \leq T \norm{\overline{\psi}}_{L^\infty_t} \norm{\partial_t J^\varepsilon}_{L^\infty_t H} < \infty.
\end{align*}
Finally, we are ready to use the weak-star convergence of $u^n$ towards $u$, cf.~\eqref{eq:conv-linfH}, to conclude
\begin{align} \label{eq:R-2-b-n}
    \liminf_{n \to \infty}  \mathrm{R}_{2,b}^n =  \liminf_{n \to \infty} \iota_{u^n - u}(Z) = 0.
\end{align}

Combining~\eqref{eq:R-2-a-n} and~\eqref{eq:R-2-b-n} establish
\begin{align*}
    \liminf_{n\to \infty} \mathrm{R}^n_2 \leq \liminf_{n\to \infty} \mathrm{R}^n_{2,a} + \liminf_{n\to \infty} \mathrm{R}^n_{2,b} \leq C \varepsilon.
\end{align*}
Passing with $\varepsilon \to 0$ verifies~\eqref{eq:convergence-young-integral}.

\underline{Step~\ref{it:identification-05} -- identifying the monotone operator:}
Recall that $u \in L^\alpha_t V \cap C_t H \cap B^{1/2}_{2,\infty} H$. Let $\varepsilon >0$. Choosing $\phi = u + \varepsilon \xi$ in~\eqref{eq:almost-identified}, where $\xi  = \zeta v$ for $\zeta \in L^\infty_t$ and $v \in V$. Dividing by $\varepsilon$ and passing with $\varepsilon \to 0$ verify the following:
\begin{align*}
    & \lim_{\varepsilon \to 0} \int_0^T \psi_t \bigg[  \int_0^t  \exp\left(-\int_0^s h_r + \eta(u_r + \varepsilon \xi_r) \dd r \right) \\
    &\hspace{6em} 2 \langle \overline{A}_s -  A(s,u_s + \varepsilon \xi_s), \xi_s \rangle_{V^*,V} \dd s \bigg] \dd t  \leq 0.
\end{align*}
By repeating the argument for $\varepsilon <0$ we derive 
\begin{align*}
    & \lim_{\varepsilon \to 0} \int_0^T \psi_t \bigg[  \int_0^t  \exp\left(-\int_0^s h_r + \eta(u_r + \varepsilon \xi_r) \dd r \right) \\
    &\hspace{6em} 2 \langle \overline{A}_s -  A(s,u_s + \varepsilon \xi_s), \xi_s \rangle_{V^*,V} \dd s \bigg] \dd t = 0.
\end{align*}
Since $\psi$ was arbitrary, we conclude for all $t \in [0,T]$
\begin{align*}
        & \lim_{\varepsilon \to 0} \int_0^t  \exp\left(-\int_0^s h_r + \eta(u_r + \varepsilon \xi_r) \dd r \right) 2 \zeta_s \langle \overline{A}_s -  A(s,u_s + \varepsilon \xi_s), v \rangle_{V^*,V} \dd s = 0.
\end{align*}
Similarly, since $\zeta$ was arbitrary for almost all $s\in [0,T]$,
\begin{align*}
    \lim_{\varepsilon \to 0}   \exp\left(-\int_0^s h_r + \eta(u_r + \varepsilon \xi_r) \dd r \right) \langle \overline{A}_s -  A(s,u_s + \varepsilon \xi_s), v \rangle_{V^*,V} = 0.
\end{align*}

Moreover, using the local boundedness of $\eta$,
\begin{align*}
   \inf_{\abs{\varepsilon}\ll 1} \exp\left(-\int_0^s h_r + \eta(u_r + \varepsilon \xi_r) \dd r \right) \geq \exp\left(- [\norm{h}_{L^1_t} + \sup_{z \in B_V}\eta(z)] \right) > 0,
\end{align*}
where $B_V$ is a ball in $V$ such that $r \mapsto u_r \pm \norm{\zeta}_{L^\infty_t} v \in B_V$. This implies for all $v \in V$ and almost all $s \in [0,T]$
\begin{align} \label{eq:one-limit}
    \lim_{\varepsilon \to 0}  \langle \overline{A}_s -  A(s,u_s + \varepsilon \xi_s), v\rangle_{V^*,V}  = 0.
\end{align}
The hemicontinuity~\ref{it:H1} establishes for all $v \in V$ and almost all $s \in [0,T]$
\begin{align} \label{eq:second-limit}
   \lim_{\varepsilon \to 0} \langle A(s,u_s)  - A(s,u_s + \varepsilon \xi_s ), v\rangle_{V^*,V} = 0.
\end{align}
Combining~\eqref{eq:one-limit} and~\eqref{eq:second-limit} shows for all $v \in V$ and almost all $s \in [0,T]$
\begin{align*}
    &\langle \overline{A}_s -  A(s,u_s) , v\rangle_{V^*,V} \\
    &= \lim_{\varepsilon \to 0} \langle \overline{A}_s - A(s,u_s + \varepsilon \xi_s ), v\rangle_{V^*,V} + \lim_{\varepsilon \to 0} \langle A(s,u_s + \varepsilon \xi_s )-  A(s,u_s) , v\rangle_{V^*,V} =0.
\end{align*}
Since $v \in V$ was arbitrary, the identification is complete.
\end{proof}

\begin{rem}
The estimate~\eqref{eq:main-result-estimate} follows from~\eqref{eq:uniform-estimate} using the weak lower semi-continuity of the norms.
\end{rem}

\section{Proofs: Integral operators and Young regimes} \label{sec:proofs-young-regimes}
In this section, we verify the theorems presented in Section~\ref{sec:main-integral-operator}.

\subsection{Additive Young regime}
\begin{proof}[Proof of Theorem~\ref{thm:AnwerAdditive}]
Let $\gamma > 1/2$ and $Z \in C^\gamma_t H$. Define $I_t(u) \equiv Z_t - Z_0 $.

We omit the verification of Assumption~\ref{ass:integral-operator} for the operator~$I$ since it is trivial. The existence of a weak solution follows by Theorem~\ref{thm:main}. Note that besides the locally monotone operator $A$, no non-linear expression of $u$ appears in the analysis, meaning we do not require the strong convergence \eqref{eq:conv-strong} along subsequences. Thus, we do not require the Aubin-Lions lemma and therefore we may drop Assumption \ref{ass:compact-Gelfand} in this setting. 

Next, we discuss uniqueness: let $u$ and $v$ be solutions to~\eqref{problem def} started in $u_0$ and $v_0$, respectively. Subtracting~\eqref{eq:weak-solution} for $u$ and $v$ yield for all $w \in V$ and $t \in [0,T]$
\begin{align*}
  \left(u_t - v_t - u_0 - v_0, w \right)_H = \int_0^t \langle A(s,u_s) - A(s,v_s), w \rangle_{V^*,V} \dd s.
\end{align*}
Applying Theorem~\ref{chain rule norm} with $X_t = u_t-v_t$, $Y_t = A(t,u_t) - A(t,v_t)$ and $I_t = 0$ shows
\begin{align*}
    \norm{u_t - v_t}^2_H &= \norm{u_0 - v_0}^2_H + \int_0^t 2  \langle A(s,u_s) - A(s,v_s), u_s - v_s \rangle_{V^*,V} \dd s \\
     &\leq \norm{u_0 - v_0}^2_H + \int_0^t (h_s+\eta(u_s))\norm{u_s-v_s}_H^2 \dd s,
\end{align*}
where we used~\ref{it:H2} in the last step. The assertion follows by invoking Gronwall's lemma.
\end{proof}

\subsection{Abstract Young integrals}
\label{abstract young integral}
Let $\sigma: H\to H$ be such that 
\begin{equation}
    \norm{\sigma(u)}_H\leq C(1+\norm{u}_H), \qquad \norm{\sigma(u)-\sigma(v)}_H\leq L \norm{u-v}_H.
    \label{sigma conditions}
\end{equation}
We want to construct the abstract Young integral 
\[
I_t(u):=\int_0^t \sigma(u_s) \dd X_s
\]
for some given path $X$. Note that since locally, $I$ should behave as $\sigma(u_s)(X_t-X_s)$, we need to first ensure that $X$ is of such spatial regularity that $\sigma(u_s)(X_t-X_s)\in H$; in this context, we introduced in Definition~\ref{multiplicative ideal} the concept of multiplier. A multiplier is a sufficient condition for the spatial compatibility of abstract Young integration.

\begin{lemma}
\label{abstract young lemma}
Let $H$ be a Hilbert space and $E$ a multiplier in $H$ in the sense of Definition \ref{multiplicative ideal}. Suppose $\sigma:H\to H$ satisfies \eqref{sigma conditions}. Let $X\in C^\gamma_t E$. 

Then for any $q\in (2, \infty)$ such that $\gamma+1/q>1$, the mapping
\begin{equation}
    \begin{split}
        L^\infty_t H\cap B^{1/2}_{2, \infty}H&\to B^\gamma_{q, \infty}H\\
        u&\to I(u):=\int_0^{(\cdot)}\sigma(u_s) \dd X_s
    \end{split}
\end{equation}
is well-defined and satisfies
\begin{align} \label{eq:estimate-abstract-Young}
\seminorm{I(u)}_{B^\gamma_{q, \infty} H}\lesssim(C+L) \seminorm{X}_{C^\gamma_t E}(1+\norm{u}_{L^\infty_tH}+\seminorm{u}_{B^{1/2}_{2, \infty}H}).
\end{align}
In particular, for any sequence $(X^n)_n\subset C^1_tE$ such that $X^n\to X$ in $C^\gamma_t E$ the approximation $I^n(u):=\int_0^t \sigma(u_s) \dot{X}^n_s \dd s$ satisfies condition \ref{it:H5} in Assumption \ref{ass:integral-operator}.
\label{verify H5}
\end{lemma}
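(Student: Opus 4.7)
The plan is to construct $I(u)$ as the sewing of the germ
\[
A_{s,t} := \sigma(u_s)(X_t - X_s) \in H,
\]
where the right-hand side makes sense in $H$ thanks to the multiplier property of $E$ in $H$. Then I would apply Lemma~\ref{sewing} with $p_1 = p_2 = q$, base regularity $\alpha = \gamma$ for $A$, and enhanced regularity $\gamma + 1/q > 1$ for $\delta A$. This is the exact range in which the sewing output lives in $B^\gamma_{q,\infty}H$ without passing through an intermediate space.

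The bound on $A$ is immediate from the multiplier inequality, the linear growth of $\sigma$ and the H\"older regularity of $X$, giving
\[
\norm{A_{r,r+h}}_H \leq C_{\mathrm{mult}}\,C(1 + \norm{u_r}_H)\, h^\gamma \seminorm{X}_{C^\gamma_t E},
\]
and hence $\norm{A}_{\mathbb{B}^\gamma_{q,\infty}} \lesssim T^{1/q}(1 + \norm{u}_{L^\infty_t H})\seminorm{X}_{C^\gamma_t E}$. For the enhanced increment, the algebraic identity $\delta A_{s,u,t} = (\sigma(u_s) - \sigma(u_u))(X_t - X_u)$ together with the multiplier and Lipschitz bounds gives
\[
\norm{\delta A_{r,r+\theta h,r+h}}_H \leq C_{\mathrm{mult}}\,L\,\seminorm{X}_{C^\gamma_t E}\, h^\gamma \norm{u_r - u_{r+\theta h}}_H.
\]
The crux is an $L^\infty$--$L^2$ interpolation of the difference: bounding $q-2$ factors in sup-norm and the remaining $2$ factors by the Nikolskii seminorm yields
\[
\left(\int_0^{T-h} \norm{u_r - u_{r+\theta h}}_H^q \dd r\right)^{1/q} \leq (2\norm{u}_{L^\infty_t H})^{(q-2)/q} (\theta h)^{1/q} \seminorm{u}_{B^{1/2}_{2,\infty}H}^{2/q}.
\]
Consequently $\norm{\delta A}_{\bar{\mathbb{B}}^{\gamma+1/q}_{q,\infty}} \lesssim L\seminorm{X}_{C^\gamma_t E}(\norm{u}_{L^\infty_t H} + \seminorm{u}_{B^{1/2}_{2,\infty}H})$ after invoking Young's inequality on the exponents $(q-2)/q + 2/q = 1$ to linearize the mixed product. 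Plugging both germ bounds into the stability estimate \eqref{est:sewed-seminorm} delivers $\mathscr{I}A \in B^\gamma_{q,\infty}H$ together with \eqref{eq:estimate-abstract-Young}, and I define $I(u) := \mathscr{I}A$.

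For the final assertion concerning $I^n(u) = \int_0^t \sigma(u_s) \dot X^n_s \dd s$, I would first identify this classical Bochner integral with the sewing of $A^n_{s,t} := \sigma(u_s)(X^n_t - X^n_s)$ by a partition-limit argument analogous to Lemma~\ref{lem:Identification-Young}. Repeating the previous germ estimates on an arbitrary subinterval $[s,t]\subset[0,T]$ with $X$ replaced by $X^n$ (using that $\sup_n \seminorm{X^n}_{C^\gamma_t E} < \infty$ since $X^n \to X$ in $C^\gamma_t E$), yields
\[
\seminorm{I^n(u)}^2_{B^\gamma_{q,\infty}(s,t;H)} \lesssim \sup_n \seminorm{X^n}^2_{C^\gamma_t E}\,|t-s|^{2/q}\bigl(1 + \norm{u}^2_{L^\infty(s,t;H)} + \seminorm{u}^2_{B^{1/2}_{2,\infty}(s,t;H)}\bigr),
\]
which is~\ref{it:H5} with $\lambda(r) = C r^{2/q}$ (continuous at zero, $\lambda(0) = 0$) and $c_4$ proportional to $\sup_n \seminorm{X^n}^2_{C^\gamma_t E} \cdot (T^{2/q} \vee 1)$.

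The main obstacle I expect is the correct choice of regularity index for $\delta A$. A crude $L^\infty$ bound on $u$-differences would only produce a $\gamma$-index, which is insufficient for sewing; a crude $L^2$ bound would force working with $L^\mu$ germs for $1/\mu = 1/2 + 1/q$, landing in $B^\gamma_{\mu,\infty}H$, which does not embed into the target $B^\gamma_{q,\infty}H$. The $L^\infty$--$L^2$ interpolation above sidesteps both issues, and, crucially, upgrades the regularity of $\delta A$ to the usable index $\gamma + 1/q$ while keeping the integrability at~$q$, which is the precise condition making $\gamma+1/q>1$ the sharp threshold appearing in the statement.
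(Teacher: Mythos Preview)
Your proof is correct and follows the same overall strategy as the paper: construct the integral by applying the Besov sewing lemma (Lemma~\ref{sewing}) to a suitable germ, and obtain the required $\bar{\mathbb{B}}^{\gamma+1/q}_{q,\infty}$ bound on $\delta A$ through an $L^\infty$--$B^{1/2}_{2,\infty}$ interpolation on the $u$-increments.

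There is one genuine difference worth noting. The paper sews the \emph{averaged} germ $G_{s,t}=\langle\sigma(u)\rangle_{s,t}(X_t-X_s)$, whereas you sew the \emph{left-point} germ $A_{s,t}=\sigma(u_s)(X_t-X_s)$. Your choice makes $\delta A_{s,v,t}=(\sigma(u_s)-\sigma(u_v))(X_t-X_v)$ a single term, so the interpolation step is immediate; the paper instead obtains four terms of the form $(\langle\sigma(u)\rangle-\sigma(u_\cdot))(X-X)$ and invokes the auxiliary Lemma~\ref{abstract average minus leftpoint} on each, which encodes exactly the same $L^\infty$--$L^2$ splitting you wrote out by hand. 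The averaged germ has the minor advantage of being representative-independent as a function of $u\in L^\infty_tH$, but by Lemma~\ref{local approx doesnt matter} both germs yield the same sewing. A second small difference: you take $p_1=q$ in the sewing lemma while the paper takes $p_1=\infty$; your choice produces an extra factor $|t-s|^{1/q}$ on the $\norm{A}_{\mathbb{B}^\gamma_{q,\infty}}$ contribution as well, which makes the verification of \ref{it:H5} with $\lambda(r)\sim r^{2/q}$ entirely transparent.
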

\begin{rem}
    Note that we start with an element $u$ on integrability scale $2$, yet the operator needs to enjoy integrability scale $q$, i.e., we need to gain integrability. This is possible, but at the price of imposing more regularity. Indeed, we use the embedding $B^{1/2}_{2, \infty}\hookrightarrow B^{1/q}_{q, \infty}$, which ensures such a trade in order to gain integrability. The price we pay in regularity becomes apparant in the condition $\gamma+1/q>1$, in that it becomes more restrictive in $\gamma$, the larger we intend to choose $q$. 
\end{rem}

\begin{proof}
Let $u \in L^\infty_t H\cap B^{1/2}_{2, \infty}H$ and $X \in C_t^\gamma E$.

We aim to apply the sewing lemma, Lemma~\ref{sewing}, to the local approximation
\begin{align*}
    G_{s,t}=\langle \sigma(u)\rangle_{s,t}(X_t-X_s),
\end{align*}
where $\mean{u}_{s,t} := \abs{t-s}^{-1} \int_s^t u_r \dd r$ is the average of $u$ on $[s,t]$. Ultimately, we define the abstract Young integral as the sewing of the local approximations, i.e., for all $t \in [0,T]$
\begin{align*}
    I_t(u) := \mathscr{I}_t[G].
\end{align*}
It remains to verify the sew-ability of~$G$; in other words, we need to check the assumptions of the sewing lemma.

Estimating $G$: Let $t \in [0,T]$ and $h \in [0,T-t]$. Note that by \eqref{sigma conditions}, we have 
\[
\norm{G_{t, t+h}}_H\lesssim \norm{\langle\sigma(u)\rangle_{t,t+h}}_H\norm{X_{t+h}-X_t}_E\lesssim C(1+\norm{u}_{L^\infty_t H})\seminorm{X}_{C^\gamma_tE}h^\gamma,
\]
from which we conclude that 
\[
\norm{G}_{\mathbb{B}^\gamma_{\infty, \infty}H}\lesssim C(1+\norm{u}_{L^\infty_t H})\seminorm{X}_{C^\gamma_tE}.
\]

Estimating $\delta G$: Let $s\leq v \leq t$. Moreover, we have 
\begin{equation*}
    \begin{split}
        (\delta G)_{s, v, t}&=\left(\langle \sigma(u)\rangle_{s, t}- \langle \sigma(u)\rangle_{s, v}\right)(X_t-X_s)+\left(\langle \sigma(u)\rangle_{s, v}- \langle \sigma(u)\rangle_{v, t}\right)(X_t-X_v)\\
        &=\left(\langle \sigma(u)\rangle_{s, t}- \sigma(u_s)\right)(X_t-X_s)-\left(\langle \sigma(u)\rangle_{s, v}- \sigma(u_s)\right)(X_t-X_s)\\
        &+\left(\langle \sigma(u)\rangle_{s, v}- \sigma(u_v)\right)(X_t-X_v)-\left(\langle \sigma(u)\rangle_{v, t}- \sigma(u_v)\right)(X_t-X_v).
    \end{split}
\end{equation*}
Let us further bound only the first of the above four expressions (the other ones follow by similar considerations). By Lemma \ref{abstract average minus leftpoint} we have
\begin{equation*}
    \begin{split}
      &\left( \int_0^{T-h}\norm{\left(\langle \sigma(u)\rangle_{s, s+h}- \sigma(u_s)\right)(X_{s+h}-X_s)}_{H}^q \dd s\right)^{1/q}\\
       &\lesssim \seminorm{X}_{C^\gamma_t E}h^\gamma \left(\int_0^{T-h}\norm{\left(\langle \sigma(u)\rangle_{s, s+h}- \sigma(u_s)\right)}_H^{q} \dd s\right)^{1/q}\\
       &\lesssim h^\gamma \seminorm{X}_{C^\gamma_tE}h^{1/q}L(2\norm{u}_{L^\infty_tH})^{1-2/q}\seminorm{u}_{B^{1/2}_{2, \infty}H}^{2/q}.
    \end{split}
\end{equation*}
Bounding the remaining terms in a similar fashion, we obtain 
\[
\bar{\Omega}_q(\delta G, h)\lesssim  h^{\gamma+1/q}\seminorm{X}_{C^\gamma_tE}L\seminorm{u}_{B^{1/2}_{2, \infty}H}.
\]
We are therefore able to apply the Besov Sewing Lemma \ref{sewing} with $\alpha = \gamma$, $p_1 = \infty$, $p_2= q$ and $\gamma = \gamma + 1/q$. Assertion~\eqref{eq:estimate-abstract-Young} follows from~\eqref{est:sewed-seminorm}; indeed, 
\begin{align*}
    \seminorm{I(u)}_{B^{\gamma}_{q,\infty} H} =  \seminorm{\mathscr{I}G }_{B^{\gamma}_{q,\infty} H} &\lesssim \norm{G}_{\mathbb{B}^\alpha_{\infty, \infty}H} + T^{1/q}\norm{\delta G}_{\bar{\mathbb{B}}^{\gamma+1/q}_{q, \infty}H} \\
    &\lesssim  C(1+\norm{u}_{L^\infty_t H})\seminorm{X}_{C^\gamma_tE} + T^{1/q} \seminorm{X}_{C^\gamma_tE}L\seminorm{u}_{B^{1/2}_{2, \infty}H}.
\end{align*}

\end{proof}
\begin{lemma}
\label{verify H6}
    Suppose the setting of Lemma \ref{abstract young lemma} and let $(X^n)_n\subset C^1_tE$ be such that $X^n\to X$ in $C^\gamma_tE$. For $q\geq 2$ and $\gamma>1-1/q$, let $I(u)\in B^{\gamma}_{q, \infty}H$ be the abstract Young integral as defined in Lemma~\ref{abstract young integral} and $I^n(u)=\int_0^{\cdot}\sigma(u_s)\dot{X}^n_s \dd s$ be its approximate. Then the continuity property \ref{it:H6} in Assumption \ref{ass:integral-operator} holds. 
    \end{lemma}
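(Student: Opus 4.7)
The plan is to split the difference as
\[
I^n(v_n)-I(v)=\bigl[I^n(v_n)-I^n(v)\bigr]+\bigl[I^n(v)-I(v)\bigr]=:R_1^n+R_2^n,
\]
and to show that both pieces vanish in $B^{\gamma}_{2,\infty}H$, so that one may take $\overline{\gamma}=\gamma$. The key observation is that the sewing-defined integral $I^n(w)=\mathscr{I}[G^{w,X^n}]$ with germ $G^{w,Y}_{s,t}=\langle\sigma(w)\rangle_{s,t}(Y_t-Y_s)$ is linear in the integrator, so that $R_2^n=\mathscr{I}[G^{v,X^n-X}]$ and $R_1^n=\mathscr{I}[A^n]$ with $A^n_{s,t}:=\langle\sigma(v_n)-\sigma(v)\rangle_{s,t}(X^n_t-X^n_s)$. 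For $X^n\in C^1_tE$ the sewing $\mathscr{I}[G^{w,X^n}]$ coincides with the classical Bochner integral $\int_0^{\cdot}\sigma(w_s)\dot{X}^n_s\dd s$ by an argument parallel to Lemma~\ref{lem:Identification-Young}, which legitimizes using the same sewing representation for both $I^n$ and $I$.

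The term $R_2^n$ will be handled by a direct application of Lemma~\ref{abstract young lemma} with integrator $X^n-X\in C^\gamma_tE$, giving
\[
\seminorm{R_2^n}_{B^{\gamma}_{q,\infty}H}\lesssim (C+L)\seminorm{X^n-X}_{C^\gamma_tE}\bigl(1+\norm{v}_{L^\infty_tH}+\seminorm{v}_{B^{1/2}_{2,\infty}H}\bigr)\longrightarrow 0.
\]
Since $q\geq 2$ and the time interval is bounded, this upgrades to convergence in $B^{\gamma}_{2,\infty}H$.

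For $R_1^n$ I would invoke the dominated-convergence statement for sewings, Lemma~\ref{sewing convergence}, with parameters $\alpha=\gamma$, $p_1=2$, $p_2=q$ and exponent $\gamma+1/q>1$. Two ingredients are needed. First, the quantitative decay $\norm{A^n}_{\mathbb{B}^\gamma_{2,\infty}}\to 0$ will follow from the Lipschitz bound $\norm{\langle\sigma(v_n)-\sigma(v)\rangle_{s,s+h}}_H\leq L\,\langle\norm{v_n-v}\rangle_{s,s+h}$, the elementary inequality $\int_0^{T-h}\langle\norm{v_n-v}\rangle_{s,s+h}^2\dd s\leq\norm{v_n-v}_{L^2_tH}^2$ (Jensen plus Fubini), and the uniform bound on $\seminorm{X^n}_{C^\gamma_tE}$. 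Second, a uniform bound $\sup_n\norm{\delta A^n}_{\bar{\mathbb{B}}^{\gamma+1/q}_{q,\infty}}<\infty$ is required. I expect this second ingredient to be the main technical step: it should be obtained by mimicking the $\delta G$-computation from the proof of Lemma~\ref{abstract young lemma}, splitting $\delta A^n_{s,u,t}$ into four telescoping increments of the form $(\langle\sigma(v_n)-\sigma(v)\rangle_{a,b}-[\sigma(v_n)_\ast-\sigma(v)_\ast])(X^n_\ast-X^n_\ast)$ and estimating each via Lemma~\ref{abstract average minus leftpoint}, now extracting constants uniformly in $n$ from the assumed bounds on $\norm{v_n}_{L^\infty_tH}$ and $\seminorm{v_n}_{B^{1/2}_{2,\infty}H}$. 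Once both ingredients are in hand, Lemma~\ref{sewing convergence} delivers $\seminorm{R_1^n}_{\mathbb{B}^\gamma_{2,\infty}}\to 0$; combining this with the bound on $R_2^n$, and using $I^n(v_n)_0-I(v)_0=0$ together with Lemma~\ref{lem:Shift-bound} to promote seminorm convergence to full $B^\gamma_{2,\infty}H$-convergence, completes the verification of~\ref{it:H6} with $\overline{\gamma}=\gamma$.
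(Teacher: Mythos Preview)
Your argument is correct and uses the same underlying machinery as the paper, but the splitting is the mirror image of the paper's. The paper decomposes
\[
I^n(v^n)-I(v)=\bigl[I^n(v^n)-I(v^n)\bigr]+\bigl[I(v^n)-I(v)\bigr],
\]
i.e.\ it first changes the integrator (keeping $v^n$) and then changes the integrand (keeping $X$), whereas you first change the integrand (keeping $X^n$) and then the integrator (keeping $v$). For the change-of-integrand term the paper applies Lemma~\ref{sewing convergence} to the germs $G^n_{s,t}=\langle\sigma(v^n)\rangle_{s,t}(X_t-X_s)$ and $G_{s,t}=\langle\sigma(v)\rangle_{s,t}(X_t-X_s)$, proving $\norm{G^n-G}_{\mathbb{B}^\gamma_{q,\infty}}\to 0$ via Lemma~\ref{abstract limit identification lemma}; with $p_1=p_2=q$ this yields convergence in $B^\gamma_{q,\infty}H$. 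Your choice $p_1=2$ bypasses Lemma~\ref{abstract limit identification lemma} altogether --- the $L^2$-bound on averages follows from Jensen and Fubini alone --- at the price of landing only in $B^\gamma_{2,\infty}H$, which is exactly what \ref{it:H6} requires. The uniform $\bar{\mathbb{B}}^{\gamma+1/q}_{q,\infty}$-bound on $\delta A^n$ that you flag as the main technical step is indeed obtained by writing $\delta A^n=\delta G^{v_n,X^n}-\delta G^{v,X^n}$ and bounding each summand exactly as in the proof of Lemma~\ref{abstract young lemma}, using the assumed uniform control of $\norm{v_n}_{L^\infty_tH}$, $\seminorm{v_n}_{B^{1/2}_{2,\infty}H}$ and $\seminorm{X^n}_{C^\gamma_tE}$. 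Both routes are equally short; the paper's gives the marginally stronger conclusion $I^n(v^n)\to I(v)$ in $B^\gamma_{q,\infty}H$, while yours is self-contained at the $L^2$ level.
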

    \begin{proof}
    Let us start by pointing out that in the notation of Assumption \ref{ass:integral-operator}, we have $b^n(t, u)=\sigma(u)\dot{W}^n_t$. By \eqref{sigma conditions}, it follows readily that 
 \begin{align*}
     \norm{b^n(t, v)}_H&\lesssim C\norm{X^n}_{C^1_tE}(1+\norm{v}_H), \\
     \norm{b^n(t, v)-b^n(t, u)}_H &\lesssim L\norm{X^n}_{C^1_tE}\norm{u-v}_H.
 \end{align*}
 We now take $v, v^n\in L^\infty_tH\cap B^{1/2}_{2, \infty}H$ such that $v^n\to v$ in $L^2_tH$. We need to show that $I^n(v_n)\to I(v)$ in $B^{\gamma}_{q, \infty}H$.
    Note that due to the linearity of the integral in the integrator, we have 
    \[
    \seminorm{I^n(v^n)-I(v^n)}_{B^\gamma_{q, \infty}}\lesssim \seminorm{X-X^n}_{C^\gamma_tE}(1+\norm{v^n}_{L^\infty_tH}+\seminorm{v^n}_{B^{1/2}_{2, \infty}H})\to 0.
    \]
   It thus remains to show $I(v^n)\to I(v)$ in $B^\gamma_{q, \infty}$, where we recall that $I$ is constructed as in Lemma \ref{abstract young lemma}. Towards this end, we employ Lemma \ref{sewing convergence}, by which it suffices to show that for $G^n_{s,t}=\langle \sigma(v^n)\rangle_{s,t}(X_t-X_s)$ and $G_{s,t}=\langle \sigma(v)\rangle_{s,t}(X_t-X_s)$ we have $\norm{G^n-G}_{\mathbb{B}^\gamma_{q, \infty}}\to 0$. By Lemma \ref{abstract limit identification lemma}, we have 
   \begin{align*}
      &\left( \int_0^{T-h}\norm{G^n_{r, r+h}-G_{r, r+h}}_H^q \dd r\right)^{1/q}\\
      &\lesssim h^\gamma \seminorm{W}_{C^\gamma_tH}\left(\int_0^{T-h}\norm{\langle \sigma(v^n)\rangle_{r,r+h}- \langle \sigma(v)\rangle_{r,r+h}}_H^q \dd r\right)^{1/q}\\
      &\lesssim  h^\gamma \seminorm{W}_{C^\gamma_tH} (\norm{v^n}_{L^\infty_t H} + \norm{v}_{L^\infty_t H})^{1-2/q} L \norm{v^n-v}_{L^2_t H}^{2/q},
   \end{align*}
  and therefore $\norm{G^n-G}_{\mathbb{B}^\gamma_{q, \infty}}\to 0$. By Lemma \ref{sewing convergence}, we conclude that $I(v^n)\to I(v)$ in $B^\gamma_{q, \infty}H$.
\end{proof}


\begin{proof}[Proof of Theorem~\ref{thm:Existence-abstract-Young}]
Let $I$, $I^n$ be as in Lemma \ref{verify H5}. 
  By Lemma \ref{verify H5}, the Young integral is a well defined element of $B^\gamma_{q, \infty}H$ provided $\gamma>1-1/q$. Note that Assumption \ref{ass:integral-operator} and Theorem \ref{thm:main} on the other hand require $\gamma>1/2+1/q$. Both constraints are satisfied for $q=4$ and $\gamma>3/4$. In this case, Lemma \ref{verify H5} and \ref{verify H6} assure that Assumption \ref{ass:integral-operator} is satisfied. We may therefore conclude by Theorem \ref{thm:main}.
\end{proof}

\subsection{Linear multiplicative Young regime}
\begin{proof}[Proof of Theorem~\ref{thm:Multiplicative}]
For existence, note that $E=\mathbb{R}$ is always a multiplicative ideal in $H$ in the sense of Definition~\ref{multiplicative ideal} and $\sigma(u)=u$ obviously satisfies \eqref{sigma conditions}. Hence existence of weak solutions follows from Theorem~\ref{thm:Existence-abstract-Young}.

Let $u$ and $v$ be two weak solutions to~\eqref{problem def} starting in $u_0$ and $v_0$, respectively. Using Theorem~\ref{chain rule norm} with $X_t = u_t -v_t$, $Y_t = A(t,u_t) - A(t,v_t)$ and $I_t = I_t(u-v)$ (notice that we used the linearity of $I$ at this point); and subtracting the resulting equations for $t > s$ yield
\begin{align*}
    \norm{u_t-v_t}^2_H&=  \norm{u_s-v_s}^2_H+\int_s^t\langle A(r, u_r)-A(r, v_r), u_r-v_r\rangle_{V^*, V} \dd r\\
    &+\mathscr{S}_{t}(u-v,  \dd I(u-v) ) - \mathscr{S}_{s}(u-v,  \dd I(u-v) ).
\end{align*}
Remark that $Z_t:=\norm{u_t-u_t}^2_H$ satisfies
\[
|Z_t-Z_s|\leq 2(\norm{u}_{C_tH}+\norm{v}_{C_tH})(\norm{u_t-u_s}_H+\norm{v_t-v_s}_H)
\]
by which we conclude that $Z\in B^{1/2}_{2, \infty}\mathbb{R}$. Hence, using the sewing Lemma \ref{sewing}, it can be easily verified that 
\begin{align*}
    \mathscr{S}_{t}(u-v,  \dd I(u-v) ) - \mathscr{S}_{s}(u-v,  \dd I(u-v) ) = \int_s^t \norm{u_r-v_r}_H^2 \dd \beta_r.
\end{align*}
Due to~\ref{it:H2-alter} it holds moreover that $\int_s^t\langle A(r, u_r)-A(r, v_r), u_r-v_r\rangle_{V^*, V} \dd r \leq 0$. Thus, the above considerations lead to the inequality: for all $s\leq t$
\begin{equation}
    Z_t\leq Z_s+\int_s^t Z_r \dd \beta_r. 
    \label{z inequality}
\end{equation}
Remark that by definition $Z\geq 0$. Let us further define
\[
B_t:=\int_0^t Z_r \dd \beta_r-Z_t.
\]
By \eqref{z inequality}, $B$ is monotone increasing. Hence, the integral 
\[
\int_0^t \exp{(-\beta_r)} \dd B_r\geq 0
\]
is well defined. Using Lemma \ref{classical rules of calculus}, we obtain
\begin{align*}
    0&\leq \int_0^t \exp{(-\beta_r)}\dd B_r=\int_0^t  \exp{(-\beta_r)} Z_r \dd \beta_r- \int_0^t\exp{(-\beta_r)}\dd Z_r\\
    &=-\int_0^t \dd (Z_r\exp{(-\beta_r)}) = -Z_t\exp{(-\beta_t)}+Z_0\exp{(-\beta_0)},
\end{align*}
which finishes the proof.
\end{proof}

\section{Examples} \label{sec:examples}
In this section we present three equations that fall in the general framework of Theorem~\ref{thm:main}: the $p$-Laplace equation, the porous medium equation and an equation that models shear-thickening fluids. We derive the existence of weak solutions for additive drivers and Young integrals. Moreover, we show that our framework captures regularization by noise effects for the $p$-Laplace equation.

\subsection{The p-Laplace equation}
\label{sec: p-laplace}
Let $\Lambda\subset \mathbb{R}^d$ be a bounded Lipschitz domain and $p\in (\frac{2d}{d+2}, \infty)$.
\subsubsection{The Gelfand triple} We set $V=W^{1, p}_0(\Lambda)$ and $H=L^2(\Lambda)$.

Since $p>\frac{2d}{d+2}$, the Rellich-Kondrachov theorem ensures that the embedding $W^{1, p}_0(\Lambda)\hookrightarrow L^2(\Lambda)$ is compact. Thus, Assumption~\ref{ass:compact-Gelfand} is satisfied.

\subsubsection{The monotone operator}
The $p$-Laplace operator $\Delta_p: V\to V^*$ is defined by 
    \begin{equation}
        \begin{split}
           \langle \Delta_p u, v\rangle_{V^*,V}:=-\int_\Lambda |\nabla u|^{p-2} \nabla u\cdot \nabla v \dd x.
           \label{p-laplace}
        \end{split}
    \end{equation}
    
It is now well-known that $\Delta_p$ satisfies~\ref{it:H1};~\ref{it:H2-alter}; \ref{it:H3} with $c_1=1, c_2=0, f=0$; and~\ref{it:H4} with $c_3=1, g=0$ (see \cite[Example 4.1.9]{Liu2015}). Thus, Assumption~\ref{ass:monotone-operator} holds.

\begin{theorem}\label{thm:p-Laplace}
Let $T > 0$ and $u_0 \in L^2(\Lambda)$. Let $I$ be given by one of the following:
\begin{enumerate}
    \item (Additive driver) \label{it:additive-p-lap} Let $\gamma > 1/2$ and $Z \in C^\gamma(0,T;L^2(\Lambda))$. 
    
    Define $I_t(u) := Z_t - Z_0$;
    \item (Young integral) Let $\gamma > 3/4$, $X\in C^\gamma(0,T; L^\infty(\Lambda))$ and $\sigma:\mathbb{R}\to \mathbb{R}$ satisfy
    \[
    |\sigma(x)|\leq C(1+|x|), \qquad |\sigma(x)-\sigma(y)|\leq L|x-y|.
    \]
    
    Define $I_t(u) := \int_0^t \sigma(u_s) \dd X_s$ as the Young integral given by Lemma~\ref{abstract young lemma};
\end{enumerate}
    Then there exists a (in Case~\ref{it:additive-p-lap} unique) weak solution
    \begin{align*}
        u \in C([0,T]; L^2(\Lambda)) \cap B^{1/2}_{2,\infty}(0,T; L^2(\Lambda)) \cap L^p(0,T; W^{1,p}_{0}(\Lambda))
    \end{align*}
    to
    \begin{align*}
        \dd u - \Delta_p u \dd t = \dd I_t(u), \qquad u(0) = u_0.
    \end{align*}
\end{theorem}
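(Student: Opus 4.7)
The strategy is to reduce each case directly to one of the already established abstract results, after checking that the $p$-Laplace setup fits their hypotheses. The Gelfand triple $(W^{1,p}_0(\Lambda), L^2(\Lambda), W^{-1,p'}(\Lambda))$ satisfies Assumption~\ref{ass:compact-Gelfand} by Rellich--Kondrachov (using $p > 2d/(d+2)$), and as recalled after \eqref{p-laplace} the operator $A = \Delta_p$ fulfils Assumption~\ref{ass:monotone-operator} with $\alpha = p$, $c_1 = c_3 = 1$, $c_2 = f = g = 0$, and in fact satisfies the stronger monotonicity~\ref{it:H2-alter} (so in~\ref{it:H2} we may take $h = \eta = 0$).

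For Case~\eqref{it:additive-p-lap}, we directly invoke Theorem~\ref{thm:AnwerAdditive} with $H = L^2(\Lambda)$ and $I_t(u) = Z_t - Z_0$. This yields existence of a weak solution in the required regularity class. For uniqueness, the stability estimate of Theorem~\ref{thm:AnwerAdditive} together with $h = \eta \equiv 0$ gives
\begin{equation*}
\norm{u_t - v_t}_{L^2}^2 \leq \norm{u_0 - v_0}_{L^2}^2,
\end{equation*}
so two solutions with the same initial datum coincide.

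For Case~(2), we apply Theorem~\ref{thm:Existence-abstract-Young}, for which we must verify Assumption~\ref{ass:Young-integral-condition}. First, $E = L^\infty(\Lambda)$ is a multiplicative ideal in $H = L^2(\Lambda)$ in the sense of Definition~\ref{multiplicative ideal}, since pointwise multiplication satisfies $\norm{f g}_{L^2} \leq \norm{f}_{L^2} \norm{g}_{L^\infty}$. Second, the Nemytskii map $\sigma: L^2(\Lambda) \to L^2(\Lambda)$ induced by the scalar function $\sigma: \mathbb{R} \to \mathbb{R}$ inherits linear growth and Lipschitz continuity on $L^2$: from $|\sigma(x)| \leq C(1 + |x|)$ we get
\begin{equation*}
\norm{\sigma(u)}_{L^2} \leq C \bigl( |\Lambda|^{1/2} + \norm{u}_{L^2} \bigr),
\end{equation*}
and from $|\sigma(x) - \sigma(y)| \leq L |x-y|$ we deduce $\norm{\sigma(u) - \sigma(v)}_{L^2} \leq L \norm{u-v}_{L^2}$. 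With $\gamma > 3/4$ and $X \in C^\gamma_t L^\infty$, Assumption~\ref{ass:Young-integral-condition} holds, so Theorem~\ref{thm:Existence-abstract-Young} provides a weak solution, where $I_t(u) = \int_0^t \sigma(u_s) \dd X_s$ is constructed via Lemma~\ref{abstract young lemma}.

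There is no essential obstacle here: once the abstract framework is in place, the $p$-Laplace example is a direct application, and the only mild subtleties are checking the multiplier property of $L^\infty$ in $L^2$ and interpreting the scalar non-linearity $\sigma$ as a Nemytskii operator on $L^2(\Lambda)$ with the required growth and Lipschitz bounds.
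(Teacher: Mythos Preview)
Your proposal is correct and follows essentially the same route as the paper: for Case~(1) you invoke Theorem~\ref{thm:AnwerAdditive}, and for Case~(2) you verify that $E=L^\infty(\Lambda)$ is a multiplier (indeed a multiplicative ideal) in $H=L^2(\Lambda)$ and that the scalar $\sigma$ induces a Nemytskii operator on $L^2(\Lambda)$ satisfying \eqref{sigma conditions}, then conclude via Theorem~\ref{thm:Existence-abstract-Young}. Your write-up is slightly more explicit than the paper's (spelling out the $L^2$ growth bound, the Lipschitz estimate, and the uniqueness consequence from $h=\eta\equiv 0$), but the argument is the same.
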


\begin{proof}
    The first part of the statement follows directly from Theorem~\ref{thm:AnwerAdditive}. 
    
    For the second part of the statement, note that $\sigma$ gives rise to a Nemytskii operator in $H=L^2(\Lambda)$ satisfying \eqref{sigma conditions}. Moreover, $E=L^\infty(\Lambda)$ is a multiplicative ideal in $H=L^2(\Lambda)$ in the sense of Definition \ref{multiplicative ideal}. Hence, we may conclude by Theorem~\ref{thm:Existence-abstract-Young}.  
\end{proof}

\subsubsection{Regularization by noise} It is well-known that randomness can lead to regularization; an effect nowadays called regularization by noise. In particular, the translation with respect to a rough process~$w$ is regularizing. One main tool behind the regularization effect is a formula that relates classical integration of differences and the sewing of a convolution with respect to the local time~$L^w$ of $w$ (for brief overview of its basic definition and some properties, refer to the corresponding section in the Appendix \ref{appendix}) evaluated at a time-averaged function value $\mean{u}$ of $u$:
\begin{align} \label{eq:main-formula-reg-by-noise}
    \int_0^t b(u_s - w_s) \dd s = \mathscr{I}_t \big( b * L^w (\mean{u}) \big).
\end{align}
Let us briefly motivate \eqref{eq:main-formula-reg-by-noise}. Since we assume $w$ to be a highly oscillating noise, we would expect that on small time scales, i.e. $|t-s|\ll 1$, we have 
\[
\int_s^tb(u_r-w_r)\dd r\simeq \int_s^tb(\langle u\rangle_{s, t}-w_r) \dd r=(b*L^w_{s,t})(\langle u\rangle_{s,t})
\]
where in the last step, we used the occupation times formula. Sewing together these small scale approximation, one can recover \eqref{eq:main-formula-reg-by-noise}, provided $L^w$ is sufficiently regular with respect to $u$ and $b$.
Note in particular that the right-hand-side can in principle still makes sense for irregular~$b$; in other words, $w$ can regularizes~\eqref{eq:main-formula-reg-by-noise} through the regularity of its local time. 

Our abstract framework captures this regularization effect by choosing
\begin{align*}
    I_t(u) := \mathscr{I}_t \big( b * L^w (\mean{u}) \big).
\end{align*}
It generalizes our results on regularization by noise for the $p$-Laplace equation~\cite{Bechtold2023} to a broader class of singularities $b$.

The next lemma justifies~\eqref{eq:main-formula-reg-by-noise}.
\begin{lemma} \label{verify nonlinear H5} 
Let $q \in [2,\infty]$, $\gamma > 1-1/q$, $b \in \mathscr{S}'(\mathbb{R})$ be a Schwartz distribution, and $w:[0,T] \to \mathbb{R}$ such that its local time $L^w$ satisfies $b * L^w \in C^\gamma(0,T;C^{0,1}(\mathbb{R}))$. 

 Then the generalized integral
   \begin{equation}
       \begin{split}
           L^\infty(0,T; L^2(\Lambda))\cap B^{1/2}_{2, \infty}(0,T;L^2(\Lambda)) &\to B^\gamma_{q, \infty}(0,T;L^2(\Lambda))\\
           u&\to I(u)=  \mathscr{I} \big( b * L^w (\mean{u}) \big),
       \end{split}
   \end{equation}
   where $\mathscr{I}_t \big( b * L^w (\mean{u}) \big)$ is the sewing of the local approximation $G_{s,t} = b* L_{s,t}^w( \mean{u}_{s,t})$ defined in Lemma~\ref{sewing}, is well-defined and satisfies 
   \begin{align} \label{eq:Integral-reg-by-noise}
   \begin{aligned}
       &\seminorm{I(u)}_{B^\gamma_{q, \infty}(0,T;L^2(\Lambda))}\\
       &\hspace{2em} \lesssim \norm{b*L^w}_{C^\gamma(0,T;C^{0,1}(\mathbb{R}))}(1+\norm{u}_{L^\infty(0,T;L^2(\Lambda))}+\seminorm{u}_{B^{1/2}_{2, \infty}(0,T;L^2(\Lambda))}).
        \end{aligned}
   \end{align}

If additionally $b$ is more regular, i.e., it satisfies
\begin{equation}
        \label{b nice}
         |b(x)|\leq C(1+|x|), \qquad |b(x)-b(y)|\leq L |x-y|,
    \end{equation}
then the generalized integral has the alternative representation: for all $t \in [0,T]$
\begin{align*}
    I_t(u) = \int_0^t b(u_s - w_s) \dd s.
\end{align*}
\end{lemma}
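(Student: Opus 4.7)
The plan is to realize $I(u)$ as the Besov sewing of the germ $G_{s,t} := (b*L^w_{s,t})(\mean{u}_{s,t}) \in L^2(\Lambda)$, apply Lemma \ref{sewing} to obtain the representation and the bound \eqref{eq:Integral-reg-by-noise}, and finally compare this against the Bochner integral via Lemma \ref{local approx doesnt matter} under the extra regularity \eqref{b nice}. Since $b * L^w \in C^\gamma(0,T;C^{0,1}(\mathbb{R}))$ and $L^w$ is time-additive, the increment $b*L^w_{s,t} \in C^{0,1}(\mathbb{R})$ satisfies $\norm{b*L^w_{s,t}}_{L^\infty(\mathbb{R})} + \mathrm{Lip}(b*L^w_{s,t}) \leq |t-s|^\gamma \norm{b*L^w}_{C^\gamma_t C^{0,1}}$. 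Pre-composition with the $L^2(\Lambda)$-valued averaged function, together with the boundedness of $\Lambda$, would immediately yield
\[
\norm{G_{s,t}}_{L^2(\Lambda)} \leq |\Lambda|^{1/2}\,|t-s|^\gamma\, \norm{b*L^w}_{C^\gamma_t C^{0,1}(\mathbb{R})},
\]
placing $G \in \mathbb{B}^\gamma_{\infty,\infty}(L^2(\Lambda))$.

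Next I would bound $\delta G$. Using $b*L^w_{s,t} = b*L^w_{s,v} + b*L^w_{v,t}$, the coboundary splits as
\[
(\delta G)_{s,v,t} = \bigl[ b*L^w_{s,v}(\mean{u}_{s,t}) - b*L^w_{s,v}(\mean{u}_{s,v}) \bigr] + \bigl[ b*L^w_{v,t}(\mean{u}_{s,t}) - b*L^w_{v,t}(\mean{u}_{v,t}) \bigr],
\]
so the spatial Lipschitz bound on the increments reduces the estimate to controlling differences of averages of $u$ on nested intervals in $L^q(0,T;L^2(\Lambda))$. Proceeding exactly as in the proof of Lemma \ref{abstract young lemma}, this is where Lemma \ref{abstract average minus leftpoint} enters, delivering
\[
\norm{\delta G}_{\bar{\mathbb{B}}^{\gamma+1/q}_{q,\infty}(L^2(\Lambda))} \lesssim \norm{b*L^w}_{C^\gamma_t C^{0,1}}\, \norm{u}_{L^\infty_t L^2}^{1-2/q}\, [u]_{B^{1/2}_{2,\infty}L^2}^{2/q}.
\]
Since $\gamma > 1-1/q$ implies $\gamma + 1/q > 1$, Lemma \ref{sewing} with $\alpha = \gamma$, $p_1 = \infty$, $p_2 = q$ produces $I(u) := \mathscr{I}G \in B^\gamma_{q,\infty}(L^2(\Lambda))$, and \eqref{est:sewed-seminorm} yields \eqref{eq:Integral-reg-by-noise} after absorbing the resulting interpolated term by Young's inequality.

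For the second assertion, assume additionally that $b$ satisfies \eqref{b nice}. Then the Bochner integral $A_{s,t} := \int_s^t b(u_r - w_r)\,dr$ is well-defined in $L^2(\Lambda)$, is $\delta$-closed, and satisfies $\mathscr{I}A_t = \int_0^t b(u_r-w_r)\,dr$. Applying the occupation times formula pointwise in $x \in \Lambda$ to the constant-in-time argument $\mean{u}_{s,t}(x)$ gives
\[
G_{s,t}(x) = (b*L^w_{s,t})(\mean{u}_{s,t}(x)) = \int_s^t b(\mean{u}_{s,t}(x) - w_r)\,dr,
\]
so the Lipschitz bound on $b$ yields $|(A-G)_{s,t}(x)| \leq L \int_s^t |u_r(x) - \mean{u}_{s,t}(x)|\,dr$. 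A short computation using Jensen's inequality, Fubini and the definition of $B^{1/2}_{2,\infty}$ (in the form $\int_0^{T-h}\norm{u_{r+h}-u_r}_{L^2}^2\,dr \leq h\, [u]_{B^{1/2}_{2,\infty}}^2$) then produces
\[
\int_0^{T-h}\norm{(A-G)_{s,s+h}}_{L^2(\Lambda)}^2\,ds \lesssim L^2\, h^3\, [u]_{B^{1/2}_{2,\infty}L^2}^2,
\]
i.e.\ $A - G \in \mathbb{B}^{3/2}_{2,\infty}(L^2(\Lambda))$. Since $3/2 > 1$, Lemma \ref{local approx doesnt matter} forces $\mathscr{I}G = \mathscr{I}A$, which is the claim.

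The main obstacle will be the $\delta G$ bound: distributing the spatial Lipschitz regularity of $b*L^w$ onto the $B^{1/2}_{2,\infty}$ Besov regularity of $u$ via the average-versus-leftpoint lemma requires careful bookkeeping of integrability exponents, but once the time-additivity of $L^w$ is exploited, the computation mirrors the template already used for abstract Young integrals in Lemma \ref{abstract young lemma}.
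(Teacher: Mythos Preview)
Your proposal is correct and follows essentially the same route as the paper: the germ $G_{s,t}=(b*L^w_{s,t})(\mean{u}_{s,t})$, the $\mathbb{B}^\gamma_{\infty,\infty}$ bound from the $L^\infty$-part of $C^{0,1}$, the four-term split of $\delta G$ via endpoint insertion combined with Lemma~\ref{abstract average minus leftpoint} (your reference to ``proceeding exactly as in Lemma~\ref{abstract young lemma}'' captures this), and the Besov sewing with $p_1=\infty$, $p_2=q$ are all identical to the paper's argument. For the identification with the Bochner integral under \eqref{b nice} the paper merely says ``by means of Lemma~\ref{local approx doesnt matter}'', whereas you supply the concrete ingredients---the occupation times formula rewriting of $G$, the Lipschitz estimate $|(A-G)_{s,t}(x)|\le L\int_s^t|u_r(x)-\mean{u}_{s,t}(x)|\,dr$, and the resulting $\mathbb{B}^{3/2}_{2,\infty}$ bound---which is a welcome elaboration rather than a different approach.
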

\begin{proof}
To prove the first part, we need to show that the germ $G_{s,t}=(b*L^w_{s,t})(\langle u\rangle_{s,t})$ admits a sewing in $L^2(\Lambda)$.

Firstly, for $r \in [0,T]$ and $h \in [0,T-r]$,  
\begin{align*}
\norm{G_{r, r+h}}_{L^2(\Lambda)}&=\norm{(b*L^w_{r,r+h})(\langle u\rangle_{r,r+h})}_{L^2(\Lambda)} \\
&\lesssim \norm{b *L^w_{r,r+h}}_{L^\infty(\mathbb{R})}\leq \norm{b*L^w}_{C^\gamma(0,T; C^{0,1}(\mathbb{R}))} h^\gamma,
\end{align*}
from which we conclude that 
\[
\norm{G}_{\mathbb{B}^\gamma_{\infty, \infty}(0,T;L^2(\Lambda))}\lesssim  \norm{b*L^w}_{C^\gamma(0,T; C^{0,1}(\mathbb{R}))}.
\]

Secondly, we have 
\begin{align*}
    (\delta G)_{s,v,t}&=(b*L^w_{s,v})(\langle u\rangle_{s,t})-(b*L^w_{s,v})(\langle u\rangle_{s,v})+(b*L^w_{v,t})(\langle u\rangle_{s,t})-(b*L^w_{v,t})(\langle u\rangle_{v,t})\\
    &=(b*L^w_{s,v})(\langle u\rangle_{s,t})-(b*L^w_{s,v})(u_s)+(b*L^w_{s,v})(u_s)-(b*L^w_{s,v})(\langle u\rangle_{s,v})\\
    &+(b*L^w_{v,t})(\langle u\rangle_{s,t})-(b*L^w_{v,t})(u_t)+(b*L^w_{v,t})(u_t)-(b*L^w_{v,t})(\langle u\rangle_{v,t})
\end{align*}

For the first difference term, we employ Lemma \ref{abstract average minus leftpoint} for $H=L^2(\Lambda)$ and $\sigma(x)=(b*L^w_{r, r+\theta h})(x)$ with $\theta\in [0, 1]$ to obtain
\begin{align*}
    &\left( \int_0^{T-h} \norm{(b^n*L_{r,r+\theta h})(\langle u\rangle_{r, r+h})-(b^n*L_{r, r+\theta h})(u_r)}^q_{L^2_x}dr\right)^{1/q}\\
    &\leq \norm{(b^n*L_{r, r+\theta h})}_{Lip_x} h^{1/q}(2\norm{u}_{L^\infty_t L^2_x})^{1-2/q}\seminorm{u}^{2/q}_{B^{1/2}_{2, \infty}L^2_x}\\
    &\leq h^{\gamma+1/q}\norm{b^n*L}_{C^\gamma_tLip_x}(2\norm{u}_{L^\infty_t L^2_x})^{1-2/q}\seminorm{u}^{2/q}_{B^{1/2}_{2, \infty}L^2_x}
\end{align*}
Bounding the remaining terms in a similar fashion, we obtain 
\[
\bar{\Omega}_q(\delta G, h)\lesssim h^{\gamma+1/q}\norm{b^n*L}_{C^\gamma_tLip_x}(2\norm{u}_{L^\infty_t L^2_x})^{1-2/q}\seminorm{u}^{2/q}_{B^{1/2}_{2, \infty}L^2_x}
\]
meaning that for $\gamma+1/q>1$, by the Sewing Lemma \ref{sewing} $G$ admits a sewing. By the a priori bound that comes with the Sewing Lemma \ref{sewing}, this concludes the claim. 

It can then be shown that this sewing coincides with the Bochner integral $I^n(u)$ in the statement by means of Lemma \ref{local approx doesnt matter}. 
\end{proof}

\begin{proof}[Proof of Theorem \ref{thm:reg-by-noise}]
We only need to check Assumption~\ref{ass:integral-operator}, since Assumptions~\ref{ass:compact-Gelfand} and~\ref{ass:monotone-operator} have already been verified. The assertion then follows by Theorem~\ref{thm:main}.

\underline{The approximate operator:}
We define the Nemytskii operator $ \sigma^n: [0,T] \times L^2(\Lambda) \to L^2(\Lambda)$ by
\begin{align*}
\sigma^n(t,u) := b^n*L^w_t(u).
\end{align*}
Clearly, $\sigma^n$ is $\mathcal{B}(0,T) \otimes \mathcal{B}(L^2(\Lambda))$-measurable.

Next, we show that $\sigma^n$ satisfies the linear growth and Lipschitz condition. By the occupation times formula~\eqref{occupation times formula} it holds
\begin{align*}
    b^n*L^w_t(u) = \int_{\mathbb{R}} b^n(u -z) L^w_t(z) \dd z = \int_0^t b^n(u -w_s) \dd s.
\end{align*}
Now, H\"older's inequality implies
\begin{align*}
  \norm{\sigma^n(t,u)}_{L^2(\Lambda)}^2 &= \int_{\Lambda} \abs{\int_0^t b^n(u(x) -w_s) \dd s }^2 \dd x \\
  &\leq t \int_{\Lambda} \int_{0}^t \abs{b^n(u(x) -w_s)}^2 \dd s \dd x \\
  &\leq 2t^2 C_n^2 \left( (1 + \norm{w}_{C([0,t];\mathbb{R})})^2 \abs{\Lambda}+ \norm{u}_{L^2(\Lambda)}^2   \right).
\end{align*}
Similarly,
\begin{align*}
     \norm{\sigma^n(t,u) - \sigma^n(t,v)}_{L^2(\Lambda)}^2 &\leq t \int_{\Lambda} \int_0^t \abs{b^n(u(x) -w_s) - b^n(v(x) -w_s) }^2 \dd s \dd x \\
     &\leq t^2 C_n^2 \norm{u -v}_{L^2(\Lambda)}^2.
\end{align*}
Define $I^n_t(u) = \int_0^t \sigma^n(s, u_s) \dd s$.

\underline{Ad~\ref{it:H5}:} 
An application of Lemma~\ref{verify nonlinear H5} with $q=4$ and $\gamma > 3/4$ implies
\begin{align*}
    I^n_t(u) = \int_0^t \sigma^n(s, u_s) \dd s = \int_0^t b^n(u_s - w_s) \dd s.
\end{align*}
Following the derivation of~\eqref{eq:Integral-reg-by-noise} (replacing $(0,T)$ by $(s,t)$ for arbitrary $s<t \in [0,T]$) it holds
  \begin{align*} 
       &\seminorm{I^n(u)}_{B^\gamma_{4, \infty}(s,t;L^2(\Lambda))}\\
       &\hspace{2em} \lesssim \norm{b^n*L^w}_{C^\gamma([s,t];C^{0,1}(\mathbb{R}))}(1+\norm{u}_{L^\infty(s,t;L^2(\Lambda))}+\seminorm{u}_{B^{1/2}_{2, \infty}(s,t;L^2(\Lambda))}).
\end{align*}
Thus,~\ref{it:H5} holds with $\lambda(\abs{t-s}):= \sup_{n \in \mathbb{N}}  \norm{b^n*L^w}_{C^\gamma([s,t];C^{0,1}(\mathbb{R}))}$ and some constant $c_4$ independent of $n$.

\underline{Ad~\ref{it:H6}:} Let us take $(u^n)_{n \in \mathbb{N}}, u \in  L^\infty(0,T; L^2(\Lambda))\cap B^{1/2}_{2, \infty}(0,T;L^2(\Lambda))$ such that
\begin{align*}
    \sup_{n \in \mathbb{N}} \norm{u^n}_{ L^\infty(0,T; L^2(\Lambda))} + \norm{u^n}_{B^{1/2}_{2, \infty}(0,T;L^2(\Lambda))} < \infty,
\end{align*}
and $u^n \to u \in L^2(0,T;L^2(\Lambda))$. We need to show $I^n(u^n)\to I(u)$ in $B^\gamma_{q, \infty} (0,T;L^2(\Lambda))$. 

Note that by construction, we have 
\begin{align*}
&\seminorm{I^n(u^n)-I(u^n)}_{B^\gamma_{q, \infty}(0,T;L^2(\Lambda))}\lesssim \norm{(b^n*L)-(b*L)}_{C^\gamma(0;T;C^{0,1}(\Lambda))}\\
&\hspace{13em}(1+\norm{u^n}_{ L^\infty(0,T; L^2(\Lambda))} + \norm{u^n}_{B^{1/2}_{2, \infty}(0,T;L^2(\Lambda))})\to 0.
\end{align*}
 
An application of Lemma~\ref{nonlinear verify H6} with $q=4$ and $\gamma > 3/4$ shows $I(u^n)\to I(u)$ in $B^\gamma_{q, \infty}(0,T;L^2(\Lambda))$.

At this point, we have verified that Assumption~\ref{ass:integral-operator} holds for $q=4$ and $\gamma > 3/4$. Therefore the claim follows by Theorem~\ref{thm:main}.
\end{proof}

\begin{lemma}
\label{nonlinear verify H6}
Let $q \in [2,\infty)$, $\gamma > 1-1/q$, $b \in \mathscr{S}'(\mathbb{R})$ be a Schwartz distribution, and $w:[0,T] \to \mathbb{R}$ such that its local time $L^w$ satisfies $b * L^w \in C^\gamma(0,T;C^{0,1}(\mathbb{R}))$. 

Moreover, let $(u^n)_{n \in \mathbb{N}}, u \in  L^\infty(0,T; L^2(\Lambda))\cap B^{1/2}_{2, \infty}(0,T;L^2(\Lambda))$ such that
\begin{align*}
    \sup_{n \in \mathbb{N}} \norm{u^n}_{ L^\infty(0,T; L^2(\Lambda))} + \norm{u^n}_{B^{1/2}_{2, \infty}(0,T;L^2(\Lambda))} < \infty,
\end{align*}
and $u^n \to u \in L^2(0,T;L^2(\Lambda))$.

Then $I(u^n) \rightarrow I(u) \in B^{\gamma}_{q,\infty}(0,T;L^2(\Lambda))$.
\end{lemma}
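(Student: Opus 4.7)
The plan is to realize both $I(u^n)=\mathscr{I}G^n$ and $I(u)=\mathscr{I}G$ as sewings of the germs
\begin{align*}
G^n_{s,t} := (b*L^w_{s,t})(\mean{u^n}_{s,t}), \qquad G_{s,t} := (b*L^w_{s,t})(\mean{u}_{s,t}),
\end{align*}
exactly as in the proof of Lemma~\ref{verify nonlinear H5}, and then to invoke the dominated-convergence result for sewings, Lemma~\ref{sewing convergence}, with $\alpha = \gamma$, $p_1 = p_2 = q$ and the higher scale $\gamma' := \gamma + 1/q$. Since $\gamma > 1 - 1/q$ we have $\gamma' > 1$, so the hypothesis $\gamma' > 1 \vee 1/p_2$ of Lemma~\ref{sewing convergence} holds once we verify the uniform germ bounds and the convergence $\norm{G^n - G}_{\mathbb{B}^\gamma_{q,\infty}L^2(\Lambda)} \to 0$.

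The uniform bounds on $G^n$ in $\mathbb{B}^\gamma_{q,\infty}L^2(\Lambda)$ and $\delta G^n$ in $\bar{\mathbb{B}}^{\gamma'}_{q,\infty}L^2(\Lambda)$ (and the same for $G$, $\delta G$) follow by repeating verbatim the computation carried out in the proof of Lemma~\ref{verify nonlinear H5}, with the hypothesized uniform bounds of $\{u^n\}$ in $L^\infty_tL^2(\Lambda) \cap B^{1/2}_{2,\infty}L^2(\Lambda)$ replacing the corresponding norms of $u$ at each step.

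The crucial step is the germ convergence. The Lipschitz bound $\norm{b*L^w_{s,t}}_{C^{0,1}(\mathbb{R})} \leq \abs{t-s}^\gamma \norm{b*L^w}_{C^\gamma(0,T;C^{0,1}(\mathbb{R}))}$ yields pointwise
\begin{align*}
\norm{G^n_{r,r+\mu} - G_{r,r+\mu}}_{L^2(\Lambda)} \leq \mu^\gamma \norm{b*L^w}_{C^\gamma_t C^{0,1}_x} \norm{\mean{u^n - u}_{r,r+\mu}}_{L^2(\Lambda)},
\end{align*}
and two applications of Jensen's inequality together with Fubini's theorem give, uniformly in $\mu$,
\begin{align*}
\int_0^{T-\mu} \norm{\mean{u^n - u}_{r,r+\mu}}_{L^2(\Lambda)}^q \dd r \leq \norm{u^n - u}_{L^q(0,T;L^2(\Lambda))}^q.
\end{align*}
Interpolating the uniform $L^\infty_t L^2$-bound on $u^n - u$ with the hypothesized $L^2_t L^2$-convergence produces $\norm{u^n-u}_{L^q_t L^2_x} \lesssim \norm{u^n-u}_{L^2_t L^2_x}^{2/q} \to 0$, so that $\Omega_q(G^n - G, \mu) \leq C \mu^\gamma \norm{u^n - u}_{L^2_t L^2_x}^{2/q}$, as required. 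Lemma~\ref{sewing convergence} then delivers $\seminorm{I(u^n) - I(u)}_{B^\gamma_{q,\infty}L^2(\Lambda)} \to 0$.

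Finally, since $I(u^n)_0 = I(u)_0 = 0$ by construction and $\gamma > 1/q$, the embedding $B^\gamma_{q,\infty} \hookrightarrow L^\infty_t$ promotes this seminorm convergence to $L^q$-convergence in time, and hence to convergence in the full Besov norm $B^\gamma_{q,\infty}(0,T;L^2(\Lambda))$. The one step requiring care is the germ convergence: the spatial Lipschitz control of $b*L^w_{s,t}$ must be married to the temporal averaging $\mean{u^n - u}_{r,r+\mu}$ in such a way that the final germ decay remains of order $\mu^\gamma$, matching the scale of the uniform bound; the Jensen--Fubini sandwich above is precisely what arranges this.
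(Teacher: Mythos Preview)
Your proof is correct and follows essentially the same route as the paper: both reduce to Lemma~\ref{sewing convergence} after checking the uniform germ bounds (inherited from Lemma~\ref{verify nonlinear H5}) and the germ convergence $\norm{G^n-G}_{\mathbb{B}^\gamma_{q,\infty}L^2(\Lambda)}\to 0$. The paper establishes the latter by invoking Lemma~\ref{abstract limit identification lemma}, whereas you spell out that estimate directly via the Lipschitz bound on $b*L^w_{s,t}$, Jensen--Fubini, and $L^\infty$--$L^2$ interpolation; these are the same computation, and you additionally make explicit the passage from seminorm to full-norm convergence via $I(\cdot)_0=0$ and $B^\gamma_{q,\infty}\hookrightarrow L^\infty$.
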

\begin{proof}
The assertion follows from Lemma~\ref{sewing convergence} provided we show that for $G^n_{s,t}=(b*L^w_{s,t})(\langle u^n\rangle_{s,t})$ and $G_{s,t}=(b*L^w_{s,t})(\langle u \rangle_{s,t})$ it holds $\norm{G^n-G}_{\mathbb{B}^\gamma_{q, \infty}(0,T;(L^2(\Lambda))}\to 0$. 

Using Lemma~\ref{abstract limit identification lemma} we have 
\begin{align*}
    &\left( \int_0^{T-h}\norm{G^n_{r, r+h}-G_{r, r+h}}_{L^2(\Lambda)}^q \dd r\right)^{1/q}\\
    &\leq \norm{b*L^w_{r, r+h}}_{C^{0,1}(\mathbb{R})}(\norm{u^n}_{L^\infty(0,T;L^2(\Lambda))}+\norm{u}_{L^\infty(0,T;L^2(\Lambda))})^{1-2/q}\norm{u^n-u}^{2/q}_{L^2(0,T;L^2(\Lambda))}\\
    &\lesssim h^\gamma \norm{b*L^w}_{C^\gamma(0,T;C^{0,1}(\mathbb{R})}\norm{v^n-v}^{2/q}_{L^2_tL^2_x},
\end{align*}
from which we conclude that indeed $\norm{G^n-G}_{\mathbb{B}^\gamma_{q, \infty}(0,T;L^2(\Lambda))}\to 0$.
\end{proof}

\subsection{Porous medium equation}
Let $\Lambda\subset \mathbb{R}^d$ be a bounded Lipschitz domain and $p\in (\frac{2d}{d+2}, \infty)$. 

\subsubsection{The Gelfand triple} We set $V=L^p(\Lambda)$ and $H=W^{-1, 2}(\Lambda)$.

By duality and the Rellich-Kondrachov theorem $p>\frac{2d}{d+2}$ ensures again that $L^p(\Lambda)\hookrightarrow W^{-1, 2}(\Lambda)$ is compact. Thus, Assumption~\ref{ass:compact-Gelfand} is satisfied.

\subsubsection{The monotone operator}
Let $\Psi: \mathbb{R}\to \mathbb{R}$ be a function satisfying:
\begin{enumerate}
   \item[($\Psi$1)] \label{it:psi1}$\Psi$ is continuous;
    \item[($\Psi$2)] $\Psi$ is monotone in the sense that $(\Psi(t)-\Psi(s))\cdot (t-s)\geq 0$ for all $t, s\in \mathbb{R}$;
    \item[($\Psi$3)] $\Psi$ is coercive in the sense that there exist $a\in (0, \infty)$ and $c\in [0, \infty)$ such that for all $s\in \mathbb{R}$
    \[
    s\cdot \Psi(s)\geq a|s|^{p}-c;
    \]
    \item[($\Psi$4)]\label{it:psi4}
    There exist $c_3, c_4$ such that for all $s\in \mathbb{R}$
    \[
    |\Psi(s)|\leq c_4+c_3|s|^{p-1}.
    \]
\end{enumerate}
It can then be shown that for any $\Psi$ satisfying \ref{it:psi1}-\ref{it:psi4}, the operator
\begin{equation}
    \begin{split}
        A: V&\to V^*\\
        u&\to \Delta \Psi(u)
        \label{porous medium operator}
    \end{split}
\end{equation}
is well-defined and satisfies the Conditions~\ref{it:H1}-\ref{it:H4}, refer to \cite[Lemma 4.1.13, p.85-88]{Liu2015}. Thus, Assumption~\ref{ass:monotone-operator} holds.

\begin{theorem}\label{thm:porous-medium}
Let $T > 0$ and $u_0 \in W^{-1,2}(\Lambda)$. Let $I$ be given by one of the following:
\begin{enumerate}
    \item (Additive driver) \label{it:additive-porous} Let $\gamma > 1/2$ and $Z \in C^\gamma(0,T;W^{-1,2}(\Lambda))$. 
    
    Define $I_t(u) := Z_t - Z_0$;
    \item (Young integral) Let $\gamma > 3/4$, $\varepsilon > 0$, $X\in C^\gamma(0,T; C^{1+\varepsilon}(\Lambda))$ and $\sigma: W^{-1, 2}(\Lambda)\to W^{-1, 2}(\Lambda)$ satisfy
    \[
    \norm{\sigma(u)}_{W^{-1, 2}(\Lambda)}\leq C(1+\norm{u}_{W^{-1, 2}(\Lambda)}), \quad \norm{\sigma(u)-\sigma(v)}_{W^{-1, 2}(\Lambda)}\leq L\norm{u-v}_{W^{-1, 2}(\Lambda)}.
    \]
    
    Define $I_t(u) := \int_0^t \sigma(u_s) \dd X_s$ as the Young integral given by Lemma~\ref{abstract young lemma};
\end{enumerate}
    Then there exists a (in Case~\ref{it:additive-porous} unique) weak solution
    \begin{align*}
        u \in C([0,T]; W^{-1, 2}(\Lambda)) \cap B^{1/2}_{2,\infty}(0,T; W^{-1, 2}(\Lambda)) \cap L^p(0,T; L^p(\Lambda))
    \end{align*}
    to
    \begin{align*}
        \dd u - \Delta \Psi(u) \dd t = \dd I_t(u), \qquad u(0) = u_0.
    \end{align*}
\end{theorem}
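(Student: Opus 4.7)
The proof strategy is to reduce both cases to the abstract results already established, namely Theorem~\ref{thm:AnwerAdditive} for the additive case and Theorem~\ref{thm:Existence-abstract-Young} for the Young integral case. Since Assumptions~\ref{ass:compact-Gelfand} and~\ref{ass:monotone-operator} have already been verified for the Gelfand triple $V = L^p(\Lambda) \hookrightarrow H = W^{-1,2}(\Lambda)$ and the porous medium operator $A(u) = \Delta \Psi(u)$ under ($\Psi 1$)--($\Psi 4$), the only work that remains is on the side of the integral operator~$I$.

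For the additive driver case, the argument is immediate: the choice $I_t(u) := Z_t - Z_0$ with $Z \in C^\gamma(0,T;W^{-1,2}(\Lambda))$, $\gamma > 1/2$, falls exactly into the scope of Theorem~\ref{thm:AnwerAdditive}, which delivers both existence and the claimed uniqueness/stability estimate. No further verification is needed.

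For the Young integral case, the plan is to apply Theorem~\ref{thm:Existence-abstract-Young} with $H = W^{-1,2}(\Lambda)$ and the multiplier $E = C^{1+\varepsilon}(\Lambda)$. The linear growth and Lipschitz conditions of Assumption~\ref{ass:Young-integral-condition} on $\sigma : W^{-1,2}(\Lambda) \to W^{-1,2}(\Lambda)$ are assumed directly in the statement, so the real point is to check that $C^{1+\varepsilon}(\Lambda)$ is a multiplier in $W^{-1,2}(\Lambda)$ in the sense of Definition~\ref{multiplicative ideal}. This is precisely the content alluded to after Theorem~\ref{multiplication theorem} in the excerpt. Concretely, for $\phi \in C^{1+\varepsilon}(\Lambda)$ pointwise multiplication $v \mapsto \phi v$ is bounded on $W^{1,2}_0(\Lambda)$ with operator norm controlled by $\norm{\phi}_{C^{1+\varepsilon}}$ (via the Leibniz rule and the fact that $C^{\varepsilon} \subset L^\infty$); dualising yields the bound
\begin{equation*}
    \norm{\phi u}_{W^{-1,2}(\Lambda)} \leq C \norm{\phi}_{C^{1+\varepsilon}(\Lambda)} \norm{u}_{W^{-1,2}(\Lambda)} \qquad \forall u \in W^{-1,2}(\Lambda),
\end{equation*}
which verifies the multiplier property. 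Thus the hypotheses of Theorem~\ref{thm:Existence-abstract-Young} are met (with $\gamma > 3/4$), and existence of a weak solution follows.

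The main technical point — really the only nontrivial one — is the above multiplier property, since the quantitative Leibniz-type estimate must be carried through carefully to keep the $C^{1+\varepsilon}$-norm as the only spatial norm appearing on $\phi$. Everything else is bookkeeping: once the abstract framework applies, the regularity $u \in C_t H \cap B^{1/2}_{2,\infty} H \cap L^p_t V$ is exactly the one produced by Theorem~\ref{thm:main} via~\eqref{eq:main-result-estimate}, and uniqueness in the additive case is inherited verbatim from Theorem~\ref{thm:AnwerAdditive}.
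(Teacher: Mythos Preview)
Your proposal is correct and matches the paper's proof essentially line for line: both invoke Theorem~\ref{thm:AnwerAdditive} for the additive case and Theorem~\ref{thm:Existence-abstract-Young} for the Young case, with the only nontrivial step being the verification that $C^{1+\varepsilon}(\Lambda)$ is a multiplier in $W^{-1,2}(\Lambda)$. The paper obtains this multiplier bound by citing Theorem~\ref{multiplication theorem} directly, whereas you arrive at it via the Leibniz rule on $W^{1,2}_0$ followed by duality; these are equivalent justifications of the same inequality.
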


\begin{proof}
    The first part of the statement follows again directly from Theorem~\ref{thm:AnwerAdditive}. Using Theorem \ref{multiplication theorem}, we note that
    \[
    \norm{u\cdot v}_{W^{-1, 2}(\Lambda)}\leq C\norm{u}_{W^{-1, 2}(\Lambda)}\norm{v}_{C^{1+\epsilon}(\Lambda)}
    \]
   for $\epsilon>0$. Thus, $E=C^{1+\epsilon}(\Lambda)$ is a multiplier in $H=W^{-1, 2}(\Lambda)$ in the sense of Definition~\ref{multiplicative ideal}. Now, the second claim follows from Theorem~\ref{thm:Existence-abstract-Young}. 
\end{proof}

\subsection{Shear-thickening fluids}
Power-law fluids are non-Newtonian fluids, whose rheology -- the interaction of shear-stress and strain-rate -- is described by a power-law behaviour. Here, we assume that this rheology is given by
\begin{align*}
    S(B) := \mu(1 + \abs{B})^{p-2} B \in \mathbb{R}^{d \times d },
\end{align*}
where $p\in (1, \infty)$ and $\mu > 0$. More details can be found in e.g.~\cite{Blechta2020}.

The evolution equation for velocity field~$u$ and pressure~$\pi$ of an unforced incompressible power-law fluid reads
\begin{align} \label{eq:Power-law-fluids}
\begin{cases}
    \partial_t u - \Div \,S(\varepsilon u) + (u\cdot \nabla)u + \nabla \pi &= 0,\\
   \hfill \Div \, u &= 0.
\end{cases}
\end{align}
Here $\varepsilon u \in \mathbb{R}^{d\times d}$ denotes the symmetric gradient. 

We show that if the fluid is shear-thickening for a sufficiently large power-law index, our general framework provides the existence of weak solutions for the forced evolution equation~\eqref{eq:Power-law-fluids}. We only discuss the existence of velocity. The pressure can be reconstructed once the existence of velocity is established. 

From now on, let $d \in \mathbb{N}$, $p\in (d,\infty) \cap [3, \infty)$ and $\Lambda\subset \mathbb{R}^d$ be a bounded Lipschitz domain. 

\subsubsection{The Gelfand triple}
We set 
\begin{align*}
    V &=W^{1, p}_{0,\Div}(\Lambda) = \overline{\{ u\in  C^\infty_c(\Lambda):\, \Div \, u =0 \}}^{\norm{\cdot}_{W^{1,p}(\Lambda)}}, \\
    H &= L_{\Div}^2(\Lambda) =  \overline{\{ u\in  C^\infty_c(\Lambda):\, \Div \, u =0 \}}^{\norm{\cdot}_{L^{2}(\Lambda)}}.
\end{align*}

\subsubsection{The locally monotone operator}
We define the operator $A: V\to V^*$ by
\begin{align} \label{eq:power-law-operator}
  \forall v \in V: \quad   \langle A (u), v\rangle_{V^*,V}:=-\int_\Lambda S(\varepsilon u) : \varepsilon v + (u \cdot \nabla)u \cdot v \dd x.
\end{align}

The following result establishes the existence and conditional uniqueness of the velocity field for shear-thickening fluids.

\begin{theorem} \label{thm:power-law-fluids}
Let $T > 0$, $u_0 \in L^2_{\Div}(\Lambda)$, and $I$ satisfy Assumption~\ref{ass:integral-operator}.

Then there exists a weak solution 
    \begin{align*}
        u \in C([0,T];L^2_{\Div}(\Lambda)) \cap B^{1/2}(0,T;L^2_{\Div}(\Lambda)) \cap L^{p}(0,T; W^{1,p}_{0,\Div}(\Lambda))
    \end{align*}
     such that for all $t \in [0,T]$ and $v \in W^{1,p}_{0,\Div}(\Lambda)$ it holds
\begin{align*} 
     \int_{\Lambda} (u_t - u_0) \cdot v \dd x + \int_0^t\int_\Lambda S(\varepsilon u) : \varepsilon v + (u \cdot \nabla)u \cdot v \dd x \dd s = \int_{\Lambda} I_t(u)\cdot v \dd x.
\end{align*}

In particular, if $I_t(u) =Z_t - Z_0$ for $Z \in C^\gamma(0,T;L^2_{\Div}(\Lambda))$ with $\gamma > 1/2$, then this weak solution is unique.
\end{theorem}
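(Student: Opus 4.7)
The plan is to bring the power-law fluid equation into the abstract framework of Theorem~\ref{thm:main} by showing that the operator~$A$ defined in~\eqref{eq:power-law-operator} satisfies Assumption~\ref{ass:monotone-operator} on the Gelfand triple $(V, H, V^*) = (W^{1,p}_{0, \Div}(\Lambda), L^2_{\Div}(\Lambda), V^*)$ with coercivity exponent $\alpha = p$. Assumption~\ref{ass:compact-Gelfand} follows at once from the Rellich--Kondrachov theorem, since $p > d \geq 2 > 2d/(d+2)$. The existence claim is then an immediate consequence of Theorem~\ref{thm:main}, and the uniqueness claim for the additive driver case is delivered by Theorem~\ref{thm:AnwerAdditive}.

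The verification of~\ref{it:H1},~\ref{it:H3} and~\ref{it:H4} is routine. Hemicontinuity~\ref{it:H1} reduces to the continuity of $\mathbb{R}^{d\times d} \ni B \mapsto S(B)$ and the multilinearity of the trilinear convective form. For coercivity~\ref{it:H3}, the convective contribution vanishes, $\int_\Lambda (u\cdot\nabla)u\cdot u \dd x = 0$, by integration by parts and the divergence-free condition, so $-\langle A(u), u\rangle_{V^*, V} = \int_\Lambda S(\varepsilon u): \varepsilon u \dd x \gtrsim \norm{\varepsilon u}_{L^p(\Lambda)}^p$, and Korn's inequality then yields~\ref{it:H3}. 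For boundedness~\ref{it:H4}, the diffusive part satisfies $\norm{\Div S(\varepsilon u)}_{V^*} \lesssim 1 + \norm{u}_V^{p-1}$ by the growth of $S$; for the convective part I would use $\abs{\langle (u\cdot\nabla)u, v \rangle_{V^*, V}} = \abs{\int_\Lambda u \otimes u : \nabla v \dd x} \lesssim \norm{u}_{L^{2p'}(\Lambda)}^2 \norm{v}_V$, so that the Sobolev embedding $W^{1,p}(\Lambda) \hookrightarrow L^{2p'}(\Lambda)$ (valid because $p > d$) combined with the assumption $p \geq 3$ (so that $\norm{u}_V^2 \leq 1 + \norm{u}_V^{p-1}$) delivers~\ref{it:H4}.

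The hard part will be establishing local monotonicity~\ref{it:H2}. First I would exploit the strong monotonicity of $S$ for $p\geq 2$, which yields $\int_\Lambda (S(\varepsilon u) - S(\varepsilon v)):\varepsilon(u-v) \dd x \gtrsim \norm{u-v}_V^p$. For the convective term, the decomposition
\[
(u\cdot\nabla)u - (v\cdot\nabla)v = u\cdot \nabla(u-v) + (u-v)\cdot\nabla v
\]
combined with $\int_\Lambda u\cdot\nabla(u-v)\cdot(u-v) \dd x = 0$ (by $\Div u = 0$) reduces matters to controlling the residual $\abs{\int_\Lambda ((u-v)\cdot\nabla v)\cdot(u-v) \dd x}$. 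I would bound this via $\norm{\nabla v}_{L^p(\Lambda)} \norm{u-v}_{L^{2p'}(\Lambda)}^2$ and Gagliardo--Nirenberg interpolation -- leveraging $p > d$ for the Sobolev embedding into $L^\infty$ -- to write $\norm{u-v}_{L^{2p'}(\Lambda)}^2 \lesssim \norm{u-v}_H^{2(1-\theta)}\norm{u-v}_V^{2\theta}$ for a suitable $\theta \in (0,1)$. Young's inequality then allows us to absorb the $\norm{u-v}_V^{2\theta}$ factor into the monotonicity gain $\norm{u-v}_V^p$ (this is valid because $2\theta < p$), leaving a residual of the form $C(1 + \norm{v}_V^\beta)\norm{u-v}_H^\alpha$ with $\alpha \leq 2$; standard absorption arguments, together with the symmetry of~\ref{it:H2} in swapping the roles of $u$ and $v$, will then deliver~\ref{it:H2} with a locally bounded $\eta$, along the lines of the treatment of Navier--Stokes type operators in~\cite{Liu2010,röckner2022wellposedness}. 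Once Assumption~\ref{ass:monotone-operator} is in place, the proof is concluded by invoking Theorems~\ref{thm:main} and~\ref{thm:AnwerAdditive}.
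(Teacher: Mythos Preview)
Your overall strategy is right and matches the paper: verify Assumptions~\ref{ass:compact-Gelfand} and~\ref{ass:monotone-operator} for the operator~\eqref{eq:power-law-operator}, then invoke Theorems~\ref{thm:main} and~\ref{thm:AnwerAdditive}. Your treatments of~\ref{it:H1},~\ref{it:H3} and~\ref{it:H4} are essentially the paper's.

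The gap is in~\ref{it:H2}. Your plan absorbs $\norm{u-v}_V^{2\theta}$ into the $L^p$-monotonicity gain $\norm{u-v}_V^p$ via Young's inequality with exponent $p/(2\theta)$; the conjugate exponent $p/(p-2\theta)$ then acts on the factor $\norm{v}_V\norm{u-v}_H^{2(1-\theta)}$, leaving a residual proportional to $\norm{u-v}_H^{2(1-\theta)p/(p-2\theta)}$. A direct check shows this exponent equals $2$ only when $p=2$; for $p\geq 3$ it is strictly less than $2$. But~\ref{it:H2} requires a bound of the shape $\eta(u)\norm{u-v}_H^2$, and a term $C\norm{v}_V^\beta\norm{u-v}_H^\alpha$ with $\alpha<2$ cannot be dominated that way: let $v\to u$ in $H$ while keeping $\norm{v}_V$ bounded, and $\norm{u-v}_H^{\alpha-2}\to\infty$. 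The leftover negative term $-c\norm{u-v}_V^p$ does not rescue this either, since via $V\hookrightarrow H$ it only controls $\norm{u-v}_H^p$, which for small $\norm{u-v}_H$ is negligible against $\norm{u-v}_H^\alpha$ (recall $p>2>\alpha$).

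The paper's remedy is to use the \emph{weaker} $L^2$-monotonicity of $S$, namely $\int_\Lambda (S(\varepsilon u)-S(\varepsilon v)):\varepsilon(u-v)\dd x \gtrsim \norm{\varepsilon(u-v)}_{L^2}^2$ (available since $p\geq 2$), and to integrate the convective remainder by parts once more so that the gradient falls on $u-v$:
\[
\int_\Lambda ((u-v)\cdot\nabla)v\cdot(u-v)\dd x = -\int_\Lambda ((u-v)\cdot\nabla)(u-v)\cdot v\dd x.
\]
Now H\"older with $v\in L^\infty$ (from $W^{1,p}_0\hookrightarrow L^\infty$, $p>d$) gives the clean product $\norm{\nabla(u-v)}_{L^2}\norm{u-v}_{L^2}\norm{v}_{L^\infty}$; a single Young then absorbs $\norm{\varepsilon(u-v)}_{L^2}^2$ into the $L^2$-monotonicity and leaves precisely $c_\delta\norm{\nabla v}_{L^p}^2\norm{u-v}_H^2$. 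This yields~\ref{it:H2} with $h=0$ and $\eta(v)=c_\delta\norm{\nabla v}_{L^p}^2$. The moral: match the monotonicity scale to the structure of the convective estimate rather than reaching for the strongest available lower bound.
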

\begin{rem}
Notice that $Z = 0$ is a valid choice in Theorem~\ref{thm:power-law-fluids}, recovering existence and uniqueness for the velocity field of unforced shear-thickening fluids.
\end{rem}

\begin{proof}[Proof of Theorem~\ref{thm:power-law-fluids}]
Existence follows from Theorem~\ref{thm:main}, while uniqueness is derived by Theorem~\ref{thm:AnwerAdditive}. It remains to verify their assumptions. 

\underline{Ad Assumption~\ref{ass:compact-Gelfand}:}~By the Rellich-Kondrachov theorem, it follows that the embedding $W^{1, p}_{0,\Div}(\Lambda)\hookrightarrow L^2_{\Div}(\Lambda)$ is compact.

Closely related to the tensor~$S$ is the tensor
\begin{align*}
     F(B) := \sqrt{\mu} (1 + \abs{B})^{(p-2)/2} B.
\end{align*}
The tensor $F$ quantifies the monotonicity of the tensor $S$ and is a convenient tool for the verification of Assumption~\ref{ass:monotone-operator}. More details on the relation between $F$ and $S$ can be found in e.g.~\cite{Diening2020,MR4286257,2023arXiv230713253L}.

Keep in mind that by Korn's and Poincaré's inequalities the following norms on $W^{1,p}_{0,\Div}(\Lambda)$ are equivalent: 
\begin{align*}
    \norm{\varepsilon u}_{L^p(\Lambda)} \eqsim \norm{\nabla u}_{L^p(\Lambda)} \eqsim \norm{ u}_{W^{1,p}(\Lambda)}.
\end{align*}

Next, we verify Assumption~\ref{ass:monotone-operator}.

\underline{Ad~\ref{it:H1}:} Let $u,v,w \in W^{1,p}_{0,\Div}(\Lambda)$ and $\lambda \in \mathbb{R}$. We split
\begin{align*}
    &\langle A (u + \lambda v) ,w\rangle_{V^*,V}  \\
    &\hspace{2em} = -\int_\Lambda S(\varepsilon u + \lambda \varepsilon v) : \varepsilon w \dd x - \int_{\Lambda} \big( (u + \lambda v) \cdot \nabla\big)(u + \lambda v) \cdot w \dd x =: \mathrm{K}_1 + \mathrm{K}_2.
\end{align*}
We show that~$\mathrm{K}_1$ and~$\mathrm{K}_2$ are continuous functions in $\lambda$. Let $\lambda_1, \lambda_2 \in \mathbb{R}$.

Continuity of $\mathrm{K}_1$: Due to continuity of $S$ it holds 
\begin{align*}
    \big( S(\varepsilon u + \lambda_1 \varepsilon v) -  S(\varepsilon u + \lambda_2 \varepsilon v) \big) : \varepsilon w \rightarrow 0 \quad \text{ for } \quad \lambda_1 \to \lambda_2.
\end{align*}
Moreover, using H\"older's inequality and $\abs{S(B)}^{p'} \lesssim \abs{B}^p + 1$,
\begin{align*}
    &\int_\Lambda \big( S(\varepsilon u + \lambda_1 \varepsilon v) - S(\varepsilon u + \lambda_2 \varepsilon v) \big): \varepsilon w \dd x \\
    &\hspace{2em} \leq \big(\norm{S(\varepsilon u + \lambda_1 \varepsilon v)}_{L^{p'}(\Lambda)} +\norm{S(\varepsilon u + \lambda_2 \varepsilon v)}_{L^{p'}(\Lambda)} \big)  \norm{\varepsilon w}_{L^p(\Lambda)} \\
    &\hspace{2em} \lesssim \big( \norm{\varepsilon u + \lambda_1 \varepsilon v}_{L^p(\Lambda)}^p +\norm{\varepsilon u + \lambda_2 \varepsilon v}_{L^p(\Lambda)}^p + \abs{\Lambda} \big)^{1/p'}  \norm{\varepsilon w}_{L^p(\Lambda)} < \infty.
\end{align*}
Therefore, dominated convergence ensures that
\begin{align*}
    \lambda \mapsto \int_\Lambda S(\varepsilon u + \lambda \varepsilon v) : \varepsilon w \dd x
\end{align*}
is continuous.

Continuity of $\mathrm{K}_2$: H\"older's inequality implies
\begin{align*}
    &\int_{\Lambda} \big[ \big( (u + \lambda_1 v) \cdot \nabla \big)(u + \lambda_1 v) - \big( (u + \lambda_2 v) \cdot \nabla \big)(u + \lambda_2 v) \big] \cdot w \dd x \\
    &\hspace{2em} = (\lambda_1 - \lambda_2) \int_{\Lambda} \big( v \cdot \nabla)(u + \lambda_1 v) + \big( (u + \lambda_2 v) \cdot \nabla) v \cdot w \dd x \\
    &\hspace{2em} \leq  \abs{\lambda_1 - \lambda_2} \big( \norm{\nabla u + \lambda_1 \nabla v}_{L^{p}(\Lambda)} \norm{v \otimes w }_{L^{p'}(\Lambda)} \\
    &\hspace{9em} + \norm{\nabla v}_{L^{p}(\Lambda)} \norm{(u + \lambda_2 v) \otimes w}_{L^{p'}(\Lambda)} \big).
\end{align*}
Notice that, since $p \geq 3d/(d+2)$ it holds $W^{1,p}_{0}(\Lambda) \hookrightarrow L^{2p'}(\Lambda)$,
\begin{align*}
    \norm{v \otimes w }_{L^{p'}(\Lambda)} \leq \norm{v}_{L^{2p'}(\Lambda)} \norm{w}_{L^{2p'} (\Lambda)} \lesssim \norm{\nabla v}_{L^p(\Lambda)}  \norm{\nabla w}_{L^p(\Lambda)}.
\end{align*}
Therefore,
\begin{align*}
    &\norm{\nabla u + \lambda_1 \nabla v}_{L^{p}(\Lambda)} \norm{v \otimes w }_{L^{p'}(\Lambda)}  + \norm{\nabla v}_{L^{p}(\Lambda)} \norm{(u + \lambda_2 v) \otimes w}_{L^{p'}(\Lambda)} \\
    &\hspace{2em}  \lesssim \norm{\nabla u + \lambda_1 \nabla v}_{L^{p}(\Lambda)} \norm{\nabla v}_{L^p(\Lambda)}  \norm{\nabla w}_{L^p(\Lambda)} \\
    &\hspace{4em} + \norm{\nabla v}_{L^{p}(\Lambda)} \norm{\nabla u + \lambda_2 \nabla v}_{L^p(\Lambda)}  \norm{\nabla w}_{L^p(\Lambda)}  < \infty.
\end{align*}
This implies the continuity of 
\begin{align*}
    \lambda \to \int_{\Lambda} \big( (u + \lambda v) \cdot \nabla\big)(u + \lambda v) \cdot w \dd x.
\end{align*}
Thus, Condition~\ref{it:H1} holds.

\underline{Ad~\ref{it:H2}:} Let $u,v \in W^{1,p}_{0,\Div}(\Lambda)$. We split
\begin{align*}
    &2\langle A (u) - A(v),u- v\rangle_{V^*,V} = -2\int_\Lambda \big(S(\varepsilon u) - S(\varepsilon v)\big) : \big(\varepsilon u- \varepsilon v \big) \dd x \\
    &\hspace{4em} -2 \int_{\Lambda} \big( (u \cdot \nabla)u - (v \cdot \nabla)v \big) \cdot (u-v) \dd x =: \mathrm{J}_1 + \mathrm{J}_2,
\end{align*}
into strongly monotone and lower-order parts. 

Using~\cite[Lemma~40]{Diening2020}, we find
\begin{align} \label{eq:strong-dissipation}
\begin{aligned}
    2\int_\Lambda \big(S(\varepsilon u) - S(\varepsilon v)\big) : \big(\varepsilon u- \varepsilon v \big) \dd x  &\eqsim \norm{F( \varepsilon u) - F(\varepsilon v)}_{L^2(\Lambda)}^2 \\ 
    &\geq c \mu \norm{\varepsilon (u - v)}_{L^2(\Lambda)}^2.
    \end{aligned}
\end{align}

Controlling $\mathrm{J}_2$: We write
\begin{align*}
& \int_{\Lambda} \big( (u \cdot \nabla)u - (v \cdot \nabla)v \big) \cdot (u-v) \dd x \\
&\hspace{2em} =  \int_{\Lambda} \big( (u \cdot \nabla)(u-v) + ((u-v) \cdot \nabla)v \big) \cdot (u-v) \dd x.
\end{align*}
Notice that, using integration by parts and the divergence-free condition,
\begin{align} \label{eq:Vanish}
    \int_{\Lambda} (u \cdot \nabla)(u-v)  \cdot (u-v) \dd x = - \frac{1}{2} \int_{\Lambda} \Div\, u \abs{u-v}^2 \dd x = 0.
\end{align}
Similarly, together with H\"older's inquality and $W^{1,p}_0(\Lambda) \hookrightarrow L^\infty(\Lambda)$, 
\begin{align*}
     \int_{\Lambda} ((u-v) \cdot \nabla)v  \cdot (u-v) \dd x &= -   \int_{\Lambda} ((u-v) \cdot \nabla)(u-v)  \cdot v \dd x \\
     &\leq \norm{\nabla (u-v)}_{L^2(\Lambda)} \norm{ u-v}_{L^{2}(\Lambda)} \norm{v}_{L^\infty(\Lambda)} \\
     &\lesssim \norm{\nabla (u-v)}_{L^2(\Lambda)} \norm{ u-v}_{L^{2}(\Lambda)} \norm{\nabla v}_{L^p(\Lambda)}.
\end{align*}

Korn's inequality and weighted Young's inequality imply
\begin{align} \label{eq:bound-lower-order}
\begin{aligned}
     \mathrm{J}_2 &\leq \delta  \norm{\varepsilon (u-v)}_{L^2(\Lambda)}^2 + c_\delta \norm{ u-v}_{L^2(\Lambda)}^2  \norm{\nabla v}_{L^{p}(\Lambda)}^{2}.
     \end{aligned}
\end{align}
Combining~\eqref{eq:strong-dissipation} and~\eqref{eq:bound-lower-order}, and choosing $\delta >0$ sufficiently small, yield
\begin{align*}
    2\langle A (u) - A(v),u- v\rangle_{V^*,V} &\leq (\delta-c \mu)  \norm{\varepsilon (u-v)}_{L^2(\Lambda)}^2  +c_\delta \norm{ u-v}_{L^2(\Lambda)}^2  \norm{\nabla v}_{L^{p}(\Lambda)}^{2} \\
    &\leq c_\delta \norm{ u-v}_{L^2(\Lambda)}^2  \norm{\nabla v}_{L^{p}(\Lambda)}^{2}.
\end{align*}
Thus, Condition~\ref{it:H2} holds with $h = 0$ and $\eta(v) = c_\delta  \norm{\nabla v}_{L^{p}(\Lambda)}^{2}$.

\underline{Ad~\ref{it:H3}:} Using $\abs{F(B)}^2 = \mu(1+ \abs{B})^{p-2} \abs{B}^2 \geq \mu \abs{B}^p $, and an argument similar to~\eqref{eq:Vanish},
\begin{align*}
     \langle A (u), u\rangle_{V^*,V} &=-\int_\Lambda S(\varepsilon u) : \varepsilon u + (u\cdot \nabla)u \cdot u \dd x \\
     &= - \norm{F(\varepsilon u)}_{L^2(\Lambda)}^2 \leq - \mu \norm{\varepsilon u}_{L^p(\Lambda)}^p.
\end{align*}
Thus, Condition~\ref{it:H3} holds with $\alpha = p$, $c_1 = \mu$, $c_2 = 0$ and $f(t) = 0$.

\underline{Ad~\ref{it:H4}:} Using $W^{1,p}_0(\Lambda) \hookrightarrow L^\infty(\Lambda)$, $\abs{S(B)}^{p'} \leq \mu^{p'}(1+\abs{B})^p$, H\"older's inequality and Korn's inequality,
\begin{align*}
    \norm{A(u)}_{V^*} &\leq \norm{S(\varepsilon u)}_{L^{p'}(\Lambda)} + C\norm{(u\cdot \nabla)u}_{L^1(\Lambda)} \\
    &\leq \mu \norm{1+\varepsilon u}_{L^p(\Lambda)}^{p-1} + C \abs{\Lambda}^{1/p'}\norm{u}_{L^{\infty}(\Lambda)} \norm{\nabla u}_{L^{p}(\Lambda)} \\
    &\leq \mu(\abs{\Lambda}+ \norm{\varepsilon u}_{L^p(\Lambda)})^{p-1} + C^2 C_{\mathrm{Korn}}\abs{\Lambda}^{1/p'} \norm{\varepsilon u}_{L^{p}(\Lambda)}^2
\end{align*}
Since $p \geq 3$, 
\begin{align*}
    \norm{\varepsilon u}_{L^{p}(\Lambda)}^2 \leq (1+\norm{\varepsilon u}_{L^{p}(\Lambda)})^2 \leq  (1+\norm{\varepsilon u}_{L^{p}(\Lambda)})^{p-1} \leq 2^{p-2}(1 +\norm{\varepsilon u}_{L^{p}(\Lambda)}^{p-1}).
\end{align*}
Overall,
\begin{align*}
    \norm{A(u)}_{V^*} &\leq 2^{p-2} (\mu \abs{\Lambda}^{p-1} +  C^2 C_{\mathrm{Korn}}\abs{\Lambda}^{1/p'}) +  \mu  2^{p-2}(\mu + C^2 C_{\mathrm{Korn}}\abs{\Lambda}^{1/p'}) \norm{\varepsilon u}_{L^p(\Lambda)}^{p-1}.
\end{align*}
Thus, Condition~\ref{it:H4} holds with $g(t) = 2^{p-2} (\mu \abs{\Lambda}^{p-1} +  C^2 C_{\mathrm{Korn}}\abs{\Lambda}^{1/p'}) $ and $c_3 = \mu  2^{p-2}(\mu + C^2 C_{\mathrm{Korn}}\abs{\Lambda}^{1/p'})$.

By assumption, Assumption~\ref{ass:integral-operator} is satisfied.

We have verified all assumptions of Theorems~\ref{thm:main} and~\ref{thm:AnwerAdditive}, and the proof is complete. 
\end{proof}

\newpage

\section{Appendix} \label{appendix}
\subsection{Local time and occupation times formula}
We recall for the reader the basic concepts of occupation measures, local times and the occupation times formula. A comprehensive review paper on these topics is \cite{horowitz}. 
\begin{definition}
Let $w:[0,T]\to \R^N$ be a measurable path. Then the occupation measure at time $t\in [0,T]$, written $\mu^w_t$ is the Borel measure on $\R^d$ defined by 
\[
\mu^w_t(A):=\lambda(\{ s\in [0,t]:\ w_s\in A\}), \quad A\in \mathcal{B}(\R^N),
\]
where $\lambda$ denotes the standard Lebesgue measure. 
\end{definition}
The occupation measure thus measures how much time the process $w$ spends in certain Borel sets. Provided for any $t\in [0,T]$, the measure is absolutely continuous with respect to the Lebesgue measure on $\R^N$, we call the corresponding Radon-Nikodym derivative local time of the process $w$:
\begin{definition}
Let $w:[0,T]\to \R^N$ be a measurable path. Assume that there exists a measurable function $L^w:[0,T]\times \R^N\to \R_+$ such that 
\[
\mu^w_t(A)=\int_A L^w_t(z) \dd z, 
\]
for any $A\in \mathcal{B}(\R^N)$ and  $t\in [0,T]$. Then we call $L^w$ local time of $w$. 
\end{definition}
Note that by the definition of the occupation  measure, we have for any bounded measurable function $f:\R^N\to \R$ that 
\begin{equation}
    \int_0^tf(w_s)ds=\int_{\R^N} f(z)\mu^w_t(\dd z).
    \label{occupation times formula}
\end{equation}
The above equation \eqref{occupation times formula} is called occupation times formula. Remark that in particular, provided $w$ admits a local time, we also have for any $u\in \R^N$
\begin{equation}
    \int_0^tf(u-w_s)\dd s=\int_{\R^N} f(u-z)\mu^w_t(\dd z)=\int_{\R^N}f(u-z)L^w_t(z) \dd z=(f*L^w_t)(u).
\end{equation}
\subsection{Some helpful Lemmata}

\begin{lemma}
\label{abstract average minus leftpoint}
    Suppose $\sigma:H\to H$ is Lipschitz-continuous, i.e., there exists a constant $L>0$ such that for all $x,y \in H$ it holds
\begin{align} \label{eq:Lipschitz-ass}
    \norm{\sigma(x)-\sigma(y)}_H\leq L\norm{x-y}_H.
\end{align}
Moreover, let $u \in B^{1/2}_{2,\infty} H\cap L^\infty_tH$. Then we have for $q\geq 2$
    \[
   \left( \int_0^{T-h} \norm{\langle \sigma(u)\rangle_{r, r+h}-\sigma(u_r)}_H^qdr\right)^{1/q}\leq h^{1/q} (2\norm{u}_{L^\infty_t H})^{1-2/q} L \seminorm{u}_{B^{1/2}_{2, \infty}H}^{2/q}.
    \]
    and 
    \[
       \left( \int_0^{T-h} \norm{\sigma(\langle u\rangle_{r, r+h})-\sigma(u_r)}_H^qdr\right)^{1/q}\leq h^{1/q} (2\norm{u}_{L^\infty_t H})^{1-2/q} L \seminorm{u}_{B^{1/2}_{2, \infty}H}^{2/q}.
    \]
\end{lemma}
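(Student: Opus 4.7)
My plan is to treat both estimates simultaneously, since they both reduce to controlling the same quantity $\frac{1}{h}\int_r^{r+h}\|u_s-u_r\|_H\,ds$. For the first one, write
\[
\langle\sigma(u)\rangle_{r,r+h}-\sigma(u_r)=\frac{1}{h}\int_r^{r+h}\bigl(\sigma(u_s)-\sigma(u_r)\bigr)\,ds,
\]
apply the triangle inequality inside the Bochner integral, and use the Lipschitz hypothesis~\eqref{eq:Lipschitz-ass} to bound the integrand by $L\|u_s-u_r\|_H$. For the second one, the Lipschitz hypothesis directly yields $\|\sigma(\langle u\rangle_{r,r+h})-\sigma(u_r)\|_H\leq L\|\langle u\rangle_{r,r+h}-u_r\|_H$, and then again Jensen's inequality brings us to the same integral average.

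The heart of the argument is thus to estimate the $L^q_r$-norm of $r\mapsto g_h(r):=\frac{1}{h}\int_r^{r+h}\|u_s-u_r\|_H\,ds$ on $[0,T-h]$. I would first derive two endpoint bounds. For the $L^\infty$-bound, the triangle inequality in $H$ yields $g_h(r)\leq 2\|u\|_{L^\infty_tH}$. For the $L^2$-bound, Cauchy--Schwarz gives $g_h(r)^2\leq \frac{1}{h}\int_r^{r+h}\|u_s-u_r\|_H^2\,ds$; substituting $s=r+t$, applying Fubini, and using the defining bound of the Nikolskii seminorm $\int_0^{T-t}\|u_{r+t}-u_r\|_H^2\,dr\leq t\,\seminorm{u}_{B^{1/2}_{2,\infty}H}^2$, I obtain
\[
\int_0^{T-h}g_h(r)^2\,dr\leq \frac{1}{h}\int_0^h t\,\seminorm{u}_{B^{1/2}_{2,\infty}H}^2\,dt\leq h\,\seminorm{u}_{B^{1/2}_{2,\infty}H}^2.
\]

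Finally, for general $q\in[2,\infty]$, I would interpolate between these two endpoints via the standard Hölder inequality $\|f\|_{L^q}\leq \|f\|_{L^\infty}^{1-2/q}\|f\|_{L^2}^{2/q}$. Multiplying by the Lipschitz constant $L$ (which enters each factor with exponents summing to one) gives
\[
\left(\int_0^{T-h}(L\,g_h(r))^q\,dr\right)^{1/q}\leq (2\|u\|_{L^\infty_tH})^{1-2/q}\bigl(h^{1/2}\seminorm{u}_{B^{1/2}_{2,\infty}H}\bigr)^{2/q}L,
\]
which rearranges into exactly the claimed bound. No step here is genuinely difficult; the only mildly delicate point is keeping track of the exponents so that the $L$-factor collects cleanly into a single power and the $h$-power comes out as $h^{1/q}$. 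Both inequalities in the statement then follow from this single interpolation argument applied to the common upper envelope $L\,g_h$.
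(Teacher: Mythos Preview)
Your proof is correct and follows essentially the same approach as the paper: reduce both inequalities to bounding the $L^q_r$-norm of $g_h(r)=\frac{1}{h}\int_r^{r+h}\|u_s-u_r\|_H\,ds$, then combine the trivial $L^\infty$ bound $g_h\leq 2\|u\|_{L^\infty_tH}$ with the $L^2$ bound coming from Cauchy--Schwarz, Fubini, and the Nikolskii seminorm. The only cosmetic difference is that the paper carries out the interpolation pointwise inside the integral (writing $\|u_z-u_r\|_H\leq (2\|u\|_{L^\infty_tH})^{1-2/q}\|u_z-u_r\|_H^{2/q}$ before applying Jensen), whereas you first establish the two endpoint estimates and then invoke $\|g_h\|_{L^q}\leq\|g_h\|_{L^\infty}^{1-2/q}\|g_h\|_{L^2}^{2/q}$; the resulting constants and exponents are identical.
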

\begin{proof}
Using Jensen's inequality,~\eqref{eq:Lipschitz-ass}, and H\"older's inequalities, 
\begin{align*}
    &\int_0^{T-h}\norm{ \langle \sigma(u) \rangle_{r,r+h}-\sigma(u_r)}_{H}^q \dd r \\
    &\hspace{2em} \leq\frac{1}{h^q}\int_0^{T-h}\left(\int_{r}^{r+h}\norm{\sigma(u_z)-\sigma(u_r)}_H \dd z\right)^q \dd r \\
    &\hspace{2em} \leq \frac{L^q}{h^q} (2\norm{u}_{L^\infty_t H})^{q-2} \int_0^{T-h}\left(\int_{r}^{r+h}\norm{u_z-u_r}^{2/q}_H \dd z \right)^q \dd r \\
    &\hspace{2em} \leq \frac{L^q}{h} (2\norm{u}_{L^\infty_t H})^{q-2} \int_0^{T-h} \int_{r}^{r+h}\norm{u_z-u_r}^{2}_H \dd z \dd r.
\end{align*}
A substitution $z = r+s$ and Fubini's theorem show
\begin{align*}
    \int_0^{T-h} \int_{r}^{r+h}\norm{u_z-u_r}^{2}_H \dd z \dd r &= \int_0^h \int_0^{T-h}\norm{u_{r+s}-u_r}_H^2 \dd r \dd s \\
    &\leq \int_0^h s \dd s \seminorm{u}_{B^{1/2}_{2,\infty}H}^2 \leq h^2 \seminorm{u}_{B^{1/2}_{2,\infty}H}^2. 
\end{align*}
The first assertion follows by combining the estimates. The second assertion is proven analogously.
\end{proof}

\begin{lemma}
    \label{abstract limit identification lemma}
    Let $\sigma:H\to H$ be Lipschitz-continuous, i.e. 
    \[
    \norm{\sigma(x)-\sigma(y)}_H\leq L\norm{u-v}.
    \]
    Let $u,v \in L^\infty_t H$. Then we have for $q\geq 2$ 
    \begin{align}
    \begin{aligned}
          &\left(\int_0^{T-h}\norm{\langle \sigma(u)\rangle_{r, r+h}-\langle \sigma(v)\rangle_{r, r+h}}_H^q \dd r \right)^{1/q} \\
          &\hspace{2em} \leq (\norm{u}_{L^\infty_t H} + \norm{v}_{L^\infty_t H})^{1-2/q} L \norm{u-v}_{L^2_t H}^{2/q}.
    \end{aligned} 
    \end{align}
    as well as
     \begin{align}
    \begin{aligned}
          &\left(\int_0^{T-h}\norm{ \sigma(\langle u\rangle_{r, r+h})- \sigma(\langle v\rangle_{r, r+h})}_H^q \dd r \right)^{1/q} \\
          &\hspace{2em} \leq (\norm{u}_{L^\infty_t H} + \norm{v}_{L^\infty_t H})^{1-2/q} L \norm{u-v}_{L^2_t H}^{2/q}.
    \end{aligned} 
    \end{align}
\end{lemma}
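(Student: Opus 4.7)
The structure of the estimate is an $L^q$-type bound expressed as an $L^\infty \cdot L^2$ interpolation with weights $1-2/q$ and $2/q$, so the natural strategy is to split a power of $q$ into a $(q-2)$ power controlled by $L^\infty$ and a squared factor controlled by $L^2$. This is analogous to the interpolation argument already used in Lemma~\ref{abstract average minus leftpoint}.

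For the first estimate, my plan is to first rewrite the difference of averages as the average of differences,
\begin{align*}
\langle \sigma(u)\rangle_{r,r+h} - \langle \sigma(v)\rangle_{r,r+h} = \frac{1}{h}\int_r^{r+h} \bigl(\sigma(u_z) - \sigma(v_z)\bigr) \dd z,
\end{align*}
then apply Jensen's inequality in the form $(h^{-1}\int_r^{r+h} f \dd z)^q \leq h^{-1}\int_r^{r+h} f^q \dd z$ to pull the $q$-th power inside. Fubini's theorem then reduces the double integral to $\int_0^T \norm{\sigma(u_z) - \sigma(v_z)}_H^q \dd z$ (up to bounding the inner $r$-length by $h$, which cancels the $1/h$). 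The key interpolation step is then
\begin{align*}
\norm{\sigma(u_z) - \sigma(v_z)}_H^q \leq \bigl(L(\norm{u_z}_H + \norm{v_z}_H)\bigr)^{q-2} \cdot \bigl(L\norm{u_z - v_z}_H\bigr)^2,
\end{align*}
one factor obtained from the Lipschitz estimate against $\norm{u_z}_H + \norm{v_z}_H$ and the other from the Lipschitz estimate against $\norm{u_z - v_z}_H$. Bounding the first factor uniformly in $z$ by $L^{q-2}(\norm{u}_{L^\infty_t H} + \norm{v}_{L^\infty_t H})^{q-2}$ and integrating the second in $z$ yields $L^2 \norm{u-v}_{L^2_t H}^2$; taking $q$-th roots gives the claimed bound, since $(q-2)/q = 1 - 2/q$.

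For the second estimate, I would first apply Lipschitz continuity pointwise in $r$ to get $\norm{\sigma(\langle u\rangle_{r,r+h}) - \sigma(\langle v\rangle_{r,r+h})}_H \leq L \norm{\langle u-v\rangle_{r,r+h}}_H$, and correspondingly the crude bound $L(\norm{u}_{L^\infty_t H} + \norm{v}_{L^\infty_t H})$. Splitting the $q$-th power as before leaves the task of estimating $\int_0^{T-h} \norm{\langle u-v\rangle_{r,r+h}}_H^2 \dd r$, which by Jensen and Fubini is at most $\norm{u-v}_{L^2_t H}^2$. The two bounds combine to the same conclusion.

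No new analytic input is needed, and there is no genuine obstacle: the argument is essentially a Jensen--Fubini--interpolation calculation. The only point requiring mild care is the uniform control of the $(q-2)$-power factor, which is immediate from $u,v \in L^\infty_t H$.
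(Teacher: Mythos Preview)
Your proposal is correct and follows essentially the same approach as the paper, which simply states that the proof proceeds analogously to Lemma~\ref{abstract average minus leftpoint}. The Jensen--Lipschitz--interpolation--Fubini steps you describe are precisely the ingredients used there, adapted to the difference $\sigma(u)-\sigma(v)$ in place of $\sigma(u_z)-\sigma(u_r)$.
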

\begin{proof}
The proof proceeds analogously to the one of Lemma~\ref{abstract average minus leftpoint}.
\end{proof}

\begin{theorem}\cite[Corollary 2.1.35]{Martin2018Refinements}
\label{multiplication theorem}
    Let $\alpha, \beta\in \mathbb{R}\backslash \{0\}$ with $\alpha<\beta$ and $\alpha+\beta>0$. Then there exists a $C>0$ such that for all $p_1, p_2, p, q_1, q_2, r\in [1, \infty]$ satisfying
    \[
    \frac{1}{p}=\frac{1}{p_1}+\frac{1}{p_2},\qquad r\leq q_1
    \]
    we have
    \[
    \norm{u\cdot v}_{B^\alpha_{p, r}}\leq C \norm{u}_{B^\alpha_{p_1, q_1}}\norm{v}_{B^\beta_{p_2, q_2}}
    \]
\end{theorem}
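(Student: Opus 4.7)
The plan is to use Bony's paraproduct decomposition combined with Littlewood--Paley theory, which is the standard machinery for bilinear Besov estimates. With the usual dyadic projectors $(\Delta_j)$ and $S_j=\sum_{k<j}\Delta_k$, I would split
\[ u\cdot v = T_u v + T_v u + R(u,v), \]
where $T_u v = \sum_j S_{j-1}u\,\Delta_j v$, $T_v u = \sum_j S_{j-1}v\,\Delta_j u$, and $R(u,v) = \sum_{|j-k|\leq 1}\Delta_j u\,\Delta_k v$, and then bound each of the three pieces separately in $B^\alpha_{p,r}$.

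For the low-high paraproduct $T_u v$, Bernstein's and H\"older's inequalities give $\norm{\Delta_k(T_u v)}_{L^p}\lesssim \norm{S_{k-1}u}_{L^{p_1}}\norm{\Delta_k v}_{L^{p_2}}$. Depending on the sign of $\alpha$, the low-frequency factor $\norm{S_{k-1}u}_{L^{p_1}}$ is either bounded uniformly in $k$ (when $\alpha > 0$, via the embedding $B^\alpha_{p_1,q_1}\hookrightarrow L^{p_1}$) or by $2^{-k\alpha}\norm{u}_{B^\alpha_{p_1,q_1}}$ via geometric summation (when $\alpha < 0$), placing $T_u v$ in $B^\beta_{p,q_2}$ or $B^{\alpha+\beta}_{p,q_2}$ respectively. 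The achieved regularity strictly exceeds $\alpha$, because $\alpha < \beta$ together with $\alpha + \beta > 0$ forces $\beta > 0$, so a Franke--Jawerth type embedding $B^{\alpha'}_{p,q_2}\hookrightarrow B^\alpha_{p,r}$ for any $\alpha' > \alpha$ and arbitrary $r$ closes the estimate. The resonant term $R(u,v)$ is handled analogously: $\Delta_j u\,\Delta_k v$ with $|j-k|\leq 1$ has spectral support of radius $\sim 2^j$, so only indices $j\geq N-c$ contribute to $\Delta_N R$, and the resulting geometric factor $\sum_{j\geq N-c}2^{(N-j)(\alpha+\beta)}$ converges precisely because $\alpha+\beta > 0$. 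This yields $R(u,v)\in B^{\alpha+\beta}_{p,q}$ for a suitable $q$, which again embeds into $B^\alpha_{p,r}$ thanks to the strict gain.

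The main obstacle will be the high-low paraproduct $T_v u$, because the naive bound $\norm{\Delta_k(T_v u)}_{L^p}\lesssim \norm{v}_{L^{p_2}}\norm{\Delta_k u}_{L^{p_1}}$ only yields $\norm{T_v u}_{B^\alpha_{p,r}}\lesssim \norm{u}_{B^\alpha_{p_1,r}}\norm{v}_{B^\beta_{p_2,q_2}}$, and the embedding $B^\alpha_{p_1,q_1}\hookrightarrow B^\alpha_{p_1,r}$ goes in the wrong direction when $r\leq q_1$. Here the strict gap $\beta-\alpha > 0$ must be exploited jointly with $r\leq q_1$: writing $S_{k-1}v = \sum_{j<k}\Delta_j v$ and extracting the Besov decomposition $\norm{\Delta_j v}_{L^{p_2}} = 2^{-j\beta}c_j$ with $(c_j)\in\ell^{q_2}$, the dyadic piece of $T_v u$ at scale $k$ carries a geometric weight $2^{(j-k)(\beta-\alpha)}$ relative to the product $a_k c_j$ with $a_k = 2^{k\alpha}\norm{\Delta_k u}_{L^{p_1}}\in \ell^{q_1}$, summable in $j<k$ precisely because $\beta > \alpha$. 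A discrete Young-type convolution inequality on the sequence side, calibrated to combine $\ell^{q_1}$-control of $(a_k)$ with $\ell^{q_2}$-control of $(c_j)$ into an $\ell^r$-bound with $r\leq q_1$, then produces the desired estimate. This conversion of the excess regularity $\beta-\alpha>0$ into improved summability of the $u$-sequence is exactly where the hypothesis $r\leq q_1$ is consumed in a non-trivial way, and combining the three estimates yields the claimed product inequality.
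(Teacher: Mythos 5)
The paper does not actually prove this statement -- it is imported verbatim from the cited reference (Corollary 2.1.35 of \cite{Martin2018Refinements}) -- so the only question is whether your argument stands on its own. Your paraproduct strategy is the standard route, and your treatment of $T_u v$ and of the resonant term $R(u,v)$ is correct: both land at regularity strictly above $\alpha$ (at $\beta$ resp.\ $\alpha+\beta$, using $\beta>0$ and $\alpha+\beta>0$), so they embed into $B^\alpha_{p,r}$ for \emph{every} fine index $r$.

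The genuine gap is exactly where you place the ``main obstacle'', and the fix you sketch cannot work. For the high-low paraproduct, the $k$-th Littlewood--Paley block obeys $2^{k\alpha}\norm{\Delta_k(T_v u)}_{L^p}\lesssim \sum_{|k'-k|\leq 2}\norm{S_{k'-1}v}_{L^{p_2}}\,2^{k\alpha}\norm{\Delta_{k'}u}_{L^{p_1}}\lesssim \norm{v}_{B^\beta_{p_2,q_2}}\,a_k$, where the uniform bound on $\norm{S_{k-1}v}_{L^{p_2}}$ uses only $\beta>0$; there is no factor decaying in $k$, so the weighted block sequence is comparable to $(a_k)\in\ell^{q_1}$ itself, and the best possible conclusion is $T_v u\in B^\alpha_{p,q_1}$. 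No ``discrete Young-type convolution inequality'' can convert $\ell^{q_1}$-control into $\ell^{r}$-control with $r<q_1$: Young's inequality gives $1/r=1/q_1+1/s-1\leq 1/q_1$, i.e.\ convolution can only lose, never gain, summability, and the weight $2^{-j\beta}$ you extract is summable in $j$ but carries no dependence on $k$ that could be traded for summability in $k$. Indeed the asserted estimate is \emph{false} for $r<q_1$: take $p_2=\infty$ (so $p=p_1$), $v$ smooth, compactly supported, with $v\equiv 1$ on a ball, and $u\in B^\alpha_{p_1,q_1}$ supported in that ball with $u\notin B^\alpha_{p_1,r}$; then $u\cdot v=u$ while $v\in B^\beta_{\infty,q_2}$ for all $\beta,q_2$. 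So the product estimate only holds with fine index $q_1$ on the left, equivalently for $r\geq q_1$ by monotonicity of the $\ell^r$-scales; the printed condition ``$r\leq q_1$'' should be read as $r\geq q_1$ (the paper only ever invokes the case $r=q_1$, e.g.\ $H=W^{-1,2}=B^{-1}_{2,2}$ against $E=C^{1+\epsilon}$), and the step you sketch for $r<q_1$ cannot be closed by any argument.
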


\begin{lemma} \label{lem:Shift-bound}
Let $q \in (1,\infty]$, $\gamma \in (1/q,1]$ and $E$ be a Banach space. Then for all finite intervals $J$ and $u \in B^{\gamma}_{q,\infty}(J;E)$ it holds 
\begin{align} \label{eq:Shift-bound}
  \sup_{s\in J}  \left(\int_J \norm{u_t - u_s}_E^q \dd t \right)^{1/q} \leq  C_{\gamma,q} \abs{J}^{\gamma} \seminorm{u}_{B^{\gamma}_{q,\infty}(I;E)},
\end{align}
where $C_{\gamma,q} = \frac{3^{2-(\gamma - 1/q)}}{\gamma - 1/q}$.
\end{lemma}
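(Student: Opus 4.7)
The plan is to go through the Morrey-type Besov-to-Hölder embedding $B^{\gamma}_{q,\infty}(J;E) \hookrightarrow C^{\gamma - 1/q}(J;E)$, which is valid precisely because $\gamma > 1/q$, and then integrate pointwise. Once an explicit Hölder-type bound
\[
\norm{u_t - u_s}_E \leq K_{\gamma,q}\,|t-s|^{\gamma - 1/q}\,\seminorm{u}_{B^\gamma_{q,\infty}(J;E)}
\]
is established for a.e.\ $s,t \in J$, the claim will follow by direct integration: since $q\gamma - 1 = q(\gamma - 1/q) > -1$, for every $s \in J = [a,b]$,
\[
\int_J \norm{u_t - u_s}_E^q \dd t \leq K_{\gamma,q}^q \seminorm{u}^q_{B^\gamma_{q,\infty}} \int_J |t-s|^{q\gamma - 1} \dd t \leq \frac{K_{\gamma,q}^q}{q\gamma}\,|J|^{q\gamma}\,\seminorm{u}^q_{B^\gamma_{q,\infty}},
\]
and taking $q$-th roots and the supremum over $s$ produces~\eqref{eq:Shift-bound} with overall constant $K_{\gamma,q}/(q\gamma)^{1/q}$.

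For the embedding itself the plan is to use a classical telescoping chain of averages on \emph{triadic} scales -- this is what forces the base $3$ appearing in $C_{\gamma,q}$. Fix $s<t$ in $J$, set $h := t-s$ and $\delta_k := 3^{-k}h$, and introduce
\[
\bar u^-_k := \frac{1}{\delta_k}\int_{s}^{s+\delta_k} u_r \dd r, \qquad \bar u^+_k := \frac{1}{\delta_k}\int_{t-\delta_k}^{t} u_r \dd r,
\]
both well defined since $\delta_k \leq h$. A pleasant cancellation occurs at the initial scale, $\bar u^+_0 = \bar u^-_0 = h^{-1}\int_s^t u_r \dd r$, and by Lebesgue differentiation $\bar u^-_k \to u_s$ and $\bar u^+_k \to u_t$ (for the continuous representative, or at every Lebesgue point). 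Telescoping then gives
\[
u_t - u_s = \sum_{k=0}^\infty \bigl(\bar u^+_{k+1}-\bar u^+_k\bigr) + \sum_{k=0}^\infty \bigl(\bar u^-_k - \bar u^-_{k+1}\bigr).
\]
Splitting $\bar u^{\pm}_k$ into three sub-averages of length $\delta_{k+1}$ shows, after elementary algebra, that each consecutive difference is a $\delta_{k+1}$-average of increments of $u$ at shifts $\delta_{k+1}$ and $2\delta_{k+1}$; Hölder's inequality in the averaging integral together with the defining Besov bound
\[
\Bigl(\int_{0}^{|J|-\eta}\norm{u_{t'+\eta}-u_{t'}}_E^q \dd t'\Bigr)^{1/q} \leq \eta^\gamma \seminorm{u}_{B^\gamma_{q,\infty}}
\]
yields each term of size $\lesssim \delta_{k+1}^{\gamma - 1/q}\seminorm{u}_{B^\gamma_{q,\infty}}$. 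Summing the resulting geometric series $\sum_{k\geq 0} 3^{-(k+1)(\gamma - 1/q)} = (3^{\gamma - 1/q}-1)^{-1}$, which converges \emph{precisely} because $\gamma > 1/q$, and bounding it from above via the convexity inequality $3^{\gamma - 1/q}-1 \geq (\gamma - 1/q)\ln 3$, produces $K_{\gamma,q}$ of the form constant times $(\gamma - 1/q)^{-1}$.

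Combining the embedding with the integration step then gives a constant of the stated form, the factor $3^{2-(\gamma-1/q)}$ encoding the triadic ratio and the two telescoping sums, and $(\gamma - 1/q)^{-1}$ encoding the geometric series sum. The proof involves no substantive new idea beyond Lebesgue differentiation, Hölder's inequality, and summation of a geometric series; the main difficulty is purely cosmetic bookkeeping to compress the various constants arising from the triadic scales and the $(q\gamma)^{-1/q}$ coming from integrating $|t-s|^{q\gamma-1}$ into the compact closed-form expression $3^{2-(\gamma-1/q)}/(\gamma-1/q)$ claimed in the lemma.
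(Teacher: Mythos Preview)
Your proposal is correct and follows the same two-step structure as the paper's proof: (i) use the embedding $B^\gamma_{q,\infty}(J;E)\hookrightarrow C^{\gamma-1/q}(J;E)$ to obtain the pointwise H\"older bound $\norm{u_t-u_s}_E\leq \seminorm{u}_{C^{\gamma-1/q}}\,|t-s|^{\gamma-1/q}$, then (ii) integrate $|t-s|^{q(\gamma-1/q)}=|t-s|^{q\gamma-1}$ over $J$. The only difference is that the paper invokes the embedding as a black box from \cite[Theorem~10]{MR1108473}, whereas you supply a self-contained triadic-chain derivation of it; your version is thus more elementary but otherwise identical in spirit, and neither argument spells out the final bookkeeping that produces the exact stated constant $3^{2-(\gamma-1/q)}/(\gamma-1/q)$ (which is inherited from the cited embedding constant and is not load-bearing in the paper's applications).
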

\begin{proof}
Let $\nu \in J$. Using~\cite[Theorem~10]{MR1108473} with $s = \gamma$, $r = \gamma - 1/q$, $p = q$ and $q = \infty$ shows
\begin{align*}
    \left(\int_J \norm{u_t - u_\nu}_E^q \dd t \right)^{1/q} &\leq \seminorm{u}_{C^{\gamma - 1/q}(J;E)} \left(\int_J \abs{t-\nu}^{\gamma q-1} \dd t \right)^{1/q} \\
    &\leq C_{\gamma,q} \seminorm{u}_{B^{\gamma}_{q,\infty}(J;E)}\abs{J}^\gamma.
\end{align*}
Since $\nu \in J$ was arbitrary,~\eqref{eq:Shift-bound} follows by taking the supremum.
\end{proof}

\begin{lemma}[Localisation of Nikolskii norm] \label{lem:local-Nikolskii}
Let $J = [a,b]$, $a<b$ be an interval, $\mathcal{P} = \{ [t_{k-1}, t_k]: \, t_0 = a < t_1  < \ldots < t_N = b\}$ be a finite partition of $J$, and $E$ be a Banach space. Then
\begin{align*}
    u \in  \bigcap_{k=1}^N B^{1/2}_{2,\infty}([t_{k-1},t_k];E) \cap L^\infty(J;E) \quad \Rightarrow \quad u \in B^{1/2}_{2,\infty}(J;E).
\end{align*}
Moreover,
\begin{align} \label{eq:Local-Nikolskii}
       \seminorm{u}_{B^{1/2}_{2,\infty}(J;E)}^2 \leq \sum_{k=1}^N  \seminorm{u}_{B^{1/2}_{2,\infty}([t_{k-1},t_k];E)}^2 + \norm{u}_{L^\infty(J;E)}^2 2(N-1+h_{\mathrm{min}}^{-1}\abs{J} ),
\end{align}
where $h_{\mathrm{min}} = \min_{k=1,\ldots,N}\abs{t_k - t_{k-1}}$.
\end{lemma}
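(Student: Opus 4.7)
The plan is to bound, for each $h\in(0,|J|)$, the quantity
\[
\mathcal{I}(h) := h^{-1}\int_a^{b-h}\norm{u(t+h)-u(t)}_E^2 \dd t
\]
and then take the supremum in $h$. The decisive observation is that the $t$-integration domain splits naturally according to whether $[t,t+h]$ is contained in a single partition cell $[t_{k-1},t_k]$ or crosses a partition point. Concretely, for each $k\in\{1,\ldots,N\}$ I would set $A_k(h) := [t_{k-1},t_k-h]$ when $t_k-t_{k-1}\geq h$ and $A_k(h):=\emptyset$ otherwise, and then let $B(h) := [a,b-h]\setminus\bigcup_{k=1}^N A_k(h)$ collect the remaining ``bad'' starting points.

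For $t\in A_k(h)$, both $t$ and $t+h$ lie in $[t_{k-1},t_k]$, so by the very definition of the local Nikolskii seminorm
\[
h^{-1}\int_{A_k(h)}\norm{u(t+h)-u(t)}_E^2 \dd t \leq \seminorm{u}_{B^{1/2}_{2,\infty}([t_{k-1},t_k];E)}^2.
\]
For $t\in B(h)$, I would use only the trivial pointwise estimate $\norm{u(t+h)-u(t)}_E^2\leq 4\norm{u}_{L^\infty(J;E)}^2$; this reduces everything to controlling $h^{-1}|B(h)|$ uniformly in $h$.

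The only really delicate step is the measure-theoretic bookkeeping for $B(h)$, and it splits into two regimes. If $h\leq h_{\min}$, every $A_k(h)$ is non-empty and $B(h)$ is covered by the $N-1$ ``gaps'' $(t_k-h,t_k]$ associated with interior partition points; hence $|B(h)|\leq(N-1)h$ and $h^{-1}|B(h)|\leq N-1$. If $h>h_{\min}$, then cells with $t_k-t_{k-1}<h$ are entirely swallowed by increments of length $h$ and cannot be recovered via the local seminorms; there I would fall back on the crude $|B(h)|\leq|J|$ together with $h^{-1}\leq h_{\min}^{-1}$ to obtain $h^{-1}|B(h)|\leq h_{\min}^{-1}|J|$. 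Combining both regimes yields $h^{-1}|B(h)|\leq (N-1)+h_{\min}^{-1}|J|$ for all admissible $h$, and taking the supremum in $h$ -- after summing the $A_k(h)$-contributions -- produces~\eqref{eq:Local-Nikolskii}, with the precise numerical constant in the $L^\infty$-term governed by the pointwise bound chosen on $B(h)$. Beyond this cautious case-distinction the argument is a routine covering estimate and I anticipate no further analytic obstacle.
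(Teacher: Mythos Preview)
Your proof is correct and follows essentially the same route as the paper: split according to $h\le h_{\min}$ versus $h>h_{\min}$, handle the in-cell pieces by the local seminorms, and control the crossing pieces (respectively the whole integral in the large-$h$ regime) by the trivial $L^\infty$ bound. The only discrepancy is the numerical factor in the $L^\infty$-term (your pointwise bound $\norm{u(t+h)-u(t)}_E^2\le 4\norm{u}_{L^\infty}^2$ yields $4(N-1+h_{\min}^{-1}|J|)$ rather than the stated $2(\cdot)$), which you already flag and which is immaterial for every application in the paper.
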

\begin{proof}
We distinguish two cases: differences for small and large times. Here small and large times should be understood with respect to the minimal partition mesh-size~$h_{\mathrm{min}}$. 

\underline{$h \leq h_{\mathrm{min}}$:} For short times we can use the control provided by the local Nikolskii norm and an $L^\infty$-estimate. Notice that each $[t_{k-1},t_k]$ splits into $[t_{k-1}, t_{k} - h] \cup [t_{k} - h,t_{k}]$. Therefore,
\begin{align*}
    \int_a^{b-h} \norm{u_{t+h} - u_t}_E^2 \dd t &= \sum_{k=1}^N \int_{t_{k-1}}^{t_k-h} \norm{u_{t+h} - u_t}_E^2 \dd t +  \sum_{k=1}^{N-1} \int_{t_{k}-h}^{t_k} \norm{u_{t+h} - u_t}_E^2 \dd t \\
    &\leq h \left( \sum_{k=1}^N  \seminorm{u}_{B^{1/2}_{2,\infty}([t_{k-1},t_k];E)}^2 + \norm{u}_{L^\infty(J;E)}^2 2(N-1) \right) . 
\end{align*}
It follows 
\begin{align} \label{eq:small-h}
    \sup_{h \leq h_{\mathrm{min}}} h^{-1} \int_a^{b-h} \norm{u_{t+h} - u_t}_E^2 \dd t \leq  \sum_{k=1}^N  \seminorm{u}_{B^{1/2}_{2,\infty}([t_{k-1},t_k];E)}^2 + \norm{u}_{L^\infty(J;E)}^2 2(N-1).
\end{align}

\underline{$h > h_{\mathrm{min}}$:} For large times we no longer need to guarantee a decay of increments. Thus, it is sufficient to estimate
\begin{align*}
    h^{-1} \int_a^{b-h} \norm{u_{t+h} - u_t}_E^2 \dd t \leq  2h_{\mathrm{min}}^{-1} \norm{u}_{L^\infty(J;E)}^2 \abs{J},
\end{align*}
which implies
\begin{align} \label{eq:large-h}
    \sup_{h > h_{\mathrm{min}}} h^{-1} \int_a^{b-h} \norm{u_{t+h} - u_t}_E^2 \dd t \leq  2h_{\mathrm{min}}^{-1} \norm{u}_{L^\infty(J;E)}^2 \abs{J}.
\end{align}

Combining~\eqref{eq:small-h} and~\eqref{eq:large-h} yields~\eqref{eq:Local-Nikolskii}.
\end{proof}

\begin{lemma}
    \label{classical rules of calculus}
Let $Z\in B^{1/2}_{2, \infty}\mathbb{R}$ and $\beta\in C^\gamma_{t}\mathbb{R}$ with $\gamma>3/4$. Then 
\[
B_t:=\int_0^t Z_r \dd \beta_r-Z_t
\]
is a well-defined object in $B^{1/2}_{2, \infty}$ and moreover, we have 
\[
\int_0^t\exp{(-\beta_r)}\dd B_r=-Z_t\exp{(-\beta_t)}+Z_0\exp{(-\beta_0)}.
\]
\end{lemma}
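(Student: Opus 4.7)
My plan is to proceed in two stages: first, establishing the well-definedness of every Young integral appearing in the statement by direct application of Theorem~\ref{thm:young-integral}; second, identifying both sides of the asserted formula as sewings of germs on $\Delta_2$ that differ only by a higher-order remainder, so that the uniqueness-of-sewing Lemma~\ref{local approx doesnt matter} closes the argument. For the first stage, I take $\alpha = 1/2$, $p = 2$, and $q = \infty$ (so $\mu = 2$); the compatibility $1/2 + \gamma > 1$ holds since $\gamma > 3/4 > 1/2$, yielding $\mathscr{S}(Z, \dd\beta) \in B^\gamma_{2,\infty} \hookrightarrow B^{1/2}_{2,\infty}$ and hence $B = \mathscr{S}(Z, \dd\beta) - Z \in B^{1/2}_{2,\infty}$. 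Interpreting $\int_0^t \exp(-\beta_r) \dd B_r := \mathscr{S}_t(\exp(-\beta), \dd B)$ with the roles of integrand and integrator reversed ($\alpha = \gamma$, $p = \infty$, $\beta = 1/2$, $q = 2$), the same compatibility yields a well-defined element of $B^{1/2}_{2,\infty}$.

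By construction, the LHS of the claimed formula is the sewing $\mathscr{I}G$ of the germ $G_{s,t} := \exp(-\beta_s)(B_t - B_s)$; the RHS, being the increment of the continuous path $f_t := Z_0 \exp(-\beta_0) - Z_t \exp(-\beta_t)$, coincides with the trivial sewing $\mathscr{I}\tilde{G}$ of $\tilde{G}_{s,t} := \delta f_{s,t}$ (for which $\delta \tilde{G} \equiv 0$). Writing $B_t - B_s = Z_s(\beta_t - \beta_s) + R_{s,t} - (Z_t - Z_s)$, where $R$ is the sewing remainder of $\mathscr{S}(Z, \dd\beta)$ (satisfying $\|R\|_{\mathbb{B}^{\gamma + 1/2}_{2,\infty}} < \infty$ by Lemma~\ref{sewing}), and expanding $Z_t \exp(-\beta_t) - Z_s \exp(-\beta_s)$ via the Taylor-type identity $\exp(-\beta_t) - \exp(-\beta_s) = -\exp(-\beta_s)(\beta_t - \beta_s) + S_{s,t}$ with $|S_{s,t}| \lesssim |t-s|^{2\gamma}$, the leading-order products $Z_s \exp(-\beta_s)(\beta_t - \beta_s)$ and $(Z_t - Z_s) \exp(-\beta_s)$ cancel between $G$ and $\tilde{G}$, leaving
\begin{align*}
G_{s,t} - \tilde{G}_{s,t} = \exp(-\beta_s) R_{s,t} + Z_s S_{s,t} + (Z_t - Z_s)\bigl(\exp(-\beta_t) - \exp(-\beta_s)\bigr).
\end{align*}
Each summand is bounded in $\mathbb{B}^\beta_{2,\infty}$ for some $\beta > 1$: the first via the sewing-remainder estimate (exponent $\gamma + 1/2 > 5/4$); the second pointwise (exponent $2\gamma > 3/2$); the third via the Cauchy--Schwarz-type splitting $\|(Z_{\cdot + h} - Z_\cdot)(\exp(-\beta_{\cdot + h}) - \exp(-\beta_\cdot))\|_{L^2} \leq h^\gamma \|Z_{\cdot + h} - Z_\cdot\|_{L^2} \lesssim h^{\gamma + 1/2}$. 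Lemma~\ref{local approx doesnt matter} then delivers $\mathscr{I}G = \mathscr{I}\tilde{G}$, which is the asserted identity.

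The principal obstacle is the algebraic bookkeeping ensuring that all order-one contributions cancel exactly between $G$ and $\tilde{G}$, leaving residuals of Nikolskii exponent strictly greater than one; once this is observed, the residual estimates rely on the uniform mechanism of pairing a $|t-s|^{1/2}$-factor from $Z$ against a $|t-s|^\gamma$-factor from $\beta$ (or its exponential). It is worth noting that $\gamma > 1/2$ alone would suffice for this lemma in isolation; the stronger hypothesis $\gamma > 3/4$ is inherited from the ambient setting of Theorem~\ref{thm:Multiplicative}.
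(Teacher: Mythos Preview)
Your proof is correct and takes a genuinely different route from the paper's. The paper argues by approximation: it mollifies $Z$ and $\beta$ to smooth $Z^n$, $\beta^n$, verifies the identity by classical integration by parts for the smooth objects, and then passes to the limit using the stability Lemma~\ref{sewing convergence}. You instead work directly at the level of germs: you write both sides as sewings of explicit two-parameter maps, perform the algebraic cancellation of the order-one contributions, and verify that the three residual terms lie in $\mathbb{B}^\beta_{2,\infty}$ with $\beta=\gamma+1/2>1$ (respectively $2\gamma>1$), so that Lemma~\ref{local approx doesnt matter} forces $\mathscr{I}G=\mathscr{I}\tilde G$. Your approach avoids the bookkeeping of mollification convergence in the Nikolskii scale and makes the regularity arithmetic completely explicit; the paper's approximation scheme is less hands-on but has the advantage of transferring verbatim to other classical calculus identities once continuity of the relevant Young integrals under smooth approximation is granted. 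Your closing remark that $\gamma>1/2$ already suffices for this lemma in isolation is also correct.
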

\begin{proof}
    The first part of the statement is an immediate consequence of the Sewing Lemma \ref{sewing}. Note that if $\beta^n, Z^n$ are smooth, we have for the corresponding $B^n$
\begin{align*}
   & \int_0^t \exp{(-\beta^n_r)} \dd B^n_r=\int_0^t  \exp{(-\beta^n_r)}Z^n_r \dd \beta^n_r- \int_0^t\exp{(-\beta^n_r)}\dd Z^n_r\\
    &=-\int_0^t \dd (Z^n_r\exp{(-\beta^n_r)}) = -Z^n_t\exp{(-\beta^n_t)}+Z^n_0\exp{(-\beta^n_0)}.
\end{align*}
Using the stability of sewings under mollification as expressed in Lemma \ref{sewing convergence}, we are able to pass to the limit in the above, yielding the claim.
\end{proof}

\subsection{An Aubin Lions type Lemma}
In an abstract sense, upon establishing suitable a priori bounds on approximate problems, one is confronted with the problem of the passage to the limit. However, as only weak or weak-$*$ convergences are available from a priori bounds, the passage to the limit in non-linear terms typically poses an issue as this requires some form of strongly convergent subsequences. Such strong convergence is classically provided by the Rellich-Kondrachov theorem in combination with the Aubin-Lions Lemma. Remark however that due to the irregularity of $I(u)$, no $L^q([0, T], X)$ estimates on $\partial_t u^n$ will be available. We therefore establish an Aubin-Lions type Lemma suited for our context.
\begin{theorem}[\cite{aubinlions} Theorem A.1]
\label{aubingut}
    Let $p\in [1, \infty)$ and let $T\in (0, \infty)$. Let $X$ be a Banach space. A set $F\subset L^p([0, T], X)$ is relatively compact in $L^p([0, T], X)$ if and only if 
    \begin{enumerate}
        \item $\sup_{f\in F} \norm{f}_{L^p([0, T], X)}<\infty$
        \item $\lim_{h\to 0}\int_0^{T-h} \norm{f_{r+h}-f_r}_X^pdr=0$ uniformly in $f\in F$
        \item For every $\epsilon>0$, there exists a compact set $Q_\epsilon\subset X$ such that for every $f\in F$ there exists a set $A_{f, \epsilon}\subset [0, T]$ with $\lambda([0, T]\backslash A_{f, \epsilon})\leq \epsilon$ and 
        \[
        f_t\in Q_\epsilon, \quad \forall t\in A_{f, \epsilon}
        \]
    \end{enumerate}
\end{theorem}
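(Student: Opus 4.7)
The plan is to establish this Fr\'echet--Kolmogorov-type characterisation in both directions. Necessity is essentially folklore; sufficiency is the substantial content and will proceed via time-averaging followed by a Banach-valued Arzel\`a--Ascoli argument. I will rely throughout on Bochner-integral manipulations (Jensen, H\"older), plus the fact that the closed convex hull of a compact subset of a Banach space is compact (Mazur's theorem).

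For necessity: (1) is immediate from total boundedness of relatively compact sets. Both (2) and (3) follow by reducing to a finite $\varepsilon$-net $\{f_1,\ldots,f_N\}\subset F$ in $L^p([0,T];X)$. Each $f_i$ is mean-continuous in $L^p$ by density of $C([0,T];X)$, so (2) propagates to all $f\in F$ via the triangle inequality. For (3), I would apply Lusin's theorem to each $f_i$ to obtain a simple-function approximation outside a set of measure at most $\varepsilon/(2N)$; the finite union of the values then taken supplies the common compact set $Q_\varepsilon$, and the exceptional set $A_{f,\varepsilon}$ is assembled from the Lusin exceptional set of the nearest $f_i$ together with a small set where the $L^p$-approximation fails pointwise (estimated via Chebyshev).

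For sufficiency, I would extend $f$ by zero outside $[0,T]$ and introduce the time average $(T_h f)(t):=h^{-1}\int_t^{t+h}f(s)\,ds$. A standard Jensen--Fubini computation combined with (2) shows that $T_h f \to f$ in $L^p([0,T-h];X)$ uniformly over $f\in F$, so it suffices to prove that $\{T_h f:f\in F\}$ is relatively compact in $L^p([0,T-h];X)$ for each fixed small $h>0$. I would verify the hypotheses of Arzel\`a--Ascoli in $C([0,T-h];X)$: uniform boundedness via H\"older and (1); uniform $1/p'$-H\"older equicontinuity from a direct increment estimate on the averaging integrals, again using (1); and pointwise relative compactness of the orbit $\{(T_hf)(t):f\in F,\,t\in[0,T-h]\}$ in $X$ by splitting
\[
(T_h f)(t)=h^{-1}\int_{(t,t+h)\cap A_{f,\varepsilon}}f(s)\,ds\;+\;h^{-1}\int_{(t,t+h)\setminus A_{f,\varepsilon}}f(s)\,ds.
\]
Writing the first summand as a fraction of total mass times a true average that belongs to $\overline{\operatorname{conv}}(Q_\varepsilon)$, it lies in the compact set $[0,1]\cdot\overline{\operatorname{conv}}(Q_\varepsilon)$ (Mazur); the second is controlled in $X$-norm by $h^{-1}\varepsilon^{1-1/p}\sup_{f\in F}\|f\|_{L^p([0,T];X)}$ via H\"older and (1), hence becomes arbitrarily small upon sending $\varepsilon\to 0$ with $h$ fixed.

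The principal obstacle is the $f$-dependence of the exceptional sets $A_{f,\varepsilon}$ in condition (3): one cannot simply invoke a common pointwise compact envelope of $\{f(t):f\in F\}$ as in the classical Aubin--Lions setting. Time-averaging is precisely the device that resolves this, as it spreads each trajectory over an interval of length $h$ and thereby converts the measure-theoretic defect in (3) into a uniformly controlled norm error, absorbable by choosing $\varepsilon\ll h$ after fixing a small $h>0$.
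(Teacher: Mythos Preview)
The paper does not provide a proof of this theorem at all: it is quoted as Theorem~A.1 of the cited reference \cite{aubinlions} and then used as a black box to derive Theorem~\ref{nikoslki-lions}. So there is no argument in the paper to compare against, and in particular the paper only ever invokes the sufficiency direction.

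Your sufficiency argument is the standard one and is sound: time-averaging to upgrade to continuous functions, Mazur's theorem to handle the convex hull of $Q_\varepsilon$, and Arzel\`a--Ascoli to extract convergent subsequences. Your remark that averaging is precisely what absorbs the $f$-dependence of $A_{f,\varepsilon}$ is the key point and is correct.

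Your necessity argument for condition~(3), however, has a genuine gap. With a single $\delta$-net $f_1,\dots,f_N$, Lusin plus Chebyshev only yields that, off a small set, $f(t)$ lies within some fixed tolerance $\eta>0$ of the finite value set of the simple approximants to the $f_i$. In an infinite-dimensional Banach space the closed $\eta$-enlargement of a compact set is \emph{not} compact, so you cannot take it as $Q_\varepsilon$; and you cannot simply shrink $\eta$ because then the Chebyshev exceptional set blows up. What is actually needed is a diagonal construction: for every $k\ge 1$ pick a $\delta_k$-net with $\delta_k\to 0$ fast, let $K^{(k)}$ be a compact set capturing the net elements up to a small exceptional set, and put $Q_\varepsilon:=\bigcap_k (K^{(k)})_{1/k}$. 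This set is closed and totally bounded (for each $k$ it lies in a finite union of $2/k$-balls), hence compact; a summable choice of the exceptional measures then gives $f(t)\in Q_\varepsilon$ off a set of measure at most~$\varepsilon$.
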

Given the above result, we may conclude the following. 
\begin{theorem}
\label{nikoslki-lions}
    Let $X_0, X$ be Banach spaces, such that the embedding $X_0\hookrightarrow \hookrightarrow X$ is compact. Let $p, q\in [1, \infty)$ and $\gamma>0$. Then the space 
    \[
    \mathcal{Y}:=L^q_tX_0\cap B^{\gamma}_{p, \infty}X
    \]
    embeds compactly into $L^p_tX$. 
\end{theorem}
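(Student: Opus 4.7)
The plan is to verify the three conditions of Theorem~\ref{aubingut} for any bounded subset $F\subset\mathcal{Y}$. Set $M:=\sup_{f\in F}\big(\|f\|_{L^q_tX_0}+\|f\|_{B^\gamma_{p,\infty}X}\big)<\infty$; since $B^\gamma_{p,\infty}X\hookrightarrow L^p_tX$ by definition of the Besov norm, condition (1) is immediate with $\sup_{f\in F}\|f\|_{L^p_tX}\leq M$.

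For condition (2), I would exploit that we are working in Nikolskii spaces ($q_{\mathrm{Besov}}=\infty$): by the very definition of the Besov seminorm, for every $h\in(0,T]$ and every $f\in F$,
\[
\int_0^{T-h}\|f_{t+h}-f_t\|_X^p\,\dd t\leq h^{\gamma p}[f]_{B^\gamma_{p,\infty}X}^p\leq h^{\gamma p}M^p,
\]
which tends to $0$ as $h\to 0$ uniformly in $f\in F$.

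The main step is condition (3), where the compact embedding $X_0\hookrightarrow\hookrightarrow X$ enters. Fix $\varepsilon>0$. For each $f\in F$, Markov's inequality applied to the $L^q$-function $t\mapsto\|f_t\|_{X_0}$ yields
\[
\lambda\bigl(\{t\in[0,T]:\|f_t\|_{X_0}>R\}\bigr)\leq R^{-q}\|f\|_{L^q_tX_0}^q\leq R^{-q}M^q.
\]
Choosing $R=R_\varepsilon:=(M^q/\varepsilon)^{1/q}$ and setting
\[
A_{f,\varepsilon}:=\{t\in[0,T]:\|f_t\|_{X_0}\leq R_\varepsilon\},
\]
we obtain $\lambda([0,T]\setminus A_{f,\varepsilon})\leq\varepsilon$ and $f_t\in B_{X_0}(0,R_\varepsilon)$ for all $t\in A_{f,\varepsilon}$. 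Finally, define
\[
Q_\varepsilon:=\overline{B_{X_0}(0,R_\varepsilon)}^{\,X},
\]
the closure in $X$ of the $R_\varepsilon$-ball of $X_0$. Since $X_0\hookrightarrow\hookrightarrow X$ is compact, this closure is a compact subset of $X$, and by construction $f_t\in Q_\varepsilon$ for all $t\in A_{f,\varepsilon}$. The sets $Q_\varepsilon$ and $R_\varepsilon$ depend only on $\varepsilon$ and the uniform bound $M$, not on the particular $f\in F$, which is exactly what Theorem~\ref{aubingut} requires.

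The only subtlety is the careful bookkeeping of the dependence of $R_\varepsilon$ on the bound $M$ (to ensure uniformity in $f$) and the fact that the second condition uses only the Nikolskii-type (i.e.\ $q_{\mathrm{Besov}}=\infty$) character of $B^\gamma_{p,\infty}X$; no additional $t$-integrability beyond the $L^p$-norm is needed. Once the three conditions are checked, Theorem~\ref{aubingut} gives relative compactness of $F$ in $L^p_tX$, which is the desired compact embedding.
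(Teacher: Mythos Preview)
Your proof is correct and follows essentially the same route as the paper: verify the three conditions of Theorem~\ref{aubingut}, with (1) and (2) immediate from the Nikolskii seminorm and (3) obtained via Markov's inequality on $t\mapsto\|f_t\|_{X_0}$ together with the compactness of $X_0\hookrightarrow X$ applied to a fixed ball. Your write-up is in fact slightly cleaner than the paper's, which contains a harmless sign slip in the definition of the good set $A_{n,\varepsilon}$.
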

\begin{proof}
    We apply Theorem \ref{aubingut}. Points $(1)$ and $(2)$ thereof follow immediately as 
    \[
\int_0^{T-h} \norm{f_{r+h}-f_r}_X^pdr\leq h^{\gamma p} \norm{f}_{B^\gamma_{p, \infty}X}\to 0. 
    \]
    Let $(f^n)_n$ be a bounded sequence in $\mathcal{Y}$. Given $\epsilon>0$, we choose $R(\epsilon)$ sufficiently large such that 
    \[
    \frac{1}{R(\epsilon)^q}\sup_n \norm{f^n}_{L^q_tX_0}^q<\epsilon
    \]
    Further set $A_{n, \epsilon}:=\{ t\in [0, T]\ |\ \norm{f_n}_{X_0}\geq R(\epsilon)\}$ and for the canonical imbedding $i:X_0\to X$, set $Q_\epsilon:=\overline{i(B_{X_0}(0, R(\epsilon)))}$, which is a compact set in $X$ such that $f^n_t\in Q_\epsilon$ for all $t\in A_{n, \epsilon}$. Finally, by Markov's inequality we have 
    \[
    \lambda([0, T]\backslash A_{n, \epsilon})\leq \frac{1}{R(\epsilon)^q}\int_0^T \norm{f^n_r}_{X_0}^qdr<\epsilon.
    \]
    This establishes $(3)$, thus allowing us to apply Theorem \ref{aubingut} concluding the proof. 
\end{proof}

\section*{Acknowledgement} The first authors warmly thanks the organizers and participants of the "GEN-Y research workshop in stochastic analysis" at the University of Münster for feedback on a preliminary version of this work.  The first author acknowledges support from the European Research Council (ERC) under the European Union’s Horizon 2020 research
and innovation programme (grant agreement No. 949981). The second author acknowledges support from the Australian Government through the Australian Research Council’s Discovery Projects funding scheme (grant number DP220100937).

\textbf{Declarations: }
All the authors declare that they have no conflicts of interest. Data availability statement is not applicable in the context to the present article since all the results here are theoretical in nature and do not involve any data.

\bibliographystyle{alpha}
\bibliography{main}

\end{document}